\theoremstyle{plain}
\newtheorem{thm}{Theorem}[section]
\newtheorem{prop}[thm]{Proposition}
\newtheorem{definition}[thm]{Definition}
\newtheorem{remark}[thm]{Remark}
\renewcommand{\epsilon}{\varepsilon}
\renewcommand{\hom}{\operatorname{Hom}}
\newcommand{\ehom}{\operatorname{End}}
\renewcommand{\phi}{\varphi}
\renewcommand{\d}{\operatorname{d}}
\newcommand{\degr}{\operatorname{deg}\,}
\newcommand{\id}{\operatorname{id}}
\newcommand{\res}{\operatorname{Res}}
\newcommand{\im}{\mathop{\operatorname{im}}}
\newcommand{\symg}[1]{\ensuremath{\mathfrak{S}_{#1}}}
\newcommand{\scr}[1]{\ensuremath{\mathcal{Q}_{#1}}}
\newcommand{\scrp}[2]{\ensuremath{\scr{#1}^{[#2]}}}
\tikzstyle{socbot} = [draw, circle,inner sep=1.8pt,outer sep=1.5pt]
\tikzstyle{socmida} = [draw, diamond,inner sep=1.3pt,outer sep=1.5pt]
\tikzstyle{socmidb} = [draw, regular polygon, regular polygon sides=4,inner sep=0.6pt,outer sep=1.5pt]
\tikzstyle{soctop} = [draw, regular polygon, regular polygon sides=3,inner sep=-0.4pt,outer sep=1.5pt]
\begin{document}
\thispagestyle{empty}
\def\thefootnote{\fnsymbol{footnote}}

\begin{center}\LARGE
  \textbf{%
    On the extended {\boldmath\(W\)-algebra
    of type \(\mathfrak{sl}_2\)\unboldmath} at positive rational level
  }
\end{center}\vskip 2em
\begin{center}\large
  Akihiro Tsuchiya\footnote{Email: {\tt akihiro.tsuchiya@ipmu.jp}} \,%
  and
  Simon Wood\footnote{Email: {\tt simon.wood@ipmu.jp}}
\end{center}
\begin{center}
  Kavli Institute for the Physics and Mathematics of the Universe (WPI),\\
  Todai Institutes for Advanced Study,\\
  The University of Tokyo
\end{center}
\vskip 1em
\begin{center}
  \date{\today}
\end{center}
\vskip 1em
\begin{abstract}
  The extended \(W\)-algebra of type \(\mathfrak{sl}_2\) at positive rational
  level, denoted by \(\mathcal{M}_{p_+,p_-}\), is a vertex operator algebra that
  was originally proposed in \cite{Feigin:2006iv}. This vertex operator
  algebra is an extension of the minimal model vertex operator algebra and
  plays the role of symmetry algebra for certain logarithmic conformal field theories.
  We give a construction of \(\mathcal{M}_{p_+,p_-}\) in terms of screening
  operators and use this construction to prove that \(\mathcal{M}_{p_+,p_-}\)
  satisfies Zhu's \(c_2\)-cofiniteness condition, calculate the structure
  of the zero mode algebra (also known as Zhu's algebra) and classify all
  simple \(\mathcal{M}_{p_+,p_-}\)-modules. 
\end{abstract}

\setcounter{footnote}{0}
\def\thefootnote{\arabic{footnote}}

\newpage

\section{Introduction}

The theory of vertex operator algebras (VOA), which was developed by
Borcherds \cite{Borcherds:1983sq}, is an algebraic counter part to conformal field theory and 
gives an algebraic meaning to the notions of locality and operator product
expansions.
For general facts about VOAs we refer to \cite{Frenkel:2001,Nagatomo:2002,Matsuo:2005}.

Examples of conformal field theories on general Riemann surfaces for which
vertex operator algebraic descriptions are known, are given
by lattice VOAs, VOAs associated to integrable representations of affine Lie
algebras and VOAs associated to minimal representations of the Virasoro
algebra. The abelian categories associated to the representation theory of all
these examples are semi-simple and the number of irreducible representations
is finite. 

In order to define conformal field theories over a Riemann surface
associated to a VOA, the VOA needs to satisfy certain finiteness
conditions. Zhu found such a finiteness condition \cite{Zhu:1996}, which is now called
Zhu's \(c_2\)-cofiniteness condition. For a VOA satisfying Zhu's 
\(c_2\)-cofiniteness condition it is known that the abelian category of its
representations is both Noetherian and Artinian. Additionally the number of
simple objects is finite. In general this abelian category is not 
semisimple \cite{Zhu:1996,Frenkel:1992}, though so far the semi-simple case
is much better understood. 
Conformal field theories associated to VOAs with a non-semisimple
representation theory are called logarithmic or non-semisimple.

Examples of VOAs which satisfy Zhu's
\(c_2\)-cofiniteness condition but not semisimplicity are given by so called \(W_p\) theories for
which the representation category is by now well 
established \cite{Gaberdiel:1996np,Gaberdiel:2007jv,Adamovic:2007er,Nagatomo:2009xp,Tsuchiya:2012}.

In this paper we analyse a different example which generalises the
\(W_p\) theories. This example was originally defined in
\cite{Feigin:2006iv} and was called \(W_{p_+,p_-}\).
Unfortunately the letter ``W''
is rather overused in this context, so we will denote these VOAs by 
\(\mathcal{M}_{p_+,p_-}\) and call them ``extended
\(W\)-algebras of type \(\mathfrak{sl}_2\) at rational level''. The
construction of \(\mathcal{M}_{p_+,p_-}\) in \cite{Feigin:2006iv} is very
similar to the way \(W_p\) was constructed in \cite{Fuchs:2003yu}.

The \(\mathcal{M}_{p_+,p_-}\) are a family of VOAs parametrised by two coprime
integers \(p_+,p_-\geq2\). They are defined by means of a lattice VOA 
\(\mathcal{V}_{p_+,p_-}\) and two screening operators
\(\scr{+},\scr{-}\). 
The \(\mathcal{M}_{p_+,p_-}\) have the same central charge
\begin{align*}
  c_{p_+,p_-}=1-6\frac{(p_+-p_-)^2}{p_+p_-},
\end{align*}
as the minimal models. However, the Virasoro subtheory of \(\mathcal{M}_{p_+,p_-}\) is not isomorphic to
the minimal model VOA \(\text{MinVir}_{p_+,p_-}\). The minimal model VOA \(\text{MinVir}_{p_+,p_-}\) is
obtained from \(\mathcal{M}_{p_+,p_-}\) by taking a quotient.

A number of results in this paper have already been described in \cite{Feigin:2006iv}.
In \cite{Feigin:2006iv} the construction of integration cycles, over which products of screening operators are integrated,
are described by using a Kazhdan-Lusztig type correspondence
between the homology groups of configuration
spaces of points on the projective line with local coefficients and quantum
groups at roots of unity. However, this correspondence is
not yet well understood in this case 
\cite{Kazhdan:1993a,Kazhdan:1993b,Kazhdan:1994a,Kazhdan:1994b,Feigin:2006xa} though it 
has been explored for the \(W_p\) case \cite{Bushlanov:2009cv,Bushlanov:2011ha,Nagatomo:2009xp}.
In \cite{Adamovic:2010,Adamovic:2011} Adamovi\'c and Milas succeeded in
proving the \(c_2\) cofiniteness, in computing relations in Zhu's algebra and in
classifying all simple modules of the VOA \(\mathcal{M}_{2,p_-}\) for \(p_-\) odd.
They proved
these results by making use of a super VOA structure that exists for
\(p_+=2\) and by considering a field \(G(z)\) that is generally not local with
fields of the lattice VOA \(\mathcal{V}_{2,p_-}\), but local with fields in
\(\mathcal{M}_{2,p_-}\).
The zero mode of this field \(G(z)\) acts as a derivation on \(\mathcal{M}_{2,p_-}\).

In order to analyse the VOA \(\mathcal{M}_{p_+,p_-}\) for coprime
\(p_+,p_-\geq2\), we introduce free field VOAs over the discrete valuation
ring \(\mathcal{O}=\mathbb{C}[[\epsilon]]\), the ring of complex formal power series; and
its quotient field \(\mathcal{K}=\mathbb{C}((\epsilon))\), the field of
formal Laurent series. We discuss the representation theory and screening operators of these free
field VOAs as well as the construction of cycles on which products of
screening operators can be integrated. By using the theory of Jack polynomials
we show that the integration of products of screening operators over these
cycles defines intertwining operators of the Virasoro algebra over
\(\mathcal{O}\). These intertwining operators map between free field modules over \(\mathcal{O}\),
called Fock modules. Furthermore, by using the theory of Jack polynomials we
explicitly calculate all the data required for analysing the VOA structure of
\(\mathcal{M}_{p_+,p_-}\). Using formulae concerning integrals of Jack
polynomials, which were originally conjectured by Macdonald \cite{Macdonald:1987} and proved by Kadell \cite{Kadell:1997},
we construct derivations \(E\) and \(F\) of the VOA \(\mathcal{M}_{p_+,p_-}\),
which we call Frobenius homomorphisms. Thus we are able to analyse the zero
mode algebra, prove \(c_2\) cofiniteness and classify all simple modules of
the VOA \(\mathcal{M}_{p_+,p_-}\).

The results of this
paper form a necessary starting point for studying problems such as the \(\mathcal{M}_{p_+,p_-}\) representation theory;
the Kazhdan-Lusztig correspondence between \(\mathcal{M}_{p_+,p_-}\) and quantum groups; and conformal field
theories with \(\mathcal{M}_{p_+,p_-}\) symmetry on the Riemann sphere and elliptic curves --
 as was done in \cite{Belavin:1984,Tsuchiya:1987} for the
semi-simple case --
and more generally on moduli spaces of \(N\) point genus \(g\) stable curves.

This paper is organised as follows: In Section \ref{sec:definitions} we
introduce some basic notation and definitions.
We define VOAs with an emphasis on the Heisenberg and lattice VOAs as well as their screening operators.
We also briefly explain
how to construct the Poisson and zero mode algebra associated to a VOA as well as their implications for
the representation theory of a VOA.

In Section \ref{sec:deformationsec} we develop the techniques required for
analysing the extended \(W\)-algebra \(\mathcal{M}_{p_+,p_-}\). We construct cycles
over which products of screening operators can be integrated.
These integration cycles are elements of the homology groups of configuration spaces of \(N\) points on the
projective line, with local coefficients defined by the monodromy of products of screening operators.
Due to so called resonance problems homology and cohomology groups with these local coefficients exhibit very complicated
behaviour \cite{Orlik:2001,Varchenko:2003}.
To overcome these complications we deform the Heisenberg VOA, including its Virasoro field and screening operators, and
construct the theory over the ring \(\mathcal{O}\) and its 
field of fractions \(\mathcal{K}=\mathbb{C}((\epsilon))\).
The problem is thus translated into 
constructing well behaved cycles such that all matrix elements of integrals of
products of screening operators lie in \(\mathcal{O}\) rather than
\(\mathcal{K}\).
We show this by using the theory of Jack polynomials \cite{MacDonald:1999}.
By setting \(\epsilon=0\) we then obtain integration cycles over \(\mathbb{C}\) for products of screening operators.
By integrating these products of screening operators over the constructed
cycles we obtain local primary fields
\(\scrp{+}{r}(z), \scrp{-}{s}(z), r,s\in \mathbb{N}\) of conformal
weight 1. The zero mode operators \(\scrp{+}{r}, \scrp{-}{s}, r,s\in \mathbb{N}\) of these primary fields define Virasoro
intertwining operators.

In Section \ref{sec:Virrepthy} we review the decomposition of Fock modules as
Virasoro modules due to Feigin and Fuchs
\cite{Feigin:1984,Feigin:1988,FeFu:1990}.
By using the intertwining operators \(\scrp{+}{r}\) and \(\scrp{-}{s}\), we
construct all Virasoro singular vectors of these Fock modules
at central charge \(c_{p_+,p_-}\). We define operators \(E\) and \(F\) which we call
Frobenius homomorphisms. The Frobenius homomorphisms are Virasoro homomorphisms that map
between the kernels of \(\scrp{+}{r}\) and \(\scrp{-}{s}\) respectively and define
derivations of the extended \(W\)-algebra \(\mathcal{M}_{p_+,p_-}\). The
Frobenius homomorphisms \(E,F\) are motivated by so called divided power
operators, constructed in \cite{Feigin:2006iv} by using quantum groups.
The definition and properties of the Frobenius homomorphisms are stated in Theorem \ref{sec:frobopthm}.

In Section \ref{sec:Mppalg} the extended \(W\)-algebra
\(\mathcal{M}_{p_+,p_-}\) is introduced. We give a decomposition of
\(\mathcal{M}_{p_+,p_-}\) as a Virasoro module, determine a generating set of
fields, analyse its zero mode and Poisson algebra and classify and construct
all simple \(\mathcal{M}_{p_+,p_-}\) modules. The screening operators
\(\scrp{+}{r}, \scrp{-}{s}\) and the Frobenius homomorphisms \(E,F\) are crucial to
all these calculations. The structure of \(\mathcal{M}_{p_+,p_-}\) as a VOA is
stated in Theorem \ref{sec:indecstructure}, the structure of the zero mode
algebra \(A_0(\mathcal{M}_{p_+,p_-})\) is stated in Theorem
\ref{sec:zhualgrelations} and 
the classification of all simple \(A_0(\mathcal{M}_{p_+,p_-})\)- and
\(\mathcal{M}_{p_+,p_-}\)-modules is stated in Theorem \ref{sec:classificationofmodules}.

In Section \ref{sec:conclusion} we give our conclusions and state a list of future problems and conjectures
associated to conformal field theories with \(\mathcal{M}_{p_+,p_-}\) symmetry.

\subsubsection*{Acknowledgements}

This work was supported by the World Premier International Research Center
Initiative (WPI Initiative), MEXT, Japan.
Both authors are supported by the Grant-in-Aid for JSPS Fellows number 2301793.
The first author is supported by the JSPS Grant-in-Aid for Scientific Research
number 22540010. The first author would like to thank A.~M.~Semikhatov and I.~Y.~Tipunin for
helpful discussions as well as J.~Shiraishi for helpful discussions about the properties of Jack polynomials.
The second
author is supported the JSPS fellowship for foreign researchers and thanks the
University of Tokyo, Kavli IPMU for its hospitality.
The second author would like to thank T.~Milanov, T.~Abe,
C.~Schnell and C.~Schmidt-Collinet for helpful discussions.

\section{Basic definitions and notation}
\label{sec:definitions}

In this section we review basic definitions and notation for VOAs, in
particular Heisenberg and lattice VOAs.

\subsection{Vertex operator algebras}

For a detailed discussion of vertex operator algebras see \cite{Nagatomo:2002,Matsuo:2005,Frenkel:1992}.

\begin{definition}
  A tuple \((V,|0\rangle,T,Y)\) is called a \emph{vertex operator algebra} (VOA for short)
  where
  \begin{enumerate}
  \item \(V\) is a complex non-negative integer graded vector space
    \begin{align*}
      V=\bigoplus_{n=0}^\infty V[n]\,,
    \end{align*}
    called \emph{the vacuum space of states}.
  \item \(|0\rangle\in V[0]\) is called \emph{the vacuum vector}.
  \item \(T\in V[2]\) is called \emph{the conformal vector}.
  \item \(Y\) is a \(\mathbb{C}\)-linear map
    \begin{align*}
      Y: V&\rightarrow \operatorname{End}_{\mathbb{C}}(V)[[z,z^{-1}]]
    \end{align*}
    called \emph{the vertex operator map}. 
      \end{enumerate}
These data are subject to the axioms:
  \begin{enumerate}
  \item Each homogeneous subspace \(V[n]\) of the space of states is finite dimensional and in particular
    \(V[0]\) is spanned by the vacuum vector.
  \item For each \(A\in V[h]\) there exists a Laurent expansion
    \begin{align*}
      Y(A;z)=A(z)=\sum_{n\in\mathbb{Z}} A[n]z^{-n-h}\,,
    \end{align*}
    where \(Y(A;z)\) is called \emph{a field} and
    the \(A[n]\) are called \emph{field modes}. 
    Each field satisfies the \emph{the state field correspondence}
    \begin{align*}
      Y(A;z)|0\rangle -A \in V[[z]]z\,.
    \end{align*}
    The field corresponding to the vacuum is the identity field
    \begin{align*}
      Y(|0\rangle;z)=\id_V\,.
    \end{align*}
    For certain special fields, such as the Virasoro field below, we denote the mode as an index instead of in brackets.
  \item The field modes of the field corresponding to the conformal vector
    \begin{align*}
      Y(T;z)=T(z)=\sum_{n\in \mathbb{Z}}L_n z^{-n-2}\,,
    \end{align*}
    satisfy the commutation relations of \emph{the Virasoro algebra} with
    fixed \emph{central charge} \(c=c_V\)
    \begin{align*}
      [L_m,L_n]=(m-n)L_{m+n}+\frac{c_V}{12}(m^3-m)\delta_{m+n,0}\,.
    \end{align*}
    The field \(T(z)\) is called \emph{the Virasoro field}.
  \item The zero mode of the Virasoro algebra \(L_0\) acts semi-simply on \(V\) and
    the eigenvalues of \(L_0\) define the grading of \(V\), that is,
    \begin{align*}
      V[h]=\{A\in V| L_0 A= h A\}\,.
    \end{align*}
  \item The Virasoro generator \(L_{-1}\) acts as the derivative with respect
    to \(z\)
    \begin{align*}
      \frac{\d}{\d z}Y(A;z)=Y(L_{-1}A;z)\,,
    \end{align*}
    for all \(A\in V\).
  \item For any two elements \(A,B\in V\) the fields \(Y(A;z)\) and \(Y(B;w)\) are local,
    {\it i.e.} there exists a sufficiently large \(N\in\mathbb{Z}\) such that
    \begin{align*}
      (z-w)^N[Y(A;z),Y(B;w)]=0\,,
    \end{align*}
    as elements of \(\operatorname{End}(V)[[z,z^{-1},w,w^{-1}]]\).
  \item For a homogeneous element \(A\in V[h]\) and an element \(B\in V\) the
    fields \(Y(A;z)\) and \(Y(B;w)\) satisfy 
    \emph{the operator product expansion}
    \begin{align*}
      Y(A;z)Y(B;w)&=Y(Y(A;z-w)B;w)\\\nonumber
      &=\sum_{n\in \mathbb{Z}}Y(A[n] B;w)(z-w)^{-n-h}\,.
    \end{align*}
  \end{enumerate}
\end{definition}
When there is no chance of confusion we will refer to a VOA just by its graded vector space
\(V\).
\begin{remark}\ 
  \begin{enumerate}
  \item For \(A\in V[h]\) the Virasoro generators \(L_0\) and \(L_{-1}\) satisfy
  \begin{align*}
    [L_{-1},A[n]]&=-(n+h-1)A[n-1]\\\nonumber [L_0,A[n]]&=-n A[n]\,.
  \end{align*}
\item The operator product expansion of the Virasoro field with itself is
  \begin{align*}
    T(z)T(w) = \frac{c_V/2}{(z-w)^4}+\frac{2}{(z-w)^2}T(w)+\frac{1}{z-w}\partial T(w)+\cdots\,.
  \end{align*}
\item For any homogeneous element \(A\in V[h]\) such that \(L_n A=0\) for \(n\geq 1\), the operator
  product expansion of the Virasoro field with \(Y(A;z)\) is
  \begin{align*}
    T(z)Y(A;w)=\frac{h}{(z-w)^2}Y(A;w)+\frac{1}{z-w}\partial Y(A;w)+\cdots\,.
  \end{align*}
  Such fields \(Y(A;w)\) are called primary fields.
\end{enumerate}
\end{remark}

\begin{definition}
  A \emph{VOA module} \(M\) is a vector space that carries a representation \(Y_M\)
  \begin{align*}
    Y_M: V\rightarrow \operatorname{End}_{\mathbb{C}}(M)[[z,z^{-1}]]\,,
  \end{align*}
  of the vertex operator \(Y\),  such that for all \(A,B\in V\) and \(C\in M\)
  \begin{enumerate}
  \item \(Y_M(A,z)C\in M((z))\),
  \item \(Y_M(|0\rangle;z)=\id_M\), is the identity on \(M\),
  \item \(Y_M(A;z)Y_M(B;w)=Y_M(Y(A;z-w) B;w)\)\,.
  \end{enumerate}
\end{definition}

Let \(O(V)\) be the vector subspace of \(V\) spanned by vectors
\begin{align*}
  A\circ B= \res_{z=0}Y(A;z)B\frac{(1+z)^{h_A}}{z^2}\d z
  =\sum_{n=0}^{h_A}\binom{h_A}{n}A[-n-1]B\,,
\end{align*}
for \(A\in V[h_A], B\in V\). Furthermore, let \(A\ast B\) be the binary operation
\begin{align*}
  A\ast B= \res_{z=0}Y(A;z)B\frac{(1+z)^{h_A}}{z}\d z
  =\sum_{n=0}^{h_A}\binom{h_A}{n}A[-n]B\,.
\end{align*}

\begin{prop}\label{sec:zhuproperties}
  The space \(A_Z(V)=V/O(V)\) is called \emph{Zhu's algebra} and
  carries the structure of an associative \(\mathbb{C}\) algebra.
  Let \([A],[B]\in A_Z(V)\) denote the classes represented by \(A,B\in V\), then the multiplication in \(A_Z(V)\) is given by
  \begin{align*}
    [A]\cdot [B]=[A\ast B]\,.
  \end{align*}
  \begin{enumerate}
  \item The unit of \(A_Z(V)\) is the class of the vacuum state \(1=[|0\rangle]\).
  \item The class of the conformal vector \([T]\) lies in the centre of \(A_Z(V)\).
  \item \((L_0+L_{-1})A\in O(V)\) for all \(A\in V\).
  \item There is a 1 to 1 correspondence between finite dimensional
    simple \(A_Z(V)\)-modules and 
    simple \(V\)-modules.
    The simple \(A_Z(V)\)-modules are isomorphic to the homogeneous space of
    least conformal weight of the corresponding simple \(V\)-module.
  \item For \(A\in V[h_A]\), \(B\in V\) and \(m\geq n\geq 0\)
    \begin{align*}
      \res_{z=0} Y(A;z)B\frac{(1+z)^{h_A+n}}{z^{2+m}}\d z \in O(V)\,.
    \end{align*}
  \end{enumerate}
\end{prop}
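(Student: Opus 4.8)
\emph{Proof strategy.}
This proposition collects the foundational properties of Zhu's algebra, and the plan is to follow the argument of Zhu \cite{Zhu:1996}, organised so that the combinatorial statement (5) is proved first and everything else is deduced from it. For (5), write $\Theta_{m,n}(A,B)=\res_{z=0}Y(A;z)B\,(1+z)^{h_A+n}z^{-2-m}\,\d z$, so that $A\circ B=\Theta_{0,0}(A,B)$ and the claim is $\Theta_{m,n}(A,B)\in O(V)$ whenever $m\geq n\geq 0$. The factorisation $(1+z)^{h_A+n}=(1+z)^{h_A+n-1}+z(1+z)^{h_A+n-1}$ gives the recursion $\Theta_{m,n}=\Theta_{m,n-1}+\Theta_{m-1,n-1}$, which reduces the claim to the case $n=0$; for $n=0$ one induces on $m$, the base case $m=0$ being the definition of $O(V)$ and the inductive step coming from the vanishing of $\res_{z=0}\partial_z\big(Y(A;z)B\,(1+z)^{h_A+1}z^{-2-m}\big)\,\d z$, which, after using $\partial_zY(A;z)=Y(L_{-1}A;z)$ and the product rule, expresses $\Theta_{m+1,0}(A,B)$ as a linear combination of $\Theta_{m,0}(L_{-1}A,B)$ and $\Theta_{m,0}(A,B)$; since $L_{-1}A$ is again homogeneous the induction closes.

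Statement (3) is then immediate: the axioms give $Y(A;z)|0\rangle=e^{zL_{-1}}A$, so $A\circ|0\rangle$ is the coefficient of $z$ in $e^{zL_{-1}}A\,(1+z)^{h_A}$, namely $L_{-1}A+h_AA=(L_0+L_{-1})A\in O(V)$. The same power-series extraction shows $A\ast|0\rangle=A$, while $|0\rangle\ast A=A$ follows from $Y(|0\rangle;z)=\id_V$, so $[|0\rangle]$ is a two-sided unit as soon as $\ast$ is known to descend to $A_Z(V)$. The two facts needed for that, and for (1) as a whole, are that $O(V)$ is a two-sided ideal for $\ast$, i.e.\ $A\ast(B\circ C)\in O(V)$ and $(A\circ B)\ast C\in O(V)$, and that $\ast$ is associative modulo $O(V)$, i.e.\ $(A\ast B)\ast C-A\ast(B\ast C)\in O(V)$. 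In each case I would write out the relevant iterated residue $\res_{z=0}\res_{w=0}(\cdots)$, use the operator product expansion $Y(A;z)Y(B;w)=Y(Y(A;z-w)B;w)$ — and, for the identities involving $\circ$ on the left or requiring $A$ to be moved past $B$, locality, which brings an auxiliary factor $(z-w)^N$ into play — to rewrite the integrand as a sum of terms of the shape $Y(D;w)E\,(1+w)^{h_D+n}w^{-2-m}$ with $m\geq n\geq 0$, and then conclude with (5). Statement (2), the centrality of $[T]$, follows from a direct residue computation of the same type, yielding $T\ast A\equiv A\ast T\pmod{O(V)}$; alternatively, once (4) is available it is immediate from $[L_0,A[0]]=0$ applied to $A_Z(V)$ acting on itself.

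For (4) the plan is to exhibit mutually inverse constructions. Given a simple $V$-module $M$, graded by $L_0$ and bounded below, let $M_{\mathrm{top}}$ be its lowest-weight subspace and let $o(A)=A[0]$ denote the grade-preserving mode of $Y_M(A;z)$; residue computations parallel to those in the previous paragraph show $o(A\ast B)|_{M_{\mathrm{top}}}=o(A)o(B)|_{M_{\mathrm{top}}}$ and $o(A\circ B)|_{M_{\mathrm{top}}}=0$, so $[A]\mapsto o(A)|_{M_{\mathrm{top}}}$ is a well-defined $A_Z(V)$-module structure on $M_{\mathrm{top}}$, and it is simple because $M$ is generated over $V$ by $M_{\mathrm{top}}$ and the lowest-weight component of the $V$-submodule generated by any $U\subseteq M_{\mathrm{top}}$ is again $U$. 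Conversely, from a simple $A_Z(V)$-module $U$ one builds an induced (``generalised Verma'') $V$-module, shows it has a unique maximal submodule meeting the lowest-weight space trivially, and passes to the quotient; checking that these two assignments are mutually inverse gives the bijection together with the identification of simple $A_Z(V)$-modules with lowest-weight spaces.

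The main obstacle is the block of bilinear residue identities in the second paragraph, and their module analogues in the third: that $O(V)$ is a two-sided ideal and that $\ast$ is associative modulo $O(V)$. These are not conceptually deep, but they are genuinely intricate, because one must interleave the operator product expansion with locality — which forces an auxiliary polynomial factor $(z-w)^N$ into the computation and demands care about the order in which the two residues are taken — and then repeatedly apply the $n=0$ recursion behind (5) to absorb every leftover term; keeping that bookkeeping straight is where the real work lies.
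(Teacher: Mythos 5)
The paper offers no proof of this proposition: it is quoted as standard, with the reader referred to Zhu \cite{Zhu:1996}, \cite{Frenkel:1992} and \cite{Nagatomo:2002}, so there is no in-paper argument to compare against. Your outline faithfully reconstructs Zhu's original proof — the recursion \(\Theta_{m,n}=\Theta_{m,n-1}+\Theta_{m-1,n-1}\), the total-derivative trick for the \(n=0\) induction, and the computation \(A\circ|0\rangle=(L_0+L_{-1})A\) are all correct — and you rightly flag that the remaining content (the two-sided-ideal and associativity identities, and the induced-module half of (4)) is standard but left as a sketch.
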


There is an equivalent definition of Zhu's algebra as a quotient of the algebra of zero modes.
Let 
\begin{align*}
  U(V)&=\bigoplus_{d\in\mathbb{Z}}U(V)[d]\\
  U(V)[d]&=\{P\in U(V)|[L_0,P]=-d\}
\end{align*}
be the graded associative algebra of modes of all the fields in \(V\). 
This algebra is called the \emph{current algebra} \cite{Nagatomo:2002}.
Furthermore, let \(F_p(V)\) be the descending filtration
\begin{align*}
  F_p(V)=\bigoplus_{d\leq p}U(V)[d]
\end{align*}
of \(U(V)\) and let
\begin{align*}
  \mathcal{I}=\overline{U(V)\cdot F_{-1}(V)}
\end{align*}
be the closure of the \(U(V)\) left ideal generated by \(F_{-1}(V)\), then
\begin{align*}
  I=\mathcal{I}\cap F_0(V)
\end{align*}
is a closed two sided \(F_0(V)\) ideal.
\begin{definition}
  The \emph{zero mode algebra} is the quotient algebra
  \begin{align*}
    A_0(V)=F_0(V)/I\,.
  \end{align*}
\end{definition}
\begin{prop}\ 
  \begin{enumerate}
  \item There exists a canonical surjective \(\mathbb{C}\) algebra homomorphism
    \begin{align*}
      U(V)[0]&\rightarrow A_0(V)
    \end{align*}
    which maps an element  \(P\in U(V)[0]\subset F_0(V)\) to its class in the
    zero mode algebra \(A_0(V)=F_0(V)/I\).
  \item There exists a well defined canonical isomorphism of \(\mathbb{C}\) algebras from Zhu's algebra \(A_Z(V)\)
    to the zero mode algebra \(A_0(V)\), such that for \(A\in V\)
    \begin{align*}
      A_Z(V)&\rightarrow A_0(V)\\
      [A]&\mapsto [A[0]]\,.
    \end{align*}
  \end{enumerate}
\end{prop}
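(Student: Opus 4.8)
\textbf{Part~1} is purely formal. Since $U(V)$ is unital, $F_{-1}(V)\subseteq F_{-1}(V)\cdot U(V)\subseteq\mathcal{I}$, and since $F_{-1}(V)\subseteq F_0(V)$ this gives $F_{-1}(V)\subseteq I$. The degree-zero component $U(V)[0]$ is a subalgebra of the graded algebra $U(V)$ and lies in $F_0(V)$, so the composite $U(V)[0]\hookrightarrow F_0(V)\twoheadrightarrow F_0(V)/I=A_0(V)$ is a $\mathbb{C}$-algebra homomorphism; it is surjective because $F_0(V)=U(V)[0]\oplus F_{-1}(V)$ (a decomposition respected by the closure defining $\mathcal{I}$) and the second summand is killed by the quotient map.

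\textbf{Part~2, the homomorphism.} For $A\in V[h_A]$ the mode $A[0]$ is the coefficient of $z^{-h_A}$ in $Y(A;z)$, so $[L_0,A[0]]=0$, i.e.\ $A[0]\in U(V)[0]\subseteq F_0(V)$ (for non-homogeneous $A$, extend by linearity over homogeneous components). The plan is to show that $A\mapsto[A[0]]$ descends to an algebra homomorphism $\phi\colon A_Z(V)\to A_0(V)$, $[A]\mapsto[A[0]]$, by establishing
\begin{align*}
  (A\circ B)[0]\in I,\qquad (A\ast B)[0]-A[0]\,B[0]\in I
\end{align*}
for all $A\in V[h_A]$, $B\in V$. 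These congruences are Zhu's computation of the product on his algebra, transcribed from vectors to modes: expanding $A\circ B=\sum_{n=0}^{h_A}\binom{h_A}{n}A[-n-1]B$ and $A\ast B=\sum_{n=0}^{h_A}\binom{h_A}{n}A[-n]B$, rewriting the zeroth mode of each field $Y(A[-k]B;z)$ by the operator product expansion axiom, and then applying the Virasoro and commutator relations together with Proposition~\ref{sec:zhuproperties}(3) and~(5), one finds that the two differences lie in the \emph{closed} right ideal $\mathcal{I}$ -- its closedness being essential here -- and hence in $I=\mathcal{I}\cap F_0(V)$. The resulting $\phi$ is unital because $Y(|0\rangle;z)=\id_V$ forces $(|0\rangle)[0]=\id_V$.

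\textbf{Part~2, surjectivity.} By Part~1 every class of $A_0(V)$ has a representative in $U(V)[0]$, which is spanned by monomials $A_1[n_1]\cdots A_r[n_r]$ in field modes with $n_1+\dots+n_r=0$. I would show, by induction on the length $r$, that each such monomial is congruent modulo $I$ to a single zeroth mode. If all $n_i=0$, the monomial is a product of zeroth modes, which the homomorphism property collapses modulo $I$ to $(A_1\ast\cdots\ast A_r)[0]$, unambiguous by associativity of $A_Z(V)$. Otherwise, since the $n_i$ sum to $0$, some $n_i$ is positive; moving this mode to the leftmost slot one transposition at a time via the commutator formula $[A[m],B[n]]=\sum_{j\geq0}\binom{m+h_A-1}{j}\bigl(A[j-h_A+1]B\bigr)[m+n]$, each step produces the rearranged length-$r$ monomial and one length-$(r-1)$ monomial of total degree $0$, the latter disposed of by induction; once the positive-index mode is leftmost the monomial lies in $F_{-1}(V)\cdot U(V)\subseteq\mathcal{I}$, hence, being of degree $0$, in $I$, i.e.\ is congruent to $0$. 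Therefore $\phi$ is surjective.

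\textbf{Part~2, injectivity -- the main obstacle.} Injectivity is the step where the vertex operator axioms enter in full. The plan is to build a two-sided inverse $\psi\colon A_0(V)\to A_Z(V)$: starting from a degree-$0$ monomial $A_1[n_1]\cdots A_r[n_r]$, use $[L_{-1},A[n]]=-(n+h_A-1)A[n-1]$ together with the commutator formula to rewrite it, modulo strictly shorter monomials, with every mode index shifted to $0$, and send the resulting product of zeroth modes to the iterated Zhu product $[A_1]\cdot[A_2]\cdots[A_r]\in A_Z(V)$. The hard part is to verify that this prescription is independent of all the choices and that it annihilates the generators of $\mathcal{I}$ -- equivalently, that the relations cutting out $O(V)$ in Proposition~\ref{sec:zhuproperties} are exactly what is needed to make $F_{-1}(V)\cdot U(V)$ act trivially -- so that $\psi$ descends to $F_0(V)/I=A_0(V)$; granting this, $\psi\circ\phi=\id_{A_Z(V)}$ holds on the generators $[A]$, and together with surjectivity of $\phi$ this yields the claimed isomorphism. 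Failing a direct construction of $\psi$, one may instead invoke the standard identification of Zhu's algebra with the algebra of zero modes from the general theory of vertex operator algebras \cite{Zhu:1996,Frenkel:1992}.
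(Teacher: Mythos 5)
The paper does not actually prove this proposition: just after Proposition \ref{sec:zhutopoisson} it refers the reader to \cite{Zhu:1996,Frenkel:1992} for the properties of Zhu's algebra and to \cite{Nagatomo:2002} for the isomorphism \(A_Z(V)\cong A_0(V)\), so there is no in-paper argument to compare against step by step. Your sketch is therefore already more explicit than the paper, and the parts you do carry out are correct: Part~1 really is just \(F_{-1}(V)\subseteq\mathcal{I}\cap F_0(V)=I\) together with the graded splitting \(F_0(V)=U(V)[0]\oplus F_{-1}(V)\) (and since \(\mathcal{I}\) is closed degree-wise, the closure causes no trouble there); the surjectivity induction in Part~2 is the standard one, and the commutator formula \([A[m],B[n]]=\sum_{j\geq0}\binom{m+h_A-1}{j}(A[j-h_A+1]B)[m+n]\) is the right one for the convention \(Y(A;z)=\sum_nA[n]z^{-n-h_A}\) used here (it reproduces the Virasoro relations when \(A=B=T\)).

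As a self-contained proof, however, the proposal still has the two holes you flag yourself. The congruences \((A\circ B)[0]\in I\) and \((A\ast B)[0]-A[0]\,B[0]\in I\) are the entire content of the claim that \([A]\mapsto[A[0]]\) is a well-defined algebra map, and they are asserted rather than derived; the computation (expand each \(Y(A[-k]B;w)\) by the OPE axiom, extract the zero mode, and collect binomial coefficients against Proposition \ref{sec:zhuproperties}(5)) is not long, but it is exactly the step a reader would need to see. Injectivity is the genuinely delicate point --- one must show that the relations generating \(O(V)\) account for \emph{all} of the intersection of \(\mathcal{I}\) with the zero modes, not merely that they land in it --- and there you ultimately invoke the same references the paper does. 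So the proposal ends where the paper begins: correct in outline and in every step it makes explicit, with the technical core delegated to \cite{Zhu:1996,Frenkel:1992} and \cite{Nagatomo:2002}.
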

\begin{remark}
  Since the zero mode algebra and Zhu's algebra are canonically isomorphic, we
  identify these two algebras and denote both by \(A_0(V)\). For \(A\in V\) we
  denote the corresponding class in \(A_0(V)\) by \([A]=[A[0]]\).
\end{remark}

Let \(c_2(V)\) be the subspace of \(V\) given by
\begin{align*}
  c_2(V)=\text{span}\{A[-(h_A+n)]B|A\in V[h_A], B\in V, n\geq 1\}\,.
\end{align*}

\begin{definition}
  The VOA \(V\) is said to satisfy \emph{Zhu's \(c_2\)-cofiniteness condition} if the quotient
  \begin{align*}
    \mathfrak{p}(V)=V/c_2(V)
  \end{align*}
  is finite dimensional.
\end{definition}

\begin{prop}
  The quotient space \(\mathfrak{p}(V)=V/c_{2}(V)\) carries the structure of a commutative
  Poisson algebra. Let \([A]_{\mathfrak{p}},[B]_{\mathfrak{p}}\) be the classes of \(A\in V[h_A]\) and \(B\in V\), then
      the multiplication and bracket are given by
  \begin{align*}
    [A]_{\mathfrak{p}}\cdot [B]_{\mathfrak{p}}&=[A[-h_A]B]_{\mathfrak{p}}\,,\\
    \{[A]_{\mathfrak{p}},[B]_{\mathfrak{p}}\}&=[A[-h_A+1]B]_{\mathfrak{p}}\,.
  \end{align*}
\end{prop}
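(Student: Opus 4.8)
The plan is to check directly that both operations descend to \(\mathfrak{p}(V)=V/c_2(V)\) and satisfy the (unital, graded) commutative Poisson axioms; every step is a formal consequence of the locality and OPE axioms. These yield the commutator formula \([A[m],B[n]]=\sum_{j\geq0}\binom{m+h_A-1}{j}(A[j-h_A+1]B)[m+n]\) and the associativity (Borcherds) formula expressing a mode \((A[m]B)[n]\) as a locally finite sum of iterated-mode terms \(A[\,\cdot\,]B[\,\cdot\,](\,\cdot\,)\) and \(B[\,\cdot\,]A[\,\cdot\,](\,\cdot\,)\), and we will also use the skew-symmetry relation \(Y(A;z)B=e^{zL_{-1}}Y(B;-z)A\). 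The running device is the observation that for homogeneous \(X\in V[h_X]\) the mode \(X[m]\), applied to any vector, lies in \(c_2(V)\) as soon as \(m\leq-h_X-1\); call such a mode \emph{deep}. So in each identity it will suffice to discard one leading term and observe that every remaining summand is an iterated-mode expression in which some deep mode acts on a vector.

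First I would record the special case \(L_{-1}V\subseteq c_2(V)\): the state--field correspondence gives \(Y(A;z)|0\rangle=e^{zL_{-1}}A\), so \(L_{-1}A=A[-h_A-1]|0\rangle\in c_2(V)\). Next comes well-definedness: that \(A[-h_A]B\) and \(A[-h_A+1]B\) lie in \(c_2(V)\) whenever \(A\) or \(B\) does. If \(B=C[-h_C-k]D\) with \(k\geq1\), push the mode of \(A\) past \(C[-h_C-k]\) with the commutator formula; the surviving term \(C[-h_C-k](A[\,\cdot\,]D)\) carries a deep \(C\)-mode, and each commutator correction \((A[j-h_A+1]C)[-h_A-h_C-k]D\) (or its \(-h_A+1\) analogue) is a deep mode of \(A[j-h_A+1]C\) on \(D\) precisely because \(k\geq1\). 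If instead \(A=C[-h_C-k]D\in c_2(V)\), the associativity formula expands \((C[-h_C-k]D)[-h_A]B\) (and the \([-h_A+1]\) mode) into terms whose outermost \(C\)- or \(D\)-mode is deep, again because \(k\geq1\). Thus \([A]_{\mathfrak{p}}\cdot[B]_{\mathfrak{p}}:=[A[-h_A]B]_{\mathfrak{p}}\) and \(\{[A]_{\mathfrak{p}},[B]_{\mathfrak{p}}\}:=[A[-h_A+1]B]_{\mathfrak{p}}\) are well defined; and since the spanning set of \(c_2(V)\) is homogeneous, \(\mathfrak{p}(V)\) is graded, with the product of degree \(0\) and the bracket of degree \(-1\).

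The algebra axioms then follow quickly. Unitality: \(Y(|0\rangle;z)=\id_V\) gives \(|0\rangle[0]A=A\) and \(|0\rangle[n]A=0\) for \(n\neq0\), so \([|0\rangle]_{\mathfrak{p}}\) is a unit, is Poisson-central, and is nonzero since \(c_2(V)\cap V[0]=0\). Commutativity and antisymmetry: comparing coefficients in the skew-symmetry relation,
\begin{align*}
  A[-h_A]B&=B[-h_B]A+\sum_{k\geq1}\frac{(-1)^k}{k!}L_{-1}^k\bigl(B[k-h_B]A\bigr),\\
  A[-h_A+1]B&=-B[-h_B+1]A+\sum_{k\geq1}\frac{(-1)^{k+1}}{k!}L_{-1}^k\bigl(B[k+1-h_B]A\bigr),
\end{align*}
and \(L_{-1}V\subseteq c_2(V)\) kills both tails. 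Associativity: the associativity formula applied to \((A[-h_A]B)[-(h_A+h_B)]C\) equals \(A[-h_A](B[-h_B]C)\) plus terms with a deep \(A\)- or \(B\)-mode, hence plus an element of \(c_2(V)\). Leibniz rule and Jacobi identity: here the commutator formula specialises \emph{exactly} (the binomials \(\binom{0}{j}\) vanish for \(j\geq1\)) to the on-the-nose operator identities \([A[-h_A+1],B[-h_B]]=(A[-h_A+1]B)[-(h_A+h_B-1)]\) and \([A[-h_A+1],B[-h_B+1]]=(A[-h_A+1]B)[-(h_A+h_B-2)]\) on all of \(V\); applying these to \(C\) and passing to classes in \(\mathfrak{p}(V)\) gives the Leibniz rule and the Jacobi identity (the latter using the antisymmetry just proved), with no corrections needed.

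The only genuine work is the well-definedness step: one has to deal with the a priori infinite sums in the commutator and Borcherds formulas and track conformal weights carefully enough to confirm that every mode that appears is deep. Once that absorption property of \(c_2(V)\) is in hand, the Poisson axioms are purely formal consequences of the mode identities above, read modulo \(c_2(V)\).
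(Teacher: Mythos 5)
Your proof is correct. The paper itself gives no proof of this proposition -- it defers to Zhu \cite{Zhu:1996} and Frenkel--Zhu \cite{Frenkel:1992} -- and your direct verification (absorption of deep modes into \(c_2(V)\), skew-symmetry modulo \(L_{-1}V\subseteq c_2(V)\), and the exact specialisation of the commutator formula at \(m=-h_A+1\) giving Leibniz and Jacobi on the nose) is precisely the standard argument from those references, carried out with the correct mode conventions.
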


\begin{remark}
  The space \(O(V)\) is not spanned by homogeneous vectors; therefore Zhu's
  algebra is not graded, it is merely filtered by conformal weight. The space
  \(c_2(V)\), on the other hand, is spanned by homogeneous vectors, so the Poisson algebra 
  \(\mathfrak{p}(V)\) is graded by conformal weight.
\end{remark}

Let \(F_0(A_0(V))\subset F_1(A_0(V))\subset\cdots\) be the filtration of
\(A_0(V)\) by conformal weight, that is
\begin{align*}
  F_p(A_0(V))=\left\{[A]| A\in \bigoplus_{h=0}^pV[h]\right\}\,.
\end{align*}
Then
the gradification of \(A_0(V)\) is the graded algebra
\begin{align*}
  \text{Gr}(A_0(V))=\bigoplus_{p\geq0} G_p(A_0(V))\,,
\end{align*}
where
\begin{align*}
  G_0(A_0(V))&=F_0(A_0(V)),&G_p(A_0(V))=F_p(A_0(V))/F_{p-1}(A_0(V)),\, p\geq1\,.
\end{align*}

\begin{prop}\label{sec:zhutopoisson}
  There exists a surjection of graded \(\mathbb{C}\) algebras
  \begin{align*}
    \mathfrak{p}(V)\rightarrow \text{Gr}(A_0(V))\,.
  \end{align*}
\end{prop}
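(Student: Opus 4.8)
The plan is to construct the map explicitly on representatives and then check it is well-defined, multiplicative, and surjective, degree by degree. First I would fix a homogeneous $A\in V[h_A]$ and observe that the two candidate ``leading'' modes that appear in the definitions — the Poisson product uses $A[-h_A]B$ while the Zhu product $A\ast B$ uses $\sum_{n\geq 0}\binom{h_A}{n}A[-n]B$ — agree modulo lower conformal weight, since among $A[-n]B$ with $0\le n\le h_A$ only the term $n=h_A$ has top conformal weight $h_A+h_B$, and the remaining terms $A[-n]B$ with $n<h_A$ lie in $\bigoplus_{h<h_A+h_B}V[h]$. So on the level of the associated graded $\mathrm{Gr}(A_0(V))$, the class of $A\ast B$ coincides with the class of $A[-h_A]B$. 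This observation is what will make the two multiplications match up.

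Next I would define the map $\mathfrak p(V)\to\mathrm{Gr}(A_0(V))$ on homogeneous generators by $[A]_{\mathfrak p}\mapsto [A]\bmod F_{h_A-1}(A_0(V))\in G_{h_A}(A_0(V))$, extended linearly. The content to verify is: (i) \emph{well-definedness}, i.e.\ the relations spanning $c_2(V)$ are sent to zero. A typical generator of $c_2(V)$ is $A[-(h_A+n)]B$ with $n\ge 1$, which lies in $V[h_A+h_B+n]$; by Proposition~\ref{sec:zhuproperties}(5) (applied with suitable $m\ge n\ge 0$, after translating $A[-(h_A+n)]B$ into the residue form $\res_{z=0}Y(A;z)B\,(1+z)^{h_A}z^{-2-(n-1)}\d z$ up to lower-weight corrections) the class of such an element already drops into a lower filtration piece, hence vanishes in the associated graded component of its own degree. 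I would spell out the binomial bookkeeping converting $A[-(h_A+n)]B$ into the residue expressions of Proposition~\ref{sec:zhuproperties}(5); this is the routine but slightly fiddly part. (ii) \emph{multiplicativity}: this is exactly the leading-term comparison from the first paragraph — $[A]_{\mathfrak p}\cdot[B]_{\mathfrak p}=[A[-h_A]B]_{\mathfrak p}\mapsto [A[-h_A]B]$, while in $\mathrm{Gr}(A_0(V))$ the product of $[A]$ and $[B]$ is represented by $[A\ast B]=[A[-h_A]B]$ modulo lower weight. (iii) \emph{Poisson-compatibility is not required} for the statement (only that it is a map of graded algebras), so I would not belabour the bracket, though one notes the bracket maps to the commutator in $\mathrm{Gr}$ by the $A[-h_A+1]B$ term.

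Finally, \emph{surjectivity}: every class in $G_p(A_0(V))$ is represented by some $[A]$ with $A\in V[p]$ (by definition of the filtration $F_p(A_0(V))$), and $[A]=$ image of $[A]_{\mathfrak p}$, so the map hits every homogeneous component; since both sides are graded, surjectivity follows. Graded-algebra structure on the target is inherited from $A_0(V)$ in the usual way (the product of $F_p$ and $F_q$ lands in $F_{p+q}$ because $A\ast B$ raises weight by at most $h_B$ from $B$, with the top piece in weight $h_A+h_B$).

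The main obstacle I anticipate is purely combinatorial: showing that the $c_2(V)$-relations $A[-(h_A+n)]B$, $n\ge 1$, are killed. The subtlety is that $A[-(h_A+n)]B$ is \emph{not} directly one of the residue expressions in Proposition~\ref{sec:zhuproperties}(5); one has to expand $(1+z)^{h_A+n'}$ for appropriate $n'$ and use that the \emph{leading} (top-weight) term of a sum $\sum_j \binom{\cdot}{j}A[-(h_A+\text{something})-j]B$ is precisely $A[-(h_A+n)]B$, while the corrections are of strictly lower weight — so membership in $O(V)$ of the full sum forces the leading term into $F_{p-1}$ in $\mathrm{Gr}(A_0(V))$. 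Getting the indices in the binomial identities to line up correctly, and tracking which corrections are genuinely lower weight, is the one place where care is needed; everything else is formal.
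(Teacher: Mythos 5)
Your argument is correct and is the standard one; the paper itself gives no proof of this proposition (it defers to \cite{Zhu:1996,Frenkel:1992}), and the proof found there is precisely your leading-term analysis: expanding \(\res_{z=0}Y(A;z)B\,(1+z)^{h_A}z^{-n-1}\,\d z\in O(V)\) for \(n\geq1\) yields \(A[-(h_A+n)]B\) plus terms of strictly lower conformal weight, so the homogeneous generators of \(c_2(V)\) die in the associated graded, while multiplicativity and surjectivity are exactly the formal points you record. The only slip is cosmetic: in your final paragraph, \(A\ast B\) raises the conformal weight of \(B\) by at most \(h_A\) (not \(h_B\)), with top component in weight \(h_A+h_B\).
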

For proofs of the properties of Zhu's algebra see \cite{Zhu:1996,Frenkel:1992}
and for a proof of the existence of the canonical isomorphism between 
\(A_Z(V)\) and \(A_0(V)\) see \cite{Nagatomo:2002}. The connection between Zhu's algebra and the zero mode algebra
has also been commented on in the physics literature \cite{Brungs:1998ij}.

\subsection{The Heisenberg vertex operator algebra}
\label{sec:heisenbergalg}

The Heisenberg VOA is a central building block for all the VOAs considered in this paper.
Before we define the Heisenberg VOA, we must first define the Heisenberg algebra and its highest weight modules,
called Fock modules.
\begin{definition}\ 
  \begin{enumerate}
  \item Let \(U(\mathfrak{b}_\pm)\) and \(U(\mathfrak{b}_0)\) be
    \(\mathbb{Z}\)-graded polynomial algebras over \(\mathbb{C}\) given by
    \begin{align*}
      U(\mathfrak{b}_\pm)&=\mathbb{C}[b_{\pm1},b_{\pm2},\dots]\,,&
      U(\mathfrak{b}_0)&=\mathbb{C}[b_0]\,,
    \end{align*}
    where the degree of \(b_{n}\) is \(\degr(b_n)=-n\).
  \item The associative \(\mathbb{Z}\)-graded degree wise 
    completed \(\mathbb{C}\)-algebra \(U(\overline{\mathfrak{b}})\) is given by
    \begin{align*}
      U(\overline{\mathfrak{b}})=U(\mathfrak{b}_-)\hat\otimes U(\mathfrak{b}_+)
    \end{align*}
    as a vector space, where \(\hat\otimes\) denotes the degree wise completed
    tensor product. The algebra structure on \(U(\overline{\mathfrak{b}})\) is
    defined by the \emph{Heisenberg commutation relations}
    \begin{align*}
      [b_m,b_n]=m\delta_{m,-n}\cdot\id\,,\ m,n\in\mathbb{Z}\setminus\{0\}\,.
    \end{align*}
  \item \emph{The Heisenberg algebra} is the \(\mathbb{Z}\)-graded associative
    algebra \(U(\mathfrak{b})\) given by
    \begin{align*}
      U(\mathfrak{b})=U(\overline{\mathfrak{b}})\otimes U(\mathfrak{b}_0)
    \end{align*}
    and satisfies the commutation relations
    \begin{align*}
      [b_m,b_n]=m\delta_{m,-n}\cdot\id\,,\ m,n\in\mathbb{Z}\,.
    \end{align*}
  \end{enumerate}
\end{definition}

 \begin{definition}
  Let \(\beta\in \mathbb{C}\).
  \begin{enumerate}
  \item We define the left \(U(\mathfrak{b})\)-module \(F_\beta\) -- called a
    \emph{Fock module}.  It is generated by the state \(|\beta\rangle\), which
    satisfies
    \begin{align*}
      b_0|\beta\rangle&=\beta|\beta\rangle, &b_n|\beta\rangle&=0\,,\quad n\geq 1\,,
    \end{align*}
    such that
    \begin{align*}
      U(\mathfrak{b}_-)&\rightarrow F_\beta\\\nonumber
      P&\mapsto  P|\beta\rangle
    \end{align*}
    is an isomorphism of complex vector spaces.
  \item We define the right \(U(\mathfrak{b})\)-module \(F_\beta^\vee\) -- called a
    dual Fock module.  It is generated by the state \(\langle\beta|\), which
    satisfies
    \begin{align*}
      \langle\beta|b_0&=\langle\beta|\beta,&      \langle\beta|b_n&=0\,,\quad n\leq -1\,,\\
    \end{align*}
    such that
    \begin{align*}
      U(\mathfrak{b}_+)&\rightarrow F_\beta^\vee\\\nonumber
      P&\mapsto \langle\beta|P
    \end{align*}
    is an isomorphism of complex vector spaces.
  \item The two Fock modules \(F_\beta,F_\beta^\vee\) are equipped with an inner product
    \begin{align*}
      F_\beta^\vee \times F_\beta\rightarrow\mathbb{C}\,,
    \end{align*}
    characterised by \(\langle \beta|\beta\rangle=1\). 
  \end{enumerate}
  The parameter \(\beta\) is called \emph{the Heisenberg weight}.
\end{definition}

\begin{remark}
  Let \(\hat b\) be the conjugate of \(b_0\), that is, \(\hat b\) satisfies the
  commutation relations
  \begin{align*}
    [b_m,\hat b]=\delta_{m,0}1\,.
  \end{align*}
  For each \(\gamma\in\mathbb{C}\), \(e^{\gamma\hat b}\) defines a 
  Heisenberg weight shifting map
  \begin{align*}
    e^{\gamma\hat b}:F_\beta\rightarrow F_{\beta+\gamma}\,,
  \end{align*}
  that satisfies
  \begin{enumerate}
  \item \(e^{\gamma\hat b}|\beta\rangle=|\beta+\gamma\rangle\),
  \item \(e^{\gamma\hat b}\) commutes with \(U(\mathfrak{b}_-)\) and
    \(U(\mathfrak{b}_+)\).
  \end{enumerate}
\end{remark}

For \(\alpha_0\in\mathbb{C}\) let
\begin{align*}
  T=\frac12 (b_{-1}^2+\alpha_0 b_{-2})|0\rangle\in F_0\,.
\end{align*}
\begin{prop}
  The Fock space \(F_0\) carries the structure of a VOA, with
  \begin{align*}
    Y(|0\rangle;z)&=\id\,,\qquad    Y(b_{-1}|0\rangle;z)=b(z)=\sum_{n\in\mathbb{Z}}b_{n}z^{-n-1}\,,\\
    Y(T;z)&=T(z)=\frac12 (:b(z)^2:+\alpha_0 \partial b(z))\,.
  \end{align*}
  We denote this VOA by
  \(\mathcal{F}_{\alpha_0}=(F_0,|0\rangle,T,Y)\) and call it the
  \emph{Heisenberg VOA}.
\end{prop}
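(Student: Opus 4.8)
The plan is to verify directly that the tuple $\mathcal{F}_{\alpha_0}=(F_0,|0\rangle,T,Y)$ satisfies the VOA axioms. The underlying graded vector space is $F_0$ with grading induced by $L_0$, and the field associated to $b_{-1}|0\rangle$ is the standard Heisenberg current $b(z)$; the vacuum axiom $Y(|0\rangle;z)=\id$ and the creation property $b(z)|0\rangle - b_{-1}|0\rangle\in V[[z]]z$ are immediate from $b_n|0\rangle=0$ for $n\geq 0$. First I would establish the operator product expansion $b(z)b(w)=\frac{1}{(z-w)^2}+\mathcal{O}(1)$, which is just a restatement of the Heisenberg commutation relations $[b_m,b_n]=m\delta_{m+n,0}$, and from this deduce locality of $b(z)$ with itself: $(z-w)^2[b(z),b(w)]=0$. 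The general state-field correspondence is then obtained by the reconstruction theorem for vertex algebras: every state in $F_0$ is a normally-ordered polynomial in the modes $b_{-n}$ applied to $|0\rangle$, and one defines $Y$ on such a state as the corresponding normally-ordered product of derivatives of $b(z)$; the reconstruction theorem guarantees that the resulting fields are mutually local and satisfy the state-field correspondence and the OPE axiom.

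Next I would check the Virasoro axioms for $T=\tfrac12(b_{-1}^2-\alpha_0 b_{-2})|0\rangle$. Writing $T(z)=\tfrac12(:b(z)^2:-\alpha_0\partial b(z))=\sum_n L_n z^{-n-2}$, one computes the modes explicitly as $L_n=\tfrac12\sum_{k\in\mathbb{Z}}:b_{n-k}b_k: -\tfrac{\alpha_0}{2}(n+1)b_n$. A standard (if somewhat lengthy) mode computation, using only $[b_m,b_n]=m\delta_{m+n,0}$, shows that these satisfy the Virasoro relations with central charge $c=1-3\alpha_0^2$; the $\alpha_0^2$ contribution to the central term comes from the cross terms between the $:b^2:$ part and the $\partial b$ part, together with the normal-ordering anomaly of $:b^2:$. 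One then checks $L_{-1}|0\rangle=0$, $L_0|0\rangle=0$, that $T\in V[2]$ (i.e. $L_0 T = 2T$), and the translation axiom $L_{-1}=\tfrac{\d}{\d z}$ on fields, which follows from $[L_{-1},b_n]=-n b_{n-1}$ together with the fact that $L_{-1}$ is a derivation of normal-ordered products. The semisimplicity of $L_0$ and the identification $V[h]=\{A : L_0 A = hA\}$ are built into the construction since $F_0$ is by definition spanned by $L_0$-eigenvectors $b_{-n_1}\cdots b_{-n_k}|0\rangle$ with eigenvalue $\sum n_i$, and each graded piece $F_0[n]$ is finite-dimensional with $F_0[0]=\mathbb{C}|0\rangle$.

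The main obstacle, and really the only nontrivial analytic point, is verifying that the fields assigned to composite states are genuinely local with each other and that the OPE axiom $Y(A;z)Y(B;w)=\sum_n Y(A[n]B;w)(z-w)^{-n-h_A}$ holds — rather than just checking it for the generator $b(z)$. The clean way to handle this is to invoke the reconstruction theorem (existence theorem for vertex algebras, as in Frenkel–Ben-Zvi or Kac): it suffices to exhibit a single generating field $b(z)$ that is local with itself, has the right creation property, and such that its modes generate $F_0$ from the vacuum; the theorem then produces a unique vertex algebra structure with the stated $Y$ on $F_0$, and all the locality and OPE relations for composite fields follow automatically by Dong's lemma and the properties of normal ordering. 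Dong's lemma is precisely what guarantees that locality is preserved under taking normally-ordered products, so mutual locality of all fields reduces to the single computation $(z-w)^2[b(z),b(w)]=0$. Finally one observes that $T$ as defined lies in $F_0$ and its field is built from $b(z)$, so it is automatically local with everything, completing the check that $\mathcal{F}_{\alpha_0}$ is a VOA.
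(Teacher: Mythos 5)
Your proof is correct and is exactly the standard argument the paper implicitly relies on: the paper states this proposition without proof, deferring to the references for general VOA facts, and the reconstruction-theorem-plus-Dong's-lemma route from the single local generator $b(z)$ is the canonical way to establish it. One minor slip in an aside: in the computation of the central term of $T(z)T(w)$ the $-3\alpha_0^2$ contribution comes from the self-contraction of the $-\tfrac{\alpha_0}{2}\partial b$ summand (giving $-12\lambda^2$ with $\lambda=-\alpha_0/2$), not from cross terms between $:b^2:$ and $\partial b$, which only affect the lower-order poles; this does not affect the validity of the argument since the mode computation itself yields $c=1-3\alpha_0^2$ either way.
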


\begin{remark}
The operator product expansion of the field \(b(z)\) with itself
    is given by
    \begin{align*}
      b(z)b(w)=\frac{1}{(z-w)^2}+\cdots
    \end{align*}
and the central charge of \(\mathcal{F}_{\alpha_0}\) is given by
  \begin{align*}
    c_{\alpha_0}=1-3\alpha_0^2\,.
  \end{align*}
The roots of the polynomial \(\kappa^2-(\tfrac{\alpha_0}{2}+1)\kappa+1\) are called the level of \(\mathcal{F}_{\alpha_0}\).
The values of \(\alpha_0\) of interest to this paper will result in positive rational levels.
\end{remark}

\begin{prop}\label{sec:primaries}\ 
  \begin{enumerate}
  \item The Fock module \(F_\beta\) is a simple \(\mathcal{F}_{\alpha_0}\)-module for all \(\beta\in\mathbb{C}\).
  \item The abelian category of \(\mathcal{F}_{\alpha_0}\)-modules, 
    \(\mathcal{F}_{\alpha_0}\)-mod, is semisimple\ and the set of simple
    objects is given by \(\{F_\beta\}_{\beta\in\mathbb{C}}\).
    \item The generating state \(|\beta\rangle\) of \(F_\beta\) satisfies
      \begin{align*}
        L_0|\beta\rangle&=h_\beta|\beta\rangle\,,&L_n|\beta\rangle&=0\,,\quad n\geq1
      \end{align*}
      where
      \begin{align*}
        h_\beta=\frac12 \beta(\beta-\alpha_0)\,.
      \end{align*}
    \end{enumerate}
\end{prop}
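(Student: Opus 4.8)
The plan is to establish each of the three parts in turn, using the explicit Heisenberg realisation of the Virasoro algebra given in the preceding remark. First I would prove part (3), since it is the most computational and feeds into the others. Using
\begin{align*}
  L_0&=\frac12 b_0^2+\sum_{k\geq1}b_{-k}b_k-\frac{\alpha_0}{2}b_0\,,&
  L_n&=\frac12\sum_{k\in\mathbb{Z}}:b_{n-k}b_k:-\frac{\alpha_0}{2}(n+1)b_n\quad(n\neq0)\,,
\end{align*}
one applies these directly to \(|\beta\rangle\). For \(n\geq1\) each normal-ordered monomial \(:b_{n-k}b_k:\) annihilates \(|\beta\rangle\): if both indices are positive this is clear, if one is zero or negative then after normal ordering the positively-moded generator sits on the right and kills \(|\beta\rangle\), and the linear term \(-\tfrac{\alpha_0}{2}(n+1)b_n|\beta\rangle\) vanishes likewise since \(n\geq1\). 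Hence \(L_n|\beta\rangle=0\). For \(L_0\), the infinite sum \(\sum_{k\geq1}b_{-k}b_k\) annihilates \(|\beta\rangle\), while \(b_0|\beta\rangle=\beta|\beta\rangle\), giving \(L_0|\beta\rangle=\bigl(\tfrac12\beta^2-\tfrac{\alpha_0}{2}\beta\bigr)|\beta\rangle=h_\beta|\beta\rangle\) with \(h_\beta=\tfrac12\beta(\beta-\alpha_0)\).

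Next I would prove part (1): irreducibility of \(F_\beta\) as an \(\mathcal{F}_{\alpha_0}\)-module. Since \(\mathcal{F}_{\alpha_0}\) contains the field \(b(z)\), any \(\mathcal{F}_{\alpha_0}\)-submodule of \(F_\beta\) is in particular a \(U(\mathfrak{b})\)-submodule; conversely the vertex operator algebra is generated by \(b(z)\) and \(T(z)\) with \(T\) built from \(b\), so \(\mathcal{F}_{\alpha_0}\)-submodules coincide with \(U(\overline{\mathfrak{b}})\)-submodules. Thus it suffices to show \(F_\beta\) is irreducible as a module over the Heisenberg algebra with all \(b_n\), \(n\neq0\), acting and \(b_0\) acting as the scalar \(\beta\). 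This is a standard Stone--von Neumann type argument: given any nonzero \(v\in F_\beta\), expand \(v\) in the PBW basis \(b_{-\lambda}|\beta\rangle\) indexed by partitions, pick a term of minimal degree, and hit \(v\) with a suitable product of positive modes \(b_{k_1}\cdots b_{k_r}\) to recover a nonzero multiple of \(|\beta\rangle\) (using \([b_m,b_{-m}]=m\); the minimality of degree ensures no lower-degree correction terms survive). Once \(|\beta\rangle\) is in the submodule, applying \(U(\mathfrak{b}_-)\) recovers all of \(F_\beta\).

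Finally, part (2): semisimplicity of \(\mathcal{F}_{\alpha_0}\)-mod with simple objects \(\{F_\beta\}_{\beta\in\mathbb C}\). One shows that any \(\mathcal{F}_{\alpha_0}\)-module \(M\) on which \(L_0\) acts semisimply with finite-dimensional eigenspaces decomposes: on each generalised lowest-weight vector the \(b_0\)-eigenvalue is some \(\beta\), the zero-mode commutation relations force \(h=h_\beta\), and by the irreducibility in part (1) the submodule generated is isomorphic to \(F_\beta\); an inductive/weight-space argument then peels off a direct sum of such Fock modules, and one checks there are no nontrivial extensions \(\operatorname{Ext}^1(F_{\beta},F_{\beta'})\) because any extension would again contain a lowest-weight vector generating a Fock submodule that splits off. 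I expect the main obstacle to be phrasing part (2) cleanly: one must be careful about what class of modules is being considered (graded, with \(L_0\) acting semisimply and locally finitely) so that the peeling-off argument terminates, and one must justify that the category genuinely has no nonsplit self-extensions rather than merely that the stated simple objects are pairwise non-isomorphic. The Heisenberg computations in parts (1) and (3) are routine; it is the categorical bookkeeping in (2) that requires care.
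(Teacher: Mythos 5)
The paper does not prove this proposition; it is recorded as a standard fact about the Heisenberg VOA, with the only preparation being the preceding remark expressing \(L_n\) through the \(b_k\). So the only question is whether your outline is sound. Parts (3) and (1) are fine: the mode computation is exactly as you describe, and the reduction of \(\mathcal{F}_{\alpha_0}\)-submodules to \(U(\mathfrak{b})\)-submodules is legitimate because every field of \(\mathcal{F}_{\alpha_0}\) is a normally ordered differential polynomial in \(b(z)\). One small repair in (1): rather than taking a term of ``minimal degree'' of a possibly inhomogeneous \(v\) and arguing about correction terms, observe that \(L_0\) acts semisimply on \(F_\beta\), so every submodule is graded and you may assume \(v\) homogeneous of degree \(d\); then for any partition \(\lambda\vdash d\) occurring in \(v\), the orthogonality \(b_{\lambda_1}\cdots b_{\lambda_\ell}\, b_{-\mu}|\beta\rangle=\delta_{\lambda\mu}z_\lambda|\beta\rangle\) gives a nonzero multiple of \(|\beta\rangle\) directly.

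The one point that needs sharpening is part (2), and you have correctly sensed where it is. The splitting of extensions between \(F_\beta\) and \(F_{\beta'}\) with \(\beta\neq\beta'\) is automatic because \(b_0\) is \emph{central} in \(U(\mathfrak{b})\), so generalized \(b_0\)-eigenspaces are submodules. But for \(\beta=\beta'\) there genuinely exist non-split self-extensions on which \(b_0\) has a Jordan block (take \(F_\beta\otimes\mathbb{C}[x]/(x^2)\) with \(b_0\) acting by \(\beta+x\)); since \(h_{\beta+x}=h_\beta+(\beta-\tfrac{\alpha_0}{2})x\) modulo \(x^2\), such an extension even has semisimple \(L_0\) when \(\beta=\alpha_0/2\). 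Hence the assertion ``any extension contains a lowest-weight vector generating a Fock submodule that splits off'' is false unless the category \(\mathcal{F}_{\alpha_0}\)-mod is defined so that \(b_0\) (not merely \(L_0\)) acts semisimply; requiring \(L_0\)-semisimplicity alone does not suffice. With that hypothesis in place, the clean route is the standard one: the space \(\Omega=\{v\mid b_nv=0,\ n\geq1\}\) of vacuum vectors is nonzero (the condition \(Y_M(A;z)C\in M((z))\) forces local lower truncation of the grading), \(b_0\) preserves \(\Omega\) and acts on it semisimply, and \(U(\mathfrak{b}_-)\otimes\Omega\rightarrow M\) is an isomorphism, exhibiting \(M\) as a direct sum of Fock modules. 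Apart from making this hypothesis explicit, your argument is correct.
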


 We introduce an auxiliary field \(\phi(z)\), 
 which is a formal primitive of \(b(z)\)
 \begin{align*}
   \phi(z)=\hat b+b_0\log z-\sum_{n\neq 0}\frac{b_n}{n}z^{-n}
 \end{align*}
 and which satisfies the operator product expansion
\begin{align*}
  \phi(z)\phi(w)=\log(z-w)+\cdots\,.
\end{align*}

\begin{definition}
  For all \(\beta\in\mathbb{C}\), let \(V_\beta(z)\) denote the field
  \begin{align*}
    V_\beta(z)&=:e^{\beta \phi(z)}:=e^{\beta\hat b}z^{\beta
      b_0}\overline{V_\beta}(z)\\\nonumber
    \overline{V_\beta}(z)&=e^{\beta\sum_{n\geq1}\tfrac{b_{-n}}{n}z^n}
    e^{-\beta\sum_{n\geq1}\tfrac{b_n}{n}z^{-n}}\in U(\overline{\mathfrak{b}})\hat\otimes\, \mathbb{C}[z,z^{-1}]\,,
  \end{align*}
  where \(z^{\beta b_0}=\exp(\beta b_0\log(z))\).
\end{definition}
The multivaluedness of \(z^{\beta b_0}\) will be
addressed in Section \ref{sec:renormcycles}.

\begin{prop}
  For all \(\beta\in\mathbb{C}\) the fields \(V_\beta(z)\) satisfy:
  \begin{enumerate}
  \item For \(\gamma\in \mathbb{C}\) the field \(V_\beta(z)\) defines a map
    \begin{align*}
      V_\beta(z):F_\gamma\rightarrow F_{\beta+\gamma}[[z,z^{-1}]]z^{\beta\gamma}\,.
    \end{align*}
  \item The field \(V_\beta(z)\) corresponds to the state 
    \(|\beta\rangle\), that is,
    \begin{align*}
      V_\beta(z)|0\rangle-|\beta\rangle\in F_\beta[[z]]z\,.
    \end{align*}
  \item The field \(V_\beta(z)\) is primary and satisfies the operator product expansion
    \begin{align*}
      T(z)V_\beta(w)=\frac{h_\beta}{(z-w)^2}V_\beta(w)+\frac{1}{z-w}\partial V_\beta(w)+\cdots\,.
    \end{align*}
  \item For \(\beta_1,\dots,\beta_k\in\mathbb{C}\) the product of the \(k\)
    fields \(V_{\beta_i}(z_i)\) satisfies the operator product expansion
    \begin{align*}
      \prod_{i=1}^k V_{\beta_i}(w_i)=e^{\sum_{i=1}^k\beta_i\hat b}\prod_{i=1}^k w_i^{\beta_i b_0}
      \prod_{1\leq i<j\leq k}(w_j-w_i)^{\beta_i \beta_j}
      :\prod_{i=1}^k \overline{V}_{\beta_i}(w_i):\,,
    \end{align*}
    where \(:\prod_{i=1}^k \overline{V}_{\beta_i}(w_i):\) is an element of 
    \(U(\overline{\mathfrak{b}})\hat\otimes \mathbb{C}[z_1,z_1^{-1},\dots,z_k,z_k^{-1}]\)
    \begin{align*}
      :\prod_{i=1}^k \overline{V}_{\beta_i}(w_i):=
      e^{\sum_{i=1}^k\beta_i\sum_{n\geq1}\tfrac{b_{-n}}{n}w_i^n}e^{-\sum_{i=1}^k\beta_i\sum_{n\geq1}\tfrac{b_n}{n}w_i^{-n}}\,.
    \end{align*}
  \end{enumerate}
\end{prop}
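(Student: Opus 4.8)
The plan is to treat the four assertions as increasingly elaborate consequences of the single normal-ordered formula for a product of vertex operators, building everything on the well-known bosonic exponential identities. First I would establish statement (1) by simply unpacking the definition $V_\beta(z)=e^{\beta\hat b}z^{\beta b_0}\overline{V_\beta}(z)$: the factor $\overline{V_\beta}(z)\in\mathcal U(\overline{\mathfrak b})\hat\otimes\mathbb C[z,z^{-1}]$ acts within the $U(\overline{\mathfrak b})$-structure and so maps $F_\gamma$ into $F_\gamma[[z,z^{-1}]]$ (convergence of the exponential series being guaranteed degree-wise by the grading and the completion $\hat\otimes$); the Heisenberg weight–shifting operator $e^{\beta\hat b}$ then sends $F_\gamma\to F_{\beta+\gamma}$ by the Remark on $e^{\gamma\hat b}$; and finally $z^{\beta b_0}=\exp(b_0\log z)$ acts on the generating vector $|\gamma\rangle$ of $F_\gamma$ (and all its descendants, since $b_0$ is central) by the scalar $z^{\beta\gamma}$. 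Composing these three observations gives $V_\beta(z)\colon F_\gamma\to F_{\beta+\gamma}[[z,z^{-1}]]z^{\beta\gamma}$.

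For statement (2), I would apply $V_\beta(z)$ to the vacuum $|0\rangle\in F_0$. Here $b_0|0\rangle=0$ so $z^{\beta b_0}|0\rangle=|0\rangle$; the annihilation half $e^{-\beta\sum_{n\geq1}\frac{b_n}{n}z^{-n}}$ of $\overline{V_\beta}(z)$ acts as the identity on $|0\rangle$; the creation half $e^{\beta\sum_{n\geq1}\frac{b_{-n}}{n}z^n}|0\rangle$ expands as $|0\rangle+(\text{terms in }F_0[[z]]z)$; and $e^{\beta\hat b}|0\rangle=|\beta\rangle$. Hence $V_\beta(z)|0\rangle=|\beta\rangle+(\text{something in }F_\beta[[z]]z)$, which is precisely the state–field correspondence claimed. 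Statement (4), the $k$-fold operator product expansion, is the genuine computational heart and I would do it next rather than (3): using the Heisenberg commutation relations one normal-orders the product $\prod_i V_{\beta_i}(w_i)$ via the Baker–Campbell–Hausdorff / Weyl identity $e^Xe^Y=e^Y e^X e^{[X,Y]}$ applied to the creation and annihilation parts of different factors. Moving each annihilation exponential $e^{-\beta_i\sum_{n\geq1}\frac{b_n}{n}w_i^{-n}}$ past each creation exponential $e^{\beta_j\sum_{m\geq1}\frac{b_{-m}}{m}w_j^m}$ with $i<j$ contributes the scalar $\exp\bigl(-\beta_i\beta_j\sum_{n\geq1}\frac1n (w_i/w_j)^{-n}\cdot(\dots)\bigr)$; summing the geometric series and being careful with the ordering $|w_i|<|w_j|$ produces exactly the factor $(w_j-w_i)^{\beta_i\beta_j}$, while the $e^{\beta_i\hat b}$ and $w_i^{\beta_i b_0}$ factors collect on the left and commute through everything by the Remark. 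This yields the stated formula with $:\prod_i\overline V_{\beta_i}(w_i):$ the fully normal-ordered exponential.

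Finally, statement (3) follows by specialising: the OPE $T(z)V_\beta(w)$ can be read off either from the $k=2$ case of (4) with the understanding that $T(z)=\tfrac12(:b(z)^2:-\alpha_0\partial b(z))$ is a quadratic expression in $b(z)=\partial\phi(z)$, or more directly by computing the single and double contractions of $b(z)$ with $V_\beta(w)$ using $b(z)\phi(w)=\frac{1}{z-w}+\mathcal O(1)$: one contraction gives $\frac{\beta}{z-w}V_\beta(w)$, two contractions give $\frac{\beta^2}{(z-w)^2}V_\beta(w)$, and the $-\tfrac{\alpha_0}{2}\partial b(z)$ term contributes $-\tfrac{\alpha_0\beta}{2}\cdot\frac{-1}{(z-w)^2}V_\beta(w)$, so that the double-pole coefficient assembles into $\tfrac12\beta^2-\tfrac12\alpha_0\beta=h_\beta$ (using Proposition \ref{sec:primaries}) and the simple-pole coefficient is $\partial V_\beta(w)$ since $\beta b(w)V_\beta(w)=\partial V_\beta(w)$. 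I expect the main obstacle to be bookkeeping in step (4): keeping track of which geometric series converge in which annulus $|w_i|<|w_j|$, and verifying that the branch choices in the multivalued factors $w_i^{\beta_i b_0}$ and $(w_j-w_i)^{\beta_i\beta_j}$ are mutually consistent — but the Proposition only asserts the formal identity, and the genuine resolution of multivaluedness is explicitly deferred to Section \ref{sec:Virrepthy}, so at this stage it suffices to record the formula as an identity of formal expansions.
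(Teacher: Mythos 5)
The paper states this proposition without proof---these are the standard free-field vertex-operator identities---and your argument is exactly the standard one the authors implicitly rely on: unpack the normal-ordered definition for (1) and (2), Weyl-reorder the creation and annihilation exponentials and resum the resulting logarithmic series for (4), and compute Wick contractions of \(b(z)\) against \(V_\beta(w)\) for (3), with the branch/multivaluedness caveat in (4) correctly deferred as the paper itself does. The one blemish is a sign in (3): with the convention \(T=\tfrac12(:b^2:-\alpha_0\partial b)\) your \(\partial b\) term contributes \(+\tfrac{\alpha_0\beta}{2}(z-w)^{-2}\), which does not assemble to \(h_\beta=\tfrac12\beta(\beta-\alpha_0)\); the convention consistent with the paper's \(L_0=\tfrac12 b_0^2-\tfrac12\alpha_0 b_0+\cdots\) is \(T=\tfrac12:b^2:+\tfrac{\alpha_0}{2}\partial b\) (the paper itself writes both signs in different places), and under that convention your final coefficient is correct.
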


\begin{remark}
  The Virasoro generator \(L_0\) is diagonalisable on the Fock spaces \(F_\beta\). The eigenvalues of \(L_0\)
  define a grading of \(F_\beta\)
  \begin{align*}
    F_\beta=\bigoplus_{n\geq 0}F_\beta[h_\beta+n]\,,
  \end{align*}
  where
  \begin{align*}
    F_\beta[h]=\{u\in F_\beta| L_0 u=h u\}\,.
  \end{align*}
  The dimension of these homogeneous subspaces is
  \begin{align*}
    \dim F_\beta[h_\beta+n]=p(n)\,,
  \end{align*}
  where \(p(n)\) is the number of partitions of the integer \(n\).
\end{remark}

The Heisenberg algebra \(U(\mathfrak{b})\) admits an anti-involution
\begin{align*}
  \sigma: U(\mathfrak{b})&\rightarrow
  U(\mathfrak{b})\\\nonumber
  b_n&\mapsto \delta_{n,0}\alpha_0-b_{-n}\,,
\end{align*}
such that
\begin{align*}
  \sigma(L_n)=L_{-n}\,.
\end{align*}

\begin{definition}
  The dual Fock module \(F_\beta^\vee\) is isomorphic to the graded dual
  space of \(F_\beta\)
  \begin{align*}
    F_\beta^\vee=\bigoplus_{n\geq 0}\hom(F_\beta[h_\beta+n],\mathbb{C})\,.
  \end{align*}
  The anti-involution \(\sigma\) induces the structure of a left
  \(U(\mathfrak{b})\)-module on \(F_\beta^\vee\) by
  \begin{align*}
    \langle b_n\phi,u\rangle=\langle\phi,\sigma(b_n)u\rangle 
  \end{align*}
  for all \(n\in\mathbb{Z},\ \phi\in F_\beta^\ast,\  u\in F_\beta\). 
  We denote this left module by \(F_\beta^\ast\) and call it the
  \emph{contragredient dual of} \(F_\beta\).
\end{definition}

\begin{prop}
  Taking the contragredient defines  a contravariant functor
  \begin{align*}
    \ast:\mathcal{F}_{\alpha_0}\text{-mod}\rightarrow \mathcal{F}_{\alpha_0}\text{-mod}\,,
  \end{align*}
  satisfying
  \begin{align*}
    F_\beta^\ast=F_{\alpha_0-\beta}\,,
  \end{align*}
  such that \((F_\beta^\ast)^\ast=F_\beta\).
\end{prop}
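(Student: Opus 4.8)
The plan is to exploit that, by Proposition~\ref{sec:primaries}, the category \(\mathcal{F}_{\alpha_0}\text{-mod}\) is semisimple with simple objects exactly the Fock modules \(\{F_\beta\}_{\beta\in\mathbb{C}}\), and that these are pairwise non-isomorphic since \(b_0\) acts by the scalar \(\beta\) on the cyclic generator of \(F_\beta\). An additive contravariant functor on such a category is determined by its values on the \(F_\beta\) together with its action on morphism spaces, and \(\hom(F_\beta,F_\gamma)=\mathbb{C}\,\delta_{\beta,\gamma}\) by Schur's lemma. So the proof reduces to three points: (i) \(\ast\) is a well-defined contravariant additive functor \(\mathcal{F}_{\alpha_0}\text{-mod}\to\mathcal{F}_{\alpha_0}\text{-mod}\); (ii) \(F_\beta^\ast\cong F_{\alpha_0-\beta}\) for all \(\beta\); (iii) \((F_\beta^\ast)^\ast\cong F_\beta\).

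For (i), I would first check that \(\sigma\) is an algebra anti-homomorphism with \(\sigma^2=\id\); both follow by a short direct computation from \([b_m,b_n]=m\delta_{m,-n}\id\) and \(\sigma(b_n)=\delta_{n,0}\alpha_0-b_{-n}\). Consequently the formula \(\langle b_n\phi,u\rangle=\langle\phi,\sigma(b_n)u\rangle\) genuinely defines a left \(U(\mathfrak{b})\)-module structure on the graded dual, and since each \(F_\beta[h]\) is finite dimensional the grading is preserved with the same \(L_0\)-eigenvalues (because \(\sigma(L_0)=L_0\)), so \(F_\beta^\ast\) again lies in \(\mathcal{F}_{\alpha_0}\text{-mod}\). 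For a module map \(f\colon M\to N\) the transpose \(f^\ast\colon N^\ast\to M^\ast\), \(\psi\mapsto\psi\circ f\), is again a module map by the \(\sigma\)-intertwining identity, and transposition reverses composition; functoriality of the transpose on each finite-dimensional graded piece then gives \(\id^\ast=\id\) and \((g\circ f)^\ast=f^\ast\circ g^\ast\), so \(\ast\) is a contravariant additive functor.

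For (ii), let \(\phi_0\in F_\beta^\ast\) be the functional dual to the generator \(|\beta\rangle\), so \(\phi_0(|\beta\rangle)=1\) and \(\phi_0\) vanishes on \(F_\beta[h_\beta+n]\) for \(n\geq1\). Using \(\langle b_n\phi_0,u\rangle=\langle\phi_0,\sigma(b_n)u\rangle\) together with the grading: for \(n\geq1\) the element \(\sigma(b_n)u=-b_{-n}u\) strictly raises conformal weight, hence pairs to zero against \(\phi_0\), giving \(b_n\phi_0=0\); and \(b_0\phi_0=(\alpha_0-\beta)\phi_0\). Thus \(\phi_0\) is a vector of Heisenberg weight \(\alpha_0-\beta\) annihilated by all \(b_n\), \(n\geq1\), so there is a nonzero \(U(\mathfrak{b})\)-module homomorphism \(F_{\alpha_0-\beta}\to F_\beta^\ast\) sending \(|\alpha_0-\beta\rangle\mapsto\phi_0\). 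Since \(F_{\alpha_0-\beta}\) is irreducible this map is injective, and since
\begin{align*}
  \dim F_\beta^\ast[h_\beta+n]=\dim F_\beta[h_\beta+n]=p(n),\qquad
  h_{\alpha_0-\beta}=\tfrac12(\alpha_0-\beta)(-\beta)=h_\beta,
\end{align*}
both sides have equal graded dimensions, so it is an isomorphism of \(\mathcal{F}_{\alpha_0}\)-modules (compatibility with the Virasoro action being automatic from \(\sigma(L_n)=L_{-n}\)); extending by additivity over direct sums finishes (ii).

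For (iii), apply (ii) twice: \((F_\beta^\ast)^\ast\cong F_{\alpha_0-(\alpha_0-\beta)}=F_\beta\). More intrinsically, for any \(M\in\mathcal{F}_{\alpha_0}\text{-mod}\) the canonical double-dual map \(M\to M^{\ast\ast}\) is an isomorphism on each finite-dimensional graded piece and intertwines the two actions because \(\sigma^2=\id\). I expect the only step needing genuine care to be (ii): correctly pinning down the generating functional \(\phi_0\) of \(F_\beta^\ast\) and its weight, after which the conclusion follows formally from irreducibility and the \(p(n)\) dimension count. Everything else is routine bookkeeping of transposes and gradings.
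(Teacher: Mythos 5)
Your argument is correct. The paper states this proposition without proof (it is a standard fact about Fock modules over the Heisenberg algebra), and your write-up is exactly the expected argument: verify that \(\sigma\) is an anti-involution so that the transpose action is a left module structure, locate the lowest-weight functional \(\phi_0\) and compute \(b_n\phi_0=0\) for \(n\geq1\) and \(b_0\phi_0=(\alpha_0-\beta)\phi_0\), then conclude by irreducibility plus the matching graded dimensions \(p(n)\) and the identity \(h_{\alpha_0-\beta}=h_\beta\). No gaps.
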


\subsection{The lattice vertex operator algebra {\boldmath\(\mathcal{V}_{p_+,p_-}\)\unboldmath}}
\label{sec:Vpppm}
The lattice VOA \(\mathcal{V}_{p_+,p_-}\) is defined for special values of the parameter \(\alpha_0\) and
by restricting the weights of the Fock spaces to a certain lattice. Let 
\(p_+,p_-\geq2\) be two coprime integers, such that
\begin{align*}
  \alpha_+&=\sqrt{\frac{2p_-}{p_+}}&\alpha_-&=-\sqrt{\frac{2p_+}{p_-}}&\alpha_0&=\alpha_++\alpha_-\\
  \kappa_+&=\frac{\alpha_+^2}{2}=\frac{p_-}{p_+}&\kappa_-&=\frac{\alpha_-^2}{2}=\frac{p_+}{p_-}
  &\alpha&=p_+\alpha_+=-p_-\alpha_-\,.
\end{align*}
The parameters \(\kappa_+=\kappa_-^{-1}\) are the roots of the polynomials \(\kappa^2-(\tfrac{\alpha_0}{2}+1)\kappa+1\) 
and are called the \emph{level} of \(\mathcal{V}_{p_+,p_-}\).
Next we define the rank 1 lattices
\begin{align*}
  Y&=\mathbb{Z}\sqrt{2p_+p_-}&X=\hom_{\mathbb{Z}}(Y,\mathbb{Z})=\mathbb{Z}\frac{1}{\sqrt{2p_+p_-}}\,.
\end{align*}
Both \(\alpha_+\) and \(\alpha_-\) lie in \(X\) and we define the 
parametrisation
\begin{align*}
  \beta_{r,s}=\frac{1-r}{2}\alpha_++\frac{1-s}{2}\alpha_-\,,\quad r,s\in\mathbb{Z}\,.
\end{align*}
Note that \(\beta_{r,s}=\beta_{r+p_+,s+p_-}\) and we use the shorthand
\begin{align*}
  \beta_{r,s;n}=\beta_{r-np_+,s}=\beta_{r,s+np_-}\,.
\end{align*}
When denoting the weights of Fock spaces, we will only write the indices and
drop the ``\(\beta\)'' from \(\beta_{r,s;n}\) or \(\beta_{r,s}\),
that is
\begin{align*}
  F_{\beta_{r,s;n}}&=F_{r,s;n}\,,&F_{\beta_{r,s}}&=F_{r,s}\,.
\end{align*}

\begin{remark}\ 
  \begin{enumerate}
  \item After specialising the parameter \(\alpha_0\) to
    \(\alpha_0=\alpha_++\alpha_-\)
    we denote the Heisenberg VOA by \(\mathcal{F}_{p_+,p_-}\) instead of
    \(\mathcal{F}_{\alpha_0}\).
  \item Taking the contragredient of a Fock space \(F_{r,s;n}\) reverses
    the sign of the indices:
    \begin{align*}
      F_{r,s;n}^\ast = F_{-r,-s;-n}\,.
    \end{align*}
  \end{enumerate}
\end{remark}

\begin{definition}
  The \emph{lattice VOA \(\mathcal{V}_{p_+,p_-}\)} is the tuple 
  \((V_{[0]},|0\rangle,\tfrac12(b_{-1}^2-\alpha_0 b_2)|0\rangle,Y)\), where
    the underlying vector space of \(\mathcal{V}_{p_+,p_-}\) is given by
    \begin{align*}
      V_{[0]}=\bigoplus_{\beta\in
        Y}F_\beta=\bigoplus_{n\in\mathbb{Z}}F_{n\alpha}\,.
    \end{align*}
    The fields corresponding to \(|0\rangle, b_{-1}|0\rangle\) and \(T\) are
    those of \(\mathcal{F}_{p_+,p_-}\) and
    \begin{align*}
      Y(|\beta\rangle;z)&=V_\beta(z),\ \beta\in Y\,.
    \end{align*}
\end{definition}
\begin{remark}
  The relations for the vertex operator map in the definition above uniquely define the VOA structure of \(\mathcal{V}_{p_+,p_-}\).
  The central charge of the Virasoro field \(T(z)\) is
    \begin{align*}
      c_{p_+,p_-}=1-6\frac{(p_+-p_-)^2}{p_+p_-}\,,
    \end{align*}
    and the conformal weight of the generating state \(|\beta\rangle\)
    of a Fock module \(F_\beta\) is \(h_\beta=\frac12\beta(\beta-\alpha_0)\).
    We define \(h_{r,s}=h_{\beta_{r,s}},\ r,s\in\mathbb{Z}\), then we have
    \begin{align*}
      h_{r,s}=\frac{r^2-1}{4}\kappa_+-\frac{rs-1}{2}+\frac{s^2-1}{4}\kappa_-\,.
    \end{align*}
\end{remark}
\begin{prop}
  The abelian category \(\mathcal{V}_{p_+,p_-}\)-mod 
  of \(\mathcal{V}_{p_+,p_-}\)-modules is semi-simple with
  \(2p_+p_-\) simple objects. These simple objects are parametrised 
  by the classes of \(X/Y\)
  \begin{align*}
    V_{[\beta]}=\bigoplus_{\gamma\in \beta+Y}F_{\gamma}\,,\quad \beta\in X\,.
  \end{align*}
\end{prop}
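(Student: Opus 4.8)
The plan is to derive the statement from the general theory of vertex operator algebras attached to positive-definite even lattices \cite{Dong:1993}, after recording two lattice-theoretic inputs. The rank-one lattice \(Y=\mathbb{Z}\sqrt{2p_+p_-}\) is even, since its generator has square length \(2p_+p_-\in2\mathbb{Z}\), and positive definite; with respect to the pairing given by multiplication its dual is exactly \(X=\mathbb{Z}\tfrac{1}{\sqrt{2p_+p_-}}\), so the discriminant group \(X/Y\) is cyclic of order \(2p_+p_-\). Granting Dong's theorem that \(\mathcal{V}_{p_+,p_-}=V_{[0]}\) is a rational vertex operator algebra whose inequivalent simple modules are precisely the spaces \(V_{[\beta]}=\bigoplus_{\gamma\in\beta+Y}F_\gamma\) indexed by \(\beta+Y\in X/Y\), both the classification and the count \(2p_+p_-\) follow at once; one notes separately that the modified background-charge conformal vector of \(\mathcal{V}_{p_+,p_-}\) is irrelevant here, since modules are modules for the underlying vertex algebra, which is the standard rank-one lattice vertex algebra on \(Y\).

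For a self-contained argument I would proceed in three steps. \emph{Construction.} Each \(V_{[\beta]}\) is a \(\mathcal{V}_{p_+,p_-}\)-module, with module vertex operator obtained by letting the fields \(V_\gamma(z)\), \(\gamma\in Y\), act on the Fock summands \(F_{\gamma'}\), \(\gamma'\in\beta+Y\). By the proposition describing the fields \(V_\beta(z)\), such a composition maps \(F_{\gamma'}\) into \(F_{\gamma+\gamma'}[[z,z^{-1}]]z^{\gamma\gamma'}\); since \(\gamma\in Y\) and \(\gamma'\in X\) the exponent \(\gamma\gamma'\) is an integer, so these are genuine Laurent series. The \(k\)-fold operator product expansion from the same proposition, together with evenness of \(Y\) (which makes all pairwise exponents \(\gamma_i\gamma_j\) even, so that no cocycle is needed in rank one), yields locality, and the remaining module axioms are inherited from \(\mathcal{V}_{p_+,p_-}\). \emph{Simplicity and separation.} By Proposition \ref{sec:primaries} each \(F_{\gamma'}\) is a simple \(\mathcal{F}_{p_+,p_-}\)-module, and for \(\gamma\in Y\setminus\{0\}\) the field \(V_\gamma(z)\) restricts to a nonzero Heisenberg intertwiner \(F_{\gamma'}\to F_{\gamma'+\gamma}\); hence a nonzero \(\mathcal{V}_{p_+,p_-}\)-submodule of \(V_{[\beta]}\) contains one, and therefore every, Fock summand, so \(V_{[\beta]}\) is simple, while \(V_{[\beta]}\cong V_{[\beta']}\) forces equality of the sets of Heisenberg weights occurring, i.e.\ \(\beta+Y=\beta'+Y\).

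\emph{Exhaustiveness and complete reducibility.} Let \(M\) be a \(\mathcal{V}_{p_+,p_-}\)-module. Restricting to the Heisenberg subalgebra and using semisimplicity of \(\mathcal{F}_{p_+,p_-}\)-mod (Proposition \ref{sec:primaries}), write \(M\cong\bigoplus_\gamma F_\gamma\otimes W_\gamma\) with multiplicity spaces \(W_\gamma\). Single-valuedness of \(Y_M(|\gamma'\rangle;z)\) for \(\gamma'\in Y\) forces \(\gamma\gamma'\in\mathbb{Z}\) whenever \(W_\gamma\neq0\), hence \(\gamma\in X\); and since these operators shift the Heisenberg weight by an element of \(Y\), the decomposition splits \(M\) along the cosets of \(Y\) in \(X\), so we may assume all occurring \(\gamma\) lie in one coset \(\beta+Y\). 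Because the relevant space of Heisenberg intertwining operators is one-dimensional, \(Y_M(|\gamma'\rangle;z)\) acts on \(F_\gamma\otimes W_\gamma\) as \(V_{\gamma'}(z)\otimes\phi^{\gamma}_{\gamma'}\) for linear maps \(\phi^{\gamma}_{\gamma'}\colon W_\gamma\to W_{\gamma+\gamma'}\); associativity and locality of \(Y_M\) translate into compatibility relations among the \(\phi\)'s which, using evenness of \(Y\), glue the \(W_\gamma\) along the coset into a single vector space \(W\) carrying an invertible action of \(Y\cong\mathbb{Z}\). This is exactly the data of \(V_{[\beta]}\otimes W\) with its obvious module structure, so \(M\) is a direct sum of copies of the \(V_{[\beta]}\), which gives the asserted semisimplicity.

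The main obstacle is the last step, namely upgrading ``every simple module is some \(V_{[\beta]}\)'' to ``every module is a direct sum of them''. Splitting \(M\) along cosets of \(Y\) is routine, but analysing the glued multiplicity space and excluding non-split self-extensions rests on one-dimensionality of the Heisenberg intertwining spaces together with a genuine cocycle/associativity computation; it is precisely evenness of \(Y\) (trivial cocycle in rank one) that makes it go through. One can cross-check the count via Proposition \ref{sec:zhuproperties}: the number of simple \(\mathcal{V}_{p_+,p_-}\)-modules equals the number of simple modules of Zhu's algebra \(A_0(\mathcal{V}_{p_+,p_-})\), and an independent computation shows the latter is semisimple with one block for each coset of \(X/Y\).
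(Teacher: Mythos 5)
The paper states this proposition without proof, as a standard fact from the theory of lattice vertex operator algebras, so there is no internal argument to compare against; your first paragraph --- observing that \(Y=\mathbb{Z}\sqrt{2p_+p_-}\) is even and positive definite with dual lattice \(X\), so that the discriminant group \(X/Y\) is cyclic of order \(2p_+p_-\), and then invoking the classification and rationality results for even lattice VOAs --- is exactly the intended justification, and your remark that the background-charge modification of the conformal vector is irrelevant to the module category is a correct and worthwhile observation. Your self-contained sketch is also sound in outline and follows the standard proof: the construction and simplicity/separation steps are complete as written, while the exhaustiveness and complete-reducibility step is, as you yourself flag, only sketched --- the gluing of the multiplicity spaces \(W_\gamma\) along a coset and the exclusion of non-split self-extensions is precisely the content of the regularity/rationality theorems for lattice VOAs (Dong; Dong--Li--Mason), so if you intend the argument to be self-contained rather than a citation, that step still needs the full associativity computation to be carried out. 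As it stands, the citation-based route already suffices for the proposition, so there is no gap in the logic, only an unexecuted (and correctly identified) portion of the optional self-contained version.
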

\begin{remark}
  Using the \(\beta_{r,s;n}\) we parametrise the simple \(\mathcal{V}_{p_+,p_-}\)-modules as
  \begin{align*}
    V_{r,s}^+&=\bigoplus_{n\in\mathbb{Z}} F_{r,s;2n}\\\nonumber
    V_{r,s}^-&=\bigoplus_{n\in\mathbb{Z}} F_{r,s;2n+1}\,,
  \end{align*}
  for \(1\leq r\leq p_+,\) \(1\leq s\leq p_-\). In this notation 
  \(V_{[0]}=V_{1,1}^+\).
\end{remark}

By the formula for conformal weights in Proposition \ref{sec:primaries}, the two
Heisenberg weights \(\alpha_+,\alpha_-\) have conformal weight
\(h_{\alpha_\pm}=1\).
These are the only Heisenberg weights with conformal weight 1.
We call the fields corresponding to \(|\alpha_\pm\rangle\) screening operators and denote them by
\begin{align*}
  \scr{\pm}(z)=:e^{\alpha_\pm \phi(z)}:.
\end{align*}
Since \(h_{\alpha_\pm}=1\), these fields define intertwining operators,
that is, the map
\begin{align*}
  \scr{\pm}=\oint \scr{\pm}(z) \text{d}z:V_{[0]}\rightarrow V_{[\alpha_\pm]}
\end{align*}
is \(\mathbb{C}\)-linear and commutes with the Virasoro algebra.
Note that since \(\alpha_\pm\notin Y\) the fields \(\scr{\pm}(z)\) do not belong to \(\mathcal{V}_{p_+,p_-}\).
We will later define the extended \(W\)-algebra \(\mathcal{M}_{p_+,p_-}\) as
the subVOA of \(\mathcal{V}_{p_+,p_-}\) given by
the intersection of the kernels of \(\scr{+}\) and \(\scr{-}\). Screening operators
were originally developed by Dotsenko and Fateev \cite{Dotsenko:1984,Dotsenko:1985}.

\section{Deformation of screening operators}
\label{sec:deformationsec}

For the purposes of this paper it is necessary to consider integrals of products of screening operators and not just
the residues of individual screening operators. In order to perform these integrals one needs
to consider homology groups of configuration spaces of \(N\) points on the
projective line with local coefficients. It is necessary
to use local coefficients because these products of screening operators are not single valued, but have non-trivial monodromies that
are roots of unity. The homology groups with such local coefficients exhibit very complicated behaviour and in order to make them
tractable we deform the Heisenberg VOA, that is, we deform its conformal
structure and its screening operators. The associated local systems then 
no longer exhibits
monodromy at roots of unity and the homology groups of these deformed local systems are very simple. After analysing the deformed case in
detail, we will show that one can take a meaningful limit to the undeformed case.

\subsection{Deformation of the Heisenberg vertex operator algebra}
\label{sec:deformheisenberg}

Let \(\mathcal{O}=\mathbb{C}[[\epsilon]]\) be the ring of formal power series with coefficients in
\(\mathbb{C}\) and let \(\mathcal{K}=\mathbb{C}((\epsilon))\) be the fraction
field of \(\mathcal{O}\).
To any module over \(\mathcal{O}\) we can associate a \(\mathbb{C}\) vector
space by taking the tensor product \(-\otimes_{\mathcal{O}}\mathbb{C}\), that
is, by setting \(\epsilon\) to zero.
We enlarge the ground field \(\mathbb{C}\) of the Heisenberg algebra \(U(\mathfrak{b})\) introduced in Section \ref{sec:heisenbergalg} to
the rings \(\mathcal{O}\) and \(\mathcal{K}\).
\begin{definition}
  Let \(\tensor[_{\mathcal{K}}]{U}{}(\mathfrak{b}_\pm)\) and 
  \(\tensor[_{\mathcal{O}}]{U}{}(\mathfrak{b}_\pm)\) be the Heisenberg algebra
  over \(\mathcal{K}\) and \(\mathcal{O}\) respectively, that is
  \begin{align*}
    \tensor[_{\mathcal{K}}]{U}{}(\mathfrak{b}_\pm)&=\mathcal{K}[b_{\pm1},b_{\pm2},\dots]&
    \tensor[_{\mathcal{O}}]{U}{}(\mathfrak{b}_\pm)&=\mathcal{O}[b_{\pm1},b_{\pm2},\dots]\\
    \tensor[_{\mathcal{K}}]{U}{}(\mathfrak{b}_0)&=\mathcal{K}[b_0]&
    \tensor[_{\mathcal{O}}]{U}{}(\mathfrak{b}_0)&=\mathcal{O}[b_0]\\
    \tensor[_{\mathcal{K}}]{U}{}(\overline{\mathfrak{b}})&
    =\tensor[_{\mathcal{K}}]{U}{}(\mathfrak{b}_-)\hat\otimes_{\mathcal{K}}\,
    \tensor[_{\mathcal{K}}]{U}{}(\mathfrak{b}_+)&
    \tensor[_{\mathcal{O}}]{U}{}(\overline{\mathfrak{b}})
    &=\tensor[_{\mathcal{O}}]{U}{}(\mathfrak{b}_-)\hat\otimes_{\mathcal{O}} 
    \tensor[_{\mathcal{O}}]{U}{}(\mathfrak{b}_+)\\
    &=\bigoplus_{d\in\mathbb{Z}}\tensor[_{\mathcal{K}}]{U}{}(\overline{\mathfrak{b}})[d]&
    &=\bigoplus_{d\in\mathbb{Z}}\tensor[_{\mathcal{O}}]{U}{}(\overline{\mathfrak{b}})[d]\\
    \tensor[_{\mathcal{K}}]{U}{}(\mathfrak{b})&=\tensor[_{\mathcal{K}}]{U}{}(\overline{\mathfrak{b}})\otimes_{\mathcal{K}}\tensor[_{\mathcal{K}}]{U}{}(\mathfrak{b}_0)&
    \tensor[_{\mathcal{O}}]{U}{}(\mathfrak{b})
    &=\tensor[_{\mathcal{O}}]{U}{}(\overline{\mathfrak{b}})\otimes_{\mathcal{O}}
    \tensor[_{\mathcal{O}}]{U}{}(\mathfrak{b}_0)\,.
  \end{align*}
The Heisenberg algebra over \(\mathcal{K}\)
contains the \(\mathcal{O}\) 
subalgebra \(\tensor[_{\mathcal{O}}]{U}{}(b)\) as an \(\mathcal{O}\) lattice.
\end{definition}

We deform the parameters \(\alpha_\pm\) and \(\kappa_\pm\) as follows.
Let
\begin{align*}
  \alpha_\pm(\epsilon)=\alpha_\pm^{(0)}+\alpha_\pm^{(1)}\epsilon+\alpha_\pm^{(2)}\epsilon^2+\cdots\in\mathcal{O}
\end{align*}
such that \(\alpha_\pm^{(0)}=\alpha_{\pm}\)
and that \(\alpha_\pm^{(1)}\neq0\) as well as
\(\alpha_+(\epsilon)\alpha_-(\epsilon)=-2\). Furthermore, let
\begin{align*}
  \kappa_\pm(\epsilon)&=\tfrac12\alpha_\pm(\epsilon)^2\in\mathcal{O}\,,\\
  \alpha_0(\epsilon)&=\alpha_+(\epsilon)+\alpha_-(\epsilon)\in\mathcal{O}\,.
\end{align*}

We define the rank 2 abelian group
\begin{align*}
  \tensor[_{\mathcal{O}}]{X}{}=\mathbb{Z}\tfrac{\alpha_+(\epsilon)}{2}\oplus\mathbb{Z}\tfrac{\alpha_-(\epsilon)}{2}\subset\mathcal{O}
\end{align*}
and for \(r,s\in\mathbb{Z}\)
\begin{align*}
  \beta_{r,s}(\epsilon)=\frac{1-r}{2}\alpha_+(\epsilon)+\frac{1-s}{2}\alpha_-(\epsilon)\in\tensor[_{\mathcal{O}}]{X}{}\,.
\end{align*}

\begin{definition}
  \begin{enumerate}
  \item For each \(\beta\in\tensor[_{\mathcal{O}}]{X}{}\) we define 
    the left \(\tensor[_{\mathcal{K}}]{U}{}(\mathfrak{b})\)-module
  \(\tensor[_{\mathcal{K}}]{F}{_\beta}\) generated by \(|\beta\rangle\)
  \begin{align*}
    b_0|\beta\rangle&=\beta|\beta\rangle\,,&b_n|\beta\rangle&=0,\quad n\geq 1\,,\\
  \end{align*}
  such that
  \begin{align*}
    \tensor[_{\mathcal{K}}]{U}{}(\mathfrak{b}_-)&\rightarrow \tensor[_{\mathcal{K}}]{F}{_\beta}\\
    P&\mapsto P|\beta\rangle
  \end{align*}
  is an isomorphism of \(\mathcal{K}\)-vector spaces.
\item Let \(\tensor[_{\mathcal{O}}]{F}{_\beta}\) be the subspace of 
  \(\tensor[_{\mathcal{K}}]{F}{_\beta}\) given by
  \begin{align*}
    \tensor[_{\mathcal{O}}]{F}{_\beta}=\tensor[_{\mathcal{O}}]{U}{}(\mathfrak{b})
    |\beta\rangle\,.
  \end{align*}
\end{enumerate}

\end{definition}

The Virasoro field and other fields
are defined in the same way as in Section \ref{sec:definitions}
\begin{align*}
  T(z)&=\tfrac12:b(z)^2:+\tfrac{\alpha_0(\epsilon)}{2}\partial b(z)\\
  V_\beta(z)&=:e^{\beta\phi(z)}:\,,
\end{align*}
with \(\beta\) now in \(\mathcal{O}\) instead of \(\mathbb{C}\).
Also as in Section \ref{sec:definitions}, we drop \(\beta\) from the index of
Fock spaces
\(\tensor[_{\mathcal{K}}]{F}{_{\beta_{r,s}}}=\tensor[_{\mathcal{K}}]{F}{_{r,s}}\),
\(\tensor[_{\mathcal{O}}]{F}{_{\beta_{r,s}}}=\tensor[_{\mathcal{O}}]{F}{_{r,s}}\). 
By evaluating operator product expansions it follows that the
central charge and conformal weights are given by the same
formulae as before
\begin{align*}
  c_{p_+,p_-}(\epsilon)&=1-3\alpha_0(\epsilon)^2 \in\mathcal{O}\,,&
  h_\beta(\epsilon)&=\tfrac12 \beta(\beta-\alpha_0(\epsilon))\in\mathcal{O}\,.
\end{align*}
Set \(h_{r,s}(\epsilon)=h_{\beta_{r,s}(\epsilon)},\ r,s\in\mathbb{Z}\), then
\begin{align*}
  h_{r,s}(\epsilon)=\frac{r^2-1}{4}\kappa_+(\epsilon)-\frac{rs-1}{2}
  +\frac{s^2-1}{4}\kappa_-(\epsilon)\,.
\end{align*}

\begin{prop}
  Let \(\tensor[_{\mathcal{K}}]{\mathcal{F}}{_{p_+,p_-}}
  =(\tensor[_{\mathcal{K}}]{F}{_0},|0\rangle,\tfrac12(b_{-1}^2+\alpha_0(\epsilon)b_{-2})|0\rangle,Y)\), then
  \(\tensor[_{\mathcal{K}}]{\mathcal{F}}{_{p_+,p_-}}\) has the
  structure of a VOA over the field
  \(\mathcal{K}\).
\end{prop}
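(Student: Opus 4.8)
The plan is to show that the defining data of $\tensor[_{\mathcal{K}}]{\mathcal{F}}{}(p_+,p_-)$ satisfy the VOA axioms read with the ground field $\mathbb{C}$ replaced by $\mathcal{K}$ throughout. The key observation is that the construction of the Heisenberg VOA $\mathcal{F}_{\alpha_0}$ recalled in Section \ref{sec:heisenbergalg} is entirely formal: the definition of the modes $L_n=\tfrac12\sum_{k\in\mathbb{Z}}:b_{n-k}b_k:-\tfrac{\alpha_0}{2}(n+1)b_n$, the verification of the Virasoro relations, the state-field correspondence, locality and the operator product expansion are all manipulations of formal series whose only inputs are the Heisenberg relations $[b_m,b_n]=m\delta_{m+n,0}$ and the scalar $\alpha_0$, and whose only use of the ground field is that it contains $\mathbb{Q}$ (one divides by the integers occurring in $\tfrac{b_n}{n}$, in the central term $\tfrac{c}{12}(m^3-m)$, in binomial coefficients, and so on). Since $\mathcal{K}=\mathbb{C}((\epsilon))$ has characteristic zero and $\alpha_0(\epsilon)\in\mathcal{O}\subset\mathcal{K}$, I expect the whole argument to transfer verbatim under $\mathbb{C}\rightsquigarrow\mathcal{K}$, $\alpha_0\rightsquigarrow\alpha_0(\epsilon)$.

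First I would record the grading: $\tensor[_{\mathcal{K}}]{F}{_0}$ has the $\mathcal{K}$-basis $\{b_{-\lambda_1}\cdots b_{-\lambda_\ell}|0\rangle\}$ indexed by partitions, so the $L_0$-eigenspace decomposition $\tensor[_{\mathcal{K}}]{F}{_0}=\bigoplus_{n\geq0}\tensor[_{\mathcal{K}}]{F}{_0}[n]$ has finite-dimensional pieces of dimension $p(n)$, with $\tensor[_{\mathcal{K}}]{F}{_0}[0]=\mathcal{K}|0\rangle$, and $L_0$ acts semisimply. Next I would define $Y$ on the generating states $|0\rangle,\,b_{-1}|0\rangle,\,T$ by the same formulas as over $\mathbb{C}$ and extend it by the normally-ordered-product recipe, then invoke the reconstruction theorem for vertex algebras, which holds over any field of characteristic zero. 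Its hypotheses are easily checked here: $b(z)$ is local with itself of order $2$ (immediate from the Heisenberg relations), $b(z)|0\rangle\in\tensor[_{\mathcal{K}}]{F}{_0}[[z]]$ has constant term $b_{-1}|0\rangle$, and the translation operator is $L_{-1}$; this yields a unique $\mathcal{K}$-VOA structure satisfying the state-field correspondence, the $L_{-1}$-derivative axiom, locality and the operator product expansion. Finally, the computation that the modes of $T(z)$ obey the Virasoro relations with central charge $c_{p_+,p_-}(\epsilon)=1-3\alpha_0(\epsilon)^2\in\mathcal{O}$ is the standard free-field calculation and uses only $\alpha_0(\epsilon)\in\mathcal{K}$ and divisibility by $12$; together with the semisimplicity of $L_0$ recovering the grading, this completes the list of axioms.

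An alternative, more structural route that I would keep in reserve is to regard the Heisenberg VOA as a vertex algebra over the polynomial ring $\mathbb{C}[\alpha_0]$ and obtain $\tensor[_{\mathcal{K}}]{\mathcal{F}}{}(p_+,p_-)$ by base change along the ring homomorphism $\mathbb{C}[\alpha_0]\to\mathcal{K}$, $\alpha_0\mapsto\alpha_0(\epsilon)$: base change of a vertex algebra along a ring map is again a vertex algebra, and since $\mathcal{K}$ is a field the $\mathcal{K}$-VOA axioms follow at once. There is no serious obstacle in either approach; the one point that genuinely requires care -- and the place where one must resist importing extra structure from $\mathbb{C}$ -- is to confirm that no step in the original construction used completeness, algebraic closedness, or the archimedean order of $\mathbb{C}$. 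It does not, so the deformation is harmless and the proposition follows.
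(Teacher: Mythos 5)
Your argument is correct, and the paper in fact gives no proof of this proposition at all -- it is stated as a routine consequence of the fact that the free-field construction of Section \ref{sec:heisenbergalg} is purely formal and transfers to any characteristic-zero ground field, which is exactly the observation your proof makes precise. Your reconstruction-theorem and base-change routes both supply the verification the authors leave implicit; nothing is missing.
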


\begin{prop}
  For each \(A\in\tensor[_{\mathcal{O}}]{F}{_0}\), the field \(Y(A;z)\) preserves the \(\mathcal{O}\) lattice
  \(\tensor[_{\mathcal{O}}]{F}{_0}\) of \(\tensor[_{\mathcal{K}}]{F}{_0}\), that is
  \begin{align*}
    Y(A;z)\in \operatorname{End}_{\mathcal{O}}(\tensor[_{\mathcal{O}}]{F}{_0})[[z,z^{-1}]]\,.
  \end{align*}
\end{prop}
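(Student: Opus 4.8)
The plan is to reduce everything to the single generating field \(b(z)=Y(b_{-1}|0\rangle;z)\) of \(\tensor[_{\mathcal{K}}]{\mathcal{F}}{}(p_+,p_-)\): I would first check directly that every mode of \(b(z)\) preserves the \(\mathcal{O}\)-lattice \(\tensor[_{\mathcal{O}}]{F}{_0}\), and then propagate this through the reconstruction of the vertex operator map on all of \(\tensor[_{\mathcal{O}}]{F}{_0}\).

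First recall that \(\tensor[_{\mathcal{O}}]{F}{_0}=\tensor[_{\mathcal{O}}]{U}{}(\mathfrak{b})|0\rangle\); since \(b_0|0\rangle=0\) and \(b_0\) is central in \(\tensor[_{\mathcal{O}}]{U}{}(\mathfrak{b}_-)\), this coincides with \(\tensor[_{\mathcal{O}}]{U}{}(\mathfrak{b}_-)|0\rangle\), which is the free \(\mathcal{O}\)-module on the monomials \(b_{-\lambda}|0\rangle=b_{-\lambda_1}\cdots b_{-\lambda_\ell}|0\rangle\) indexed by partitions \(\lambda_1\geq\cdots\geq\lambda_\ell\geq1\), and is an \(\mathcal{O}\)-lattice of \(\tensor[_{\mathcal{K}}]{F}{_0}\). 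I would then observe that each mode \(b_n\), \(n\in\mathbb{Z}\), of \(b(z)\) lies in \(\operatorname{End}_{\mathcal{O}}(\tensor[_{\mathcal{O}}]{F}{_0})\): for \(n\leq-1\) it sends a monomial \(b_{-\lambda}|0\rangle\) to another such monomial (the negative modes commute among themselves); for \(n=0\) one has \(b_0=0\) on \(F_0\); and for \(n\geq1\), commuting \(b_n\) to the right past \(b_{-\lambda}\) by means of \([b_n,b_{-m}]=n\delta_{n,m}\operatorname{id}\) and \(b_n|0\rangle=0\) yields an integer linear combination of monomials. Since \(\mathbb{Z}\subset\mathcal{O}\) this gives \(b_n\in\operatorname{End}_{\mathcal{O}}(\tensor[_{\mathcal{O}}]{F}{_0})\), and hence so does every product \(b_{n_1}\cdots b_{n_k}\) as well as every locally finite \(\mathcal{O}\)-linear combination of such products, since on a fixed vector such a sum is finite.

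Next, for \(A\in\tensor[_{\mathcal{O}}]{F}{_0}\) I would write \(A\) as an \(\mathcal{O}\)-combination of the monomials \(b_{-\lambda}|0\rangle\) and use \(\mathcal{O}\)-linearity of \(Y\) to reduce to \(A=b_{-\lambda}|0\rangle\). By the reconstruction theorem for vertex algebras,
\begin{align*}
  Y\bigl(b_{-\lambda_1}\cdots b_{-\lambda_\ell}|0\rangle;z\bigr)
  =\frac{1}{\prod_{i=1}^{\ell}(\lambda_i-1)!}\,
  :\partial^{\lambda_1-1}b(z)\cdots\partial^{\lambda_\ell-1}b(z):\,,
\end{align*}
and each mode of the right hand side is a locally finite \(\mathbb{Q}\)-linear combination of products of the \(b_n\) -- the derivatives contribute integer coefficients and normal ordering only reorders modes. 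Combined with the previous step and \(\mathbb{Q}\subset\mathbb{C}\subset\mathcal{O}\), every such mode lies in \(\operatorname{End}_{\mathcal{O}}(\tensor[_{\mathcal{O}}]{F}{_0})\), which gives \(Y(A;z)\in\operatorname{End}_{\mathcal{O}}(\tensor[_{\mathcal{O}}]{F}{_0})[[z,z^{-1}]]\). Alternatively one can avoid the explicit formula: the set of \(A\) with this property is an \(\mathcal{O}\)-submodule of \(\tensor[_{\mathcal{K}}]{F}{_0}\) that contains \(|0\rangle\) and \(b_{-1}|0\rangle\) and is closed under all \(n\)-th products \(A[n]B\), because Borcherds' associativity identity expresses \((A[p]B)[q]\) as a locally finite \(\mathbb{Z}\)-combination of products of modes of \(A\) and of \(B\), the conformal weights on \(F_0\) being non-negative integers. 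I would also remark that the deformation enters only through \(\alpha_0(\epsilon)\in\mathcal{O}\), which appears solely in \(T=\tfrac12(b_{-1}^2-\alpha_0(\epsilon)b_{-2})|0\rangle\in\tensor[_{\mathcal{O}}]{F}{_0}\), so it requires no separate treatment.

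The hard part -- really the only point needing care -- will be making sure that all the structure constants produced by the reconstruction theorem, equivalently by the \(n\)-th products of the vertex operation, introduce no denominators beyond the factorials \((\lambda_i-1)!\), so that they lie in \(\mathbb{C}\subset\mathcal{O}\) and the computation stays inside \(\operatorname{End}_{\mathcal{O}}(\tensor[_{\mathcal{O}}]{F}{_0})\) rather than merely in \(\operatorname{End}_{\mathcal{K}}(\tensor[_{\mathcal{K}}]{F}{_0})\); beyond this it is a direct computation with Heisenberg modes.
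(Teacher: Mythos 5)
The paper states this proposition without giving any proof, so there is nothing to diverge from; your argument is correct and is exactly the routine verification the authors evidently had in mind: reduce via the reconstruction theorem to the modes of \(b(z)\), which visibly preserve the monomial \(\mathcal{O}\)-basis \(b_{-\lambda}|0\rangle\) of \(\tensor[_{\mathcal{O}}]{F}{_0}\), with the deformation entering only through \(\alpha_0(\epsilon)\in\mathcal{O}\) inside the conformal vector and not in the vertex operator map itself. Your closing worry about denominators is moot: since \(\mathcal{O}=\mathbb{C}[[\epsilon]]\supset\mathbb{C}\supset\mathbb{Q}\), the only thing that could ever leave the lattice is a negative power of \(\epsilon\), and none arises because all structure constants are rational apart from the \(\mathcal{O}\)-valued \(\alpha_0(\epsilon)\).
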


The two Heisenberg weights \(\alpha_\pm(\epsilon)\) have conformal weight
\(h_{\alpha_{\pm}}(\epsilon)=1\). Therefore the fields
\begin{align*}
  \scr{+}(z)&=:e^{\alpha_+(\epsilon)\phi(z)}:\\
  \scr{-}(z)&=:e^{\alpha_-(\epsilon)\phi(z)}:
\end{align*}
define screening operators for \(\tensor[_{\mathcal{K}}]{\mathcal{F}}{_{p_+,p_-}}\).

\subsection{The construction of renormalisable cycles}
\label{sec:renormcycles}

In this section we construct cycles over which we can integrate products of
the screening operators \(\scr{+}(z)\) and \(\scr{-}(z)\).
In order to construct these cycles, we make extensive use of local systems as
well as de Rham theory twisted by these local systems. We refer readers
unfamiliar with these topics to Aomoto and Kita's book
\cite{AomotoKita:2011}. We give a very brief overview following Chapter 2 of
\cite{AomotoKita:2011} to fix notation.

For \(m\geq1\) let \(Y_m\) be the complex manifold
\begin{align*}
  Y_m=\{(y_1,\dots,y_m)\in\mathbb{C}^m|y_i\neq y_j,
  y_i\neq 0,1\}
\end{align*}
Let
\(\rho, \sigma,\tau \in\mathcal{O}\), then
\begin{align*}
  G_m(\rho,\sigma,\tau;y)=\prod_{i=1}^m y_i^\rho (1-y_i)^\sigma
  \prod_{1\leq i\neq j\leq m}(y_i-y_j)^\tau
\end{align*}
is a multivalued function on \(Y_m\). The logarithmic derivative of
\(G_m(\rho,\sigma,\tau;y)\)
\begin{align*}
  \omega_m(\rho,\sigma,\tau)&= \d\log G_m(\rho,\sigma,\tau,y)
  =\sum_{i=1}^m\left(\frac{\rho}{y_i}-\frac{\sigma}{1-y_i}\right)\d y_i
  +\sum_{1\leq i<j\leq m}\tau\frac{\d y_i-\d y_j}{y_i-y_j}
\end{align*}
is a single valued 1-form, which defines the twisted differential
\begin{align*}
  \nabla_{\omega_m(\rho,\sigma,\tau)}&=\d+\omega_m(\rho,\sigma,\tau)\wedge\ \,.
\end{align*}
The local systems
\(\tensor[_{\mathcal{O}}]{\mathcal{L}}{_m}(\rho,\sigma,\tau)\)
and \(\tensor[_{\mathcal{K}}]{\mathcal{L}}{_m}(\rho,\sigma,\tau)\) are defined
to be the local solutions of the differential equation
\begin{align*}
  \nabla_{\omega_m(\rho,\sigma,\tau)} f(y)=0
\end{align*}
over \(\mathcal{O}\) and \(\mathcal{K}\) respectively. Note that
\(G_m(\rho,\sigma,\tau;y)\) is such a local solution, since
\(\nabla_{\omega_m(\rho,\sigma,\tau)} G_m(\rho,\sigma,\tau;y)=0\). The local systems
\(\tensor[_{\mathcal{O}}]{\mathcal{L}}{_m}(\rho,\sigma,\tau)\)
and \(\tensor[_{\mathcal{K}}]{\mathcal{L}}{_m}(\rho,\sigma,\tau)\) are fibre
bundles with base manifold \(Y_m\)  and with fibres \(\mathcal{O}\) and \(\mathcal{K}\) respectively.
The duals of these local systems are denoted by
\(\tensor*[_{\mathcal{O}}]{\mathcal{L}}{_m^\vee}(\rho,\sigma,\tau)=
\hom(\tensor[_{\mathcal{O}}]{\mathcal{L}}{_m}(\rho,\sigma,\tau),\mathcal{O})\)
and \(\tensor*[_{\mathcal{K}}]{\mathcal{L}}{_m^\vee}(\rho,\sigma,\tau)=
\hom(\tensor[_{\mathcal{K}}]{\mathcal{L}}{_m}(\rho,\sigma,\tau),\mathcal{K})\).
The twisted homology groups with coefficients in \(\tensor*[_{\mathcal{O}}]{\mathcal{L}}{_m^\vee}(\rho,\sigma,\tau)\)
and \(\tensor*[_{\mathcal{K}}]{\mathcal{L}}{_m^\vee}(\rho,\sigma,\tau)\) are
denoted by
\(H_p(Y_m,\tensor*[_{\mathcal{O}}]{\mathcal{L}}{_m^\vee}(\rho,\sigma,\tau))\)
and
\(H_p(Y_m,\tensor*[_{\mathcal{K}}]{\mathcal{L}}{_m^\vee}(\rho,\sigma,\tau))\)
and the twisted cohomology groups by
\begin{align*}
  H^p(Y_m,\tensor*[_{\mathcal{O}}]{\mathcal{L}}{_m}(\rho,\sigma,\tau))&=
  \hom_{\mathcal{O}}(H_p(Y_m,\tensor*[_{\mathcal{O}}]{\mathcal{L}}{_m^\vee}(\rho,\sigma,\tau)),\mathcal{O})\,,\\
  H^p(Y_m,\tensor*[_{\mathcal{K}}]{\mathcal{L}}{_m}(\rho,\sigma,\tau))&=
  \hom_{\mathcal{K}}(H_p(Y_m,\tensor*[_{\mathcal{K}}]{\mathcal{L}}{_m^\vee}(\rho,\sigma,\tau)),\mathcal{K})\,.
\end{align*}
The boundary maps required for defining the twisited homology groups are
constructed in the following way. Let \(\Delta_p\) be a \(p\)-simplex in some
smooth triangulation \(K\) of \(Y_m\). We fix a branch \(\phi\) of
\(G_m(\rho,\sigma,\tau;y)\) on \(\Delta_p\) and denote the pair by the symbol
\(\Delta_p\otimes\phi\). The boundary \(\partial(\Delta_p\otimes\phi)\) of
\(\Delta_p\otimes\phi\) is given by the boundary of \(\Delta_p\) with the
branch fixed by restricting \(\phi\) to the boundary of \(\Delta_p\). See
Figure \ref{fig:cycle} for an explicit example of a closed cylce.  For a
\(p\)-cycle \(\Gamma_p\) and a single valued \(p-1\)-form \(\psi_{p-1}(y)\),
the twisted version of Stokes theorem is given by
\begin{align*}
   \int_{\partial \Gamma_p} G_m(\rho,\sigma,\tau;y)\psi_{p-1}(y)=
   \int_{\Gamma_p} \d G_m(\rho,\sigma,\tau;y)\psi_{p-1}(y)=
   \int_{\Gamma_p} G_m(\rho,\sigma,\tau;y)\nabla_{\omega_m(\rho,\sigma,\tau)}\psi_{p-1}(y)\,.
\end{align*}
The differential \(\nabla_{\omega_m(\rho,\sigma,\tau)}\) defines a twisted
de Rham theory.

The theorem of twisted de Rham theory states that the twisted cohomology groups are
isomorphic to twisted de Rham cohomology groups
\begin{align*}
  H^p(Y_m,\tensor*[_{\mathcal{O}}]{\mathcal{L}}{_m}(\rho,\sigma,\tau))&\cong
  \tensor[_{\mathcal{O}}]{H}{^p}(Y_m,\nabla_{\omega_m(\rho,\sigma,\tau)})\,,&
  H^p(Y_m,\tensor*[_{\mathcal{K}}]{\mathcal{L}}{_m}(\rho,\sigma,\tau))&\cong
  \tensor[_{\mathcal{K}}]{H}{^p}(Y_m,\nabla_{\omega_m(\rho,\sigma,\tau)})\,.
\end{align*}
The pairing between the twisted homology groups and twisted de Rham cohomology
groups is given by integration as with ordinary de Rham theory.
It is known that \cite[Chapter 2]{AomotoKita:2011}
\begin{align*}
  \dim
  H^p(Y_m,\tensor*[_{\mathcal{O}}]{\mathcal{L}}{_m}(\rho,\sigma,\tau))=0,\ p>m\,.
\end{align*}
The theory of local systems elegantly sidesteps the potentially problematic multivaluedness of
\(G_m(\rho,\sigma,\tau;y)\) (or the multivaluedness of \(z^{\beta b_0}\)
mentioned in Section \ref{sec:heisenbergalg}), by expressing everything in terms of a de Rham
theory with twisted differential \(\nabla_{\omega_m(\rho,\sigma,\tau)}\).

The symmetric group \(\symg{m}\) acts in a compatible fashion on both \(Y_m\) and
\(\tensor[_{\mathcal{K}}]{\mathcal{L}}{_m}(\rho,\sigma,\tau)\), therefore the cohomology group
\(H^p(X_N,\tensor[_{\mathcal{K}}]{\mathcal{L}}{_m}(\rho,\sigma,\tau))\) carries the structure of a 
finite dimensional representation of \(\symg{m}\). We can therefore decompose
\(H^p(X_N,\tensor[_{\mathcal{K}}]{\mathcal{L}}{_m}(\rho,\sigma,\tau))\) into a direct sum of
irreducible \(\symg{m}\) modules.
For any \(\symg{m}\) module \(M\), let \(M^{\symg{m}-}\) be the skew symmetric part of \(M\).
For the purposes of this paper we are only interested in the skew symmetric parts of the \(m\)th cohomology
groups \(H^m(Y_m,\tensor[_{\mathcal{K}}]{\mathcal{L}}{_m}(\rho,\sigma,\tau))^{\symg{m}-}\).
\begin{prop}\label{sec:goodcycles}
Let
\(\rho=\rho_0+\rho_1\epsilon+\cdots,\sigma=\sigma_0+\sigma_1\epsilon+\cdots,
\tau=\tau_0+\tau_1\epsilon+\cdots\in\mathcal{O}\) such that the constant terms
of \(\sigma,\tau\) lie in \(\mathbb{C}\setminus\mathbb{Q}_{\leq0}\) and
\begin{align*}
  d(d+1)\tau \notin\mathbb{Z}\,,\ d(d-1)\tau + d\rho\notin\mathbb{Z}\,,\ 
  d(d-1)\tau+d \sigma\notin\mathbb{Z}\,,\ 1\leq d\leq m\,,
\end{align*}
then
  there exists a construction of a closed cycle \(\Delta_m(\rho,\sigma,\tau)\)
  with non-trivial homology class
    \([\Delta_m(\rho,\sigma,\tau)]\in
    H_m(Y_m,\tensor*[_{\mathcal{K}}]{\mathcal{L}}{_m^\vee}(\rho,\sigma,\tau))\)
    such that
  \begin{enumerate}
  \item For \(f(y)\in \mathcal{K}[y_1^\pm,\dots,y_m^\pm]\)
    \begin{align*}
      \int_{[\Delta_m(\rho,\sigma,\tau)]}G_m(\rho,\sigma,\tau;y)f(y)\tfrac{\d
        y_1\cdots\d y_m}{y_1\cdots y_m}
      =
      \int_{\Delta_m}G_m(\rho,\sigma,\tau;y)f(y)\tfrac{\d
        y_1\cdots\d y_m}{y_1\cdots y_m}\,,
    \end{align*}
    where the right hand side is an indefinite integral over the \(m\)-simplex \(\Delta_m=\{1> y_1>\cdots>y_m>0\}\).
  \item The integration of \(G_m(\rho,\sigma,\tau)\) over \(\Delta_m\) is
    given by the Selberg integral
    \begin{align*}
      S_m(\rho,\sigma,\tau)&=\int_{\Delta_m}G_m(\rho,\sigma,\tau;y) \tfrac{\d
        y_1\cdots\d y_m}{y_1\cdots y_m}\\
      &=\frac{1}{m!}\prod_{i=1}^m
      \frac{\Gamma(1+i\tau)\Gamma(\rho+(i-1)\tau)\Gamma(1+\sigma+(i-1)\tau)}
      {\Gamma(1+\tau)\Gamma(1+\rho+\sigma+(m+i-2)\tau)}
    \end{align*}
  \end{enumerate}
\end{prop}
\begin{proof}\ 
  \begin{enumerate}
  \item The class of cylces \([\Delta_m(\rho,\sigma,\tau)]\) was explicitly constructed in
    \cite[Sections 5]{Tsuchiya:1986}
    by means of the isomorphism of homology groups
    \(H_m(Y_m,\tensor*[_{\mathcal{K}}]{\mathcal{L}}{_m^\vee}(\rho,\sigma,\tau))
    \cong
    H_m^{\operatorname{l.f}}(Y_m,\tensor*[_{\mathcal{K}}]{\mathcal{L}}{_m^\vee}(\rho,\sigma,\tau))\)
    between the twisted homology group and the locally finite homology
    group. If \(\phi\) is the principal branch of \(G_m(\rho,\sigma,\tau;y)\)
    on \(\Delta_m\), then 
    \([\Delta_m\otimes\phi]\in
    H_m^{\operatorname{l.f}}(Y_m,\tensor*[_{\mathcal{K}}]{\mathcal{L}}{_m^\vee}(\rho,\sigma,\tau))\).
    The corresponding class in 
    \(H_m(Y_m,\tensor*[_{\mathcal{K}}]{\mathcal{L}}{_m^\vee}(\rho,\sigma,\tau))\)
    was then constructed by means of a blow up \(\hat{Y}_m\rightarrow Y_m\).
  \item This integral is due to Selberg \cite{Selberg:1944}. For an overview of the many contributions of the Selberg integral to
    mathematics see \cite{Forrester:2008}.
  \end{enumerate}
\end{proof}

For \(m=1\) the exponent \(\tau\) does not appear in the multi-valued function \(G_1(\rho,\sigma,\tau;y)\)
and the cycle \(\Delta_1(\rho,\sigma,\tau)\) is also known as the
regularisation of the open interval \((0,1)\). See Figure \ref{fig:cycle} for
an explicit depiction of \(\Delta_1(\rho,\sigma,\tau)\).
\begin{figure}[tp]
  \centering 
  \begin{tikzpicture}
    \draw[->,>=angle 60] (0,-1.2) -- (0,2.8);
    \draw[->,>=angle 60] (-1.2,0) -- (8,0);
    \draw[-] (6,-0.1) node[below] {1} -- (6,0.1);
    \draw (0,0) node[anchor=north west] {0};
    \draw[->,>=angle 60,very thick] (1,0) arc (0:357:1);
    \draw[->,>=angle 60,very thick] (5,0) arc (180:537:1);
    \draw[->,>=angle 60,very thick] (1,0) -- (3,0) node[below] {\([\delta,1-\delta]\)};
    \draw[-,very thick] (2.8,0) -- (5,0);
    \draw (1,0) node[anchor=south west] {\(\delta\)};
    \draw (5,0) node[anchor=south east] {\(1-\delta\)};
    \draw (0,1) node[anchor=south west] {\(S^1_\delta(0)\)};
    \draw (6,1) node[anchor=south west] {\(S^1_\delta(1)\)};
    \draw (3,2) node {\(G_1(\rho,\sigma,\tau;y)=y^\rho(1-y)^\sigma\)};
  \end{tikzpicture}
  \caption{Regularisation of the open interval \((0,1)\): The closed interval
    from \(\delta\) to \(1-\delta\) for some small \(\delta>0\) is denoted by
    \([\delta,1-\delta]\), the counter-clockwise circle around 0 of radius
    \(\delta\) starting at \(\delta\) by \(S^1_\delta(0)\) and the counter-clockwise circle around 1 of radius
    \(\delta\) starting at \(1-\delta\) by \(S^1_\delta(1)\). The boundary of
    the interval \([\delta,1-\delta]\) is given by its end points but with opposite
    orientations \(\partial[\delta,1-\delta]=\langle 1-\delta\rangle-\langle\delta\rangle\). The start and
    end points of the two circles \(S^1_\delta(0)\) and \(S^1_\delta(1)\) are
    the same, however, since they are on
    different branches the circles are not closed.
    The boundaries are given by \(\partial S^1_\delta(0)=e^{2\pi i\rho}\langle\delta\rangle-\langle
    \delta\rangle=(e^{2\pi i\rho}-1)\langle\delta\rangle\), \(\partial
    S^1_\delta(1)=(e^{2\pi i\sigma}-1)\langle 1-\delta\rangle\).
    The
    regularisation of the open interval \((0,1)\)
    is given by \(\Delta_1(\rho,\sigma,\tau)=\frac{1}{e^{2\pi
          i\rho}-1}S^1_\delta(0)+[\delta,1-\delta]-\frac{1}{e^{2\pi
          i\sigma}-1}S^1_\delta(1)\), which is closed \(\partial \Delta_1(\rho,\sigma,\tau)=0\).}
  \label{fig:cycle}
\end{figure}
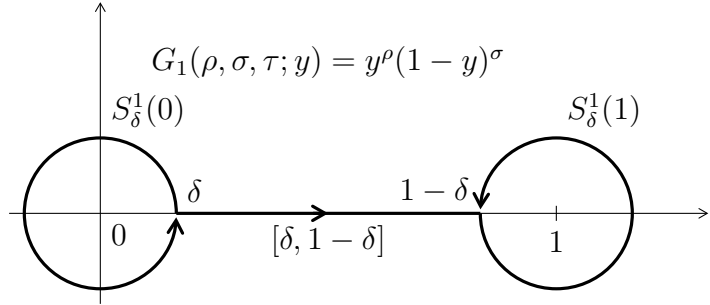
The integral of \(G_1(\rho,\sigma,\tau;y)\) over the class
\([\Delta_1(\rho,\sigma,\tau)]\) is the famous Euler beta function.
\begin{align*}
  B(\rho,\sigma+1)=\int_{[\Delta_1(\rho,\sigma,\tau)]}y^\rho(1-y)^\sigma\frac{\d y}{y}
  =\int_{0}^1y^\rho(1-y)^\sigma\frac{\d y}{y}
  =\frac{\Gamma(\rho)\Gamma(\sigma+1)}{\Gamma(\rho+\sigma+1)}
\end{align*}
For \(m\geq2\) the cycles \([\Delta_m(\rho,\sigma,\tau)]\) are more difficult
to visualise, because the visualisations would have to be drawn in
\(\mathbb{C}^m\). The \([\Delta_m(\rho,\sigma,\tau)]\) are essentially just
higher dimensional analogues of \([\Delta_1(\rho,\sigma,\tau)]\), that is, one
starts with an \(m\)-simplex \(\Delta_m=\{1>y_1>\cdots >y_n>0\}\) and cuts out
the \(k\)-faces of \(\Delta_m\) on which \(G_m(\rho,\sigma,\tau;y)\) is
singular, where \(m>k\geq0\). The \(k\)-faces that were cut away are then
replaced by the boundaries of tubular neighbourhoods of said \(k\)-faces. This
procedure is carried out explicitly in \cite[Section 5]{Tsuchiya:1986}.

We will now use these classes of cycles \([\Delta_m(\rho,\sigma,\tau)]\) in order to
integrate \(N\)-fold products of screening operators.
Consider the \(N\)-fold product of \(\scr{\pm}(z)\)
\begin{align*}
  \prod_{i=1}^N \scr{\pm}(z_i)=e^{N\alpha_{\pm}(\epsilon)\hat b}\prod_{i=1}^N z_i^{\alpha_\pm(\epsilon)b_0}
  \prod_{1\leq i\neq j\leq N}(z_i-z_j)^{\alpha_\pm(\epsilon)^2/2}
  :\prod_{i=1}^N\overline{\scr{\pm}}(z_i):\,,
\end{align*}
where \(:\prod_{i=1}^N\overline{\scr{\pm}}(z_i):\in \tensor[_{\mathcal{O}}]{U}{}(\overline{\mathfrak{b}})\hat\otimes_{\mathcal{O}}\mathcal{O}[z_1^\pm,\dots,z_n^\pm]^{\symg{n}}\)
\begin{align*}
  :\prod_{i=1}^N\overline{\scr{\pm}}(z_i):=\prod_{k\geq 1}e^{\alpha_\pm(\epsilon)\sum_{i=1}^N \tfrac{z_i^{k}}{k}b_{-k}}
  \prod_{k\geq 1}e^{-\alpha_\pm(\epsilon)\sum_{i=1}^N \tfrac{z_i^{-k}}{k}b_{k}}\,.
\end{align*}
The superscript \(\symg{n}\) of \(\mathcal{O}[z_1^\pm,\dots,z_n^\pm]^{\symg{n}}\) indicates that \(:\prod_{i=1}^N\overline{\scr{\pm}}(z_i):\) is symmetric with
respect to permuting the variables \(z_i\).

Let \(r\geq 1,s\in\mathbb{Z}\), then if we evaluate the operator \(\scr{+}(z_1)\cdots \scr{+}(z_r)\)
on \(\tensor[_{\mathcal{K}}]{F}{_{r,s}}\) we have
\begin{align*}
  \prod_{i=1}^r\scr{+}(z_i)=e^{r\alpha_+(\epsilon)\hat b}U_r(z_1,\dots,z_r;\kappa_+(\epsilon))\prod_{i=1}^r z_i^{s-1}
  :\prod_{i=1}^r\overline{\scr{+}}(z_i):\,.
\end{align*}
where
\begin{align*}
  U_r(z_1,\dots,z_r;\kappa_+(\epsilon))&=
  \prod_{1\leq i\neq j\leq r}(z_{i}-z_{j})^{\kappa_+(\epsilon)}
  \prod_{i=1}^r z_i^{(1-r)\kappa_+(\epsilon)}\\
  &=\prod_{1\leq i\neq j\leq r}(1-\tfrac{z_i}{z_j})^{\kappa_+(\epsilon)}\,.
\end{align*}
Similarly one can also evaluate \(\scr{-}(z_1)\cdots \scr{-}(z_s)\)
on \(\tensor[_{\mathcal{K}}]{F}{_{r,s}}\) for \(s\geq 1\), \(r\in\mathbb{Z}\).

We can use the twisted de Rham theory, developed above, to integrate the
multivalued function
\(U_N(z_1,\dots,z_N;\kappa)\).
Consider the \(N\) dimensional
complex manifold
\begin{align*}
  X_N=\{(z_1,\dots,z_N)\in \mathbb{C}^N| z_i\neq z_j, z_i\neq 0 \}\,.
\end{align*}
and fix
\begin{align*}
  \kappa=\kappa_{0}+\kappa_{1}\epsilon+\cdots\in \mathcal{O}
\end{align*}
such that \(\kappa_{0}\in\mathbb{C}\setminus\mathbb{Q}_{\leq0}\)
and \(\kappa_{1}\neq 0\).
Then
\begin{align*}
  U_N(z;\kappa)=\prod_{1\leq i\neq j\leq N}(1-\tfrac{z_i}{z_j})^{\kappa}
\end{align*}
defines a multivalued holomorphic function on \(X_N\). Denote by
\(\tensor[_{\mathcal{K}}]{\mathcal{L}}{_N}(\kappa)\) the local system defined by the multivaluedness of
\(U_N(z;\kappa)\) over \(\mathcal{K}\) and its dual by \(\tensor*[_{\mathcal{K}}]{\mathcal{L}}{_N^\vee}(\kappa)\). We introduce new variables
\(z,y_1,\dots,y_{N-1}\) such that
\begin{align*}
  z_1&=z\,,\ z_i=zy_{i-1}\,, i=2,\dots,N\,.
\end{align*}
Then it is clear that \(X_N\cong\mathbb{C}^\ast \times Y_{N-1}\). If we
write \(U_N(z;\kappa)\) in terms of the new variables, we see that the \(z\)
dependence drops out
\begin{align*}
  U_N(z,zy_1,\dots;\kappa)=\prod_{i=1}^{N-1}y_i^{(1-N)\kappa}(1-y_i)^{2\kappa}\prod_{1\leq
    i\neq j\leq N-1}(y_i-y_j)^\kappa=G_{N-1}((1-N)\kappa,2\kappa,\kappa)\,.
\end{align*}
By the K\"unneth formula we therefore have
\begin{align*}
  H_N(X_N,\tensor*[_{\mathcal{K}}]{\mathcal{L}}{_N^\vee}(\kappa))=
  H_1(\mathbb{C}^\ast,\mathcal{K})\otimes
  H_{N-1}(Y_{N-1},\tensor*[_{\mathcal{K}}]{\mathcal{L}}{_m^\vee}((1-N)\kappa,2\kappa,\kappa))\,.
\end{align*}
It is known that \cite{AomotoKita:2011}
\begin{align*}
  \dim_{\mathcal{K}}
  H^N(X_N,\tensor*[_{\mathcal{K}}]{\mathcal{L}}{_N}(\kappa))
  &=(N-1)!\,,\\
  \dim_{\mathcal{K}}
  H^N(X_N,\tensor*[_{\mathcal{K}}]{\mathcal{L}}{_N}(\kappa))^{\symg{N}-}
  &=1\,.
\end{align*}
Let \(2\pi i[\gamma]\in H_1(\mathbb{C}^\ast,\mathcal{K})\) be the class of a
circle about the origin, then we can use Proposition \ref{sec:goodcycles}.

\begin{definition}\label{sec:cycledef}
  Let \([\Gamma_N(\kappa)]\) and \([\overline{\Gamma}_N(\kappa)]\) be the renormalised cycles
  \begin{align*}
    [\overline{\Gamma}_{N}(\kappa)]&=\frac{1}{S_{N-1}((1-N)\kappa,2\kappa,\kappa)}[\Delta_{N-1}((1-N)\kappa,2\kappa,\kappa)]\,,&
    [\Gamma_{N}(\kappa)]&=[\gamma]\times [\overline{\Gamma}_N(\kappa)]
  \end{align*}
  such that
  \begin{align*}
    \int_{[\Gamma_N(\kappa)]}U_N(z;\kappa)\prod_{i=1}^N\frac{\d
      z_i}{z_i}&=1\,,&
    \int_{[\overline{\Gamma}_N(\kappa)]}U_N(z,zy_1,\dots,zy_{N-1};\kappa)\prod_{i=1}^{N-1}\frac{\d
      y_i}{y_i}&=1\,.
  \end{align*}
\end{definition}

\begin{definition}\label{sec:symringmap}
  For \(f\in\mathcal{K}[z_1^{\pm},\dots,z_N^{\pm}]^{\symg{N}}\)
  the cycle \([\Gamma_N(\kappa)]\) defines 
  a \(\mathcal{K}\)-linear map
  \begin{align*}
    \langle \phantom{f}
    \rangle_\kappa^N:\mathcal{K}[z_1^{\pm},\dots,z_N^{\pm}]^{\symg{N}}\rightarrow
    \mathcal{K}
  \end{align*}
  by the formula
  \begin{align*}
    \langle f(z)
    \rangle_\kappa^N=\int_{[\Gamma_N(\kappa)]}U_N(z;\kappa)f(z)\prod_{i=1}^N\frac{\d
      z_i}{z_i}\,.
  \end{align*}
\end{definition}

\begin{prop}\ 
  \begin{enumerate}
  \item \(\langle 1\rangle_\kappa^N=1\)
  \item \(\langle f\rangle_\kappa^N=0\) if \(\deg f\neq 0\)
  \item \(\langle f\rangle_\kappa^N=\langle \overline{f}\rangle_\kappa^N\)
    where \(\overline{f}(z_1,\dots,z_N)=f(z_1^{-1},\dots,z_N^{-1})\).
  \end{enumerate}
\end{prop}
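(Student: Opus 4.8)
The plan is to prove the three claimed properties of the linear functional $\langle\ \rangle_\kappa^N$ directly from the integral representation in terms of the renormalised cycle $[\Gamma_N(\kappa)]$, exploiting the structure of the integrand $\Delta_N(z;\kappa)\prod_i z_i^{-1}\d z_i$ together with the normalisation supplied by the Selberg integral.

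\medskip
\emph{Property 1.} By construction $[\Gamma_N(\kappa)] = \tfrac{1}{c_N(\kappa)}[\Delta_N(\kappa)]$, so
\begin{align*}
  \langle 1\rangle_\kappa^N = \int_{[\Gamma_N(\kappa)]}\Delta_N(z;\kappa)\prod_{i=1}^N\frac{\d z_i}{z_i}
  = \frac{1}{c_N(\kappa)}\int_{[\Delta_N(\kappa)]}\Delta_N(z;\kappa)\prod_{i=1}^N\frac{\d z_i}{z_i} = 1\,,
\end{align*}
which is immediate from equation~\eqref{eq:selbergint} (after restoring the residue in $z$) identifying the right-hand integral with $c_N(\kappa)$.

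\medskip
\emph{Property 2.} I would use the $\mathbb{C}^\ast$-scaling symmetry of $X_N$. Since the factorisation $\mathbb{C}^\ast\times Y_{N-1}\cong X_N$, $(z,y)\mapsto(z,zy_1,\dots,zy_{N-1})$ already appears in the construction of $[\Delta_N(\kappa)]$, and since $\overline{\Delta}_N(y;\kappa)$ and the form $\prod_i \d y_i/y_i$ are independent of $z$, while $\Delta_N(z;\kappa)\prod_i \d z_i/z_i$ picks up only the residue $\res_{z=0}\,\d z/z$ in the $z$-direction, any monomial $f$ of nonzero total degree $d$ scales as $z^d$ under $z_i\mapsto \lambda z_i$ and contributes $\res_{z=0} z^{d}\,\d z/z = 0$ unless $d=0$. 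More invariantly: the cycle $[\Delta_N(\kappa)]$ is supported (up to the residue) on the image of $\overline{\Delta}_{N-1}(\kappa)$, on which the overall $z$ is integrated out by a residue at $0$, so only the degree-zero part of the symmetric Laurent polynomial $f$ survives. By $\mathbb{C}$-linearity of $\langle\ \rangle_\kappa^N$ this gives $\langle f\rangle_\kappa^N = 0$ whenever $\deg f\neq 0$. I would spell this out by expanding $f$ into a sum of monomials and tracking the exponent of $z$.

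\medskip
\emph{Property 3 and the main obstacle.} The identity $\langle f\rangle_\kappa^N = \langle\overline f\rangle_\kappa^N$, with $\overline f(z) = f(z_1^{-1},\dots,z_N^{-1})$, is where the real work lies. The natural approach is to exhibit a self-map of $X_N$, namely $\iota:(z_1,\dots,z_N)\mapsto(z_1^{-1},\dots,z_N^{-1})$, check that $\iota^\ast\Delta_N(z;\kappa) = \Delta_N(z;\kappa)$ (each factor $(1-z_i/z_j)$ goes to $(1-z_j/z_i) = -(z_j/z_i)(1-z_i/z_j)$, and the product over ordered pairs $i\neq j$ makes the sign and monomial factors cancel, leaving $\Delta_N$ invariant up to a factor which must be absorbed), that $\iota^\ast(\prod_i \d z_i/z_i) = (-1)^N\prod_i \d z_i/z_i$, and that $\iota$ maps the cycle $[\Gamma_N(\kappa)]$ to $\pm[\Gamma_N(\kappa)]$ in homology. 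The hard part will be controlling all these signs simultaneously and, above all, verifying that $\iota_\ast[\Gamma_N(\kappa)]$ equals $[\Gamma_N(\kappa)]$ rather than merely a scalar multiple: a priori the $N$-th twisted homology with the skew-symmetric part one-dimensional only pins the cycle down up to a scalar, so I would fix that scalar by testing against $f=1$ using Property~1 (which forces the normalising constant to be $+1$ after all signs are accounted for), and then conclude the identity for general $f$ by linearity. An alternative, perhaps cleaner route is to prove Property~3 at the level of the Selberg-type integral \eqref{eq:selbergint} by the substitution $y_i\mapsto y_i^{-1}$ on $\overline{\Delta}_{N-1}$ and pushing it through the residue construction of $[\Delta_N(\kappa)]$; I expect this to reduce to the symmetry of the Selberg integrand under $y\mapsto y^{-1}$ combined with the $\deg = 0$ vanishing of Property~2 to kill the extra monomial factors generated by the substitution.
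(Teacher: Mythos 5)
The paper states this proposition without any proof, so there is no argument of the authors' to compare yours against; your proposal is correct and supplies the natural missing argument. Properties 1 and 2 follow exactly as you describe from the factorisation \([\Delta_N(\kappa)]=\res_{z=0}\,\d z\otimes[\overline{\Delta}_{N-1}(\kappa)]\): a homogeneous \(f\) of degree \(d\) contributes \(z^{d}\) in the overall scaling variable while \(\Delta_N\) and \(\prod_i\d y_i/y_i\) are scale-invariant, so the \(z\)-residue kills everything except \(d=0\), and \(d=0\) with \(f=1\) reproduces the Selberg normalisation. For Property 3 your strategy is also the right one, and the two points you single out are indeed the only delicate ones: \(\iota^\ast\Delta_N=\Delta_N\) holds on the nose because inversion merely permutes the ordered index pairs \((i,j)\mapsto(j,i)\) in the product, and since \(\iota\) commutes with the \(S_N\)-action it induces a scalar on the one-dimensional skew-symmetric part of the twisted homology, which is the only part that pairs nontrivially with the \(S_N\)-invariant integrand \(\Delta_N f\prod_i\d z_i/z_i\) (this last remark is worth stating explicitly, since the full \(H_N\) is larger than its skew part); the scalar, together with the \((-1)^N\) from \(\iota^\ast(\d z_i/z_i)=-\d z_i/z_i\) and any branch ambiguity in lifting \(\iota\) to the local system, is then fixed to \(1\) by Property 1. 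One incidental remark: the displayed formula in the paper's definition integrates over \([\Delta_N(\kappa)]\) rather than the renormalised cycle \([\Gamma_N(\kappa)]\); this is evidently a typo, since Property 1 only holds for the renormalised cycle, and you were right to read it that way.
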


\begin{definition}\label{sec:innerproddef}
  Let \((\phantom{f},\phantom{g})_\kappa^N\) be the bilinear 
  \(\mathcal{K}\)-form
  \begin{align*}
    (\phantom{f},\phantom{g})_\kappa^N:\mathcal{K}[z_1,\dots,z_N]^{\symg{N}}\otimes_{\mathcal{K}}
    \mathcal{K}[z_1,\dots,z_N]^{\symg{N}}\rightarrow \mathcal{K}
  \end{align*}
  defined by
  \begin{align*}
    ( f,g)_\kappa^N=\langle
    \overline{f}g\rangle_{\kappa^{-1}}^N\,.
  \end{align*}
\end{definition}
Note that the inner product above is defined in terms of \(\kappa^{-1}\)
in order to
be closer to the notation of Macdonald's book \cite{MacDonald:1999}.
Following Macdonald's book \cite{MacDonald:1999}, we will evaluate this
bilinear form in the next section by using the theory of Jack polynomials.
We will also be able to show that it defines an inner product of symmetric
polynomials over \(\mathcal{O}\), that is
  \begin{align*}
    (\ ,\
    )_\kappa^N:\mathcal{O}[z_1,\dots,z_N]^{\symg{N}}\otimes_{\mathcal{O}}
    \mathcal{O}[z_1,\dots,z_N]^{\symg{N}}\rightarrow \mathcal{O}\,.
  \end{align*}

\subsection{The theory of Jack polynomials}
\label{sec:thyofjackpoly}

We introduce the theory of Jack polynomials following 
Macdonald's book \cite[Chapter 6]{MacDonald:1999}. 
Fix the parameter \(\kappa\) to be
\begin{align*}
  \kappa=\kappa_{0}+\kappa_{1}\epsilon+\dots\in\mathcal{O}\,,
\end{align*}
such that \(\kappa_{0}\in\mathbb{C}\setminus\mathbb{Q}_{\leq0}\) and 
\(\kappa_{1}\neq 0\).

\begin{definition}
  \emph{The rings of symmetric polynomials} over \(\mathcal{O}\) and
  \(\mathcal{K}\) in \(N\) variables \(x_i\) are given by
  \begin{align*}
    \tensor[_{\mathcal{K}}]{\Lambda}{_N}&=\mathcal{K}[x_1,\dots,x_N]^{\symg{N}}&
    \tensor[_{\mathcal{O}}]{\Lambda}{_N}&=\mathcal{O}[x_1,\dots,x_N]^{\symg{N}}\\
    &=\mathcal{K}[p_1,\dots,p_N]&&=\mathcal{O}[p_1,\dots,p_N]
  \end{align*}
  where the
  \begin{align*}
    p_n=\sum_{i= 1}^Nx_i^n\,,
  \end{align*}
  are called \emph{power sums}.  The ring of symmetric polynomials
  \(\tensor[_{\mathcal{K}}]{\Lambda}{_N}\) forms a graded commutative
  algebra with \(\deg p_n=n\). 
  Rings of symmetric polynomials in different numbers of variables are related by the homomorphisms
  \begin{align*}
    \rho_{N,M}:\tensor[_{\mathcal{K}}]{\Lambda}{_N}&\rightarrow \tensor[_{\mathcal{K}}]{\Lambda}{_M}\\
    x_i&\mapsto x_i\quad i\leq M\\
    x_i&\mapsto 0\quad i>M\,,
  \end{align*}
  where \(N>M\).  The ring of symmetric polynomials in a countably
  infinite number variables is given by the projective limit
  \begin{align*}
    \tensor[_{\mathcal{K}}]{\Lambda}{}=\varprojlim_{N}\tensor[_{\mathcal{K}}]{\Lambda}{_N}\,,
  \end{align*}
  relative to the homomorphisms \(\rho_{N,M}\). One can return to the finite
  variable case by the projection
  \begin{align*}
    \rho_N:\tensor[_{\mathcal{K}}]{\Lambda}{}&\rightarrow
    \tensor[_{\mathcal{K}}]{\Lambda}{_N}\\
    x_i&\mapsto x_i\quad i\leq N\\
    x_i&\mapsto 0\quad i>N\,.
  \end{align*}
\end{definition}

A convenient way of parametrising symmetric polynomials is by partitions of
integers.
\begin{definition}
  A \emph{partition}
  \(\lambda=(\lambda_1,\lambda_2,\dots)\) is a weakly descending
  sequence of non-negative integers. We refer to
  \(|\lambda|=\sum_i \lambda_i\)
  as the \emph{degree} of \(\lambda\) and to
  \(\ell(\lambda)=\#\{\lambda_i\neq 0\}\)
  as the \emph{length} of \(\lambda\).

To each partition \(\lambda\) we associate a Young diagram, that is a
collection of left aligned rows of boxes where the \(i\)th
row consists of \(\lambda_i\) boxes. 
The boxes of a diagram are labelled by two integers \((i,j)\), where \(i\) labels the row and \(j\) the column.
For every partition \(\lambda\) there is also conjugate partition \(\lambda^\prime\) which is obtained by exchanging rows
and columns in the Young diagram. For example the conjugate of the partition \((4,2)\) is \((2,2,1,1)\).
For a box \(s=(i,j)\) in a Young diagram let
\begin{align*}
  a_\lambda(s)&=\lambda_i-j& a^\prime_\lambda(s)&=j-1\\
  \ell_\lambda(s)&=\lambda^\prime_j-i&\ell^\prime_\lambda(s)&=i-1\,.
\end{align*}
Partitions admit a partial ordering \(\geq\) called the dominance ordering. For two partitions \(\lambda,\mu\) of equal degree,
\(\lambda\geq \mu\) if and only if
\begin{align*}
  \sum_{i=1}^n \lambda_i\geq \sum_{i=1}^n \mu_i\,,\quad \text{for any } n\geq 1\,.
\end{align*}
\end{definition}

Two examples of bases of \(\tensor[_{\mathcal{K}}]{\Lambda}{}\) parametrised by partitions are the power sums
\begin{align*}
  p_\lambda(x)&=p_{\lambda_1}(x)p_{\lambda_2}(x)\cdots
\end{align*}
and the symmetric monomials
\begin{align*}
  m_\lambda(x)&=\sum_{\sigma}\prod_{i\geq 1}x_{\sigma(i)}^{\lambda_i}\,,
\end{align*}
where the first sum runs over all distinct permutations of the entries of the partition \(\lambda\).
Note that in the case of symmetric polynomials in
\(N\) variables one must restrict oneself to
partitions of length at most \(N\), since
\begin{align*}
  \rho_N(m_\lambda(x))=0\,,\quad \text{for } \ell(\lambda)>N\,.
\end{align*}

\begin{definition}
  Let \((\phantom{f},\phantom{g})_\kappa\) be the inner product
  of symmetric polynomials defined by
  \begin{align*}
    (\phantom{f},\phantom{g})_\kappa: \tensor[_{\mathcal{K}}]{\Lambda}{}\otimes_{\mathcal{K}}\tensor[_{\mathcal{K}}]{\Lambda}{}&\rightarrow \mathcal{K}\\
    ( p_\lambda(x),p_\mu(x))_\kappa&\mapsto \delta_{\lambda,\mu}z_\lambda
    \kappa^{\ell(\lambda)}\,,
  \end{align*}
  where
  \begin{align*}
    z_\lambda=\prod_{i\geq 1}i^{m_i(\lambda)} m_i(\lambda)!
  \end{align*}
  and \(m_i(\lambda)\) is the multiplicity of \(i\) in \(\lambda\).
\end{definition}

\begin{prop}\label{sec:uppertriang}
  For \(\kappa=\kappa_{0}+\kappa_{1}\epsilon+\cdots\in\mathcal{O}\) such that \(\kappa_{0}\in\mathcal{O}\setminus\mathbb{Q}_{\leq0}\)
  and \(\kappa_{1}\neq 0\),
  the symmetric polynomials \(\tensor[_{\mathcal{K}}]{\Lambda}{}\) admit a basis of polynomials called
  Jack polynomials \(P_\lambda(x;\kappa)\) that satisfy
  \begin{enumerate}
  \item \(( P_\lambda(x;\kappa),P_\mu(x;\kappa))_\kappa=0\) if \(\lambda\neq\mu\)
  \item
    \(P_\lambda(x;\kappa)=\sum_{\lambda\geq\mu}u_{\lambda,\mu}(\kappa)
    m_\mu(x)\), where \(u_{\lambda,\lambda}(\kappa)=1\),
    \(u_{\lambda,\mu}(\kappa)\in\mathcal{O}\) and the partial
    ordering \(\geq\) is the dominance ordering.
  \end{enumerate}
\end{prop}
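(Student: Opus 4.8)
The plan is to establish existence and uniqueness of the Jack polynomials by a Gram--Schmidt-type argument with respect to the inner product $(\ ,\ )_\kappa$, working over $\mathcal{K}$ for existence and then descending to $\mathcal{O}$. First I would record the key fact that the monomial symmetric functions $m_\mu$, ordered by any linear refinement of the dominance ordering, form a graded basis of $\tensor[_{\mathcal{K}}]{\Lambda}{}$, so that within each degree-$n$ homogeneous component $\tensor[_{\mathcal{K}}]{\Lambda}{}[n]$ one has a finite totally ordered basis $\{m_\mu\}_{|\mu|=n}$. For the existence of $P_\lambda(x;\kappa)$ I would then run Gram--Schmidt on $\{m_\mu\}$ within this degree component, but \emph{triangularly}: set $P_\lambda = m_\lambda - \sum_{\mu<\lambda} c_{\lambda\mu} P_\mu$ and solve recursively for the coefficients $c_{\lambda\mu}$ so that $(P_\lambda,P_\mu)_\kappa=0$ for all $\mu<\lambda$. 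Because the $P_\mu$ with $\mu<\lambda$ are, by induction, already mutually orthogonal and each has leading term $m_\mu$, this forces
\begin{align*}
  c_{\lambda\mu}=\frac{(m_\lambda,P_\mu)_\kappa}{(P_\mu,P_\mu)_\kappa}\,,
\end{align*}
which is well defined provided $(P_\mu,P_\mu)_\kappa\neq 0$; expanding $P_\lambda$ back in the $m$-basis gives property~2 with $u_{\lambda,\lambda}=1$ and $u_{\lambda,\mu}$ supported on $\mu\le\lambda$. Orthogonality for \emph{all} $\mu\ne\lambda$ (not just $\mu<\lambda$) then follows by symmetry of the form together with the triangular structure: if $\mu\not\le\lambda$ and $\mu\ne\lambda$ one instead uses $(P_\lambda,P_\mu)_\kappa=(P_\mu,P_\lambda)_\kappa$ and the already-proved vanishing in the other direction, handling incomparable pairs by a double induction on degree and position in the order.

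The main obstacle, and the step that needs genuine input rather than formal nonsense, is showing $(P_\mu,P_\mu)_\kappa\neq 0$ so that the Gram--Schmidt recursion does not break down --- equivalently, that the form $(\ ,\ )_\kappa$ is nondegenerate on each $\tensor[_{\mathcal{K}}]{\Lambda}{}[n]$. Here I would exploit the deformation: since $\kappa=\kappa_0+\kappa_1\epsilon+\cdots$ with $\kappa_0\in\mathbb{C}\setminus\mathbb{Q}_{\le 0}$, the Gram matrix of $(\ ,\ )_\kappa$ in the $p_\lambda$-basis is diagonal with entries $z_\lambda\kappa^{\ell(\lambda)}\in\mathcal{K}^\times$ (nonzero because $\kappa\ne 0$ in $\mathcal{K}$), hence the form is visibly nondegenerate on $\tensor[_{\mathcal{K}}]{\Lambda}{}$; the classical obstruction to nondegeneracy --- $\kappa_0$ a nonpositive rational --- has been deformed away, and the excess $\epsilon$-adic room is exactly what guarantees that the denominators $(P_\mu,P_\mu)_\kappa$ are units in $\mathcal{K}$.

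Finally I would verify the integrality statement $u_{\lambda,\mu}(\kappa)\in\mathcal{O}$. The subtlety is that $c_{\lambda\mu}=(m_\lambda,P_\mu)_\kappa/(P_\mu,P_\mu)_\kappa$ involves a division, and a priori only lands in $\mathcal{K}$. The point is to show by induction on the order that $(P_\mu,P_\mu)_\kappa$ is a \emph{unit} in $\mathcal{O}$, i.e.\ has nonzero constant term in $\epsilon$; granting this, all coefficients stay in $\mathcal{O}$ since numerators are $\mathcal{O}$-bilinear combinations of the already-integral $P_\mu$. To see $(P_\mu,P_\mu)_\kappa\in\mathcal{O}^\times$ one reduces mod $\epsilon$: at $\epsilon=0$ the form specialises to the classical Jack inner product at parameter $\kappa_0$, for which $(P_\mu(x;\kappa_0),P_\mu(x;\kappa_0))_{\kappa_0}=\prod_{s\in\mu} \frac{a_\mu(s)\kappa_0+\ell_\mu(s)+1}{(a_\mu(s)+1)\kappa_0+\ell_\mu(s)}$ is a nonzero rational number precisely because $\kappa_0\notin\mathbb{Q}_{\le 0}$ (each factor is nonzero and finite). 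Thus the constant term of $(P_\mu,P_\mu)_\kappa$ is nonzero, giving a unit in $\mathcal{O}$ and closing the induction; I would cite \cite[Chapter 6]{MacDonald:1999} for the classical norm formula and the fact that the $\epsilon=0$ specialisation of everything in sight recovers Macdonald's setup.
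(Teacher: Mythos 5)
There is a genuine gap at the heart of your argument: the treatment of incomparable partitions. Your Gram--Schmidt recursion $P_\lambda=m_\lambda-\sum_{\mu<\lambda}c_{\lambda\mu}P_\mu$ only enforces $(P_\lambda,P_\mu)_\kappa=0$ for pairs that are \emph{comparable} in the dominance order (directly for $\mu<\lambda$, and by symmetry of the form for $\mu>\lambda$). For two incomparable partitions $\lambda$ and $\nu$ of the same degree, neither recursion imposes anything, and symmetry of the bilinear form gives you $(P_\lambda,P_\nu)_\kappa=(P_\nu,P_\lambda)_\kappa$ without telling you that either side vanishes. The phrase ``handling incomparable pairs by a double induction on degree and position in the order'' is not an argument: for a general nondegenerate symmetric form and a general partial order, a basis that is simultaneously orthogonal and upper-triangular with respect to the \emph{partial} order simply need not exist. (Gram--Schmidt with respect to a total refinement of dominance does produce an orthogonal basis, but then the triangularity you obtain is only with respect to that total order, which is strictly weaker than statement 2 of the proposition.) The paper itself does not reprove this result; it imports it from Macdonald, where the missing ingredient is supplied by the Sekiguchi--Debiard (Laplace--Beltrami type) operator: a self-adjoint operator for $(\ ,\ )_\kappa$ that preserves each span $\langle m_\mu:\mu\le\lambda\rangle$ and has distinct eigenvalues on partitions of a fixed degree when $\kappa_0\notin\mathbb{Q}_{\le 0}$. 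Its eigenfunctions are automatically dominance-triangular, and orthogonality of eigenvectors for distinct eigenvalues of a self-adjoint operator is what kills the incomparable pairs. Without some such operator (or an equivalent structural input like the Pieri/duality machinery) your construction does not close.

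Two smaller remarks. First, your nondegeneracy discussion is correct but slightly misdirected: diagonality of the Gram matrix in the $p_\lambda$-basis with entries $z_\lambda\kappa^{\ell(\lambda)}$ needs only $\kappa\neq 0$ in $\mathcal{K}$ and has nothing to do with the deformation; the hypothesis $\kappa_0\notin\mathbb{Q}_{\le 0}$ is really there to keep the eigenvalue differences of the triangular operator (and hence the coefficients $u_{\lambda,\mu}(\kappa)$ and the norms $b_\lambda(\kappa)^{-1}$) from degenerating at $\epsilon=0$, which is exactly what makes them units of $\mathcal{O}$. Second, your integrality argument via $(P_\mu,P_\mu)_\kappa\in\mathcal{O}^\times$ leans on the classical norm product formula, which in the standard development is derived \emph{after} existence and triangularity; if you route the proof through the operator, the $\mathcal{O}$-integrality of the $u_{\lambda,\mu}(\kappa)$ comes out directly because the denominators are the eigenvalue differences, which are units under the stated hypothesis on $\kappa_0$.
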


\begin{definition}
  Let
  \begin{align*}
    b_\lambda(\kappa)=(( P_\lambda(x;\kappa),P_\lambda(x;\kappa))_\kappa)^{-1}\,,
  \end{align*}
  then the polynomials
  \begin{align*}
    Q_\lambda(x;\kappa)=b_\lambda(\kappa) P_\lambda(x;\kappa)\,,
  \end{align*}
  form a basis dual to \(P_\lambda(x;\kappa)\),
  such that,
  \begin{align*}
    ( P_\lambda(x;\kappa),Q_\mu(x;\kappa))_\kappa=\delta_{\lambda,\mu}\,.
  \end{align*}
\end{definition}

Jack polynomials in an infinite number of variables satisfy
a number of remarkable properties which we summarise in the
following proposition.
\begin{prop}\label{sec:polytofieldmap}\ 
  \begin{enumerate}
  \item For \(\kappa=\kappa_{0}+\kappa_{1}\epsilon+\cdots\in\mathcal{O}\)
    such that 
    \(\kappa_{0}\in\mathbb{C}\setminus\mathbb{Q}_{\leq0}\) and 
    \(\kappa_{1}\neq 0\),
    the coefficients \(b_\lambda(\kappa)\) are given by
    \begin{align*}
      b_\lambda(\kappa)=((
      P_\lambda(x;\kappa),P_\lambda(x;\kappa))_\kappa)^{-1}=
      \prod_{s\in\lambda}\frac{\kappa a_\lambda(s)+\ell_\lambda(s)+1}{\kappa a_\lambda(s)+\ell_\lambda(s)+\kappa}
    \end{align*}
    and are units of \(\mathcal{O}\), that is, 
    the coefficient of \(\epsilon^0\) is non-zero.
  \item
    \begin{align*}
      \prod_{i,j\geq1}(1-x_i
      y_j)^{-1/\kappa}=\prod_{k\geq1}e^{\tfrac{1}{\kappa}\tfrac{p_k(x)p_k(y)}{k}}
      =\sum_{\lambda}P_\lambda(x;\kappa)Q_\lambda(y;\kappa)
    \end{align*}
  \item Let \(\omega_\beta,\beta\in\mathcal{O}\) be the \(\mathcal{K}\) algebra endomorphism of 
    \(\tensor[_{\mathcal{K}}]{\Lambda}{}\) given by
    \begin{align*}
      \omega_\beta(p_r)=(-1)^{r-1}\beta p_r,
    \end{align*}
    for \(r\geq1\), then the Jack polynomials satisfy
    \begin{align*}
      \omega_\kappa(P_\lambda(x;\kappa))=Q_{\lambda^\prime}(x;\kappa^{-1})\,.
    \end{align*}
  \item Let \(\Xi_X\) be the \(\mathcal{O}\) algebra homomorphism defined by
    \begin{align*}
      \Xi_X: \tensor[_{\mathcal{K}}]{\Lambda}{}&\rightarrow \mathcal{K}\\
      p_r&\mapsto X
    \end{align*}
    for \(r\geq1\) and any \(X\in\mathcal{O}\), then the Jack polynomials satisfy
      \begin{align*}
    \Xi_X(Q_\lambda(x;\kappa))=
    \prod_{s\in\lambda}\frac{X+\kappa a^\prime_\lambda(s)-\ell^\prime_\lambda(s)}{\kappa a_\lambda(s)+\ell_\lambda(s)+\kappa}\,.
  \end{align*}
  The map \(\Xi_X\) allows us to decompose certain products in terms of Jack polynomials.
  \begin{align*}
    \Xi_X(\prod_{i,j\geq1}(1-x_i
      y_j)^{-1/\kappa})&=\prod_{k\geq1}e^{\tfrac{X}{\kappa}\tfrac{p_k(y)}{k}}=\prod_{i\geq1}(1-y_i)^{-\tfrac{X}{\kappa}}\\
      &=\sum_{\lambda}P_\lambda(y;\kappa)\Xi_X(Q_\lambda(x;\kappa))
  \end{align*}
  \end{enumerate}
\end{prop}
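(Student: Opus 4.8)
The plan is to establish the four statements in order, since (2) is the Cauchy-type identity underpinning everything and (1), (3), (4) are classical specialisations refined to work over \(\mathcal{O}\) rather than merely \(\mathbb{C}\). For statement (1), I would start from the known product formula over the field of fractions \(\mathcal{K}\): the evaluation
\begin{align*}
  (P_\lambda(x;\kappa),P_\lambda(x;\kappa))_\kappa=\prod_{s\in\lambda}\frac{\kappa a(s)+\ell(s)+\kappa}{\kappa a(s)+\ell(s)+1}
\end{align*}
is Macdonald's formula \cite[Chapter 6]{MacDonald:1999}, valid once \(\kappa\) avoids the poles of the Jack polynomials. The point specific to this paper is integrality: since \(\kappa=\kappa_0+\kappa_1\epsilon+\cdots\) with \(\kappa_0\in\mathbb{C}\setminus\mathbb{Q}_{\leq0}\), each factor \(\kappa a(s)+\ell(s)+1\) and \(\kappa a(s)+\ell(s)+\kappa\) lies in \(\mathcal{O}\) and has nonzero constant term \(\kappa_0 a(s)+\ell(s)+1\) respectively \(\kappa_0 a(s)+\ell(s)+\kappa_0=\kappa_0(a(s)+1)+\ell(s)\); because \(a(s)\geq0\), \(\ell(s)\geq0\) are integers not both making these vanish (the first is \(\geq1\) when \(a(s)=0\); when \(a(s)\geq1\) a root would force \(\kappa_0\in\mathbb{Q}_{<0}\), excluded), both factors are units of \(\mathcal{O}\). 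Hence \(b_\lambda(\kappa)\), their ratio over all boxes, is a unit of \(\mathcal{O}\) and \(\lim_{\epsilon\to0}b_\lambda(\kappa)\in\mathbb{C}\setminus\{0\}\). The same argument shows the inner product \((\ ,\ )_\kappa\) restricts to a map \(\tensor[_{\mathcal{O}}]{\Lambda}{}\otimes\tensor[_{\mathcal{O}}]{\Lambda}{}\to\mathcal{O}\), since on power sums it takes values \(z_\lambda\kappa^{\ell(\lambda)}\in\mathcal{O}\), and combined with Proposition \ref{sec:zhuproperties}-type triangularity \(u_{\lambda,\mu}(\kappa)\in\mathcal{O}\) the matrix of the form in the monomial basis has entries in \(\mathcal{O}\).

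For statement (2), I would prove the middle equality \(\prod_{i,j}(1-x_iy_j)^{-1/\kappa}=\prod_k e^{p_k(x)p_k(y)/(k\kappa)}\) directly by taking \(\log\) of both sides and expanding \(-\log(1-x_iy_j)=\sum_{k\geq1}(x_iy_j)^k/k\), then summing over \(i,j\). The identification with \(\sum_\lambda P_\lambda(x;\kappa)Q_\lambda(y;\kappa)\) is the defining duality of the \(Q\)-basis: expand the generating function in the \(p_\lambda\)-basis, use that \((p_\lambda,p_\mu)_\kappa=\delta_{\lambda\mu}z_\lambda\kappa^{\ell(\lambda)}\) makes \(\{p_\lambda/(z_\lambda^{1/2}\kappa^{\ell(\lambda)/2})\}\) orthonormal so that \(\prod_k e^{p_k\otimes p_k/(k\kappa)}=\sum_\lambda p_\lambda(x)p_\lambda(y)/(z_\lambda\kappa^{\ell(\lambda)})\) is the reproducing kernel, and then observe that any pair of dual bases (in particular \((P_\lambda,Q_\mu)_\kappa=\delta_{\lambda\mu}\)) yields the same kernel. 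This is \cite[Chapter 6]{MacDonald:1999}.

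For statement (3), the endomorphism \(\omega_\kappa\) sends the reproducing kernel \(\prod_{i,j}(1-x_iy_j)^{-1/\kappa}\) to \(\prod_{i,j}(1+x_iy_j)=\prod_{i,j}(1-x_i(-y_j))^{-1/\kappa^{-1}}\cdot(\text{relabel})\) — more precisely \(\omega_\kappa\) acting in the \(x\)-variables carries \(\sum_\lambda P_\lambda(x;\kappa)Q_\lambda(y;\kappa)\) to \(\sum_\lambda P_\lambda(x;\kappa^{-1})Q_\lambda(y;\kappa^{-1})\) with roles of \(P\) and \(Q\) swapped and conjugate partitions appearing, from which \(\omega_\kappa(P_\lambda(x;\kappa))=Q_{\lambda'}(x;\kappa^{-1})\) follows by matching coefficients; one must additionally check the triangularity and normalisation survive under \(\omega_\kappa\), which they do since \(\omega_\kappa\) is a graded algebra isomorphism. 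For statement (4), the principal specialisation \(\epsilon_X\) is evaluated by applying \(\epsilon_X\) in the \(y\)-variables (or rather a single-variable specialisation and induction) to the Cauchy identity and using the known one-variable evaluation of \(Q_\lambda\); the closed product formula \(\prod_{s\in\lambda}(X+\kappa a'(s)-\ell'(s))/(\kappa a(s)+\ell(s)+1)\) is \cite[Chapter 6, (10.20) and surrounding]{MacDonald:1999}, and again the hypotheses on \(\kappa_0\) guarantee all factors lie in \(\mathcal{O}\) with the denominators units. The main obstacle is not any single identity — all are in Macdonald — but rather the uniform bookkeeping to confirm that every specialisation formula, stated classically over \(\mathbb{C}(\kappa)\) or \(\mathbb{Q}(\kappa)\), remains valid and \(\mathcal{O}\)-integral under the substitution \(\kappa\mapsto\kappa_0+\kappa_1\epsilon+\cdots\); concretely this reduces to checking that no denominator \(\kappa a(s)+\ell(s)+1\) or \(\kappa a(s)+\ell(s)+\kappa\) acquires a zero constant term, which is exactly the content of the hypothesis \(\kappa_0\notin\mathbb{Q}_{\leq0}\) together with \(a(s),\ell(s)\in\mathbb{Z}_{\geq0}\).
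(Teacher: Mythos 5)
The paper states this proposition without proof, presenting it as a summary of known results from \cite[Chapter 6]{MacDonald:1999}, so there is no in-paper argument to diverge from; your reconstruction of Macdonald's arguments (norm formula, Cauchy kernel, duality under \(\omega_\kappa\), principal specialisation) together with the observation that the hypothesis \(\kappa_0\in\mathbb{C}\setminus\mathbb{Q}_{\leq0}\) forces every factor \(\kappa a(s)+\ell(s)+1\) and \(\kappa(a(s)+1)+\ell(s)\) to have nonzero constant term, hence be a unit of \(\mathcal{O}\), is exactly the intended justification. Your proposal is correct and consistent with the paper's (implicit) approach.
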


We return to symmetric polynomials in a finite number of
variables. 
\begin{prop}\label{sec:finiteJackprop}
  \begin{enumerate}
  \item A partition \(\lambda\) defines a non-zero Jack polynomial \(P_{\lambda}(x;\kappa)\in\tensor[_{\mathcal{K}}]{\Lambda}{_N}\)
    if and only if \(\ell(\lambda)\leq N\).
  \item For the partition \(\lambda=(M,\dots,M)\), that is a partition
    consisting of \(N\) copies of an integer \(M\), the Jack
    polynomial
    \(P_{\lambda}(x;\kappa)\in\tensor[_{\mathcal{K}}]{\Lambda}{_N}\)
    is given by
    \begin{align*}
      P_{\lambda}(x;\kappa)=m_\lambda(x)=\prod_{i=1}^N x_i^M\,.
    \end{align*}
  \item For a partition \(\lambda=(\lambda_1,\dots, \lambda_N)\) and
    \(M\geq0\), let
    \(\lambda+M=(\lambda_1+M,\dots,\lambda_N+M)\), then the Jack polynomial 
    \(P_{\lambda}(x;\kappa)\in\tensor[_{\mathcal{K}}]{\Lambda}{_N}\) satisfies
    \begin{align*}
      P_{\lambda}(x;\kappa)\cdot \prod_{i=1}^N x_i^M=P_{\lambda+M}(x;\kappa)\,.
    \end{align*}
  \end{enumerate}
\end{prop}

\begin{definition}
  For the symmetric polynomials in a finite number of variables, let \((\ ,\
    )_\kappa^N\) be the inner product
  \begin{align*}
    (\ ,\
    )_\kappa^N:\tensor[_{\mathcal{K}}]{\Lambda}{_N}\otimes_{\mathcal{K}}\tensor[_{\mathcal{K}}]{\Lambda}{_N}
    \rightarrow \mathcal{K}
  \end{align*}
  such that
  \begin{align*}
    (
    f,g)_\kappa^N=\int_{[\Gamma_N(\kappa)]}U_N(x;\kappa^{-1})\overline{f}g\prod_{i=1}^N\frac{\d
      x_i}{x_i}\,,
  \end{align*}
  where \(\overline{f}=f(x_1^{-1},\dots,x_N^{-1})\).
\end{definition}

\begin{prop}\label{sec:finitevarinnerprod}
  For \(\kappa=\kappa_{0}+\kappa_{1}\epsilon+\cdots\in\mathcal{O}\)
  such that 
  \(\kappa_{0}\in\mathbb{C}\setminus\mathbb{Q}_{\leq0}\) and 
  \(\kappa_{1}\neq 0\),
  the inner product \((\ ,\ )_\kappa\) and  the inner product \((\ ,\
  )_\kappa^N\) of Definition \ref{sec:innerproddef} satisfy:
  \begin{enumerate}
  \item For two partitions \(\lambda,\mu\)
    \begin{align*}
      ( P_\lambda(x;\kappa),P_\mu(x;\kappa))_\kappa&=\delta_{\lambda,\mu}
      b_\lambda(\kappa)^{-1}\in\mathcal{O}\\
      b_\lambda(\kappa)&=
      \prod_{s\in\lambda}\frac{\kappa a_\lambda(s)+\ell_\lambda(s)+1}{\kappa a_\lambda(s)+\ell_\lambda(s)+\kappa}\,.
    \end{align*}
  \item For two partitions \(\lambda,\mu\) with \(\ell(\lambda),\ell(\mu)\leq
    N\)
    \begin{align*}
      ( P_\lambda(x;\kappa),P_\mu(x;\kappa))_\kappa^N&=\delta_{\lambda,\mu} \frac{b_\lambda^{N}(\kappa)}{b_\lambda(\kappa)}\in\mathcal{O}\\
      b_\lambda^N(\kappa)&=\prod_{s\in\lambda}\frac{N+\kappa
        a^\prime_\lambda(s)-\ell^\prime_\lambda(s)}
      {N+(a^\prime_\lambda(s)+1)\kappa-\ell^\prime_\lambda(s)-1}\,.
    \end{align*}
  \item For positive integers \(M,N\), let \(\lambda_{N,M}=(M,\dots,M)\)
    denote the length \(N\) partition consisting of \(N\) copies of \(M\).
    The coefficients \(b_{\lambda_{N,M}}^N(\kappa),b_{\lambda_{N,M}}(\kappa)\)
    satisfy the remarkable identity
    \begin{align*}
      b_{\lambda_{N,M}}^N(\kappa)=b_{\lambda_{N,M}}(\kappa)\,.
    \end{align*}
  \end{enumerate}
\end{prop}
As a direct consequence of the above proposition we have the following
proposition.
\begin{prop}\label{sec:innerprodclosesonO}
  Let \(\kappa=\kappa_{0}+\kappa_{1}\epsilon+\cdots\in\mathcal{O}\)
    such that 
    \(\kappa_{0}\in\mathcal{O}\setminus\mathbb{Q}_{\leq0}\)
    and \(\kappa_{1}\neq 0\).
      \begin{enumerate}
      \item The restriction of the map \(\langle\phantom{F}\rangle_{1/\kappa}^N\) of Definition
        \ref{sec:symringmap} to \(\mathcal{O}[z_1^{\pm},\dots,z_N^\pm]^{\symg{N}}\)
        induces a map
        \begin{align*}
          \langle\phantom{F}\rangle_{1/\kappa}^N:\mathcal{O}[z_1^{\pm},\dots,z_N^\pm]^{\symg{N}}\rightarrow \mathcal{O}\,.
        \end{align*}
      \item The restriction of the inner product
        \((\phantom{f},\phantom{g})_\kappa^N\) of Definition
        \ref{sec:innerproddef} to \(\tensor[_{\mathcal{O}}]{\Lambda}{^N}\) induces an inner product        
        \begin{align*}
          (\phantom{f},\phantom{g})_\kappa^N:\tensor[_{\mathcal{O}}]{\Lambda}{^N}\otimes_{\mathcal{O}}\tensor[_{\mathcal{O}}]{\Lambda}{^N}
          \rightarrow \mathcal{O}\,,
        \end{align*}
      \end{enumerate}
\end{prop}

The following proposition is crucial for defining Frobenius homomorphisms in
Section \ref{sec:frobhomsec}.
It was first conjectured by Macdonald \cite{Macdonald:1987}
and was first proven by Kadell \cite{Kadell:1997}.

\begin{prop}\label{sec:Kadellintegral}
  Let
\(\rho=\rho_0+\rho_1\epsilon+\cdots,\sigma=\sigma_0+\sigma_1\epsilon+\cdots,
\tau=\tau_0+\tau_1\epsilon+\cdots\in\mathcal{O}\) such that the constant terms
of \(\sigma,\tau\) lie in \(\mathbb{C}\setminus\mathbb{Q}_{\leq0}\) and
\begin{align*}
  d(d+1)\tau \notin\mathbb{Z}\,,\ d(d-1)\tau + d\rho\notin\mathbb{Z}\,,\ 
  d(d-1)\tau+d \sigma\notin\mathbb{Z}\,,\  1\leq d\leq m\,.
\end{align*}
 Let
     \(\lambda\) be a partition and 
    \(Q_\lambda(x;\kappa)\in\tensor[_{\mathcal{O}}]{\Lambda}{_N}\) a dual
    Jack polynomial. Then the Kadell integral is given by
    \begin{align*}
      I_{\lambda,N}(\rho,\sigma,1/\kappa)&=\int_{[\Delta_N(\rho,\sigma,1/\kappa)]}
        \prod_{i=1}^N x_i^\rho(1-x_i)^\sigma\prod_{1\leq i\neq j\leq
          N}(x_i-x_j)^{1/\kappa} Q_\lambda(x;\kappa)\tfrac{\d x_1\cdots \d
          x_N}{x_1\cdots x_N}\\
        &=S_N(\rho,\sigma,1/\kappa)\frac{\Xi_{\kappa\rho+N-1}(Q_\lambda(x;\kappa))}
        {\Xi_{\kappa(1+\rho+\sigma)+2(N-1)}(Q_\lambda(x;\kappa))}\Xi_{N}(Q_\lambda(x;\kappa))\,.
    \end{align*}
\end{prop}

\subsection{Integrating on the \(\mathcal{O}\)-lattice}

In this section we consider the action of the screening operators on
the Fock modules \(\tensor[_{\mathcal{O}}]{F}{_{r,s}}\) over \(\mathcal{O}\).

\begin{definition}\label{sec:Oscreeningops}\ 
  Let \(r\geq 1\) and \(s\geq 1\). We define the fields \(\scrp{+}{r}(z)\) and
  \(\scrp{-}{s}(z)\) by
  \begin{align*}
    \scrp{+}{r}(z)&=\int_{[\overline{\Gamma}_r(\kappa_+(\epsilon))]}\scr{+}(z)z^{r-1}\prod_{i=1}^{r-1}\scr{+}(zy_i)\prod_{i=1}^{r-1}\d y_i\,,\\
    \scrp{-}{s}(z)&=\int_{[\overline{\Gamma}_s(\kappa_-(\epsilon))]}\scr{-}(z)z^{s-1}\prod_{i=1}^{s-1}\scr{-}(zy_i)\prod_{i=1}^{s-1}\d y_i\,.
  \end{align*}
\end{definition}

\begin{prop}\label{sec:screeningfields}
  Let \(r\geq1,\ s\geq1\) and \(k\in\mathbb{Z}\).
  \begin{enumerate}
  \item   The fields \(\scrp{+}{r}(z)\) and \(\scrp{-}{s}(z)\) lie in
    \(\hom_{\mathcal{K}}(\tensor[_{\mathcal{K}}]{F}{_{r,k}},\tensor[_{\mathcal{K}}]{F}{_{-r,k}})[[z,z^{-1}]]\)
    and
    \(\hom_{\mathcal{K}}(\tensor[_{\mathcal{K}}]{F}{_{k,s}},\tensor[_{\mathcal{K}}]{F}{_{k,-s}})[[z,z^{-1}]]\)
    respectively and they
    are primary fields of conformal weight 1, that is, they satisfy
    \begin{align*}
      T(w)\scrp{+}{r}(z)&=\frac{1}{(w-z)^2}\scrp{+}{r}(z)+\frac{1}{(w-z)}\partial \scrp{+}{r}(z)+\cdots\\
      [L_n,\scrp{+}{r}(z)]&=z^n(z\frac{\d}{\d z}+(n+1))\scrp{+}{r}(z)\\
      T(w)\scrp{-}{s}(z)&=\frac{1}{(w-z)^2}\scrp{-}{s}(z)+\frac{1}{(w-z)}\partial \scrp{-}{s}(z)+\cdots\\
      [L_n,\scrp{-}{s}(z)]&=z^n(z\frac{\d}{\d z}+(n+1))\scrp{-}{s}(z)
    \end{align*}
    for \(n\in\mathbb{Z}\) and \(k=r,s\).
  \item The fields \(\scrp{+}{r}(z),\ \scrp{-}{s}(z)\) admit the the mode expansions
    \begin{align*}
      \scrp{+}{r}(z)&=\sum_{n\in\mathbb{Z}}\scrp{+}{r}[n]z^{-n-1}\,,&
      \scrp{+}{r}[n]&=\res_{z=0}z^{n}\scrp{+}{r}(z)\d z\in
      \hom_{\mathcal{K}}(\tensor[_{\mathcal{K}}]{F}{_{r,s}},\tensor[_{\mathcal{K}}]{F}{_{-r,s}})\\
      \scrp{-}{s}(z)&=\sum_{n\in\mathbb{Z}}\scrp{-}{s}[n]z^{-n-1}\,,&
      \scrp{-}{s}[n]&=\res_{z=0}z^{n}\scrp{-}{s}(z)\d z\in
      \hom_{\mathcal{K}}(\tensor[_{\mathcal{K}}]{F}{_{r,s}},\tensor[_{\mathcal{K}}]{F}{_{r,-s}})\,.
    \end{align*}
  \item For \(n\in\mathbb{Z}\), the modes \(\scrp{+}{r}[n],\scrp{-}{s}[n]\)
    satisfy, respectively,
    \begin{align*}
      \scrp{+}{r}[n](\tensor[_{\mathcal{O}}]{F}{_{r,s}})&\subseteq
      \tensor[_{\mathcal{O}}]{F}{_{-r,s}}\,,\quad r\geq 1, s\in\mathbb{Z}\,,\\
      \scrp{-}{s}[n](\tensor[_{\mathcal{O}}]{F}{_{r,s}})&\subseteq
      \tensor[_{\mathcal{O}}]{F}{_{r,-s}}\,,\quad r\in\mathbb{Z},s\geq 1\,.
    \end{align*}
  \item The zero modes \(\scrp{+}{r}=\scrp{+}{r}[0]\) and
    \(\scrp{-}{s}=\scrp{-}{s}[0]\) are Virasoro homomorphisms, that is, they are
    Virasoro intertwining operators of Fock modules. 
  \end{enumerate}
\end{prop}
\begin{proof}
  We only prove the \(\scrp{+}{r}\) case, since the \(\scrp{-}{s}\) case
  follows by the same arguments. 

  1. The field \(\scrp{+}{r}(z)\) maps from
  \(\tensor[_{\mathcal{K}}]{F}{_{r,k}}\) to
  \(\tensor[_{\mathcal{K}}]{F}{_{-r,k}}\), because each \(\scr{+}(z)\) shifts
  the Heisenberg charge by \(\alpha_+(\epsilon)\) and
  \(\beta_{r,k}(\epsilon)+r\alpha_+(\epsilon)=\beta_{-r,k}(\epsilon)\).
  We prove the operator product expansion formula. The formula for the
  commutator then directly follows from the operator product expansion. By the
  definition of \(\scrp{+}{r}(z)\) we have the equation:
  \begin{align*}
    T(w)\scrp{+}{r}(z)&=\int_{[\overline{\Gamma}_r(\kappa_+(\epsilon))]}T(w)\scr{+}(z)z^{r-1}
    \prod_{i=1}^{r-1}\scr{+}(zy_i)\prod_{i=1}^{r-1}\d y_i\\
    &=\int_{[\overline{\Gamma}_r(\kappa_+(\epsilon))]}
    \left(\frac{1}{(w-z)^2}\scr{+}(z)+\frac{1}{w-z}\partial
      \scr{+}(z)\right)z^{r-1}\prod_{i=1}^{r-1}\scr{+}(zy_i)\prod_{i=1}^{r-1}\d
    y_i\\
    &+\sum_{i=1}^{r-1}\int_{[\overline{\Gamma}_r(\kappa_+(\epsilon))]}
    \scr{+}(z)z^{r-1}\prod_{j=1}^{i-1}\scr{+}(zy_j)\\
    &\phantom{+\sum_{i=1}^{r-1}\int_{[\overline{\Gamma}_r(\kappa_+(\epsilon))]}}\times
    \left(\frac{1}{(w-zy_i)^2}\scr{+}(zy_i)+\frac{1}{w-zy_i}(\partial\scr{+})(zy_i)\right)
    \prod_{j=i+1}^{r-1}\scr{+}(zy_j)\prod_{j=1}^{r-1}\d y_j\,.
  \end{align*}
  Straightforward but somewhat tedious algebraic manipulations simplify the
  above operator product expansions to
  \begin{align*}
    T(w)\scrp{+}{r}(z)&=\frac{1}{(w-z)^2}\scrp{+}{r}(z)+\frac{1}{w-z}\partial\scrp{+}{r}(z)\\
    &-\int_{[\overline{\Gamma}_r(\kappa_+(\epsilon))]}\d_{y}\sum_{i=1}^{r-1}(-1)^{i}
    \left(\frac{1}{w-zy_i}-\frac{y_i}{w-z}\right)\scr{+}(z)z^{r-2}\prod_{j=1}^{r-1}\scr{+}(zy_i)
    \prod_{\substack{1\leq j\leq r-1\\j\neq i}}\d y_j\,,
  \end{align*}
  where \(\d_y\) is the total derivative with respect to the \(y_i\)
  variables, which implies that the integral on the right hand side vanishes.

  2. The mode expansions follow from the fact \(\scrp{+}{r}(z)\) and
  \(\scrp{-}{s}(z)\) conformal weight 1 primary fields.

  3. We note two properties of Jack polynomials that will be helpful for this
  proof. Let \(\lambda=(\lambda_1,\dots,\lambda_r)\) be a partition of length at most \(r\). Due to
  the upper triangular decomposition of Jack polynomials into symmetric
  monomials in Proposition \ref{sec:uppertriang}, there exists a symmetric
  polynomial \(\widetilde{Q}_\lambda(z_1,\dots,z_r;\kappa_+(\epsilon))\) in
  positive powers of the \(z_i\) variables of
  degree \(r \lambda_1-|\lambda|\) such that
  \begin{align*}
    Q_\lambda(z_1^{-1},\dots,z_r^{-1};\kappa_+(\epsilon))=
    \widetilde{Q}_\lambda(z_1,\dots,z_r;\kappa_+(\epsilon))\prod_{i=1}^r z_i^{-\lambda_1}\,.
  \end{align*}
  Let \(\mu=(\lambda_1-\lambda_r,\dots,\lambda_{r-1}-\lambda_r,0)\), then
  \begin{align*}
    Q_\lambda(z_1,\dots,z_r;\kappa_+(\epsilon))=Q_{\mu}(z_1,\dots,z_r;\kappa_+(\epsilon))\prod_{i=1}^r z_i^{\lambda_r}\,,
  \end{align*}
  due to the third part of Proposition \ref{sec:finiteJackprop}.
  
  The mode \(\scrp{+}{r}[n]\) is given by
  \begin{align*}
    \scrp{+}{r}[n]&=\int_{[\Gamma_r(\kappa_+(\epsilon))]}z_1^{n+1}\scr{+}(z_1)\cdots \scr{+}(z_r)\d
    z_1\cdots \d z_r\\
    &=\int_{[\gamma]\times[\overline{\Gamma}_r(\kappa_+(\epsilon))]}
    z^{n+1+s}G_{r-1}((1-r)\kappa_+(\epsilon),2\kappa_+(\epsilon),\kappa_+(\epsilon);y)\\
    &\ \times\prod_{i=1}^{r-1}y_i^{s}
    :\overline{\scr{+}}(z)\prod_{i=1}^{r-1}\overline{\scr{+}}(z y_i):\frac{\d z\d y_1\cdots\d y_{r-1}}{zy_1\cdots y_{r-1}}\,,
  \end{align*}
  where \(G_{r-1}((1-r)\kappa_+(\epsilon),2\kappa_+(\epsilon),\kappa_+(\epsilon);y)\) is the multivalued function of Section 
  \ref{sec:renormcycles}.
  Due to the two properties of Jack polynomials listed at the beginning of this
  proof, the product \(:\overline{\scr{+}}(z)\prod_{i=1}^{r-1}\overline{\scr{+}}(z y_i):\)
  admits a decomposition into Jack polynomials of the form
  \begin{align*}
    :\overline{\scr{+}}(z)\prod_{i=1}^{r-1}\overline{\scr{+}}(zy_i):
    =\sum_{k\in\mathbb{Z}}\sum_{\ell(\lambda)\leq r-1}
    z^{rk}\prod_{i=1}^{r-1}y_i^{k}Q_\lambda(z,zy_1,\dots,zy_{r-1};\kappa_-(\epsilon)) A_{\lambda,k}\,,
  \end{align*}
  where the second summation index runs over all partitions \(\lambda\) of length at most \(r-1\) and
  \(A_{\lambda,k}\) is an element of
  \(U(\bar{\mathfrak{b}})[|\lambda|+k]\). Therefore the action of \(\scrp{+}{r}[n]\) reduces
  to evaluating integrals of the form
  \begin{align*}
    \int_{[\overline{\Gamma}_r(\kappa_+(\epsilon))]}G_{r-1}((1-r)\kappa_+(\epsilon)+k,2\kappa_+(\epsilon),\kappa_+(\epsilon);y)
    Q_\lambda(y;\kappa_-(\epsilon))\frac{\d y_1\cdots\d y_{r-1}}{y_1\cdots y_{r-1}}\,,
  \end{align*}
  where \(k\in\mathbb{Z}\) and \(\lambda\) is a partition of length at most \(r-1\).
  By using the Kadell integral formulae of Proposition
  \ref{sec:Kadellintegral} one sees that all these integrals lie in
  \(\mathcal{O}\) and thus
  \(\scrp{+}{r}[n](\tensor[_{\mathcal{O}}]{F}{_{r,s}})\subseteq
  \tensor[_{\mathcal{O}}]{F}{_{-r,s}}\).

  4. The zero mode \(\scrp{+}{r}[0]\) being a Virasoro homomorphisms follows from \(\scrp{+}{r}(z)\) being a primary field
  of conformal weight 1.
\end{proof}

\begin{definition}
  By setting \(\epsilon=0\) in Definition \ref{sec:Oscreeningops} we define
  the primary fields:
  \begin{align*}
    \scrp{+}{r}(z)&\in\hom_{\mathbb{C}}(F_{r,s},F_{-r,s})[[z,z^{-1}]]\,,\quad r\geq1, s\in\mathbb{Z}\,,\\
    \scrp{-}{s}(z)&\in\hom_{\mathbb{C}}(F_{r,s},F_{r,-s})[[z,z^{-1}]]\,,\quad r\in\mathbb{Z}, s\geq1\,.
  \end{align*}
  In the same way we also define the Virasoro homomorphisms:
  \begin{align*}
  \scrp{+}{r}&=\res_{z=0}\scrp{+}{r}(z)\d z\,,&
  \scrp{-}{s}&=\res_{z=0}\scrp{-}{s}(z)\d z\,.
  \end{align*}
\end{definition}

For each \(\gamma\in\mathbb{C}\setminus\{0\}\), let \(\rho_\gamma\) be the
\(\mathbb{C}\) algebra isomorphism
\begin{align*}
  \rho_\gamma:\Lambda&\rightarrow U(\mathfrak{b}_-)\\
  p_n(x)&\mapsto \gamma b_{-n},\ n=1,2,\dots\,.
\end{align*}
\begin{prop}\label{sec:fockspacesingvec}
  Let \(\lambda_{N,M}=(M,\dots,M)\) be the partition consisting of \(N\) copies of \(M\).
  The action of the screening operators on \(|\beta_{r,s}\rangle\) 
  is given by
  \begin{enumerate}
  \item For \(r\geq 1\) and \(s\in\mathbb{Z}\)
  \begin{align*}
    \scrp{+}{r}:F_{r,s}&\rightarrow F_{-r,s}\,,
  \end{align*}
  such that
  \begin{align*}
    \scrp{+}{r}|\beta_{r,s}\rangle=\left\{
      \begin{array}{cc}
        0&s\geq 1\\
        \rho_{\tfrac{2}{\alpha_+}}(Q_{\lambda_{r,-s}}(x;\kappa_-))
        |\beta_{-r,s}\rangle
        &s\leq 0
      \end{array}\right.\,.
  \end{align*}
\item For \(r\in\mathbb{Z}\) and \(s\geq 1\)
  \begin{align*}
        \scrp{-}{s}:F_{r,s}&\rightarrow F_{r,-s}\,,
  \end{align*}
  such that
  \begin{align*}
    \scrp{-}{s}|\beta_{r,s}\rangle=\left\{
      \begin{array}{cc}
        0&r\geq 1\\
        \rho_{\tfrac{2}{\alpha_-}}(Q_{\lambda_{s,-r}}(x;\kappa_+))
        |\beta_{r,-s}\rangle
        &r\leq 0
      \end{array}\right.\,.
  \end{align*}
\item For \(r,s\geq 1\) 
  \begin{align*}
    \scrp{+}{r}|\beta_{r,-s}\rangle=(-1)^{rs}b_{\lambda_{r,s}}(\kappa_-)\scrp{-}{s}|\beta_{-r,s}\rangle\,.
  \end{align*}
\end{enumerate}
\end{prop}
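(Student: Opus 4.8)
The plan is to compute the action of the divided-power screening operators on a highest-weight Fock vector directly from the bosonic operator product expansion, to recognise the resulting generating function as a Jack-polynomial Cauchy kernel, and then to let Jack-polynomial orthogonality collapse the Selberg-type integral to a single term; as usual this is carried out over \(\mathcal{O}\), where the integration cycles behave well, and then specialised at \(\epsilon=0\), which is legitimate because the Jack polynomials \(P_\lambda(x;\kappa_\pm(\epsilon))\), \(Q_\lambda(x;\kappa_\pm(\epsilon))\), the scalars \(b_\lambda(\kappa_\pm(\epsilon))\) (units of \(\mathcal{O}\) by Proposition~\ref{sec:polytofieldmap}) and the functionals \(\langle\,\cdot\,\rangle^N_{\kappa_\pm(\epsilon)}\) all lie over \(\mathcal{O}\). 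First I would evaluate \(\prod_{i=1}^{r}S_+(x_i)|\beta_{r,s}\rangle\): by the product formula for \(\prod_{i}^{r}S_+(z_i)\) on \(F_{r,s}\) established above (in terms of \(\Delta_r(z;\kappa_-(\epsilon))\), \(\prod_i z_i^{s-1}\) and \(:\prod_i\overline{S}_+(z_i):\)) and the fact that the annihilation half of \(:\prod_i\overline{S}_+(x_i):\) kills a highest-weight state, only the creation half survives, so that, writing \(p_n(x)=\sum_i x_i^n\) and noting \(e^{r\alpha_+(\epsilon)\hat b}|\beta_{r,s}\rangle=|\beta_{-r,s}\rangle\in F_{-r,s}\),
\begin{align*}
  \prod_{i=1}^{r}S_+(x_i)|\beta_{r,s}\rangle
  &=\Delta_r(x;\kappa_-(\epsilon))\Bigl(\prod_{i=1}^{r}x_i^{s-1}\Bigr)
  \exp\Bigl(\alpha_+(\epsilon)\sum_{n\geq1}\tfrac{b_{-n}}{n}p_n(x)\Bigr)|\beta_{-r,s}\rangle\,.
\end{align*}
Then \(S_+^{[r]}|\beta_{r,s}\rangle\) is obtained by applying \(\langle\,\cdot\,\rangle^{r}_{\kappa_-(\epsilon)}\) to this, after absorbing the extra \(\prod_i x_i\) that converts \(\prod_i\d x_i\) into \(\prod_i\d x_i/x_i\).

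The crucial input is Proposition~\ref{sec:polytofieldmap}(2): applying the isomorphism \(\rho_{2/\alpha_+(\epsilon)}\colon\Lambda\to U(\mathfrak{b}_-)\) to the second family of variables in \(\prod_{k\geq1}e^{\tfrac1\kappa\tfrac{p_k(x)p_k(y)}{k}}=\sum_\lambda P_\lambda(x;\kappa)Q_\lambda(y;\kappa)\) at \(\kappa=\kappa_-(\epsilon)\) turns the left side into \(\exp\bigl(\tfrac{2}{\alpha_+(\epsilon)\kappa_-(\epsilon)}\sum_{k\geq1}\tfrac{p_k(x)b_{-k}}{k}\bigr)\), and \(\tfrac{2}{\alpha_+(\epsilon)\kappa_-(\epsilon)}=\alpha_+(\epsilon)\) exactly because \(\alpha_+(\epsilon)\alpha_-(\epsilon)=-2\) and \(\kappa_-(\epsilon)=\tfrac12\alpha_-(\epsilon)^2\). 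Thus \(\exp\bigl(\alpha_+(\epsilon)\sum_{n\geq1}\tfrac{b_{-n}}{n}p_n(x)\bigr)=\sum_\lambda P_\lambda(x;\kappa_-(\epsilon))\,\rho_{2/\alpha_+(\epsilon)}\bigl(Q_\lambda(\,\cdot\,;\kappa_-(\epsilon))\bigr)\). Substituting this, using \(\prod_i x_i^{s}=P_{(s)^{r}}(x;\kappa_-(\epsilon))\) in \(r\) variables together with the standard shift relation \(\prod_i x_i^{s}\cdot P_\lambda=P_{\lambda+(s^{r})}\), and using that \(\langle\,\cdot\,\rangle^{r}_{\kappa_-(\epsilon)}\) annihilates everything of nonzero total degree while taking \(P_\mu\) to \(\delta_{\mu,(0^{r})}\), one finds: for \(s\geq1\) the total degree \(rs+|\lambda|\) is always strictly positive, so every summand dies and \(S_+^{[r]}|\beta_{r,s}\rangle=0\); for \(s\leq0\) the unique surviving summand is \(\lambda=(-s)^{r}\) with coefficient \(\langle P_{(0^r)}\rangle^{r}_{\kappa_-(\epsilon)}=1\), so \(S_+^{[r]}|\beta_{r,s}\rangle=\rho_{2/\alpha_+(\epsilon)}\bigl(Q_{(-s)^{r}}(\,\cdot\,;\kappa_-(\epsilon))\bigr)|\beta_{-r,s}\rangle\). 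Letting \(\epsilon\to0\) gives the first assertion, and the second is identical after interchanging \(+\leftrightarrow-\), \(r\leftrightarrow s\), \(\alpha_\pm\leftrightarrow\alpha_\mp\) and \(\kappa_\pm\leftrightarrow\kappa_\mp\).

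For the third assertion I would combine the two formulas just obtained, \(S_+^{[r]}|\beta_{r,-s}\rangle=\rho_{2/\alpha_+}(Q_{(s)^{r}}(x;\kappa_-))|\beta_{-r,-s}\rangle\) and \(S_-^{[s]}|\beta_{-r,s}\rangle=\rho_{2/\alpha_-}(Q_{(r)^{s}}(x;\kappa_+))|\beta_{-r,-s}\rangle\), with the Jack duality \(\omega_{\kappa_-}(P_\lambda(x;\kappa_-))=Q_{\lambda'}(x;\kappa_-^{-1})\) of Proposition~\ref{sec:polytofieldmap}(3). For the rectangular partition \(\lambda=(s)^{r}\), whose conjugate is \(\lambda'=(r)^{s}\), and with \(Q_\lambda=b_\lambda(\kappa)P_\lambda\), this gives \(\omega_{\kappa_-}\bigl(Q_{(s)^{r}}(x;\kappa_-)\bigr)=b_{(s)^{r}}(\kappa_-)\,Q_{(r)^{s}}(x;\kappa_+)\). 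It then remains to compare the algebra homomorphisms \(\rho_{2/\alpha_+}\) and \(\rho_{2/\alpha_-}\circ\omega_{\kappa_-}\) from \(\Lambda\) to \(U(\mathfrak{b}_-)\): on \(p_n\) they give \(\tfrac{2}{\alpha_+}b_{-n}\) and \((-1)^{n-1}\kappa_-\tfrac{2}{\alpha_-}b_{-n}\) respectively, and \(\alpha_+\alpha_-=-2\) together with \(\kappa_-=\tfrac12\alpha_-^2\) makes these differ precisely by the grading automorphism \(p_n\mapsto(-1)^{n}p_n\), which on the homogeneous degree-\(rs\) subspace of \(\Lambda\) is multiplication by \((-1)^{rs}\). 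Since \(Q_{(s)^{r}}(x;\kappa_-)\) is homogeneous of degree \(rs\), feeding it through this comparison and the \(\omega\)-identity yields \(\rho_{2/\alpha_+}(Q_{(s)^{r}}(x;\kappa_-))=(-1)^{rs}b_{(s)^{r}}(\kappa_-)\,\rho_{2/\alpha_-}(Q_{(r)^{s}}(x;\kappa_+))\) in \(U(\mathfrak{b}_-)\), and evaluating both sides on \(|\beta_{-r,-s}\rangle\) completes the proof. (Alternatively, both vectors are singular vectors of weight \(h_{r,-s}(\epsilon)\) in \(F_{-r,-s}\), a one-dimensional space by Proposition~\ref{sec:fockresolution}, so proportionality is automatic and only the scalar needs to be identified.)

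The step I expect to be most delicate is not conceptual but the bookkeeping that makes the constant in the third assertion uniform: \(\rho_\gamma\) rescales the \(p_\mu\)-component of a symmetric function by \(\gamma^{\ell(\mu)}\) while \(\omega_\kappa\) rescales it by \((-1)^{|\mu|-\ell(\mu)}\kappa^{\ell(\mu)}\), and it is exactly \(\alpha_+\alpha_-=-2\) (equivalently \(\kappa_+\kappa_-=1\)) that cancels the \(\ell(\mu)\)-dependence and leaves the single factor \((-1)^{rs}b_{(s)^{r}}(\kappa_-)\); checking this cancellation carefully is the crux. A secondary point is that in the first assertion with \(s\leq0\) the object \(\prod_i x_i^{s}\) and the index \(\lambda+(s^{r})\) are genuinely Laurent, so one should verify that the shift relation and the orthogonality \(\langle P_\mu\rangle^{r}_{\kappa_-(\epsilon)}=\delta_{\mu,(0^{r})}\) persist over the ring of symmetric Laurent polynomials with \(P_\mu\) indexed by dominant \(\mu\in\mathbb{Z}^{r}\).
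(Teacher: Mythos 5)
Your argument is correct and is exactly the one the paper intends: the paper states this proposition without proof, but the Jack-polynomial machinery it develops immediately beforehand (the product formula for \(\prod_i S_+(z_i)\) on \(F_{r,s}\), the functional \(\langle\,\cdot\,\rangle^N_\kappa\), the Cauchy kernel, and the duality \(\omega_\kappa(P_\lambda(x;\kappa))=Q_{\lambda'}(x;\kappa^{-1})\)) is precisely what you use, and the same computational pattern appears in the paper's proofs of Propositions \ref{sec:indecstructure_technical} and \ref{sec:Wzeromodeeigenvalues}. Your Laurent-polynomial worry in the first part can be sidestepped by writing \(\langle\prod_i x_i^{s}P_\lambda\rangle^r_{\kappa_-}=(\prod_i x_i^{-s},P_\lambda)^r_{\kappa_-}=(P_{(-s)^r},P_\lambda)^r_{\kappa_-}\) and invoking orthogonality together with \(\|P_{(m)^r}\|^2=\langle 1\rangle=1\), but otherwise the proof is complete.
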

\begin{proof}
  This proposition is proved by using Proposition \ref{sec:finitevarinnerprod}
  to evaluate the action of the screening operators on
  \(|\beta_{r,s}\rangle\).

  1. Let \(r\geq1\) and \(s\in\mathbb{Z}\). Recall that the conformal weight
  of \(\beta_{r,s}\) is
  \begin{align*}
    h_{r,s}=\frac{r^2-1}{4}\kappa_+-\frac{rs-1}{2}+\frac{s^2-1}{4}\kappa_-\,.
  \end{align*}
  If \(s\geq1\), then \(h_{r,s}<h_{-r,s}\). This means that \(F_{-r,s}\) has no
  states of conformal weight \(h_{r,s}\) and that therefore
  \(\scrp{+}{r}|\beta_{r,s}\rangle=0\).
  If \(s\leq0\), then evaluating \(\scrp{+}{r}\) on \(|\beta_{r,s}\rangle\) yields
  \begin{align*}
    \scrp{+}{r}|\beta_{r,s}\rangle&=\int_{[\Gamma_r(\kappa_+)]}U_r(z;\kappa_-(\epsilon))\prod_{i=1}^r
    z_i^{s}
    \prod_{k\geq1}e^{\alpha_+ \tfrac{p_k(z)}{k}b_{-k}}|\beta_{-r,s}\rangle\frac{\d z_1\cdots \d z_r}{z_1\cdots z_r}\\
    &=\int_{[\Gamma_r(\kappa_+)]}U_r(z;\kappa_-(\epsilon))\prod_{i=1}^r z_i^{s}
    \sum_{\lambda} P_\lambda(z;\kappa_-) \rho_{\tfrac{2}{\alpha_+}}(Q_\lambda(x;\kappa_-))|\beta_{-r,s}\rangle
    \rangle\frac{\d z_1\cdots \d z_r}{z_1\cdots z_r}\\
    &=\sum_\lambda
    (P_{\lambda_{r,-s}}(z;\kappa_-),P_\lambda(z;\kappa_-))_{\kappa_-}^r
    \rho_{\tfrac{2}{\alpha_+}}(Q_\lambda(x;\kappa_-))|\beta_{-r,s}\rangle
    =\rho_{\tfrac{2}{\alpha_+}}(Q_{\lambda_{r,-s}}(x;\kappa_-))|\beta_{-r,s}\rangle\,,
  \end{align*}
  where we have used the identity
  \begin{align*}
    \prod_{k\geq1}e^{\alpha_+
      \tfrac{p_k(z)}{k}b_{-k}}&=\rho_{\tfrac{2}{\alpha_+}}\left(\prod_{k\geq1}e^{\tfrac{1}{\kappa_-}
        \tfrac{p_k(z)p_k(x)}{k}}\right)\\
    &=\sum_\lambda P_\lambda(z;\kappa_-)\rho_{\tfrac{2}{\alpha_+}}(Q_\lambda(x;\kappa_-))\,.
  \end{align*}
  
  2. follows by the same arguments as 1.

  3. follows from part 3 of Proposition \ref{sec:polytofieldmap}.
\end{proof}
\begin{remark}
  The results of Proposition \ref{sec:fockspacesingvec} are truly
  remarkable. The screening operators on the left-hand side of points 1 and 2
  are defined in terms of integrals over renormalised cycles and Jack
  polynomials in a finite number of variables, while the right-hand side is
  written in terms of Jack polynomials in the infinite variable case with
  Heisenberg generators inserted.
\end{remark}
For generic values of the central charge \(c\notin\mathbb{Q}\), the singular vectors of Fock
modules were identified with Jack polynomials by direct calculation in
\cite{Mimachi:1995}.

\section{Virasoro representation theory}
\label{sec:Virrepthy}

The way in which Fock spaces decompose into Virasoro modules was determined
by Feigin and Fuchs in \cite{Feigin:1984,Feigin:1988,FeFu:1990}.
For a more modern and detailed account see \cite{Iohara:2010}. In this section we will see how Fock-modules decompose as
Virasoro modules, calculate the kernels and images of screening operators mapping between Fock-modules and introduce
infinite sums of kernels and images that will later turn out to be \(\mathcal{M}_{p_+,p_-}\)-modules.

Let \(\mathcal{L}\)
be the Virasoro algebra at fixed central charge
\begin{align*}
  c_{p+,p_-}=1-6\frac{(p_+-p_-)^2}{p_+p_-}
\end{align*}
and let \(U(\mathcal{L})\) be the universal enveloping algebra of
\(\mathcal{L}\). The Virasoro vertex operator
algebra \(\operatorname{Vir}_{p_+,p_-}\) is given by the restriction of
\(\mathcal{F}_{p_+,p_-}\) to the subVOA
 \(\operatorname{Vir}_{p_+,p_-}=(U(\mathcal{L})|0\rangle,|0\rangle,
 \tfrac12(b_{-1}^2-\alpha_0 b_2)|0\rangle,Y)\).
Furthermore let \(\tensor[_{\mathcal{K}}]{U(\mathcal{L})}{}\) and
\(\tensor[_{\mathcal{O}}]{U(\mathcal{L})}{}\) be the universal enveloping
algebras of the Virasoro algebra at central charge \(c_{p_+,p_-}(\epsilon)\)
over \(\mathcal{K}\) and \(\mathcal{O}\) respectively.
By \(U(\mathcal{L}_-), \tensor[_{\mathcal{K}}]{U(\mathcal{L}_-)}{}\) and
\(\tensor[_{\mathcal{O}}]{U(\mathcal{L}_-)}{}\)
we denote the universal enveloping algebras of Virasoro generators with negative mode
numbers over \(\mathbb{C}, \mathcal{K}\) and \(\mathcal{O}\) respectively.

\subsection{Virasoro representation theory over {\boldmath\(\mathcal{K}\) and \(\mathcal{O}\)\unboldmath}}

Let
\begin{align*}
  \tensor[_{\mathcal{K}}]{K}{_{1,1}}&=\ker(\scr{+}:\tensor[_{\mathcal{K}}]{F}{_{1,1}}\rightarrow\tensor[_{\mathcal{K}}]{F}{_{-1,1}})
  \cap \ker(\scr{-}:\tensor[_{\mathcal{K}}]{F}{_{1,1}}\rightarrow\tensor[_{\mathcal{K}}]{F}{_{1,-1}})\\
  \tensor[_{\mathcal{O}}]{K}{_{1,1}}&=\ker(\scr{+}:\tensor[_{\mathcal{O}}]{F}{_{1,1}}\rightarrow\tensor[_{\mathcal{O}}]{F}{_{-1,1}})
  \cap \ker(\scr{-}:\tensor[_{\mathcal{O}}]{F}{_{1,1}}\rightarrow\tensor[_{\mathcal{O}}]{F}{_{1,-1}})\,,
\end{align*}
then \((\tensor[_{\mathcal{O}}]{K}{_{1,1}},|0\rangle,\tfrac12(b_{-1}^2+\alpha_0(\epsilon)b_{-2})|0\rangle,Y)\) carries the structure
of a VOA over \(\mathcal{O}\) and is an \(\mathcal{O}\)-lattice of the
VOA \((\tensor[_{\mathcal{K}}]{K}{_{1,1}},|0\rangle,\tfrac12(b_{-1}^2+\alpha_0(\epsilon)b_{-2})|0\rangle,Y)\).

\begin{remark}
  It is known that
  \((\tensor[_{\mathcal{K}}]{K}{_{1,1}},|0\rangle,\tfrac12(b_{-1}^2+\alpha_0(\epsilon)b_{-2})|0\rangle,Y)\)
  is isomorphic to the Virasoro VOA over \(\mathcal{K}\) at central charge \(c_{p_+,p_-}(\epsilon)\).
  However, as we will see in Section \ref{sec:Mppalg}, the VOA
  \((\tensor[_{\mathcal{O}}]{K}{_{1,1}}\otimes_{\mathcal{O}}\mathbb{C},|0\rangle,\tfrac12(b_{-1}^2+\alpha_0(0)b_{-2})|0\rangle,Y)\) is
  larger than just the Virasoro VOA at central charge \(c_{p_+,p_-}\).
\end{remark}

For each \(h\in\mathcal{K}\), let \(\tensor[_{\mathcal{K}}]{M}{}(h)\) be the 
\(\tensor[_{\mathcal{K}}]{U}{}(\mathcal{L})\)-Verma module with highest weight
\(h\). 
\begin{prop}\label{sec:vermatofock}\ 
  \begin{enumerate}
  \item The Verma module \(\tensor[_{\mathcal{K}}]{M}{}(h)\) is not
    simple as a \(\tensor[_{\mathcal{K}}]{U}{}(\mathcal{L})\)
    module if and only if \(h=h_{r,s}(\epsilon)\) for some \(r\geq
    1,s\geq1\).
  \item For each \(\beta\in\mathcal{K}\), consider the left
    \(\tensor[_{\mathcal{K}}]{U}{}(\mathcal{L})\)-module
    \(\tensor[_{\mathcal{K}}]{F}{_\beta}\), then there is a canonical 
    \(\tensor[_{\mathcal{K}}]{U}{}(\mathcal{L})\)-module map
    \begin{align*}
      \tensor[_{\mathcal{K}}]{M}{_{h_\beta}}&\rightarrow\tensor[_{\mathcal{K}}]{F}{_\beta}\\
      u_{h_\beta}&\mapsto |\beta\rangle\,,
    \end{align*}
    where \(u_{h_\beta}\) is the highest weight state that generates
    \(\tensor[_{\mathcal{K}}]{M}{_{h_\beta}}\). This map is not an isomorphism if and only if
    \(\beta=\beta_{r,s}(\epsilon)\) for some \(r\geq1, s\geq 1\).
  \item Let \(r\geq1,s\geq1\). The sequences
    \begin{align*}
      0\rightarrow&\tensor[_{\mathcal{K}}]{L}{}(h_{r,s}(\epsilon))\longrightarrow
      \tensor[_{\mathcal{K}}]{F}{_{r,s}}\overset{\scrp{+}{r}}{\longrightarrow}
      \tensor[_{\mathcal{K}}]{F}{_{-r,s}}\rightarrow 0\\
      0\rightarrow&\tensor[_{\mathcal{K}}]{L}{}(h_{r,s}(\epsilon))\longrightarrow
      \tensor[_{\mathcal{K}}]{F}{_{r,s}}\overset{\scrp{-}{s}}{\longrightarrow}
      \tensor[_{\mathcal{K}}]{F}{_{r,-s}}\rightarrow 0\\
      0\rightarrow&\tensor[_{\mathcal{K}}]{F}{_{r,-s}}\overset{\scrp{+}{r}}{\longrightarrow}
      \tensor[_{\mathcal{K}}]{F}{_{-r,-s}}\longrightarrow
      \tensor[_{\mathcal{K}}]{L}{}(h_{-r,-s}(\epsilon))\rightarrow 0\\
      0\rightarrow&\tensor[_{\mathcal{K}}]{F}{_{-r,s}}\overset{\scrp{-}{s}}{\longrightarrow}
      \tensor[_{\mathcal{K}}]{F}{_{-r,-s}}\longrightarrow
      \tensor[_{\mathcal{K}}]{L}{}(h_{-r,-s}(\epsilon))\rightarrow 0
    \end{align*}
    are exact as \(\tensor[_{\mathcal{K}}]{U}{}(L)\)-modules and the screening
    operators \(\scrp{+}{r}\) and \(\scrp{-}{s}\) commute.
  \item\label{item:vermafockisom} The map
    \begin{align*}
      \tensor[_{\mathcal{K}}]{M}{}(h_{r,s}(\epsilon))&\rightarrow
      \tensor[_{\mathcal{K}}]{F}{_{-r,-s}}\,,\\
      u_{h_{r,s}(\epsilon)}&\mapsto |\beta_{-r,-s}(\epsilon)\rangle
    \end{align*}
    is an isomorphism of left
    \(\tensor[_{\mathcal{K}}]{U}{}(\mathcal{L})\)-modules and so is its dual
    \begin{align*}
    \tensor[_{\mathcal{K}}]{F}{_{r,s}}=\tensor[_{\mathcal{K}}]{F}{_{-r,-s}^\ast}&\rightarrow
      \tensor[_{\mathcal{K}}]{M}{}(h_{r,s}(\epsilon))^\ast\,.
    \end{align*}
    If we restrict the domains of the above
    \(\tensor[_{\mathcal{K}}]{U}{}(\mathcal{L})\)-module isomorphisms to
    \(\tensor[_{\mathcal{O}}]{M}{}(h_{r,s}(\epsilon))\)
    and \(\tensor[_{\mathcal{O}}]{F}{_{r,s}}\) we obtain \(\tensor[_{\mathcal{O}}]{U}{}(\mathcal{L})\)-module homomorphisms
    \begin{align*}
      \tensor[_{\mathcal{O}}]{M}{}(h_{r,s}(\epsilon))&\rightarrow \tensor[_{\mathcal{O}}]{F}{_{-r,-s}}\\
      \tensor[_{\mathcal{O}}]{F}{_{r,s}}&\rightarrow \tensor[_{\mathcal{O}}]{M}{}(h_{r,s}(\epsilon))^\ast\,,
    \end{align*}
    where
    \begin{align*}
      \tensor[_{\mathcal{O}}]{M}{}(h_{r,s}(\epsilon))&=\tensor[_{\mathcal{O}}]{U}{}(\mathcal{L})u_{h_{r,s}(\epsilon)}
      \subset \tensor[_{\mathcal{K}}]{M}{}(h_{r,s}(\epsilon))\\
      \tensor[_{\mathcal{O}}]{M}{}(h_{r,s}(\epsilon))^\ast&=\bigoplus_{d\geq 0}\operatorname{Hom}_{\mathcal{O}}
      (\tensor[_{\mathcal{O}}]{M}{}(h_{r,s}(\epsilon))[h_{r,s}(\epsilon)+d],\mathcal{O})\,.
    \end{align*}
  \item For each \(r\geq 1,s\geq 1\) there exists a unique element 
    \(S_{r,s}(\kappa)\in\tensor[_{\mathcal{O}}]{U}{}(\mathcal{L}_{-})\) such that
    \begin{align*}
      S_{r,s}(\kappa)=(L_{-1})^{rs}+\cdots
    \end{align*}
    which satisfies
    \begin{align*}
      L_nS_{r,s}(\kappa)u_{h_{r,s}(\epsilon)}&=0,\ n\geq 1\\
      L_0S_{r,s}(\kappa)u_{h_{r,s}(\epsilon)}&=(h_{r,s}(\epsilon)+rs)S_{r,s}(\kappa)u_{h_{r,s}(\epsilon)}\\
      h_{r,s}(\epsilon)+rs&=h_{r,-s}(\epsilon)=h_{-r,s}(\epsilon)\,.
    \end{align*}
  \item The conformal weight \(h_{r,s}(\epsilon)+rs=h_{r,-s}(\epsilon)=h_{-r,s}(\epsilon)\) is precisely the conformal
    weight of \(S_{r,s}(\kappa)u_{h_{r,s}(\epsilon)},\
    \scrp{+}{r}|\beta_{r,-s}(\epsilon)\rangle\)
    and \(\scrp{+}{s}|\beta_{-r,s}(\epsilon)\rangle\).
    Under the identification
    \begin{align*}
      \tensor[_{\mathcal{K}}]{M}{}(h_{r,s})\cong \tensor[_{\mathcal{K}}]{F}{_{-r,-s}}
    \end{align*}
    these three singular vectors are proportional to each other.
  \end{enumerate}
\end{prop}
\begin{proof}
  By construction the Virasoro representation over \(\mathcal{K}\) is equivalent to working over
  \(\mathbb{C}\) at generic central charge. The above statements on Virasoro representation theory
  at generic central charge are shown in
  \cite{Feigin:1984,Feigin:1988,FeFu:1990}. See \cite{Iohara:2010} for
  detailed proofs. The statements regarding Virasoro representation theory over \(\mathcal{O}\) follow by restriction.
\end{proof}

\subsection{The category {\boldmath \(U(\mathcal{L})\)\unboldmath}-mod of Virasoro 
  modules at central charge {\boldmath \(c_{p_+,p_-}\)\unboldmath}}

The purpose of this subsection is to give a summary of Virasoro representation theory and to give
socle sequence decompositions of Fock-modules in terms of simple Virasoro modules.

\begin{definition}
  Recall that \(X\) was defined to be the lattice of the Heisenberg weights
  that appear in lattice modules:
  \begin{align*}
    X=\mathbb{Z}\frac{1}{\sqrt{p_+p_-}}\,.
  \end{align*}
  Let
  \begin{align*}
    H=\{h_\beta|\beta\in X\}
  \end{align*}
  be
  \emph{the set of highest conformal weights.}
  Two non-equal Heisenberg weights \(\beta,\beta^\prime\in X\) correspond to
  the same conformal weight if and only if
  \(\beta^\prime=\alpha_0-\beta\).
\end{definition}

\begin{definition}
  \begin{enumerate}
  \item For \(1\leq r<p_+,\ 1\leq s<p_-\), let
    \begin{align*}
      \Delta_{r,s}=\Delta_{p_+-r,p_--s}=h_{r,s;0}\,.
    \end{align*}
  \item \emph{The Kac table} \(\mathcal{T}\) is the quotient set
    \begin{align*}
      \mathcal{T}=\{(r,s)|1\leq r<p_+,1\leq s<p_-\}/\sim\,,
    \end{align*}
    where \((r,s)\sim(r^\prime,s^\prime)\) if and only if \(r^\prime=p_+-r,s^\prime=p_--s\). The Kac table
    is the set of all classes \([(r,s)]\) such that the conformal weights \(\Delta_{r,s}\) are distinct.
  \item For \(1\leq r\leq p_+,\ 1\leq s\leq p_-,\ n\geq 0\) let
    \begin{align*}
      \Delta_{r,s;n}^+&=\left\{
        \begin{array}{cc}
          h_{p_+,p_-;-2n}&r=p_+,s=p_-\\
          h_{p_+-r,p_-;-2n-1}&r\neq p_+,s=p_-\\
          h_{p_+,p_--s;2n+1}&r=p_+,s\neq p_-\\
          h_{p_+-r,s;-2n-1}&r\neq p_+, s\neq p_-
        \end{array}\right.\,,&
      \Delta_{r,s;n}^-&=\left\{
        \begin{array}{cc}
          h_{p_+,p_-;-2n-1}&r=p_+,s=p_-\\
          h_{p_+-r,p_-;-2n-2}&r\neq p_+,s=p_-\\
          h_{p_+,p_--s;2n+2}&r=p_+,s\neq p_-\\
          h_{p_+-r,s;-2n-2}&r\neq p_+, s\neq p_-
        \end{array}\right.\,.
    \end{align*}
  \end{enumerate}
\end{definition}

\begin{definition}
  Let \(U(\mathcal{L})\)-Mod be the abelian category whose
  morphisms are Virasoro-homomorphisms and whose objects are left
  \(U(\mathcal{L})\) modules that satisfy the following:
\begin{enumerate}
\item Every object \(M\) decomposes into a direct sum of generalised \(L_0\) eigenspaces
    \begin{align*}
      M&=\bigoplus_{h\in\mathbb{C}}M[h]\\
      M[h]&=\{u\in M| \exists n\geq 1, \text{ s.t. } (L_0-h)^nu=0\}
    \end{align*}
    where \(\dim M[h]<\infty\). For all \(h\in\mathbb{C}\) and there are only a
    countable number of \(h\) for which \(M[h]\) is non-trivial.
  \item For  every object \(M\in U(\mathcal{L})\)-Mod,
    there exists the contragredient object \(M^\ast\)
    \begin{align*}
      M^\ast=\bigoplus_{h\in\mathbb{C}}\hom(M[h],\mathbb{C})\,,
    \end{align*}
    on which the anti-involution \(\sigma(L_n)=L_{-n}\) induces the structure of a left
    \(U(\mathcal{L})\)-module by
    \begin{align*}
      \langle L_n \phi,u\rangle=\langle \phi, \sigma(L_n)u\rangle,\ \phi\in M^\ast, u\in M\,.
    \end{align*}
    Note that \((M^\ast)^\ast\cong M\).
  \end{enumerate}
\end{definition}

\begin{definition}
  Let \(M\in U(\mathcal{L})\)-Mod be a Virasoro module.
  \begin{enumerate}
  \item The Virasoro module \(M\) is called semi-simple if and only if it is
    isomorphic to a at most countably infinite direct sum of simple Virasoro modules.
  \item If there exist non-zero semi-simple submodules of \(M\), then we
    denote the maximal semi-simple submodule of \(M\) by \(S_1(M)\), that is,
    \(S_1(M)\) is semi-simple and any semi-simple submodule of \(M\) lies in
    \(S_1(M)\). Note that if \(M\) is semi-simple, then \(S_1(M)=M\).  The
    semi-simple module \(S_1(M)\) is called the \emph{socle} of \(M\).
  \item Let
    \begin{align*}
      0=M_0\subset M_1 \subset M_2\subset\cdots
    \end{align*}
    be an ascending sequence of submodules of \(M\), such that
    \(S_i(M)=M_i/M_{i-1}\), \(i\geq 1\) is the maximal semi-simple submodule
    of \(M/M_{i-1}\), that is, for each \(i\geq1\), the socle of
    \(M/M_{i-1}\) is \(S_i(M)\). 
    The semi-simple simple modules \(S_i(M)\) are
    called \emph{the components of \(M\)}, while the sequence
    \(\{S_i(M)\}_{i\geq1}\) is called \emph{the socle sequence} of \(M\).  If
    there exists an element \(M_n\) of the filtration, such that, \(M_n=M\)
    and \(M_{n-1}\neq M\), then we say that the socle sequence has length \(n\)
    or that it has finite length. Socle sequences are unique if they exist.
  \end{enumerate}
\end{definition}

\begin{definition}
  We define \(U(\mathcal{L})\)-mod to be the full subcategory of \(U(\mathcal{L})\)-Mod
  such that all objects \(M\) in \(U(\mathcal{L})\)-mod satisfy the following two conditions:
  \begin{enumerate}
  \item The socle sequence of \(M\) has finite length,
  \item The conformal weights \(h\) of the simple modules \(L(h)\),
    appearing in the components of \(M\),
    are elements of \(H\).
  \end{enumerate}
\end{definition}
  Note that the  Verma modules \(M(h_{r,s})\) and their duals \(M(h_{r,s})^\ast\) are not objects of \(U(\mathcal{L})\)-mod
  because they do not admit finite length socle sequences.
  However, the Fock modules \(F_\beta, \beta\in X\) and the simple modules
  \(L(h), h\in H\) are objects of \(U(\mathcal{L})\)-mod.

\begin{prop}\label{sec:socles}\ 
  \begin{enumerate}
  \item For each \(\beta\in X\) the Fock module \(F_\beta\) is an 
    object of \(U(\mathcal{L})\)-mod.
  \item There are four cases of socle sequence for the Fock modules \(F_{r,s;n}\)
  \begin{trivlist}
  \item \((I)\) For \(1\leq r< p_+\), \(1\leq s<p_-\), \(n\in\mathbb{Z}\),
    \begin{align*}
      S_1(F_{r,s;n})&=\bigoplus_{k\geq 0}L(h_{r,p_--s;|n|+2k+1})\,,\\
      S_2(F_{r,s;n})&=\bigoplus_{k\geq a}L(h_{r,s;|n|+2k})\\\nonumber
      &\phantom{=}\oplus\bigoplus_{k\geq 1-a}L(h_{p_+-r,p_--s;|n|+2k})\,,\\\nonumber
      S_3(F_{r,s;n})&=\bigoplus_{k\geq 0}L(h_{p_+-r,s;|n|+2k+1})\,,\\
    \end{align*}
    where \(a=0\) if \(n\geq0\) and \(a=1\) if \(n<0\).
  \item \((II_+)\) For \(1\leq s<p_-\), \(n\in\mathbb{Z}\),
    \begin{align*}
      S_1(F_{p_+,s;n})&=\bigoplus_{k\geq 0}L(h_{p_+,p_--s;|n|+2k+1})\,,\\\nonumber
      S_2(F_{p_+,s;n})&=\bigoplus_{k\geq a}L(h_{p_+,s;|n|+2k})\,,
    \end{align*}
    where \(a=0\) if \(n\geq 1\) and \(a=1\) if \(n<1\).
  \item \((II_-)\) For \(1\leq r< p_+\), \(n\in\mathbb{Z}\),
    \begin{align*}
      S_1(F_{r,p_-;n})&=\bigoplus_{k\geq 0}L(h_{r,p_-;|n|+2k})\,,\\\nonumber
      S_2(F_{r,p_-;n})&=\bigoplus_{k\geq a}L(h_{p_+-r,p_-;|n|+2k-1})\,,
    \end{align*}
    where \(a=1\) if \(n\geq 0\) and \(a=0\) if \(n<0\).
  \item \((III)\) For \(n\in\mathbb{Z}\), the Fock space \(F_{p_+,p_-;n}\) is
    semi-simple as a Virasoro module
    \begin{align*}
      S_1(F_{p_+,p_-;n})=F_{p_+,p_-;n}=\bigoplus_{k\geq 0}L(h_{p_+,p_-;|n|+2k})\,.
    \end{align*}
  \end{trivlist}
  \end{enumerate}
\end{prop}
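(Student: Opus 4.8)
\noindent\emph{Proof strategy.} This proposition amounts to the Feigin--Fuchs classification of Fock modules at $c_{p_+,p_-}<1$, specialised to the weights $\beta_{r,s;n}$; the plan is to recover it from the exact sequences of Proposition~\ref{sec:fockresolution}, the explicit screening formulas of Proposition~\ref{sec:fockspacesingvec}, and the known character formulas for the simple modules $L(h)$ at $c_{p_+,p_-}$ (see \cite{Feigin:1984,Feigin:1988,FeFu:1990,Iohara:2010}).

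First I would deduce Part 1 from Part 2. Every $\beta\in X$ equals $\beta_{r,s;n}$ for suitable $1\le r\le p_+$, $1\le s\le p_-$, $n\in\mathbb{Z}$: using $\beta_{r,s}=\beta_{r+p_+,s+p_-}$ one brings the second index into $\{1,\dots,p_-\}$, and then $\beta_{r,s;n}=\beta_{r-np_+,s}$ brings the first index into $\{1,\dots,p_+\}$ without disturbing the second. Granting the socle series of Part 2, each $F_\beta$ then carries a socle series of length at most $3$ whose simple constituents are all of the form $L(h_\gamma)$, $\gamma\in X$, hence have conformal weight in $H$; since $L_0$ acts on $F_\beta$ semisimply with finite-dimensional, countably many, weight spaces, this is exactly the statement $F_\beta\in U(\mathcal{L})$-mod.

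For Part 2 I would argue in three stages. \emph{(i)} The character $\operatorname{ch}F_{r,s;n}=q^{h_{\beta_{r,s;n}}}\prod_{k\ge1}(1-q^k)^{-1}$ equals, by the Feigin--Fuchs character formulas, the sum of the characters of precisely the simple modules listed on the right-hand sides of $(I)$--$(III)$, each with multiplicity one; combined with the fact that $F_{r,s;n}$ has finite length with composition factors among $\{L(h_\gamma)\}_{\gamma\in X}$, this identifies the multiset of composition factors and reduces everything to locating the socle filtration. \emph{(ii)} Proposition~\ref{sec:fockresolution}(\ref{item:vermafockisom}) identifies, over $\mathcal{K}$, the Fock module $F_{-r,-s}$ with the Verma module $M(h_{r,s}(\epsilon))$, and the remaining parts of that proposition carry this to the $\mathcal{O}$-lattices, hence on setting $\epsilon=0$ to a comparison of $F_{-r,-s}$ with $M(h_{r,s})$ at $c_{p_+,p_-}$; I would then import the classical $c<1$ embedding diagram of $M(h_{r,s})$ --- a ``braid'' of singular vectors for $1\le r<p_+$, $1\le s<p_-$, a single chain when exactly one of $r=p_+$, $s=p_-$ holds, and, at the ``corner'' weights $\beta_{kp_+,lp_-}$ that occur in case $(III)$, a module whose radical filtration makes the Fock module \emph{semisimple} --- and read off its socle layers. \emph{(iii)} For the remaining $F_{r,s;n}$ I would propagate this structure along the screening maps $S_\pm^{[r]}$ over $\mathbb{C}$, using the short exact sequences and the commuting square of Proposition~\ref{sec:fockresolution} (and the higher divided-power screenings, which implement the shifts $n\mapsto n\pm1$): these present every $F_{r,s;n}$ either as the kernel of a surjective screening --- hence, up to isomorphism, a submodule of a Verma-type Fock module --- or as an extension of a simple module by such a kernel. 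Proposition~\ref{sec:fockspacesingvec}, which says exactly when the screening of a highest-weight vector is a nonzero singular vector (given by an explicit Jack polynomial) and when it vanishes, then pins down which constituents lie in the socle, which in the second layer, and which in the third, and in particular produces the parameter $a\in\{0,1\}$: increasing $|n|$ truncates the braid from below, and the vanishing statements of Proposition~\ref{sec:fockspacesingvec} record which low-lying constituents disappear.

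The main obstacle is the combinatorial bookkeeping rather than any single conceptual point: one must verify, uniformly across all four cases and both signs of $n$, that the listed $S_1$ really is the \emph{full} maximal semisimple submodule (no constituent of $S_2$ or $S_3$ is secretly a submodule), that $S_2$ in case $(I)$ is the full maximal semisimple submodule of $F_{r,s;n}/S_1$, and that no two constituents placed in different layers are glued; this is where the precise kernels and images produced by the exact sequences, together with the non-vanishing of the Jack-polynomial singular vectors of Proposition~\ref{sec:fockspacesingvec}, do the real work, the semisimplicity in case $(III)$ and the $c<1$ Verma embedding diagrams being the structural inputs not produced by the exact sequences alone. (One may instead simply quote the Fock-module classification of \cite{Iohara:2010} and translate it into the present indexing.)
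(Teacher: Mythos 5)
The paper does not prove this proposition at all: it is stated as a known result, with the sentence immediately preceding it (and the remark after Proposition~\ref{sec:fockresolution}) attributing the decomposition of Fock modules into Virasoro modules to Feigin--Fuchs \cite{Feigin:1984,Feigin:1988,FeFu:1990} and to Iohara--Koga \cite{Iohara:2010} for a detailed account. So your closing parenthetical --- quote the classification of \cite{Iohara:2010} and translate the indexing --- is in fact exactly what the authors do, and your longer sketch is a reconstruction of the standard literature argument rather than of anything in this paper. The sketch itself is sound in outline (characters pin down the composition factors, the embedding diagrams and the screening singular vectors locate the layers), but be aware that its weakest link is stage (ii): Proposition~\ref{sec:fockresolution}(\ref{item:vermafockisom}) gives an isomorphism $M(h_{r,s}(\epsilon))\cong F_{-r,-s}$ only over $\mathcal{K}$, i.e.\ for the \emph{deformed} (generic) central charge where the Verma module has a single chain of singular vectors; the induced maps of $\mathcal{O}$-lattices are merely injections, and their reductions at $\epsilon=0$ are neither injective nor surjective in general (the cokernel acquires $\epsilon$-torsion). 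Analysing precisely how the generic structure degenerates at $\epsilon=0$ is the actual content of the Feigin--Fuchs theorem, and your phrase ``a comparison of $F_{-r,-s}$ with $M(h_{r,s})$'' quietly delegates all of it to the cited references --- which is acceptable here only because the paper does the same.
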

The above socle sequences are originally due to
\cite{FeFu:1990}, however, a more comprehensive
explanation can be found in \cite{Iohara:2010}.
Figure \ref{fig:fock1} is a visualisation of the socle sequence decomposition of Fock modules.
\begin{figure}
  \centering
  \begin{tikzpicture}[scale=0.9, >=latex]
    \node at (1.5,2.5) {\((\text{I})\ F_{r,s;n}\,\  1\leq r<p_+,\ 1\leq s<p_-\)};
    \node at (-1.5,0) {\(n\geq0\)};
    \path (0,0) node[socmida] (1) {0} 
    ++(2,-0.75) node[soctop] (2l) {1} edge[->] (1) 
    ++(0,1.5) node[socbot] (2r) {1} edge[<-] (1) 
    ++(2,-1.5)  node[socmidb] (3l) {2} edge[<-] (2l) edge[->] (2r) 
    ++(0,1.5) node[socmida] (3r) {2} edge[<-] (2l) edge[->] (2r) 
    ++(2,-1.5) node[soctop] (4l) {3} edge[->] (3l) edge[->]  (3r) 
    ++(0,1.5) node[socbot] (4r) {3} edge[<-] (3l) edge[<-]  (3r)
    ++(2,-1.5) node[socmidb] (5l) {4} edge[<-] (4l) edge[->]   (4r) 
    ++(0,1.5) node[socmida] (5r) {4} edge[<-] (4l)  edge[->] (4r) 
    ++(1,0) node (6l) {\footnotesize\(\cdots\)}
    ++(0,-1.5) node (6r) {\footnotesize\(\cdots\)};
    \path (-1.5,1.5) node[soctop] {\(k\)}
    ++(1.8,0) node {\(=h_{p_+-r,s;n+k},\)}
    ++(1.8,0) node[socmida] {\(k\)}
    ++(1.5,0) node {\(=h_{r,s;n+k},\)}
    ++(1.5,0) node[socmidb] {\(k\)}
    ++(2.1,0) node {\(=h_{p_+-r,p_--s;n+k},\)}
    ++(2.1,0) node[socbot] {\(k\)}
    ++(1.7,0) node {\(=h_{r,p_--s;n+k}\)};
    \node at (-1.5,-2.5) {\(n\leq0\)};
    \path (0,-2.5) node[socmida] (7) {0} 
    ++(2,-0.75) node[soctop] (8l) {1} edge[->] (7) 
    ++(0,1.5) node[socbot] (8r) {1} edge[<-] (7) 
    ++(2,-1.5)  node[socmidb] (9l) {2} edge[<-] (8l) edge[->] (8r) 
    ++(0,1.5) node[socmida] (9r) {2} edge[<-] (8l) edge[->] (8r) 
    ++(2,-1.5) node[soctop] (10l) {3} edge[->] (9l) edge[->]  (9r) 
    ++(0,1.5) node[socbot] (10r) {3} edge[<-] (9l) edge[<-]  (9r)
    ++(2,-1.5) node[socmidb] (11l) {4} edge[<-] (10l) edge[->]   (10r) 
    ++(0,1.5) node[socmida] (11r) {4} edge[<-] (10l)  edge[->] (10r) 
    ++(1,0) node (12l) {\footnotesize\(\cdots\)}
    ++(0,-1.5) node (12r) {\footnotesize\(\cdots\)};
    \path (-1.5,-4) node[soctop] {\(k\)}
    ++(1.8,0) node {\(=h_{p_+-r,s;-n+k},\)}
    ++(1.9,0) node[socmida] {\(k\)}
    ++(2.2,0) node {\(=h_{p_+-r,p_--s;-n+k},\)}
    ++(2.2,0) node[socmidb] {\(k\)}
    ++(1.5,0) node {\(=h_{r,s;-n+k},\)}
    ++(1.5,0) node[socbot] {\(k\)}
    ++(1.8,0) node {\(=h_{r,p_--s;-n+k}\)};
  \end{tikzpicture}\vspace{2mm}\\
  \begin{tikzpicture}[scale=0.9, >=latex]
    \node at (0.5,2) {\((\text{II}_-)\ F_{p_+,s;n}\,, 1\leq s<p_-\)};
    \node at (-1.5,0)   {\(n\geq1\)}; 
    \path (0,0) node[soctop] (1) {0}
    ++(2,0) node[socbot] (2) {1} edge[<-] (1)
    ++(2,0) node[soctop] (3) {2} edge[->] (2)
    ++(2,0) node[socbot] (4) {3} edge[<-] (3)
    ++(2,0) node[soctop] (5) {4} edge[->] (4)
    ++(1,0) node (6) {\footnotesize\(\cdots\)};
    \path (-1.5,1) node[soctop] {\(k\)}
    ++(1.5,0) node {\(=h_{p_+,s;n+k},\)}
    ++(1.6,0) node[socbot] {\(k\)}
    ++(1.8,0) node {\(=h_{p_+,p_--s;n+k}\)};
    \node at (-1.5,-1)   {\(n<1\)}; 
    \path (0,-1) node[socbot] (7) {1}
    ++(2,0) node[soctop] (8) {2} edge[<-] (7)
    ++(2,0) node[socbot] (9) {3} edge[->] (8)
    ++(2,0) node[soctop] (10) {4} edge[<-] (9)
    ++(2,0) node[socbot] (11) {5} edge[->] (10)
    ++(1,0) node (12) {\footnotesize\(\cdots\)};
    \path (-1.5,-2) node[soctop] {\(k\)}
    ++(2,0) node {\(=h_{p_+,p_--s;-n+k},\)}
    ++(2,0) node[socbot] {\(k\)}
    ++(1.6,0) node {\(=h_{p_+,s;-n+k}\)};
  \end{tikzpicture}\vspace{2mm}\\
\begin{tikzpicture}[scale=0.9, >=latex]
    \node at (0.5,2) {\((\text{II}_+)\ F_{r,p_-;n}\,, 1\leq r<p_+\)};
    \node at (-1.5,0)   {\(n\geq0\)}; 
    \path (0,0) node[socbot] (1) {0}
    ++(2,0) node[soctop] (2) {1} edge[->] (1)
    ++(2,0) node[socbot] (3) {2} edge[<-] (2)
    ++(2,0) node[soctop] (4) {3} edge[->] (3)
    ++(2,0) node[socbot] (5) {4} edge[<-] (4)
    ++(1,0) node (6) {\footnotesize\(\cdots\)};
    \path (-1.5,1) node[soctop] {\(k\)}
    ++(1.9,0) node {\(=h_{p_+-r,p_-;n+k},\)}
    ++(2,0) node[socbot] {\(k\)}
    ++(1.5,0) node {\(=h_{r,p_-;n+k}\)};
    \node at (-1.5,-1)   {\(n<0\)}; 
    \path (0,-1) node[soctop] (7) {-1}
    ++(2,0) node[socbot] (8) {0} edge[<-] (7)
    ++(2,0) node[soctop] (9) {1} edge[->] (8)
    ++(2,0) node[socbot] (10) {2} edge[<-] (9)
    ++(2,0) node[soctop] (11) {3} edge[->] (10)
    ++(1,0) node (12) {\footnotesize\(\cdots\)};
    \path (-1.5,-2) node[soctop] {\(k\)}
    ++(1.7,0) node {\(=h_{r,p_-;-n+k},\)}
    ++(1.8,0) node[socbot] {\(k\)}
    ++(1.9,0) node {\(=h_{p_+-r,p_-;-n+k}\)};
  \end{tikzpicture}
  \caption{The socle sequences of Fock modules:
  The arrows indicate which states one can reach by acting with the Virasoro algebra, that is,
  an arrow \(v\rightarrow w\) means \(w\in U(\mathcal{L})v\).
}
\label{fig:fock1}
\end{figure}
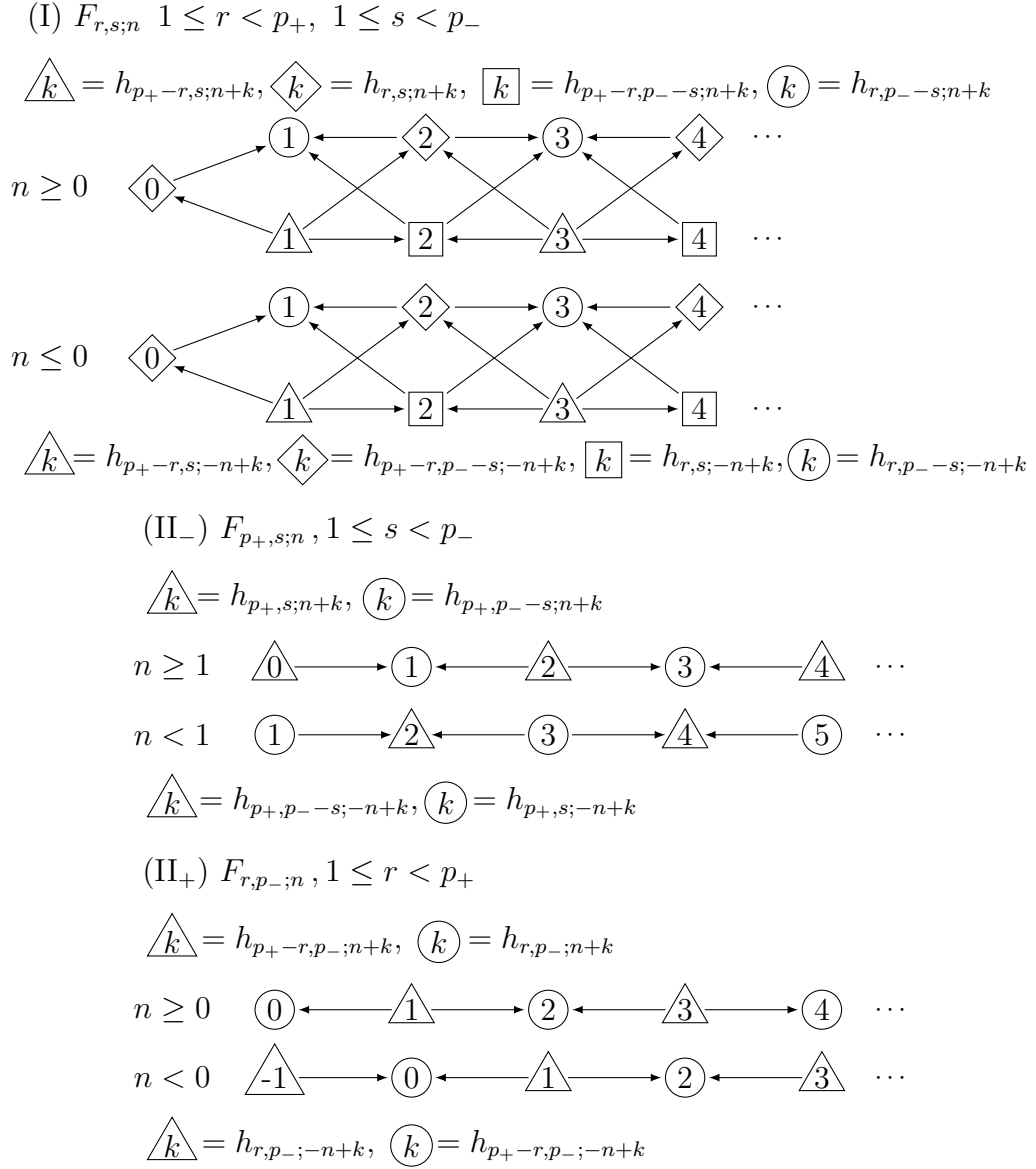

\subsection{Kernels and images of screening operators}

In this subsection we give the socle sequences of the kernels and images of the screening operators \(\scrp{+}{r},\ \scrp{-}{s}\).
\begin{definition}\label{sec:kersandims}
  We denote the kernels and images of screening operators \(\scrp{+}{r},\ \scrp{-}{s}\) by
  \begin{enumerate}
  \item For \(1\leq r<p_+\), \(1\leq s\leq p_-\), \(n\in \mathbb{Z}\)
    \begin{align*}
      K_{r,s;n;+}&=\ker \scrp{+}{r}:F_{r,s;n}\rightarrow F_{p_+-r,s;n+1}\\
      X_{p_+-r,s;n+1;+}&=\im \scrp{+}{r}:F_{r,s;n}\rightarrow F_{p_+-r,s;n+1}
    \end{align*}
  \item For \(1\leq r\leq p_+\), \(1\leq s< p_-\), \(n\in \mathbb{Z}\)
    \begin{align*}
      K_{r,s;n;-}&=\ker \scrp{-}{s}:F_{r,s;n}\rightarrow F_{r,p_--s;n-1}\\
      X_{r,p_--s;n-1;-}&=\im \scrp{-}{s}:F_{r,s;n}\rightarrow F_{r,p_--s;n-1}
    \end{align*}
  \item For \(1\leq r< p_+\), \(1\leq s< p_-\), \(n\in \mathbb{Z}\)
    \begin{align*}
      K_{r,s:n}&=K_{r,s;n;+}\cap K_{r,s;n;-}&X_{r,s:n}&=X_{r,s;n;+}\cap
      X_{r,s;n;-}\\
      K_{r,p_-;n}&=K_{r,p_-;n;+}&K_{p_+,s;n}&=K_{p_+,s;n;-}\\
      K_{p_+,p_-;n}&=X_{p_+,p_-;n}=F_{p_+,p_-;n}\,.
    \end{align*}
  \end{enumerate}
\end{definition}

\begin{prop}\label{sec:ksocs}
  For \(1\leq r<p_+\), \(1\leq s<p_-\)
  the socle sequences of the kernels and images of the screening operator
  \(\scr{+}\)  are given by
  \begin{align*}
    n&\geq 0&n&\leq -1\\
    S_1(K_{r,s;n;+})&=\bigoplus_{k\geq 1}L(h_{r,p_--s;n+2k-1})\,,&
    S_1(K_{r,s;n;+})&=\bigoplus_{k\geq 1}L(h_{r,p_--s;-n+2k-1})\,,\\
    S_2(K_{r,s;n;+})&=\bigoplus_{k\geq 1}L(h_{r,s;n+2(k-1)})\,,&
    S_2(K_{r,s;n;+})&=\bigoplus_{k\geq 1}L(h_{r,s;-n+2k})\,,\\
    S_1(X_{r,s;n+1;+})&=\bigoplus_{k\geq 1}L(h_{r,p_--s;n+2k})\,,&
    S_1(X_{r,s;n+1;+})&=\bigoplus_{k\geq 1}L(h_{r,p_--s;-n+2(k-1)})\,,\\
    S_2(X_{r,s;n+1;+})&=\bigoplus_{k\geq 1}L(h_{r,s;n+2k-1})\,,&
    S_2(X_{r,s;n+1:+})&=\bigoplus_{k\geq 1}L(h_{r,s;-n+2k-1})\,,
  \end{align*}
  and those of the screening operator \(\scr{-}\) by
  \begin{align*}
    n&\geq 1&n&\leq 0\\
    S_1(K_{r,s;n;-})&=\bigoplus_{k\geq 1}L(h_{r,p_--s;n+2k-1)})\,,&
    S_1(K_{r,s;n;-})&=\bigoplus_{k\geq 1}L(h_{r,p_--s;-n+2k-1)})\,,\\    
    S_2(K_{r,s;n;-})&=\bigoplus_{k\geq 1}L(h_{p_+-r,p_--s;n+2k})\,,&
    S_2(K_{r,s;n;-})&=\bigoplus_{k\geq 1}L(h_{p_+-r,p_--s;-n+2(k-1)})\,,\\    
    S_1(X_{r,s;n+1;-})&=\bigoplus_{k\geq 1}L(h_{r,p_--s;n+2(k-1)})\,,&
    S_1(X_{r,s;n+1;-})&=\bigoplus_{k\geq 1}L(h_{r,p_--s;-n+2k})\,,\\    
    S_2(X_{r,s;n+1;-})&=\bigoplus_{k\geq 1}L(h_{p_+-r,p_--s;n+2k-1})\,,&
    S_2(X_{r,s;n+1;-})&=\bigoplus_{k\geq 1}L(h_{p_+-r,p_--s;-n+2k-1})\,.
  \end{align*}
  For \(r=p_+\) or \(s=p_-\) the kernels and images are semisimple and we have
  \begin{align*}
    K_{r,p_-;n;+}&=X_{r,p_-;n;+}=S_1(F_{r,p_-;n})\,,&K_{p_+,s;n;-}&=X_{p_+,s;n;-}=S_1(F_{p_+,s;n})\,.
  \end{align*}
\end{prop}
The above proposition is due to Felder \cite{Fel:1989}. It can be proved by means of the socle sequences in Propositions
\ref{sec:socles} and the following proposition due to Felder \cite{Fel:1989}.
\begin{prop}
  \begin{enumerate}
  \item For \(1\leq r<p_+\), \(1\leq s\leq p_-\) and \(n\in\mathbb{Z}\)
    the screening operator \(\scr{+}\) defines the \emph{Felder complex}
    \begin{align*}
      \cdots\stackrel{\scrp{+}{r}}{\longrightarrow}
      F_{p_+-r,s;n-1}\stackrel{\scrp{+}{p_+-r}}{\longrightarrow}
      F_{r,s,n}\stackrel{\scrp{+}{r}}{\longrightarrow}F_{p_+-r,s;n+1}
      \stackrel{\scrp{+}{p_+-r}}{\longrightarrow}\cdots\,.
    \end{align*}
    This complex is exact for \(s=p_-\). For \(1\leq s< p_-\) it is exact
    everywhere except in \(F_{r,s;0}\), where the cohomology is given by
    \begin{align*}
      K_{r,s;0;+}/X_{r,s;0;+}\cong L(h_{r,s;0})\,.
    \end{align*}
  \item For \(1\leq r \leq p_+\), \(1\leq s< p_-\) and \(n\in\mathbb{Z}\) the
    screening operator \(\scr{-}\) defines the \emph{Felder complex}
    \begin{align*}
      \cdots\stackrel{\scrp{-}{s}}{\longrightarrow}
      F_{r,p_--s;n+1}\stackrel{\scrp{-}{p_--s}}{\longrightarrow}
      F_{r,s,n}\stackrel{\scrp{-}{s}}{\longrightarrow}F_{r,p_--s;n-1}
      \stackrel{\scrp{-}{p_--s}}{\longrightarrow}\cdots\,.
    \end{align*}
    This complex is exact for \(r=p_+\). For \(1\leq r< p_+\) it is exact
    everywhere except in \(F_{r,s;0}\), where the cohomology is given by
    \begin{align*}
      K_{r,s;0;-}/X_{r,s;0;-}\cong L(h_{r,s;0})\,.
    \end{align*}
  \end{enumerate}
\end{prop}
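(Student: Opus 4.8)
The plan is as follows. The two assertions are equivalent under the involution $p_+\leftrightarrow p_-$, $r\leftrightarrow s$, $\alpha_+\leftrightarrow\alpha_-$ together with the reversal of the third-index shift (the field $S_+$ raises the third index by one, while $S_-$ lowers it), so I would prove the statement for the $S_+$-sequence and obtain the $S_-$-sequence by this symmetry. Fix $1\le r<p_+$ and $1\le s\le p_-$. Two things must be established: first, that the displayed sequence really is a complex, i.e. $S_+^{[p_+-r]}\circ S_+^{[r]}=0$ on $F_{r,s;n}$ and $S_+^{[r]}\circ S_+^{[p_+-r]}=0$ on $F_{p_+-r,s;n}$ for every $n$; and second, the identification of the cohomology.

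I would dispose of the cohomology computation first, granting the complex property. By the very definitions of $K_{r,s;n;+}$ and $X_{r,s;n;+}$, the cohomology at $F_{r,s;n}$ equals $\ker\bigl(S_+^{[r]}\colon F_{r,s;n}\to F_{p_+-r,s;n+1}\bigr)$ modulo $\operatorname{im}\bigl(S_+^{[p_+-r]}\colon F_{p_+-r,s;n-1}\to F_{r,s;n}\bigr)$, that is $K_{r,s;n;+}/X_{r,s;n;+}$. When $s=p_-$ the relevant Fock modules are of type $(\text{II}_-)$ and, by the preceding proposition, $K_{r,p_-;n;+}=X_{r,p_-;n;+}=S_1(F_{r,p_-;n})$, so the complex is exact. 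When $1\le s<p_-$ I would read off the socle series of $K_{r,s;n;+}$ and of $X_{r,s;n;+}$ from the preceding proposition (the latter by specialising the formulas for $X_{r,s;n+1;+}$ to $n\mapsto n-1$): for $n\neq 0$ the two socle series coincide term by term, and since the socle filtration of a Fock module is multiplicity free a submodule is determined by its set of composition factors, whence $K_{r,s;n;+}=X_{r,s;n;+}$ and the cohomology vanishes; for $n=0$ the $S_1$-parts agree while the $S_2$-parts differ by exactly one simple summand $L(h_{r,s;0})$, so $K_{r,s;0;+}/X_{r,s;0;+}$ has a single composition factor and is therefore isomorphic to $L(h_{r,s;0})$.

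The substance of the argument is the complex property. The composition $S_+^{[p_+-r]}\circ S_+^{[r]}$ is a Virasoro intertwiner $F_{r,s;n}\to F_{r,s;n+2}$ built from the integration of the $p_+$-fold product of the field $S_+(z)$, and the plan is to identify it, up to a scalar, with the $p_+$-th divided power $S_+^{[p_+]}$ and then to show $S_+^{[p_+]}\equiv 0$. The vanishing of $S_+^{[p_+]}$ should come from the construction of the renormalised cycle: $S_+^{[N]}$ is built from $[\Gamma_N(\kappa_+(\epsilon))]=c_N(\kappa_+(\epsilon))^{-1}[\Delta_N(\kappa_+(\epsilon))]$, and at $N=p_+$ the Selberg normalisation $c_{p_+}(\kappa)$ has a pole at $\kappa=\kappa_+=p_-/p_+$ (a $\Gamma$-function pole in its product formula), so $c_{p_+}(\kappa_+(\epsilon))^{-1}$ vanishes at $\epsilon=0$; combined with the fact, from Corollary~\ref{sec:innerprodclosesonO} and the results on integrating over the $\mathcal{O}$-lattice, that the un-renormalised operator has all matrix elements regular at $\epsilon=0$, this forces $S_+^{[p_+]}$ to vanish in the limit $\epsilon\to0$. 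An alternative route avoids the divided-power bookkeeping altogether: $\operatorname{im} S_+^{[r]}$ and $\ker S_+^{[p_+-r]}$ are submodules of one and the same Fock module, whose submodule lattice is made completely explicit by the Feigin--Fuchs analysis summarised in Figures~\ref{fig:fock1} and~\ref{fig:fock2}, and comparing composition factors exactly as above yields $\operatorname{im} S_+^{[r]}\subseteq\ker S_+^{[p_+-r]}$, with equality away from the central node. I expect this step --- making rigorous, over $\mathcal{O}$, the reduction of the twofold composition to a multiple of $S_+^{[p_+]}$ together with the control of the $\epsilon\to0$ limit --- to be the main obstacle; once it is in hand, everything else is bookkeeping with the already-established socle data, and the $S_-$-complex follows by the stated symmetry.
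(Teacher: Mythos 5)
First, note that the paper itself offers no proof of this proposition: it is attributed to Felder \cite{Fel:1989} and used as imported input, so your argument has to stand on its own merits. Your reduction of the cohomology statement to the complex property, and the computation of the cohomology by comparing the composition factors of the kernels and images read off from the preceding proposition (together with the observation that the Fock modules of Proposition \ref{sec:socles} are multiplicity free, so that containment of composition-factor sets forces containment of submodules), is correct, as is the symmetry reducing the \(S_-\) statement to the \(S_+\) one.

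The genuine gap is in your main route to the complex property. The claim that \(S_+^{[p_+]}\equiv 0\) is false and contradicts the paper: the soliton bases of Remark \ref{sec:solitonbasis} consist of nonzero vectors of the form \(S_+^{[(n+m)p_+]}|\beta_{p_+,p_-;-2n}\rangle\) and \(S_+^{[(n+m+1)p_+-r]}|\beta\rangle\), and the Frobenius map \(E\) of Definition \ref{sec:frobeniusaction} acts precisely by raising the divided power by \(p_+\); all of this collapses if \(S_+^{[p_+]}=0\). The mechanism you propose also runs backwards: the renormalisation \([\Gamma_N]=c_N(\kappa)^{-1}[\Delta_N]\) is chosen exactly so that \(\langle 1\rangle_\kappa^N=1\) and so that, by Corollary \ref{sec:innerprodclosesonO}, the renormalised integrals land in \(\mathcal{O}\) with unit leading term -- any pole of \(c_N\) is cancelled against the degeneration of \([\Delta_N]\), it does not kill the operator. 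Moreover the identification of \(S_+^{[p_+-r]}\circ S_+^{[r]}\) with a multiple of \(S_+^{[p_+]}\) does not typecheck: on \(F_{r,s;n}\) with \(1\leq r<p_+\) the \(p_+\)-fold product of screenings carries an extra multivalued factor \(\prod_i z_i^{(p_+-r)\kappa_+}\) with \((p_+-r)\kappa_+\notin\mathbb{Z}\), so the relevant local system is not the one over which \([\Gamma_{p_+}]\) was constructed. The correct mechanism (Felder's) is that the nested product cycle defining the composition represents, in the skew part of the homology of this twisted local system, a class proportional to a \(q\)-binomial coefficient \(\binom{p_+}{r}_q\) at the root of unity \(q=e^{i\pi\kappa_+}\), which vanishes for \(0<r<p_+\) while the divided power itself survives; none of this is supplied by your argument. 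Your fallback route -- deducing \(\im S_+^{[r]}\subseteq\ker S_+^{[p_+-r]}\) by comparing composition factors inside the multiplicity-free Fock module -- is logically sound, but it rests entirely on the preceding proposition listing the socle series of \(K_{r,s;n;\pm}\) and \(X_{r,s;n;\pm}\), whose statement already presupposes knowledge of these kernels and images; as an independent proof of the complex property it is circular, though as a derivation of this proposition from the previous one it is acceptable.
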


\begin{thm}\label{sec:singvects}
  Let \(1\leq r\leq p_+\), \(1\leq s \leq p_-\), all singular vectors of the
  Fock modules \(F_{r,s;n}\) can be expressed as the images of
  the screening operators \(\scr{+}\) and \(\scr{-}\).
  \begin{enumerate}
  \item  The singular vectors at
    conformal weights \(h_{r,p_--s;|n|+2k-1}\), \(k\geq0\) are given by
    \begin{align*}
      &\scrp{+}{(k+n+1)p_+-r}|\beta_{p_+-r,s;-1-2k-n}\rangle\in F_{r,s;n}
    \end{align*}
    for \(n\geq 0\) and
    \begin{align*}
      &\scrp{+}{(k+1)p_+-r}|\beta_{p_+-r,s;-1-2k+n}\rangle\in F_{r,s;n}
    \end{align*}
    for \(n\leq 0\).
  \item The singular vectors at
    conformal weights \(h_{r,p_--s;|n|+2k-1}\), \(k\geq0\) are given by
    \begin{align*}
      &\scrp{-}{(k+1)p_--s}|\beta_{r,p_--s;2k+n+1}\rangle\in F_{r,s;n}
    \end{align*}
    for \(n\geq 0\) and
    \begin{align*}
      &\scrp{-}{(k-n+1)p_--s}|\beta_{r,p_--s;2k-n+1}\rangle\in F_{r,s;n}
    \end{align*}
    for \(n\leq 0\).
  \end{enumerate}
\end{thm}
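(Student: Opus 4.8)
The plan is to identify the singular vectors from the socle decomposition of Proposition~\ref{sec:socles} and then to realise each of them explicitly as the image of a Fock highest-weight vector under a divided-power screening. The key mechanism is that the integrated products \(S_+^{[N]}\) and \(S_-^{[N]}\), obtained by integrating the conformal-weight-one primary fields \(S_\pm(z)\) over the closed cycles \([\Gamma_N(\kappa)]\) of Section~\ref{sec:deformationsec}, commute with the whole Virasoro algebra, in particular with \(L_0\); hence the image of a Virasoro highest-weight vector under any such operator is either \(0\) or again a Virasoro highest-weight vector of the \emph{same} conformal weight. I would carry out the construction over \(\mathcal{O}\), where these divided powers are defined, and only specialise \(\epsilon\to0\) at the end -- by Proposition~\ref{sec:fockspacesingvec} the relevant matrix elements are already controlled in that limit. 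With this in place the proof splits into three tasks: (i) identify the source, target and conformal weight of each of the four expressions; (ii) prove non-vanishing; (iii) show the resulting family is exhaustive.

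For (i), I would use the periodicity \(\beta_{r,s}=\beta_{r+p_+,s+p_-}\) to rewrite the highest-weight vector occurring in each formula -- for instance \(|\beta_{p_+-r,s;-1-2k-n}\rangle\) in part~1 -- as \(|\beta_{\widetilde N,\widetilde s}\rangle\) with \(\widetilde N\) \emph{equal to} the number of screening currents appearing in the divided power and with \(\widetilde s\le 0\). Since \(\beta_{a,b}+\alpha_+=\beta_{a-2,b}\), this gives \(S_+^{[\widetilde N]}|\beta_{\widetilde N,\widetilde s}\rangle\in F_{-\widetilde N,\widetilde s}\), which one checks equals \(F_{r,s;n}\); and the conformal weight of the output, being \(h_{\widetilde N,\widetilde s}\), is matched with the level stated in the theorem by a direct computation using the Kac formula for \(h_{r,s}\) and the symmetry \(h_{r,s}=h_{-r,-s}\). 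This is a routine but slightly tedious case analysis (\(n\ge0\) versus \(n\le0\), \(S_+\) versus \(S_-\)); a convenient consistency check -- and in fact precisely the condition that makes each divided power well defined on the Fock module on which it acts -- is that the number of integration variables is always congruent, modulo \(p_+\) (resp.\ \(p_-\)), to the first (resp.\ second) Fock index.

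Step (ii) is then immediate. Once the highest-weight vector has been rewritten as \(|\beta_{\widetilde N,\widetilde s}\rangle\) with \(\widetilde s\le 0\), Proposition~\ref{sec:fockspacesingvec} evaluates \(S_+^{[\widetilde N]}|\beta_{\widetilde N,\widetilde s}\rangle\) as \(\rho_{2/\alpha_+}\bigl(Q_{(|\widetilde s|)^{\widetilde N}}(x;\kappa_-)\bigr)\) applied to the generating vector of \(F_{-\widetilde N,\widetilde s}\). The Jack polynomial \(Q_{(|\widetilde s|)^{\widetilde N}}\) is a nonzero element of \(\Lambda\) (it belongs to a basis), \(\rho_{2/\alpha_+}\) is an algebra isomorphism onto \(U(\mathfrak{b}_-)\), and \(U(\mathfrak{b}_-)\to F_{-\widetilde N,\widetilde s}\) is an isomorphism of vector spaces, so the output is nonzero and nothing is lost on passing to \(\epsilon=0\). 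The same reasoning with \(\kappa_+\) and \(\kappa_-\) interchanged and \(S_+\) replaced by \(S_-\) settles part~2, and item~3 of Proposition~\ref{sec:fockspacesingvec} (equivalently, the commuting square of Proposition~\ref{sec:fockresolution}) shows that the \(S_+\)-expression and the \(S_-\)-expression describe the \emph{same} singular vector up to a nonzero scalar.

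For (iii), I would appeal to Proposition~\ref{sec:socles}: in cases \((\mathrm{I})\) and \((\mathrm{II}_-)\) the socle \(S_1(F_{r,s;n})\) is a direct sum of simple submodules of the form \(L(h_{r,p_--s;\,\cdot\,})\), and a singular vector of \(F_{r,s;n}\) that is not the cyclic generator must generate a highest-weight submodule whose socle is one of these simple pieces. A dimension count -- the modules \(L(h)\) occurring in the higher socle layers are exactly those whose highest weights are \emph{sub}singular, never singular, in \(F_{r,s;n}\) -- then shows that the space of singular vectors of \(F_{r,s;n}\) is one-dimensional at each conformal weight occurring in \(S_1\) and trivial elsewhere (apart from the generator). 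Comparing with the weights produced in (i)--(ii) shows the constructed family is complete. I expect the real obstacle to be none of these steps individually but the bookkeeping that knits them together: getting the re-indexing of the generating vectors exactly right, in all four cases and for both signs of \(n\), so that Proposition~\ref{sec:fockspacesingvec} applies verbatim and the conformal-weight computation lands on the stated level; once that re-indexing is pinned down, non-vanishing and completeness follow with little further effort.
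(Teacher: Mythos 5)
The paper states this theorem without proof --- it is followed only by the remark that a weaker version appears in Mimachi's work --- so there is no internal argument to compare against. Your proposal assembles precisely the ingredients the paper sets up for this purpose: the intertwining property and \(L_0\)-preservation of the divided powers, the re-indexing \(\beta_{r,s}=\beta_{r+p_+,s+p_-}\) so that Proposition \ref{sec:fockspacesingvec} applies with the number of screenings equal to the first Fock index and the second index non-positive, non-vanishing via the Jack-polynomial evaluation together with the fact that \(b_\lambda(\kappa)\) is a unit of \(\mathcal{O}\), and exhaustiveness from the Feigin--Fuchs structure of \(F_{r,s;n}\). This is surely the intended argument, and your observation that the whole difficulty is in the re-indexing is accurate.

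Two concrete issues. First, the conformal-weight bookkeeping you defer does not land on the level displayed in the theorem. Carrying out your own recipe for \(n=k=0\): \(\beta_{p_+-r,s;-1}=\beta_{2p_+-r,s}=\beta_{p_+-r,s-p_-}\), so the image has conformal weight \(h_{p_+-r,s-p_-}=h_{2p_+-r,s}=h_{r,p_--s;1}\), and in general the \(k\)-th vector sits at \(h_{r,p_--s;|n|+2k+1}\) --- exactly the weights occurring in \(S_1(F_{r,s;n})\) in Proposition \ref{sec:socles} --- not at \(h_{r,p_--s;|n|+2k-1}\). These genuinely differ: for \(p_+=2\), \(p_-=3\), \(r=s=1\), \(n=k=0\) the displayed level is \(h_{1,2;-1}=h_{3,2}=1\), but \(F_{1,1;0}=F_0\) has no singular vector at conformal weight \(1\) (its weight-one subspace is \(\mathbb{C}b_{-1}|0\rangle\) and \(L_1b_{-1}|0\rangle=-\alpha_0|0\rangle\neq0\)). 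So the index in the theorem's level formula is off by one; your claim that the direct computation matches the stated level cannot be literally true, and you should flag the correction rather than assert agreement. Second, your completeness step uses more than Proposition \ref{sec:socles} provides: the socle filtration alone does not determine which constituents' lowest-weight vectors lift to genuine singular vectors of \(F_{r,s;n}\). For that you need the cyclic/cocyclic arrow data of Figures \ref{fig:fock1} and \ref{fig:fock2} --- the generator itself lies in \(S_2\) and is singular, while the remaining \(S_2\) and all \(S_3\) constituents are only cosingular --- together with the observation that the weights \(h_{r,p_--s;|n|+2k+1}\) are pairwise distinct, so each socle summand carries a one-dimensional space of singular vectors. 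As written, your ``dimension count'' asserts this piece of Feigin--Fuchs structure theory rather than deriving it; that is defensible only because the paper imports that theory wholesale, and it should be said explicitly.
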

\begin{proof}
  The formulae for the singular vectors are precisely those of Proposition
  \ref{sec:fockspacesingvec} with the values of \(r,s,n\) chosen appropriately.
  The fact that these lists exhaust all singular vectors follows from the socle sequence
  decompositions in Proposition \ref{sec:socles}, that is, the generating
  singular vector of each simple module in the socle of \(F_{r,s;n}\) is constructed
  by the above formulae.
\end{proof}

\subsection{From {\boldmath \(U(\mathcal{L})\)\unboldmath}-mod to {\boldmath \(\mathcal{M}_{p_+,p_-}\)\unboldmath}-mod}

The purpose of this subsection is to define certain infinite direct sums of kernels and images of \(\scrp{+}{s},\ \scrp{-}{s}\) which will
later turn out to be modules of \(\mathcal{M}_{p_+,p_-}\).
\begin{definition}\label{sec:solitonweights}\ 
  \begin{enumerate}
  \item For \(1\leq r < p_+,\ 1\leq s < p_-\) and \(n\geq 0\),
    let \(\mathcal{K}_{r,s}^\pm\) and \(\mathcal{X}_{r,s}^\pm\) be the direct sums 
    \begin{align*}
      \mathcal{K}_{r,s}^+&=\ker \scrp{+}{r}|_{V_{r,s}^+}\cap\ker \scrp{-}{s}|_{V_{r,s}^+}=
      \bigoplus_{n\in\mathbb{Z}} K_{r,s;2n}\,,\\
      \mathcal{X}_{r,s}^+&=\im \scrp{+}{p_+-r}|_{V_{p_+-r,s}^+}
      \cap\im \scrp{-}{p_--s}|_{V_{r,p_--s}^+}=
      \bigoplus_{n\in\mathbb{Z}} X_{r,s,;2n}\\
      \mathcal{K}_{r,s}^-&=\ker \scrp{+}{r}|_{V_{r,s}^-}\cap\ker \scrp{-}{s}|_{V_{r,s}^-}=
      \bigoplus_{n\in\mathbb{Z}} K_{r,s;2n+1}\,,\\
      \mathcal{X}_{r,s}^-&=\im \scrp{+}{p_+-r}|_{V_{p_+-r,s}^-}
      \cap\im \scrp{-}{p_--s}|_{V_{r,p_--s}^-}=
      \bigoplus_{n\in\mathbb{Z}} X_{r,s,;2n+1}
    \end{align*}
    \begin{align*}
      \mathcal{K}_{r,p_-}^+&=\mathcal{X}_{r,p_-}^+=\ker \scrp{+}{r}|_{V_{r,p_-}^+}=
      \bigoplus_{n\in\mathbb{Z}} K_{r,p_-;2n}\,,&
      \mathcal{K}_{r,p_-}^-&=\mathcal{X}_{r,p_-}^-=\ker \scrp{+}{r}|_{V_{r,p_-}^-}=
      \bigoplus_{n\in\mathbb{Z}} K_{r,p_-;2n+1}\,,      \\
      \mathcal{K}_{p_+,s}^+&=\mathcal{X}_{p_+,s}^+=\ker \scrp{-}{s}|_{V_{p_+,s}^+}=
      \bigoplus_{n\in\mathbb{Z}} K_{p_+,s;2n}\,,&
      \mathcal{K}_{p_+,s}^-&=\mathcal{X}_{p_+,s}^-=\ker \scrp{-}{s}|_{V_{p_+,s}^-}=
      \bigoplus_{n\in\mathbb{Z}} K_{p_+,s;2n+1}\,,
    \end{align*}
    \begin{align*}
      \mathcal{K}_{p_+,p_-}^+&=\mathcal{X}_{p_+,p_-}^+=
      \bigoplus_{n\in\mathbb{Z}} F_{p_+,p_-;2n}\,,&
      \mathcal{K}_{p_+,p_-}^-&=\mathcal{X}_{p_+,p_-}^-=
      \bigoplus_{n\in\mathbb{Z}} F_{p_+,p_-;2n+1}\,.
    \end{align*}
  \item For \(1\leq r\leq p_+,\ 1\leq s\leq p_-\) and \(n\geq 0\), 
    let \(V_{r,s:n}^\pm\) be the spaces of singular vectors of conformal
    weight \(\Delta_{r,s;n}^\pm\)
    \begin{align*}
      V_{r,s:n}^\pm=\{u\in \mathcal{X}_{r,s}^\pm| L_0u=\Delta_{r,s;n}^\pm u, L_nu=0, n\geq1\}\,.
    \end{align*}
    We call these spaces \emph{soliton sectors}.
    The elements of \(V_{r,s:n}^\pm\) are called \emph{soliton vectors}.
  \end{enumerate}
\end{definition}
Note that by Theorem \ref{sec:singvects} it is easy to give explicit bases of the
soliton sector  \(V_{r,s;n}^\pm\) in terms of screening operators. 
\begin{trivlist}
\item Basis in terms of \(\scr{+}\):
    \begin{align*}
      V_{r,s:n}^+&=\left\{
        \begin{array}{lc}
          \bigoplus_{m=-n}^{n}\mathbb{C}
          \scrp{+}{(n+m)p_+}|\beta_{p_+,p_-;-2n}\rangle& r=p_+,s=p_-\\
          \bigoplus_{m=-n}^{n}\mathbb{C}
          \scrp{+}{(n+m)p_+}|\beta_{p_+,s;-2n}\rangle& r=p_+,s\neq p_-\\
          \bigoplus_{m=-n}^{n}\mathbb{C}
          \scrp{+}{(m+n+1)p_+-r}|\beta_{p_+-r,p_-;-2n-1}\rangle& r\neq p_+,s=p_-\\
          \bigoplus_{m=-n}^{n}\mathbb{C}
          \scrp{+}{(m+n+1)(p_+-r)}|\beta_{p_+-r,s;-2n-1}\rangle& r\neq p_+,s\neq p_-
        \end{array}\right.\\
      V_{r,s:n}^-&=\left\{
        \begin{array}{lc}
          \bigoplus_{m=-n}^{n+1}\mathbb{C}
          \scrp{+}{(n+m)p_+}|\beta_{p_+,p_-;-2n-1}\rangle& r=p_+,s=p_-\\
          \bigoplus_{m=-n}^{n+1}\mathbb{C}
          \scrp{+}{(n+m)p_+}|\beta_{p_+,s;-2n-1}\rangle& r=p_+,s\neq p_-\\
          \bigoplus_{m=-n}^{n+1}\mathbb{C}
          \scrp{+}{(m+n+1)p_+-r}|\beta_{p_+-r,p_-;-2n-2}\rangle& r\neq p_+,s=p_-\\
          \bigoplus_{m=-n}^{n+1}\mathbb{C}
          \scrp{+}{(m+n+1)p_+-r}|\beta_{p_+-r,s;-2n-2}\rangle& r\neq p_+,s\neq p_-
        \end{array}\right.
    \end{align*}
  \item Basis in terms of \(\scr{-}\):
    \begin{align*}
      V_{r,s:n}^+&=\left\{
        \begin{array}{lc}
          \bigoplus_{m=-n}^{n}\mathbb{C}
          \scrp{-}{(n-m)p_-}|\beta_{p_+,p_-;2n}\rangle& r=p_+,s=p_-\\
          \bigoplus_{m=-n}^{n}\mathbb{C}
          \scrp{-}{(n-m+1)p_--s}|\beta_{p_+,p_--s;2n+1}\rangle& r=p_+,s\neq p_-\\
          \bigoplus_{m=-n}^{n}\mathbb{C}
          \scrp{-}{(n-m)p_-}|\beta_{r,p_-;2n}\rangle& r\neq p_+,s=p_-\\
          \bigoplus_{m=-n}^{n}\mathbb{C}
          \scrp{-}{(n-m+1)p_--s}|\beta_{r,p_--s;2n+1}\rangle& r\neq p_+,s\neq p_-
        \end{array}\right.\\
      V_{r,s:n}^-&=\left\{
        \begin{array}{lc}
          \bigoplus_{m=-n}^{n+1}\mathbb{C}
          \scrp{-}{(n-m+1)p_-}|\beta_{p_+,p_-;2n+1}\rangle& r=p_+,s=p_-\\
          \bigoplus_{m=-n}^{n+1}\mathbb{C}
          \scrp{-}{(n-m+2)p_--s}|\beta_{p_+,p_--s;2n+2}\rangle& r=p_+,s\neq p_-\\
          \bigoplus_{m=-n}^{n+1}\mathbb{C}
          \scrp{(n-m+1)p_-}|\beta_{r,p_-;2n+1}\rangle& r\neq p_+,s=p_-\\
          \bigoplus_{m=-n}^{n+1}\mathbb{C}
          \scrp{-}{(n-m+2)p_--s}|\beta_{r,p_--s;2n+2}\rangle& r\neq p_+,s\neq p_-
        \end{array}\right.
    \end{align*}
 \end{trivlist}
\begin{prop}\label{sec:Mppirreds}
  As Virasoro modules the spaces \(\mathcal{K}^\pm_{r,s}\) and \(\mathcal{X}^\pm_{r,s}\) decompose as
  \begin{enumerate}
  \item For \(1\leq r< p_+\) and \(1\leq s<p_-\)
    \begin{align*}
      \mathcal{K}^+_{r,s}&=U(\mathcal{L})|\beta_{r,s;0}\rangle\oplus\bigoplus_{n\geq 1}
      L(\Delta_{r,s:n}^+)\otimes V_{r,s;n}^+\\
      \mathcal{X}^+_{r,s}&=\bigoplus_{n\geq 0}
      L(\Delta_{r,s:n}^+)\otimes V_{r,s;n}^+\\
      \mathcal{K}^-_{r,s}&=\mathcal{X}^-_{r,s}=\bigoplus_{n\geq 0}
      L(\Delta_{r,s:n}^-)\otimes V_{r,s;n}^-
    \end{align*}
  \item For \(r=p_+\) and \(1\leq s<p_-\)
    \begin{align*}
      \mathcal{K}^+_{p_+,s}&=\mathcal{X}^+_{p_+,s}=\bigoplus_{n\geq 0}
      L(\Delta_{p_+,s:n}^+)\otimes V_{p_+,s;n}^+\\
      \mathcal{K}^-_{p_+,s}&=\mathcal{X}^-_{p_+,s}=\bigoplus_{n\geq 0}
      L(\Delta_{p_+,s:n}^-)\otimes V_{p_+,s;n}^-
    \end{align*}
  \item For \(1\leq r<p_+\) and \(s=p_-\)
    \begin{align*}
      \mathcal{K}^+_{r,p_-}&=\mathcal{X}^+_{r,p_-}=\bigoplus_{n\geq 0}
      L(\Delta_{r,p_-:n}^+)\otimes V_{r,p_-;n}^+\\
      \mathcal{K}^-_{r,p_-}&=\mathcal{X}^-_{r,p_-}=\bigoplus_{n\geq 0}
      L(\Delta_{r,p_-:n}^-)\otimes V_{r,p_-;n}^-
    \end{align*}
  \item For \(r=p_+\) and \(s=p_-\)
    \begin{align*}
      \mathcal{K}^+_{p_+,p_-}&=\mathcal{X}^+_{p_+,p_-}=\bigoplus_{n\geq 0}
      L(\Delta_{p_+,p_-:n}^+)\otimes V_{p_+,p_-;n}^+\\
      \mathcal{K}^-_{p_+,p_-}&=\mathcal{X}^-_{p_+,p_-}=\bigoplus_{n\geq 0}
      L(\Delta_{p_+,p_-:n}^-)\otimes V_{p_+,p_-;n}^-
    \end{align*}
  \end{enumerate}
\end{prop}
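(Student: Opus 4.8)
The plan is to prove the decomposition case by case, using the socle sequences of Fock modules from Proposition~\ref{sec:socles} together with the explicit descriptions of kernels and images from the two preceding propositions, and with the soliton bases of Remark~\ref{sec:solitonbasis}. First I would fix $1\leq r<p_+$, $1\leq s<p_-$, which is the hardest case, and treat the others afterwards by degeneration. The key structural input is that the screening operator $S_+^{[r]}\colon F_{r,s;n}\to F_{p_+-r,s;n+1}$ (and similarly $S_-^{[s]}$) restricts nicely on the semisimple components $S_1,S_2,S_3$ of the socle sequences, so that $\ker$ and $\im$ are themselves built from the simple modules $L(h_{\bullet,\bullet;\bullet})$ with known multiplicities. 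Summing over $n\in 2\mathbb{Z}$ (for the $+$ superscript) or $n\in 2\mathbb{Z}+1$ (for the $-$ superscript) then reorganises these simple modules: for each fixed soliton weight $\Delta_{r,s;n}^\pm$ one collects all simple subquotients isomorphic to $L(\Delta_{r,s;n}^\pm)$ occurring across the various Fock modules in the sum. The multiplicity of $L(\Delta_{r,s;n}^\pm)$ — which by the socle data is finite and equal to $2n+1$ in the $+$ case and $2n+2$ in the $-$ case — is exactly $\dim V_{r,s;n}^\pm$, as one reads off from the soliton bases in Remark~\ref{sec:solitonbasis}. This matching of multiplicities is what produces the tensor factor $V_{r,s;n}^\pm$.

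More precisely, I would argue as follows. By Proposition~\ref{sec:fockspacesingvec} and Theorem~\ref{sec:singvects}, each soliton vector in $V_{r,s;n}^\pm$ is a singular vector of conformal weight $\Delta_{r,s;n}^\pm$ lying in $\mathcal{X}_{r,s}^\pm$; since the simple quotient of the Verma module at that weight is $L(\Delta_{r,s;n}^\pm)$ and the relevant Fock module restricted to that weight sector contains only the corresponding simple submodule (by the socle decomposition), the $U(\mathcal L)$-submodule generated by a soliton vector is isomorphic to $L(\Delta_{r,s;n}^\pm)$. Distinct basis elements of $V_{r,s;n}^\pm$ lie in distinct Fock summands (they are labelled by $m$ in Remark~\ref{sec:solitonbasis}), hence generate independent copies, giving an injection $\bigoplus_n L(\Delta_{r,s;n}^\pm)\otimes V_{r,s;n}^\pm \hookrightarrow \mathcal{X}_{r,s}^\pm$. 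Surjectivity follows from a dimension/character count: the graded character of the right-hand side is the sum over $n$ of the Fock-module images' characters, which by Proposition on kernels and images equals $\sum_k (\text{characters of }L(\cdot))$ with exactly the multiplicities produced on the left. For $\mathcal{K}_{r,s}^+$ in case~(1) there is the extra summand $U(\mathcal L)|\beta_{r,s;0}\rangle$: this comes from the fact that $F_{r,s;0}$ sits at the ``centre'' of Felder's complex, where $K_{r,s;0;+}/X_{r,s;0;+}\cong L(h_{r,s;0})$ is nonzero, so the $n=0$ kernel contains a submodule not coming from an image; by the socle sequence of $F_{r,s;0}$ this submodule is precisely $U(\mathcal L)|\beta_{r,s;0}\rangle = S_1(F_{r,s;0})\oplus(\text{part of }S_2)$, and one checks it is complementary to $\bigoplus_{n\geq 1}L(\Delta_{r,s;n}^+)\otimes V_{r,s;n}^+$.

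For the boundary cases $r=p_+$ or $s=p_-$ (cases~(2),(3),(4)) the argument is simpler because, by Proposition~\ref{sec:socles}, the relevant Fock modules have socle length at most $2$ and the kernels and images coincide and are semisimple; so one only needs to collect simple summands and match multiplicities with $\dim V_{r,s;n}^\pm$, with no subtlety about a leftover Verma-type summand. In case~(4), $r=p_+,s=p_-$, the Fock modules $F_{p_+,p_-;n}$ are themselves semisimple, making the bookkeeping essentially trivial.

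The main obstacle I anticipate is the surjectivity/completeness step in case~(1): showing that \emph{every} vector of $\mathcal{X}_{r,s}^+$ (resp.\ $\mathcal{K}_{r,s}^\pm$) is accounted for, i.e.\ that the simple modules generated by soliton vectors exhaust the image, and in particular pinning down the ``extra'' summand $U(\mathcal L)|\beta_{r,s;0}\rangle$ in $\mathcal{K}_{r,s}^+$ and verifying it is a direct complement rather than merely a submodule. This requires combining Felder's exactness statement at $F_{r,s;0}$ with the precise socle sequence of $F_{r,s;0}$ (the $n=0$ picture in Figure~\ref{fig:fock1}) to see that the non-split extensions internal to each $F_{r,s;n}$ nevertheless assemble, after taking the infinite direct sum and intersecting $\ker S_+^{[r]}$ with $\ker S_-^{[s]}$, into a direct sum of simple Virasoro modules tensored with the soliton multiplicity spaces. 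I would handle this by a careful graded-character comparison, using that the characters of $L(h_{r,s;n})$ are known (Feigin--Fuchs) and that the alternating-sum (Euler characteristic) identities from the Felder resolutions force the claimed multiplicities.
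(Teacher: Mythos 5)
Your proposal is correct and follows essentially the route the paper intends: the paper states Proposition~\ref{sec:Mppirreds} without an explicit proof, presenting it as a direct consequence of exactly the ingredients you assemble — the Feigin--Fuchs socle sequences of Proposition~\ref{sec:socles}, the socle data for the kernels and images of \(S_+^{[r]},S_-^{[s]}\), Felder's complex with its cohomology \(L(h_{r,s;0})\) at \(F_{r,s;0}\), and the soliton bases of Remark~\ref{sec:solitonbasis} whose dimensions \(2n+1\) and \(2n+2\) supply the multiplicity spaces \(V_{r,s;n}^\pm\). The one point worth making explicit in your write-up is why the intersections \(K_{r,s;n;+}\cap K_{r,s;n;-}\) and \(X_{r,s;n;+}\cap X_{r,s;n;-}\) are (apart from the single extension \(U(\mathcal{L})|\beta_{r,s;0}\rangle\) at \(n=0\)) contained in the semisimple socle \(S_1(F_{r,s;n})\): the second socle layers of the \(+\) and \(-\) kernels/images consist of non-isomorphic composition factors except at \(n=0\), which is what converts the length-two modules into the claimed direct sums of simples.
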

\begin{proof}
  The above proposition follows by plugging Definition \ref{sec:kersandims}
  and Proposition \ref{sec:ksocs}
  into Definition \ref{sec:solitonweights}.
\end{proof}

\begin{prop}
  For \(1\leq r< p_+\), \(1\leq s<p_-\), the \(\mathcal{K}^+_{r,s}\) satisfy the
  following exact sequence as Virasoro modules
  \begin{align*}
    0\longrightarrow \mathcal{X}^+_{r,s}\longrightarrow \mathcal{K}^+_{r,s}
    \longrightarrow L(h_{r,s;0})\longrightarrow 0\,.
  \end{align*}
\end{prop}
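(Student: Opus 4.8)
The plan is to read off the exact sequence from the explicit Virasoro-module decompositions of $\mathcal{K}^+_{r,s}$ and $\mathcal{X}^+_{r,s}$ recorded in Proposition \ref{sec:Mppirreds}. First I would check that $\mathcal{X}^+_{r,s}$ is a Virasoro submodule of $\mathcal{K}^+_{r,s}$: since $\mathcal{K}^+_{r,s}=\bigoplus_n K_{r,s;2n}$ and $\mathcal{X}^+_{r,s}=\bigoplus_n X_{r,s;2n}$, with $K_{r,s;m}=K_{r,s;m;+}\cap K_{r,s;m;-}$ and $X_{r,s;m}=X_{r,s;m;+}\cap X_{r,s;m;-}$, and since the Felder sequences are complexes (the image of a screening operator lies in the kernel of the next one), one has $X_{r,s;m;\pm}\subseteq K_{r,s;m;\pm}$, hence $X_{r,s;m}\subseteq K_{r,s;m}$ for every $m$. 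The surjection in the exact sequence will be the resulting quotient map followed by an isomorphism of the quotient with $L(h_{r,s;0})$; producing that isomorphism is the real content.

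To identify the quotient I would compare the two decompositions of Proposition \ref{sec:Mppirreds} term by term. For $n\geq1$ the summand $L(\Delta^+_{r,s:n})\otimes V^+_{r,s;n}$ occurring in each is the Virasoro submodule $U(\mathcal{L})V^+_{r,s;n}$ generated by the soliton sector, and since $V^+_{r,s;n}\subseteq\mathcal{X}^+_{r,s}\subseteq\mathcal{K}^+_{r,s}$ this is literally the same submodule of $\mathcal{K}^+_{r,s}$, so these common summands drop out of the quotient. By Remark \ref{sec:solitonbasis} the soliton sector $V^+_{r,s;0}$ is one-dimensional, spanned by $v=S_+^{[p_+-r]}|\beta_{p_+-r,s;-1}\rangle$, which is the singular vector of $F_{r,s;0}$ of conformal weight $h_{r,p_--s;1}$; a direct computation of conformal weights gives $h_{r,p_--s;1}=h_{r,s;0}+(p_+-r)(p_--s)=\Delta^+_{r,s;0}$, so the remaining summand of $\mathcal{X}^+_{r,s}$ is $U(\mathcal{L})v\cong L(h_{r,p_--s;1})$, which is simple (a direct summand of the semisimple socle $S_1(F_{r,s;0})$). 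Since $v$ is a Virasoro descendant of $|\beta_{r,s;0}\rangle$ -- the edge issuing from the ``$0$''-vertex in Figure \ref{fig:fock1} -- we have $U(\mathcal{L})v\subseteq U(\mathcal{L})|\beta_{r,s;0}\rangle$, and therefore $\mathcal{K}^+_{r,s}/\mathcal{X}^+_{r,s}\cong U(\mathcal{L})|\beta_{r,s;0}\rangle/U(\mathcal{L})v$.

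It then remains to prove that $0\to L(h_{r,p_--s;1})\to U(\mathcal{L})|\beta_{r,s;0}\rangle\to L(h_{r,s;0})\to0$ is exact, i.e.\ that $U(\mathcal{L})|\beta_{r,s;0}\rangle$ is a non-split length-two module. Using the socle series of $F_{r,s;0}$ from Proposition \ref{sec:socles}, the generator $|\beta_{r,s;0}\rangle$ sits in the second socle layer $S_2(F_{r,s;0})$, the image of $U(\mathcal{L})|\beta_{r,s;0}\rangle$ there is forced (being a semisimple quotient of a highest-weight module of weight $h_{r,s;0}$) to be the single summand $L(h_{r,s;0})$, and the kernel $U(\mathcal{L})|\beta_{r,s;0}\rangle\cap S_1(F_{r,s;0})$ is therefore the radical of $U(\mathcal{L})|\beta_{r,s;0}\rangle$; viewing $U(\mathcal{L})|\beta_{r,s;0}\rangle$ as a quotient of the Verma module $M(h_{r,s;0})$, this radical is the image of the maximal submodule of $M(h_{r,s;0})$, which by the Feigin--Fuchs analysis of Section \ref{sec:Virrepthy} is generated by the two lowest singular vectors -- one of which maps to $0$ in $F_{r,s;0}$ (its weight $h_{-r,s}$ only occurs in the third socle layer) and the other to $v$ -- so the radical is $U(\mathcal{L})v\cong L(h_{r,p_--s;1})$. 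Hence $U(\mathcal{L})|\beta_{r,s;0}\rangle/U(\mathcal{L})v\cong L(h_{r,s;0})$, giving the exact sequence. The main obstacle is exactly this last step, controlling the Loewy structure of the top-vector submodule of $F_{r,s;0}$; an alternative packaging of the same input is to first use Felder exactness, $K_{r,s;2n;\pm}=X_{r,s;2n;\pm}$ for $n\neq0$, to reduce to $\mathcal{K}^+_{r,s}/\mathcal{X}^+_{r,s}=K_{r,s;0}/X_{r,s;0}$, and then obtain $K_{r,s;0}/X_{r,s;0}\cong L(h_{r,s;0})$ from $K_{r,s;0;+}/X_{r,s;0;+}\cong L(h_{r,s;0})$ together with the socle-series formulas for the kernels and images of $S_+^{[r]}$.
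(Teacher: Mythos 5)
Your argument is correct and follows the same route the paper implicitly takes: the paper states this proposition without proof as an immediate consequence of the decompositions in Proposition \ref{sec:Mppirreds} together with the Felder cohomology computation $K_{r,s;0;+}/X_{r,s;0;+}\cong L(h_{r,s;0})$, and your write-up simply fills in the details (the inclusion $\mathcal{X}^+_{r,s}\subseteq\mathcal{K}^+_{r,s}$, the cancellation of the $n\geq 1$ summands, and the identification of $U(\mathcal{L})|\beta_{r,s;0}\rangle$ as a non-split extension of $L(h_{r,s;0})$ by $U(\mathcal{L})v\cong L(\Delta^+_{r,s;0})$ via the Feigin--Fuchs structure of $M(h_{r,s;0})$ and the socle series of $F_{r,s;0}$). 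The weight identification $\Delta^+_{r,s;0}=h_{p_+-r,s;-1}=h_{r,p_--s;1}=h_{r,s;0}+(p_+-r)(p_--s)$ and the nonvanishing of $v$ both check out against Remark \ref{sec:solitonbasis} and Theorem \ref{sec:singvects}.
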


\subsection{Frobenius homomorphisms}\label{sec:frobhomsec}

In this section we introduce a class of Virasoro homomorphisms
\begin{align*}
  E,F\in \ehom_{\mathbb{C}}(\mathcal{K}_{r,s}^\pm),\ 1\leq r<p_+, 1\leq s<p_-
\end{align*}
such that \(E\) and \(F\) define derivations of the VOA
\(\mathcal{M}_{p_+,p_-}\).
We call \(E\) and \(F\)
\emph{Frobenius homomorphisms}. 
These Frobenius homomorphisms will prove to be essential tools for analysing the
VOA structure of \(\mathcal{M}_{p_+,p_-}\) and for analysing the action of
\(\mathcal{M}_{p_+,p_-}\) on the \(\mathcal{K}_{r,s}^\pm\). In
\cite{Adamovic:2010,Adamovic:2011}, for \(p_+=2\) a similar derivation was
constructed as the zero mode of a field that is non-local with the fields of
\(\mathcal{V}_{2,p_-}\). The Frobenius homomorphisms \(E\) and \(F\) are not
constructed as the zero modes of anything, though as we shall see, they are
not well defined on the Fock modules \(F_{r,s}\) but only on the kernels of
\(\scr{+}\) and \(\scr{-}\). So it appears that some of the non-locality
encountered in \cite{Adamovic:2010,Adamovic:2011} is still there, but it
manifests in a different way.

Recall that by the Felder complexes the maps
\begin{align*}
  \scrp{+}{p_+-r}\circ \scrp{+}{r}:F_{r,s;n}&\rightarrow
  F_{r,s;n+2}\,,&\scrp{-}{p_--s}\circ \scrp{-}{s}:F_{r,s;n}\rightarrow F_{r,s;n-2}
\end{align*}
are zero. The Frobenius homomorphisms are regularisations of these maps, such
that they become non-trivial.

We give the details of the construction for \(\scr{+}\), the \(\scr{-}\) case follows
in the same way. At first we fix \(1\leq r\leq p_+\) and \(s\in\mathbb{Z}\) and consider
the action of \(\scr{+}(z_1)\cdots \scr{+}(z_{p_+})\) on \(\tensor[_{\mathcal{K}}]{F}{_{r,s}}\)
\begin{align*}
  \scr{+}(z_1)\cdots \scr{+}(z_{p_+})&=e^{p_+\alpha_+(\epsilon)\hat b}U_{p_+}(z;\kappa_+(\epsilon),r)
  \prod_{i=1}^{p_+}z_i^{s-1}:\prod_{i=1}^{p_+}\overline{\scr{+}(z_i)}:\\
  U_{p_+}(z;\kappa_+(\epsilon),r)&=\prod_{1\leq i\neq j\leq p_+}(z_i-z_j)^{\kappa_+(\epsilon)}
  \prod_{i=1}^{p_+}z_i^{(1-r)\kappa_+(\epsilon)}\,.
\end{align*}
Let \(\tensor*[_{\mathcal{K}}]{\mathcal{L}}{_{p_+}}(\kappa_+(\epsilon),r)\) be
the local system on \(X_{p_+}\)
over \(\mathcal{K}\) defined by the multivaluedness of
\(U_{p_+}(z;\kappa_+(\epsilon),r)\) and let
\(\tensor*[_{\mathcal{K}}]{\mathcal{L}}{_{p_+}^\vee}(\kappa_+(\epsilon),r)\)
be its dual. For \(r=p_+\) we can integrate the above screening operators over the cycle
\([\Gamma_{p_+}(\kappa_+)]\) to obtain \(\scrp{+}{p_+}\), however, for \(r<p_+\)
the homology group
\(H_{p_+}(X_{p_+},\tensor*[_{\mathcal{K}}]{\mathcal{L}}{_{p_+}^\vee}(\kappa_+(\epsilon),r))\)
is trivial and no such cycle exists. We remedy this problem by changing the
domain of the \(z_i\)  to
\begin{align*}
  Y_{p_+}=\{(z_1,\dots,z_{p_+})\in\mathbb{C}^{p_+}|z_i\neq z_j, z_i\neq 0,1\}\subset X_{p_+}\,,
\end{align*}
then it is known that 
\cite{Varchenko:2003,AomotoKita:2011}
\begin{align*}
  \dim_{\mathcal{K}}
  H^{p_+}(Y_{p_+},\tensor*[_{\mathcal{K}}]{\mathcal{L}}{_{p_+}}(\kappa_+(\epsilon),r))&=p_+!\,,\\
  \dim_{\mathcal{K}}
  H^{p_+}(Y_{p_+},\tensor*[_{\mathcal{K}}]{\mathcal{L}}{_{p_+}}(\kappa_+(\epsilon),r))^{\symg{p_+}-}&=1\,,\\
  \dim_{\mathcal{K}}
  H_{p_+}(Y_{p_+},\tensor*[_{\mathcal{K}}]{\mathcal{L}}{_{p_+}^\vee}(\kappa_+(\epsilon),r))&=p_+!\,.
\end{align*}
We introduce new variable names  \(w_i=z_{i+p_+-r},\ i=1,\dots, r\) and 
consider the open domain \(U_1^{p_+-r}\times U_2^r\subset Y_{p_+}\)
\begin{align*}
  U_1^{p_+-r}&=\{(z_1,\dots,z_{p_+-r})\in\mathbb{C}^{p_+-r}
  | z_i\neq z_j, |z_i|> 1\}\,,\\
  U_2^r&=\{(w_1,\dots,w_r)\in\mathbb{C}^{r}|w_i\neq w_j, 1>|w_i|>0\}\,,
\end{align*}
and define the multivalued holomorphic functions
\begin{align*}
  G_{p_+-r}(z;\kappa_+(\epsilon))&=\prod_{1\leq i\neq j\leq
    p_+-r}(z_i-z_j)^{\kappa_+(\epsilon)}\prod_{i=1}^{p_+-r}z_i^{(1+r)\kappa_+(\epsilon)}\,,\\
  U_{r}(w;\kappa_+(\epsilon))&=\prod_{1\leq i\neq j\leq
    r}(w_i-w_j)^{\kappa_+(\epsilon)}\prod_{i=1}^{r}w_i^{(1-r)\kappa_+(\epsilon)}\,.
\end{align*}
on \(U_1^{p_+-r}\) and \(U_2^r\) respectively. Then on \(U_1^{p_+-r}\times
U_2^r\) the product \(\scr{+}(z_1)\cdots \scr{+}(z_{p_-r})\scr{+}(w_1)\cdots \scr{+}(w_{r})\)
factorises as
\begin{align*}
  \prod_{i=1}^{p_+-r}\scr{+}(z_i) \prod_{i=1}^{r}\scr{+}(w_i)&=
  e^{p_+\alpha_+(\epsilon)\hat{b}}G_{p_+-r}(z;\kappa_+(\epsilon))\prod_{i=1}^{p_+-r}z_i^{s-1}:\prod_{i=1}^{p_+-r}\overline{\scr{+}}(z_i):\\
  &\ \ \times
  U_{r}(w;\kappa_+(\epsilon))\prod_{i=1}^{r}w_i^{s-1}:\prod_{i=1}^{r}
  \overline{\scr{+}}(w_i):\,.
\end{align*}
Next we consider the local system
\(\tensor*[_{\mathcal{K}}]{\mathcal{G}}{_{p_+-r}}(\kappa_+(\epsilon))\)
on \(U_1^{p_+-r}\) defined by \(G_{p_+-r}(z;\kappa_+(\epsilon))\) and the local
system \(\tensor*[_{\mathcal{K}}]{\mathcal{L}}{_{r}}(\kappa_+(\epsilon))\) on
\(U_2^r\) defined by \(U_{r}(w;\kappa_+(\epsilon))\). We define regularised cycles
\begin{align*}
  [\Gamma_{p_+-r}^\infty(\kappa_+(\epsilon))]&\in
  H_{p_+-r}^{\operatorname{l.f}}(U_1^{p_+-r},\tensor*[_{\mathcal{K}}]{\mathcal{G}}{_{p_+-r}^\vee}(\kappa_+(\epsilon)))\\
  [\Gamma_{r}(\kappa_+(\epsilon))]&\in
  H_{r}^{\operatorname{l.f}}(U_2^{r},\tensor*[_{\mathcal{K}}]{\mathcal{L}}{_{r}^\vee}(\kappa_+(\epsilon)))\,,
\end{align*}
where the regularised cycles \([\Gamma_r(\kappa_+(\epsilon))]\) are those of
Definition \ref{sec:cycledef}. We define
\([\Gamma_{p_+-r}^\infty(\kappa_+(\epsilon))]\) as
\begin{align*}
  [\Gamma_{p_+-r}^\infty(\kappa_+(\epsilon))]=\frac{1}{c_r^\infty(\kappa_+(\epsilon))}[\Delta_{p_+-r}^\infty\otimes\phi]\,,
\end{align*}
where \(\Delta_{p_+-r}^\infty=\{\infty>z_1>\cdots>z_{p_+-r}>1\}\), \(\phi\)
is the principal branch of \(G_{p_+-r}(z;\kappa_+(\epsilon))\) and
\begin{align*}
  c_r^\infty(\kappa_+(\epsilon))&=\epsilon\int_{[\Delta_{p_+-r}^\infty\otimes\phi]}
  \int_{[\overline{\Gamma}_{r}(\kappa_+(\epsilon))]}
  \prod_{1\leq i\neq j\leq p_+-r}(z_i-z_j)^{\kappa_+(\epsilon)}\prod_{i=1}^{p_+-r}z_i^{2\kappa_+(\epsilon)}
  \prod_{i=1}^{p_+-r}\prod_{j=1}^{r-1}(z_i-y_j)^{2\kappa_+(\epsilon)}\\
  &\qquad \times\prod_{1\leq i\neq j\leq r-1}(y_i-y_j)^{\kappa_+(\epsilon)}\prod_{i=1}^{r-1}y_i^{2\kappa_+(\epsilon)}
  \frac{\d y_1\cdots \d y_{r-1}}{y_1\cdots y_{r-1}}\frac{\d z_1\cdots \d z_{p_+-r}}{z_1\cdots z_{p_+-r}}\,,
\end{align*}
where \([\overline{\Gamma}_r(\kappa_+(\epsilon))]\) is the regularised cycle of
Definition \ref{sec:cycledef}.
By the change of variables \(u_i=1/z_i\) one can express
\(c_r^\infty(\kappa_+(\epsilon))\) as 
\begin{align*}
  c_r^\infty(\kappa_+(\epsilon))&=
  \epsilon\int_{[\Delta_{p_+-r}^\infty\otimes\phi]}
  \int_{[\overline{\Gamma}_{r}(\kappa_+(\epsilon))]}
  \prod_{1\leq i\neq j\leq p_+-r}(u_i-u_j)^{\kappa_+(\epsilon)}\prod_{i=1}^{p_+-r}u_i^{(1-2p_++r)\kappa_+(\epsilon)}
  \prod_{i=1}^{p_+-r}\prod_{j=1}^{r-1}(1-u_iy_j)^{2\kappa_+(\epsilon)}\\
  &\qquad \times\prod_{1\leq i\neq j\leq r-1}(y_i-y_j)^{\kappa_+(\epsilon)}\prod_{i=1}^{r-1}y_i^{2\kappa_+(\epsilon)}
  \frac{\d y_1\cdots \d y_{r-1}}{y_1\cdots y_{r-1}}\frac{\d u_1\cdots \d u_{p_+-r}}{u_1\cdots u_{p_+-r}}\,.
\end{align*}
The product \(\prod_{i=1}^{p_+-r}\prod_{j=1}^{r-1}(1-u_iy_j)^{2\kappa_+(\epsilon)}\) can be expanded as a sum of symmetric 
polynomials in the \(u_i\) times symmetric polynomials in the \(y_i\) and we can therefore use Kadell's integrals of
Proposition \pageref{sec:Kadellintegral} to evaluate
\(c_r^\infty(\kappa_+(\epsilon))\).
\begin{align*}
  I_{\lambda,p_+-r}((1-2p_++r)\kappa_+(\epsilon)+k,0,\kappa_+(\epsilon))&\in\left\{
    \begin{array}{cc}
      \epsilon^{-1}\mathcal{O}^\times&k=p_-, \lambda=0\\
      \mathcal{O}&\text{else}
    \end{array}\right.\,,\\
  I_{\mu,r-1}(2\kappa_+(\epsilon)+k,0,\kappa_+(\epsilon))&\in \mathcal{O}\,,
\end{align*}
for \(k\geq0\) and \(\lambda\) and \(\mu\) partitions of length at most \(p_+-r-1\) and \(r-2\) respectively. It therefore follows that
\(c_r^\infty(\kappa_+(\epsilon))\in\mathcal{O}^\times\).
We define the cycle \([\Gamma_{p_+,r}(\kappa_+(\epsilon))]\in
H_{p_+}(Y_{p_+},\tensor*[_{\mathcal{K}}]{\mathcal{L}}{_{p_+}^\vee}
(\kappa_+(\epsilon),r))\) as the image of
\begin{align*}
  [\Gamma_{p_+-r}^\infty(\kappa_+(\epsilon))]\otimes[\Gamma_r(\kappa_+(\epsilon))]\in 
  H_{p_+-r}^{\operatorname{l.f}}(U_1^{p_+-r},\tensor*[_{\mathcal{K}}]{\mathcal{G}}{_{p_+-r}^\vee}(\kappa_+(\epsilon)))
  \otimes
  H_{r}^{\operatorname{l.f}}(U_2^{r},\tensor*[_{\mathcal{K}}]{\mathcal{L}}{_{r}^\vee}(\kappa_+(\epsilon)))
\end{align*}
under the map
\begin{align*}
    H_{p_+-r}^{\operatorname{l.f}}(U_1^{p_+-r},\tensor*[_{\mathcal{K}}]{\mathcal{G}}{_{p_+-r}^\vee}(\kappa_+(\epsilon)))
  &\otimes
  H_{r}^{\operatorname{l.f}}(U_2^{r},\tensor*[_{\mathcal{K}}]{\mathcal{L}}{_{r}^\vee}(\kappa_+(\epsilon)))\\
  &\rightarrow
  H_{p_+}^{\operatorname{l.f}}(Y_{p_+},\tensor*[_{\mathcal{K}}]{\mathcal{L}}{_{p_+}^\vee}(\kappa_+(\epsilon),r))
  \cong H_{p_+}(Y_{p_+},\tensor*[_{\mathcal{K}}]{\mathcal{L}}{_{p_+}^\vee}(\kappa_+(\epsilon),r))\,.
\end{align*}
\begin{definition}\label{sec:frobopdef}\ 
\begin{enumerate}
  \item  For \(1\leq r\leq p_+\), \(s\in\mathbb{Z}\), the \emph{Frobenius
      operator} \(E\) associated to \(\scr{+}\) is the map
    \(E:\tensor[_{\mathcal{K}}]{F}{_{r,s}}\rightarrow
    \tensor[_{\mathcal{K}}]{F}{_{r-2p_+,s}}\), where
    \begin{align*}
      E&= \int_{[\Gamma_{p_+,r}(\kappa_+(\epsilon))]}
      \prod_{i=1}^{p_+-r}\scr{+}(z_i) \prod_{i=1}^{r}\scr{+}(w_i)
      \prod_{i=1}^{p_+-r}\d z_i \prod_{i=1}^{r} \d w_i\,.
    \end{align*}
  \item 
    For \(1\leq s\leq p_-\), \(r\in\mathbb{Z}\), the \emph{Frobenius
      operator} \(F\) associated to \(\scr{-}\) is the map
    \(F:\tensor[_{\mathcal{K}}]{F}{_{r,s}}\rightarrow
    \tensor[_{\mathcal{K}}]{F}{_{r,s-2p_-}}\), where
    \begin{align*}
      F&= \int_{[\Gamma_{p_-,s}(\kappa_-(\epsilon))]}
      \prod_{i=1}^{p_--s}\scr{-}(z_i) \prod_{i=1}^{s}\scr{-}(w_i)
      \prod_{i=1}^{p_--s}\d z_i \prod_{i=1}^{s} \d w_i\,.
    \end{align*}
  \end{enumerate}
\end{definition}

\begin{thm}\label{sec:frobopthm}
  For \(1\leq r\leq p_+\), \(1\leq s\leq p_-\) and \(n\in\mathbb{Z}\)
  the Frobenius homomorphisms \(E\) and \(F\) induce well defined maps over
  \(\mathbb{C}\) 
  \begin{align*}
    E:K_{r,s;n;+}&\rightarrow K_{r,s;n+2;+}&F:K_{r,s;n;-}&\rightarrow K_{r,s;n-2;-}
  \end{align*}
  that satisfy the following properties:
  \begin{enumerate}
  \item The maps \(E\) and \(F\) are Virasoro homomorphisms.
  \item The maps \(E\) and \(F\) act transitively on the soliton sectors, that
    is, for \(m\geq0\), \(l\geq 1\)
    \begin{align*}
      E \scrp{+}{(m+1)p_+-r}|\beta_{p_+-r,s;-m-1-l}\rangle
      &=\operatorname{const}\scrp{+}{(m+2)p_+-r}|\beta_{p_+-r,s;-m-1-l}\rangle\\
      F \scrp{-}{(m+1)p_--s}|\beta_{r,p_--s;m+1+l}\rangle
      &=\operatorname{const}\scrp{-}{(m+2)p_--s}|\beta_{r,p_--s;m+1+l}\rangle\,,
    \end{align*}
    where the constants are non-zero complex numbers.
  \item The maps \(E\) and \(F\) are derivations, that is, 
    for \(A\in K_{1,1;2n;+}\), \(B\in K_{r,s;m;+}\) and \(n,m\in\mathbb{Z}\)
    \begin{align*}
      EY(A;u)B= Y(EA;u)B+Y(A;u)EB\,,
    \end{align*}
    while for \(C\in K_{1,1;2n;-}\), \(D\in K_{r,s;m;-}\)
    \begin{align*}
      FY(C;u)D= Y(FC;u)D+Y(C;u)FD\,.
    \end{align*}
  \end{enumerate}
\end{thm}
\begin{proof}
  We prove the Theorem for \(E\), since \(F\) follows in the same way. 
  We already know from our previous calculations regarding the screening
  operator \(\scrp{+}{r}\), that integrating the \(w_i\) coordinates over the cycle
  \([\Gamma_{r}(\kappa_+(\epsilon))]\)
  is well defined over \(\mathbb{C}\) after setting \(\epsilon=0\).
  If we evaluate the integral formula,
  \begin{align*}
    E&= \int_{[\Gamma_{p_+,r}(\kappa_+(\epsilon))]}
    \prod_{i=1}^{p_+-r}\scr{+}(z_i) \prod_{i=1}^{r}\scr{+}(w_i)
    \prod_{i=1}^{p_+-r}\d z_i \prod_{i=1}^{r} \d w_i\,,
  \end{align*}
  which defines \(E\), on \(\tensor[_{\mathcal{K}}]{F}{_{r,s}}\) and integrate
  the \(w_i\) coordinates over the cycle \([\Gamma_{r}(\kappa_+(\epsilon))]\)
  then what is left over is sums of integrals of the form
  \begin{align*}
    \int_{[\Gamma^\infty_{p_+-r}(\kappa_+(\epsilon))]}\prod_{1\leq i\neq j\leq p_+-r}(z_i-z_j)^{\kappa_+(\epsilon)}
    \prod_{i=1}^{p_+-r}z_{i}^{(1+r)\kappa_+(\epsilon)+k}f(z)\frac{\d z_1\cdots
      \d z_{p_+-r}}{z_1\cdots z_{p_+-r}}\,,
  \end{align*}
  for \(k\in\mathbb{Z}\) and \(f(z)\in
  \mathcal{O}[z_1^\pm,\dots,x_{p_+-r}^\pm]^{\symg{p_+-r}}\). Due to the
  identities for Jack Polynomials introduced in the third part of the proof of
  Proposition \ref{sec:screeningfields}, we can without loss of generality
  consider \(f(z)=Q_\lambda(z^{-1};\kappa_-(\epsilon))\) to be a dual Jack polynomial in
  \(z_i^{-1}\) with a partition \(\lambda\) of length at most \(p_+-r-1\).
  If we perform a change of variables \(u_i=1/z_i\), then we can evaluate this
  expression using the Kadell integral formulae in Proposition \ref{sec:Kadellintegral}
  \begin{align*}
    &\int_{[\Gamma^\infty_{p_+-r}(\kappa_+(\epsilon))]}\prod_{1\leq i\neq j\leq p_+-r}(z_i-z_j)^{\kappa_+(\epsilon)}
    \prod_{i=1}^{p_+-r}z_{i}^{(1 +r)\kappa_+(\epsilon)+k}Q_\lambda(z^{-1};\kappa_-(\epsilon))\frac{\d z_1\cdots
      \d z_{p_+-r}}{z_1\cdots z_{p_+-r}}\\
    &\ =\frac{1}{c_r^\infty(\kappa_+(\epsilon))}
    \int_{[\Delta_{p_+-r}^\infty\otimes\phi]}\prod_{1\leq i\neq j\leq p_+-r}(u_i-u_j)^{\kappa_+(\epsilon)}
    \prod_{i=1}^{p_+-r}u_{i}^{(1 -2p_+ +r)\kappa_+(\epsilon)-k}Q_\lambda(u;\kappa_-(\epsilon))\frac{\d u_1\cdots
      \d u_{p_+-r}}{u_1\cdots u_{p_+-r}}\\
    &\ =
    \frac{I_{\lambda,p_+-r}((1-2p_++r)\kappa_+(\epsilon)-k,0,\kappa_+(\epsilon))}{c_r^{\infty}(\kappa_+(\epsilon))}
        \in\left\{
      \begin{array}{cl}
        \epsilon^{-1}\mathcal{O}^\times&k=-p_- \text{ and } \lambda=0\\
        \mathcal{O}&\operatorname{else}
      \end{array}\right.
  \end{align*}
  It thus
  follows that \(E:\tensor[_{\mathcal{O}}]{F}{_{r,s}}\rightarrow
  \epsilon^{-1}\tensor[_{\mathcal{O}}]{F}{_{r-2p_+,s}}\), which is why the action of \(E\) is not well defined
  on the Fock modules \(F_{r,s;n}\) when one sets \(\epsilon=0\).
  Furthermore, if \(A\in K_{r,s;n;+}\) then for any lift \(\tilde A\in
  \tensor[_{\mathcal{O}}]{F}{_{r,s}}\), we have
  \begin{align*}
    \int_{[\Gamma_r(\kappa_+(\epsilon))]}
    \prod_{i=1}^{r}\scr{+}(w_i)\tilde A
    \d w_1\cdots\d w_r\in\epsilon\tensor[_{\mathcal{O}}]{F}{_{-r,s}}
  \end{align*}
  Therefore it
  follows that \(E\) induces a well defined map \(E:K_{r,s;n;+}\rightarrow
  F_{r,s;n+2}\) over \(\mathbb{C}\).

  Next we show that \(E\) commutes with the Virasoro algebra. 
  Recall that
  \begin{align*}
    [L_m,\scr{+}(z_i)]=z_i^{m}(z_i\partial_{z_i}+n+1)\scr{+}(z_i)=\partial_{z_i}
    z_i^{m+1}Q_+(z_i)\,.
  \end{align*}
  If we consider the action of Virasoro generator \(L_m\) on integral expressions of
  the form
  \begin{align*}
    \int_{[\Gamma^\infty_{p_+-r}(\kappa_+(\epsilon))]}\prod_{1\leq i\neq j\leq p_+-r}(z_i-z_j)^{\kappa_+(\epsilon)}
    \prod_{i=1}^{p_+-r}z_{i}^{(1+r)\kappa_+(\epsilon)+k}f(z)\frac{\d z_1\cdots
      \d z_{p_+-r}}{z_1\cdots z_r}\,,
  \end{align*}
  then the action of \(L_m\) can be written as a total derivative
  \begin{align*}
    &\int_{[\Gamma^\infty_{p_+-r}(\kappa_+(\epsilon))]}(\sum_{l=1}^{p_+-r}\partial_{z_l}z_l^{m+1}
    )\hspace{-1.5em}\prod_{1\leq
      i\neq j\leq p_+-r}\hspace{-1.5em}(z_i-z_j)^{\kappa_+(\epsilon)}
    \prod_{i=1}^{p_+-r}z_{i}^{(1+r)\kappa_+(\epsilon)+k}
    Q_\lambda(z^{-1};\kappa_-(\epsilon))\frac{\d z_1\cdots
      \d z_{p_+-r}}{z_1\cdots z_{p_+-r}}\\
    &\ =
    \int_{[\Gamma^\infty_{p_+-r}(\kappa_+(\epsilon))]}\hspace{-1.2em}\d \left(\sum_{l=1}^{p_+-r}(-1)^{l+1}z_l^{m}
      \hspace{-1.5em}\prod_{1\leq i\neq j\leq p_+-r}\hspace{-1.5em}(z_i-z_j)^{\kappa_+(\epsilon)}
    \prod_{i=1}^{p_+-r}z_{i}^{(1+r)\kappa_+(\epsilon)+k}
    Q_\lambda(z^{-1};\kappa_-(\epsilon))\frac{\d z_1\cdots \widehat{\d z_l}\cdots
      \d z_{p_+-r}}{z_1\cdots \widehat{z_l} \cdots z_{p_+-r}}\right)\\
  &\ =
  \int_{[\Gamma^\infty_{p_+-r}(\kappa_+(\epsilon))]}\hspace{-1.5em}G_{p_+-r}(z;\kappa_+(\epsilon))\nabla_G\left(\sum_{l=1}^{p_+-r}(-1)^{l+1}z_l^{m}
    \prod_{i=1}^{p_+-r}z_{i}^{k}
    Q_\lambda(z^{-1};\kappa_-(\epsilon))\frac{\d z_1\cdots \widehat{\d z_l}\cdots
      \d z_{p_+-r}}{z_1\cdots \widehat{z_l} \cdots z_{p_+-r}}\right)=0\,,
  \end{align*}
  where \(\d\) denotes the exterior derivative with respect to the \(z_i\)
  coordinates, \(\hat{\ }\) denotes omission of a variable 
  and \(\nabla_G\) is the differential twisted by \(G_{p_+-r}(z;\kappa_+(\epsilon))\)
  \begin{align*}
    \nabla_G=\d +(\d \log G_{p_+-r}(z;\kappa_+(\epsilon)))\wedge\,.
  \end{align*}
  It therefore follows that \(E\) commutes with the Virasoro algebra, that is,
  \(E\) is a Virasoro homomorphism.
  Since \(E\) is a Virasoro homomorphism, it follows
  that \(\im E(K_{r,s;n;+})\subset K_{r,s;n+2;+}\) from the socle sequence
  decompositions of Proposition \ref{sec:ksocs}.

  Next we prove that \(E\) acts transitively on soliton vectors. In order to
  show that
  \begin{align*}
    E \scrp{+}{(m+1)p_+-r}|\beta_{p_+-r,s;-m-1-l}\rangle
      &=\operatorname{const}\scrp{+}{(m+2)p_+-r}|\beta_{p_+-r,s;-m-1-l}\rangle
  \end{align*}
  it is sufficient to show that \(E \scrp{+}{(m+1)p_+-r}|\beta_{p_+-r,s;-m-1-l}\rangle
  \neq0\), since singular vectors of a given conformal weight are unique
  up to normalisation
  and, due to \(E\) being a Virasoro homomorphism, \(E\) maps singular
  vectors to singular vectors. 
  We show that \(E\) acts non-trivially by showing that the following matrix
  element is non-trivial
  \begin{align*}
    \langle \beta_{r,s+k_1-k_2;m-l+2}|\prod_{i=1}^{k_1}V_{\alpha_-/2}(v_i)V_{-k_2\alpha_-/2}(u)E\scrp{+}{(m+1)p_+-r}|\beta_{p_+-r,s;-m-1-l}\rangle\,,
  \end{align*}
  \(k_1=(m+1)p_-\), \(k_2=(l+1)p_--s\) and we assume that \(v_i,u\) lie on the
  positive imaginary axis, satisfying \(|v_{k_1}|>\cdots >|v_1|>|u|>1\).
  We will show that the matrix element is non-trivial by writing it as the
  product of a non-trivial multivalued function times a Laurent series in the
  \(u,v,w\) and \(z\) variables with at
  least one non-vanishing coefficient.
  A lift of this matrix element to \(\mathcal{O}\) is given by
  \begin{align*}
    M(u,v)=\int_{[\Gamma_{p_+,r}(\kappa_+(\epsilon))]}\hspace{-2em}&
    \langle \beta_{r-2p_+,s+k_1-k_2+p_-(m-l)}(\epsilon)|
    \prod_{i=1}^{k_1}V_{\alpha_-(\epsilon)/2}(v_i)V_{-k_2\alpha_-(\epsilon)/2}(u)\\
    &\hspace{-3em}\times \scr{+}(z_1)\cdots
    \scr{+}(z_{p_+-r})\scr{+}(w_1)\cdots \scr{+}(w_r)\\
    &\hspace{-3em}\times
    \rho_{2/\alpha_+(\epsilon)}(Q_{\lambda_{(m+1)p_+-r,(\ell+1)p_--s}}(x;\kappa_-(\epsilon)))|\beta_{r,s+(m-\ell)p_-}(\epsilon)\rangle
    \prod_{i=1}^{p_+-r}\frac{\d z_i}{z_i}\prod_{i=1}^{r}\frac{\d w_i}{w_i}\,,        
  \end{align*}
  where \(\rho\) is the map introduced for Proposition \ref{sec:fockspacesingvec}.
  Multiplying this matrix element by
  \begin{align*}
    &\prod_{i=1}^{k_1}v_i^{-\tfrac{1}{2}\alpha_-(\epsilon)\beta_{r,s+(m-\ell)p_-}(\epsilon)}
    u^{\tfrac{k_2}{2}\alpha_-(\epsilon)\beta_{r,s+(m-\ell)p_-}(\epsilon)+((m+1)p_+-r)((\ell+1)p_--s)}\\
    &\times \prod_{1\leq i\neq j\leq k_1}(v_i-v_j)^{-\tfrac{\kappa_-(\epsilon)}{4}}\prod_{i=1}^{k_1}(v_i-u)^{k_2\tfrac{\kappa_-(\epsilon)}{4}}\,,
  \end{align*}
  and then taking the limit \(u\rightarrow 0\) along the positive imaginary axis the integrand becomes
  \begin{align*}
    \widetilde{M}(v)=&(-1)^{p_+k_2}\prod_{i=1}^{k_1}\prod_{j=1}^{p_+-r}(v_i-z_j)^{-1}\prod_{i=1}^{k_1}\prod_{j=1}^r(v_i-w_j)^{-1}
    \\
    &\Xi_{-k_1\kappa_-(\epsilon)}\left(Q_{\lambda_{(m+1)p_+-r,(\ell+1)p_--s}}(x;\kappa_-(\epsilon))\right)
    \\
    &\times U_{p_+-r}(z;\kappa_+(\epsilon))\prod_{i=1}^{p_+-r}z_i^{(m+2)p_-+\epsilon
      p_+\overline{\kappa_+}(\epsilon)}
    U_r(w;\kappa_+(\epsilon))\prod_{i=1}^r w_i^{(m+1)p_-}\\
    &\times \prod_{i=1}^{p_+-r}\prod_{j=1}^r
    (1-\tfrac{w_j}{z_i})^{2\kappa_+(\epsilon)}\prod_{i=1}^{p_+-r}\frac{\d
      z_i}{z_i}
    \prod_{i=1}^{r}\frac{\d w_i}{w_i}\,,
  \end{align*}
  where \(\Xi\) is the map introduced in Proposition \ref{sec:polytofieldmap}.
  Next we take the limit \(v_1\rightarrow 0\) along the positive imaginary axis
  and then \(v_2\rightarrow 0\) and so on. As the \(v_i\) become sufficiently
  small the integrand \(\widetilde{M}(v)\) picks up degree 1 poles at
  \(w_i=v_j\). Upon evaluating the residues at these poles, one sees that they
  do not contribute in the limit \(v_i\rightarrow0\) and it therefore follows that
  \begin{align*}
    &\lim_{v_{k_1}\rightarrow0}\cdots\lim_{v_1\rightarrow0}\int_{[\Gamma_{p_+,r}(\kappa_+(\epsilon))]}
    \widetilde{M}(v)=\\
    &\ (-1)^{-p_+(k_2-k_1)}\Xi_{-k_1\kappa_-(\epsilon)}\left(Q_{\lambda_{(m+1)p_+-r,(\ell+1)p_--s}}(x;\kappa_-(\epsilon))\right)
    \frac{S_{p_+-r}((1-2p_++r)\kappa_+(\epsilon),0,\kappa_+(\epsilon))}{c_r^{\infty}(\kappa_+(\epsilon))}\,.
  \end{align*}
  This lies in \(\mathcal{O}^\times\) and thus
  \(E\scrp{+}{(m+1)p_+-r}|\beta_{p_+-r,s;-(m+1+\ell)}\rangle\)
  is non-zero.
  
  Finally we prove the derivation property of \(E\). Let Let \(A\in K_{1,1;2n;+}\), \(B\in K_{r,s;2m;+}\), \(n,m\in\mathbb{Z}\)
  and let \(\tilde A\in \tensor[_{\mathcal{O}}]{K}{_{1,1+2np_-;+}}\),
  \(\tilde B \in \tensor[_{\mathcal{O}}]{K}{_{r,s+2mp_-;+}}\) be lifts of
  \(A\) and \(B\). Then
  \begin{align*}
    E Y(\tilde A;u) \tilde B
    =[E, Y(\tilde A;u)] \tilde B + Y(\tilde A;u)
    E \tilde B\,.
  \end{align*}
  Let
  \([\overline{\Gamma}_{r}(\kappa_+(\epsilon))]\) be the renormalised cycles
  given in Definition \ref{sec:cycledef}.
  The
  commutator above is given by evaluating the Frobenius operator \(E\) of Definition \ref{sec:frobopdef} 
  on \(Y(\tilde A;u)\) with the integration contour centred at \(u\)
  \begin{align*}
    &[E, Y(\tilde A;u)] \\
    &=\int_{[\Gamma^\infty_{p_+-r}(\kappa_+(\epsilon))]}\hspace{-1em}
    Y\left(\prod_{i=1}^{p_+-r}\!\! \scr{+}(z_i-u)
    \res _{w=u} \int_{[\overline{\Gamma}_{r}(\kappa_+(\epsilon))]}
    \prod_{i=1}^r\scr{+}(w y_{i-1}-u)\tilde A;u\right)  \!\! w^{r-1}\!\! \prod_{i=1}^{p_+-r}\d z_i
    \d w\prod_{i=1}^{r-1} \d y_i\\
    &= \int_{[\Gamma^\infty_{p_+-r}(\kappa_+(\epsilon))]}
    \int_{[\overline{\Gamma}_{r}(\kappa_+(\epsilon))]}Y\left(\prod_{i=1}^{p_+-r}\scr{+}(z_i-u)
    \prod_{i=1}^r\scr{+}(u (y_{i-1}-1))(\scr{+} \tilde A);u\right)  u^{r-1} \prod_{i=1}^{p_+-r}\d z_i
    \d w\prod_{i=1}^{r-1} \d y_i\,.
  \end{align*}
  Note that we have suppressed total derivative terms in the \(y_i\) coordinates and that since
  \(\tilde A\in \tensor[_{\mathcal{O}}]{K}{_{1,1+2np_-;+}}\) for \(n\) not necessarily 0,
  the vertex operator \(Y(\ ,\ )\) is actually a generalised vertex operator in the sense of \cite{DongLepowski:1993}
  though this does not change how the calculation is performed.
  Setting \(u=-1\), then implies
  \begin{align*}
    &[E, Y(\tilde A;-1)] \\
    &= \int_{[\Gamma^\infty_{p_+-r}(\kappa_+(\epsilon))]}
    \int_{[\overline{\Gamma}_{r}(\kappa_+(\epsilon))]}\hspace{-3em}(-1)^{r-1}
    Y\left(\prod_{i=1}^{p_+-r}\scr{+}(z_i+1)
    \prod_{i=1}^r\scr{+}(1-y_{i-1})(\scr{+} \tilde A);-1\right) 
    \prod_{i=1}^{p_+-r}\d z_i
    \d w\prod_{i=1}^{r-1} \d y_i\,.
  \end{align*}
  Therefore we have an orientation reversal of the usual integration of the
  \(y_i\) coordinates and a shift in the lower bound of the cycle for the
  \(z_i\) coordinates. Since changing the lower bound of the cycle for the
  \(z_i\) coordinates only leads to corrections of order \(\epsilon\) and
  greater, we can shift the lower bound of the cycle for the \(z_i\) to
  1 and obtain
  \begin{align*}
    &[E, Y(\tilde A;-1)]\\
    &= \int_{[\Gamma^\infty_{p_+-r}(\kappa_+(\epsilon))]}
    \int_{[\overline{\Gamma}_{r}(\kappa_+(\epsilon))]}
    Y\left(\prod_{i=1}^{p_+-r}\scr{+}(z_i)\prod_{i=1}^{r-1}\scr{+}(y_i)
    (\scr{+} \tilde A);-1\right)\prod_{i=1}^{p_+-r}\d z_i\d y_i\quad \operatorname{mod} \epsilon\,.
  \end{align*}
  Recall that \(H^{p_+}(Y_{p_+},\tensor*[_{\mathcal{K}}]{\mathcal{L}}{_{p_+}}(\kappa_+(\epsilon),r))^{\symg{p_+}-}\) is 1 dimensional, so
  the two integrals 
  \begin{align*}
    &\int_{[\Gamma^\infty_{p_+-r}(\kappa_+(\epsilon))]}
    \int_{[\overline{\Gamma}_{r}(\kappa_+(\epsilon))]}
    Y\left(\prod_{i=1}^{p_+-r}\scr{+}(z_i)\prod_{i=1}^{r-1}\scr{+}(y_i)
    (\scr{+} \tilde A);-1\right)\prod_{i=1}^{p_+-r}\d z_i\d y_i\,,\\
    &
    \int_{[\Gamma^\infty_{p_+-1}(\kappa_+(\epsilon))]}
    Y\left(\prod_{i=1}^{p_+-1}\scr{+}(z_i)
    (\scr{+} \tilde A);-1\right)\prod_{i=1}^{p_+-1}\d z_i\,
  \end{align*}
  can at most differ by the multiplication with some constant.
  The multivalued part of the operator product expansion of \(\prod_{i=1}^{p_+-r}\scr{+}(z_i)\prod_{i=1}^{r-1}\scr{+}(y_i)\)
  on an element of \(\tensor[_{\mathcal{O}}]{K}{_{-1,1+2np_-;+}}\) is
  \begin{align*}
    \prod_{1\leq i\neq j\leq
      p_+-r}(z_i-z_j)^{\kappa_+(\epsilon)}\prod_{i=1}^{p_+-r}z_i^{2\kappa_+(\epsilon)}
    \prod_{i=1}^{p_+-r}\prod_{j=1}^{r-1}(z_i-y_j)^{2\kappa_+(\epsilon)}
    \prod_{1\leq i\neq j\leq
      r-1}(y_i-y_j)^{\kappa_+(\epsilon)}\prod_{i=1}^{r-1}y_i^{2\kappa_+(\epsilon)}
  \end{align*}
  and the normalising factor \(c_r^\infty(\kappa_+(\epsilon))\) in the definition of 
  \([\Gamma^\infty_{p_+-r}(\kappa_+(\epsilon))]\)
  was chosen such that
  \begin{align*}
    &\int_{[\Gamma^\infty_{p_+-r}(\kappa_+(\epsilon))]}
    \int_{[\overline{\Gamma}_{r}(\kappa_+(\epsilon))]}
    Y\left(\prod_{i=1}^{p_+-r}\scr{+}(z_i)\prod_{i=1}^{r-1}\scr{+}(y_i)
    (\scr{+} \tilde A);-1\right)\prod_{i=1}^{p_+-r}\d z_i\d y_i\\
    &=
    \int_{[\Gamma^\infty_{p_+-1}(\kappa_+(\epsilon))]}
    Y\left(\prod_{i=1}^{p_+-1}\scr{+}(z_i)
    (\scr{+} \tilde A);-1\right)\prod_{i=1}^{p_+-1}\d z_i\,.
  \end{align*}
  Setting \(\epsilon=0\), it therefore follows that
  \begin{align*}
    [E, Y(A;-1)]=
    Y(E A;-1)\,.
  \end{align*}
  Since \(E\) is a
  Virasoro homomorphism it then follows that
  \begin{align*}
    [E, Y(A;u)]&=e^{(1-u)L_{-1}}[E, Y( A;-1)]e^{(u-1)L_{-1}}\\
    &= Y(E A;u)\,.
  \end{align*}
\end{proof}

\begin{remark}
  The Frobenius operators \(E\) and \(F\) are only well defined on the kernels of the screening operators \(\scr{+}\)
  and \(\scr{-}\) respectively. Attempting to evaluate \(E\) and \(F\) on the Fock modules \(F_{r,s;n}\) leads to
  poles in the deformation parameter \(\epsilon\).
\end{remark}

\begin{definition}
  For \(1\leq r\leq p_+\), \(1\leq s\leq p_-\) and \(n\geq0\), bases
  of the sectors \(V_{r,s;n}^\pm\subset \mathcal{K}_{r,s;n}^\pm\) in terms of
  the Frobenius homomorphisms \(E\) and \(F\) are given by:
\begin{enumerate}
  \item \(E\) basis
    \begin{align*}
      V_{r,s:n}^+&=\left\{
        \begin{array}{lc}
          \bigoplus_{m=-n}^{n}\mathbb{C}
          E^{n+m}|\beta_{p_+,p_-;-2n}\rangle& r=p_+,s=p_-\\
          \bigoplus_{m=-n}^{n}\mathbb{C}
          E^{n+m}|\beta_{p_+,s;-2n}\rangle& r=p_+,s\neq p_-\\
          \bigoplus_{m=-n}^{n}\mathbb{C}
          E^{n+m}\scrp{+}{p_+-r}|\beta_{p_+-r,p_-;-2n-1}\rangle& r\neq p_+,s=p_-\\
          \bigoplus_{m=-n}^{n}\mathbb{C}
          E^{n+m}\scrp{+}{p_+-r}|\beta_{p_+-r,s;-2n-1}\rangle& r\neq p_+,s\neq p_-
        \end{array}\right.\\
      V_{r,s:n}^-&=\left\{
        \begin{array}{lc}
          \bigoplus_{m=-n}^{n+1}\mathbb{C}
          E^{n+m}|\beta_{p_+,p_-;-2n-1}\rangle& r=p_+,s=p_-\\
          \bigoplus_{m=-n}^{n+1}\mathbb{C}
          E^{n+m}|\beta_{p_+,s;-2n-1}\rangle& r=p_+,s\neq p_-\\
          \bigoplus_{m=-n}^{n+1}\mathbb{C}
          E^{n+m}\scrp{+}{p_+-r}|\beta_{p_+-r,p_-;-2n-2}\rangle& r\neq p_+,s=p_-\\
          \bigoplus_{m=-n}^{n+1}\mathbb{C}
          E^{n+m}\scrp{+}{p_+-r}|\beta_{p_+-r,s;-2n-2}\rangle& r\neq p_+,s\neq p_-
        \end{array}\right.
    \end{align*}
  \item \(F\) basis
    \begin{align*}
      V_{r,s:n}^+&=\left\{
        \begin{array}{lc}
          \bigoplus_{m=-n}^{n}\mathbb{C}
          F^{n-m}|\beta_{p_+,p_-;2n}\rangle& r=p_+,s=p_-\\
          \bigoplus_{m=-n}^{n}\mathbb{C}
          F^{n-m}\scrp{-}{p_--s}|\beta_{p_+,p_--s;2n+1}\rangle& r=p_+,s\neq p_-\\
          \bigoplus_{m=-n}^{n}\mathbb{C}
          F^{n-m}|\beta_{r,p_-;2n}\rangle& r\neq p_+,s=p_-\\
          \bigoplus_{m=-n}^{n}\mathbb{C}
          F^{n-m}\scrp{-}{p_--s}|\beta_{r,p_--s;2n+1}\rangle& r\neq p_+,s\neq p_-
        \end{array}\right.\\
      V_{r,s:n}^-&=\left\{
        \begin{array}{lc}
          \bigoplus_{m=-n}^{n+1}\mathbb{C}
          F^{n-m+1}|\beta_{p_+,p_-;2n+1}\rangle& r=p_+,s=p_-\\
          \bigoplus_{m=-n}^{n+1}\mathbb{C}
          F^{n-m+1}\scrp{-}{p_--s}|\beta_{p_+,p_--s;2n+2}\rangle& r=p_+,s\neq p_-\\
          \bigoplus_{m=-n}^{n+1}\mathbb{C}
          F^{n-m+1}|\beta_{r,p_-;2n+1}\rangle& r\neq p_+,s=p_-\\
          \bigoplus_{m=-n}^{n+1}\mathbb{C}
          F^{n-m+1}\scrp{-}{p_--s}|\beta_{r,p_--s;2n+2}\rangle& r\neq p_+,s\neq p_-
        \end{array}\right.
    \end{align*}
 \end{enumerate}  
\end{definition}

\section{The extended {\boldmath \(W\)-algebra \(\mathcal{M}_{p_+,p_-}\)\unboldmath}}\label{sec:Mppalg}

In this section we apply the theory developed in the previous sections to
analysing the extended \(W\)-algebra \(\mathcal{M}_{p_+,p_-}\).

\subsection{The algebra structure of {\boldmath\(\mathcal{M}_{p_+,p_-}\)\unboldmath}}

\begin{definition}\label{sec:Mpqdef}\ 
  \begin{enumerate}
  \item The extended \(W\)-algebra
    \(\mathcal{M}_{p_+,p_-}=(\mathcal{K}^+_{1,1},|0\rangle,T,Y)\) is a
    subVOA of \(\mathcal{V}_{p_+,p_-}\), where the vacuum vector,
    conformal vector and vertex operator map are those of
    \(\mathcal{V}_{p_+,p_-}\); and
    \begin{align*}
      \mathcal{K}^+_{1,1}=\ker \scr{+}\cap\ker \scr{-}\subset V^+_{1,1}\,.
    \end{align*}
  \item Let \(\mathcal{M}_{p_+,p_-}^{(0)}=\mathcal{M}_{p_+,p_-}\cap F_{1,1;0} = (K_{1,1;0},|0\rangle,T,Y)\)
    be the restriction of \(\mathcal{M}_{p_+,p_-}\) to its Heisenberg weight 0 subspace. Then
    \(\mathcal{M}_{p_+,p_-}^{(0)}\) contains the vacuum and Virasoro states \(|0\rangle,\ T\) and is a subVOA of
    \(\mathcal{M}_{p_+,p_-}\).
  \end{enumerate}
\end{definition}

We specialise some of the notation of Section \ref{sec:Virrepthy}.
\begin{definition}\label{sec:specialnotation}\ 
  \begin{enumerate}
  \item For \(n\geq 0\) let
    \begin{align*}
      \alpha_n^+&=\beta_{p_+-1,1;-2n-1}=\alpha_+ - (n+1)\alpha\,,\\
      \alpha_n^-&=\beta_{1,p_--1;2n+1}=\alpha_- + (n+1)\alpha\,,
    \end{align*}
    such that \(\alpha_n^-={\alpha_n^+}^\vee\). The conformal weight
    corresponding to these Heisenberg weights is
    \begin{align*}
      \Delta^+_{1,1;n}&=h_{\alpha_n^\pm}=((n+1)p_+-1)((n+1)p_--1)\,.
    \end{align*}
    We abbreviate \(\Delta_{1,1;n}^+\) as \(\Delta_n\).
  \item Let \(\lambda^+_{n,m}\) and \(\lambda^-_{n,m}\) be the conjugate partitions
    \begin{align*}
      \lambda^+_{n,m}&=\lambda_{(n+m+1)p_+-1,(n-m+1)p_--1}\\
      \lambda^-_{n,m}&=\lambda_{(n-m+1)p_--1,(n+m+1)p_+-1}\,,
    \end{align*}
    where \(\lambda_{N,M}=(M,\dots,M)\) is the partition consisting of \(N\) copies of \(M\).
  \item For \(n\geq0\) and \(-n\leq m\leq n\) let \(W_{n,m}\) be the basis of
    the soliton sector \(V^+_{1,1;n}\) given by
    \begin{align*}
      W_{n,m}=E^{n+m}\scrp{+}{p_+-1}|\alpha_n^+\rangle\,.
    \end{align*}
  \end{enumerate}
\end{definition}

\begin{prop}
  For \(1\leq r<p_+\) and \(1\leq s<p_-\) the screening operators
  \(\scrp{+}{r}\) and \(\scrp{-}{s}\) are \(\mathcal{M}_{p_+,p_-}\) homomorphisms, that is, for
  \(A\in \mathcal{K}_{1,1}^+\)
  \begin{align*}
    [\scrp{+}{r},Y(A;w)]&=0\,,& [\scrp{-}{s},Y(A;w)]=0\,.
  \end{align*}
\end{prop}
\begin{proof}
  We show the \(\scr{+}\) case. 
  \begin{align*}
    &[\scrp{+}{r},Y(A;w)]=\res_{z=w} \scrp{+}{r}(z) Y(A;w)\d z\\
    &\qquad= \res_{z=w}\int_{[\overline{\Gamma}_r(\kappa_+)]}
    z^{r-1}Y(\prod_{i=1}^{r-1}\scr{+}((z-w)y_i)\scr{+}(z-w)
    A;w)\d z\d y_1\cdots \d y_{r-1}\\
    &\qquad=\int_{[\overline{\Gamma}_r(\kappa_+)]}
    z^{r-1}Y(\prod_{i=1}^{r-1}\scr{+}((z-w)y_i)
    (\scr{+}A);w)\d y_1\cdots \d y_{r-1}=0\,,
  \end{align*}
  since \(\scr{+}A=0\).
  Note that we have suppressed total derivative terms in the 
  \(y_i\) coordinates. Note that the vertex operator
    \(Y(\ ,\ )\) is technically a generalised vertex operator in the sense of \cite{DongLepowski:1993}, but this does
    not change how the calculation is performed.
\end{proof}

\begin{thm}\label{sec:indecstructure}\ 
  The extended \(W\)-algebra \(\mathcal{M}_{p_+,p_-}\) is generated by the fields \(T(z), Y(W_{1,1};z)\) and \(Y(W_{1,-1};z)\),
  while \(\mathcal{M}_{p_+,p_-}^{(0)}\) is generated by the fields \(T(z)\) and \(Y(W_{1,0};z)\).
  Furthermore the VOAs \(\mathcal{M}_{p_+,p_-}\)  and 
  \(\mathcal{M}_{p_+,p_-}^{(0)}\) satisfy:
  \begin{enumerate}
  \item The Frobenius homomorphisms \(E\) and \(F\) are derivations of the extended \(W\)-algebra \(\mathcal{M}_{p_+,p_-}\).
  \item The module \(\mathcal{X}^+_{1,1}\subset \mathcal{K}^+_{1,1}\) is simple as an \(\mathcal{M}_{p_+,p_-}\)-module and is
    generated by the soliton vector \(W_{0,0}\).
  \item The quotient \(\mathcal{K}^+_{1,1}/\mathcal{X}^+_{1,1}\cong L(0)\) is a simple \(\mathcal{M}_{p_+,p_-}\)-module
    on which \(Y(A;z)=0\) for \(A\in \mathcal{X}^+_{1,1}\).
  \item The sequence
    \begin{align*}
      0\longrightarrow \mathcal{X}^+_{1,1}\longrightarrow \mathcal{K}^+_{1,1}\longrightarrow L(0)\longrightarrow 0\,,
    \end{align*}
    is an exact sequence of \(\mathcal{M}_{p_+,p_-}\)-modules.
  \item The module \(X_{1,1;0}\) is simple as an \(\mathcal{M}_{p_+,p_-}^{(0)}\)-module and is
    generated by the soliton vector \(W_{0,0}\).
  \item The quotient \(K_{1,1;0}/X_{1,1;0}\cong L(0)\) is a simple \(\mathcal{M}_{p_+,p_-}^{(0)}\)-module
    on which \(Y(A;z)=0\) for \(A\in X_{1,1;0}\).
  \item The sequence
    \begin{align*}
      0\longrightarrow X_{1,1;0}\longrightarrow K_{1,1;0}\longrightarrow L(0)\longrightarrow 0\,,
    \end{align*}
    is an exact sequence of \(\mathcal{M}_{p_+,p_-}^{(0)}\)-modules.
  \end{enumerate}
\end{thm}
\begin{remark}
  The modes of the fields  \(T(z), Y(W_{1,1};z)\) and \(Y(W_{1,-1};z)\) generate all of \(\mathcal{M}_{p_+,p_-}\)
  by acting on the vacuum vector \(|0\rangle\), however, the operator product expansion of \(Y(W_{1,1};z)\)
  with \(Y(W_{1,-1};z)\) contains \(Y(W_{1,0};z)\) as one of its singular terms. Therefore \(T(z), Y(W_{1,1};z)\)
  and \(Y(W_{1,-1};z)\) are what is sometimes referred to as weak generators while \(T(z), Y(W_{1,1};z), Y(W_{1,0};z)\)
  and \(Y(W_{1,-1};z)\) are strong generators.
\end{remark}

In order to prove the above Theorem we first state the following
proposition.
\begin{prop}\ \label{sec:indecstructure_technical}
  \begin{enumerate}
  \item For \(n\geq0\)
    \begin{align*}
      W_{1,1}[\Delta_n-\Delta_{n+1}]W_{n,n}&=a_nW_{n+1,n+1}\,,&
      W_{1,-1}[\Delta_n-\Delta_{n+1}]W_{n,-n}&=b_n W_{n+1,-(n+1)}\,,\\
      W_{1,1}[\Delta_{n+1}-\Delta_{n}]W_{n+1,-(n+1)}&=c_nW_{n,-n}\,,&
      W_{1,-1}[\Delta_{n+1}-\Delta_{n}]W_{n+1,n+1}&=d_n W_{n,n}\,,
    \end{align*}
    where \(a_{n},b_n,c_n\) and \(d_{n}\) are non-zero constants.
  \item For \(m>\Delta_1-\Delta_0\)
    \begin{align*}
      W_{1,-1}[m] W_{1,1}&=0&
      W_{1,1}[m] W_{1,-1}&=0
    \end{align*}
  \end{enumerate}
\end{prop}
\begin{proof}
  Since up to multiplication by non-zero constants the \(W_{n,\pm n}\) are
  given by \(\scrp{\mp}{p_\mp-1}|\alpha_n^\mp\rangle\), 1. is proven by
  showing that the matrix elements
  \begin{align*}
    &\int_{[\Gamma_{p_\mp-1}(\kappa_\mp)]}\langle
    \alpha_{n+1}^\mp|\scr{-}(z_1+w)\cdots
    \scr{-}(z_{p_\mp-1}+w)V_{\alpha_1^\mp}(w)|\alpha_n^\mp\rangle\prod_{i=1}^{p_\mp-1}\d
    z_i\,,\\
    &\int_{[\Gamma_{p_\mp-1}(\kappa_\mp)]}\langle
    \alpha_{n-1}^\pm|\scr{-}(z_1+w)\cdots
    \scr{-}(z_{p_\mp-1}+w)V_{\alpha_1^\mp}(w)|\alpha_n^\pm\rangle\prod_{i=1}^{p_\mp-1}\d z_i\,,
  \end{align*}
  are non-zero.
  We detail how to evaluate one of these matrix elements. The rest follow in
  the same way.
  \begin{align*}
    &\int_{[\Gamma_{p_--1}(\kappa_-)]}\langle
    \alpha_{n+1}^-|\scr{-}(z_1+w)\cdots
    \scr{-}(z_{p_\mp-1}+w)V_{\alpha_1^-}(w)|\alpha_n^-\rangle\prod_{i=1}^{p_--1}\d
    z_i\,,\\
    &\ =
    \int_{[\Gamma_{p_--1}(\kappa_-)]}\prod_{1\leq i\neq j\leq
      p_--1}(z_i-z_j)^{\kappa_-}\prod_{i=1}^{p_--1}z_i^{(2-p_-))\kappa_-+1-3p_+}\\
    &\ \quad\times
    \prod_{i=1}^{p_-1}(1+\tfrac{z_i}{w})^{2\kappa_--2(n+1)p_+}w^{2(n+1)p_+p_--2p_+}\prod_{i=1}^{p_--1}\frac{\d
      z_i}{z_i}\\
    &\
    =w^{\Delta_{n+1}-\Delta_n-\Delta_1}\Xi_{2(n+1)p_--1}(Q_{\lambda^-_{2,1}}(x;\kappa_+))\neq 0\,,
  \end{align*}
  where \(\Xi\) is the map described in Proposition
  \ref{sec:polytofieldmap}. 

  2. follows from the fact that
  \begin{align*}
    W_{1,-1}[m] |\alpha_1^-\rangle&=0&
    W_{1,1}[m] |\alpha_1^+\rangle&=0&
  \end{align*}
  for \(m>\Delta_1-\Delta_0\).
\end{proof}

\begin{proof}[Proof of Theorem \ref{sec:indecstructure}]
  1. follows directly from Theorem \ref{sec:frobopthm}.
  We prove the points 2. - 4. pertaining to \(\mathcal{M}_{p_+,p_-}\),
  points 5. - 7. follow by the same methods.

  2. Since \(W_{0,0}\) is a Virasoro descendant of the vacuum \(|0\rangle\), proving that \(\mathcal{M}_{p_+,p_-}\) is generated by
  \(T(z), Y(W_{1,-1};z)\) and \(Y(W_{1,1};z)\), reduces to showing that all soliton vectors \(W_{n,m}\) can be reached by acting on
  \(W_{0,0}\) with the modes of \(Y(W_{1,-1};z)\) and \(Y(W_{1,1};z)\). 
  We prove this by showing that \(W_{1,-1}[\Delta_n-\Delta_{n\pm1}]W_{n,m}\)
  and \(W_{1,1}[\Delta_n-\Delta_{n\pm1}]W_{n,m}\) have non-zero contributions
  from \(W_{n\pm1,m-1}\) and \(W_{n\pm1,m+1}\) respectively. 

  We show the
  \(W_{1,-1}\) case.
  Since \(F W_{1,-1}=0\), the derivation property of the Frobenius homomorphisms implies that 
  \(Y(W_{1,-1};z)\) commutes with \(F\). 
  Therefore
  \begin{align*}
    F^{m+n}(W_{1,-1}[\Delta_n-\Delta_{n+1}]W_{n,m})=\operatorname{const} W_{n+1,-(n+1)}\,,
  \end{align*}
  which implies that \(W_{1,-1}[\Delta_n-\Delta_{n+1}]W_{n,m}\) has non-zero
  contributions from \(W_{n+1,m-1}\), similarly
  \begin{align*}
    W_{1,-1}[\Delta_n-\Delta_{n-1}]W_{n,m}=\operatorname{const}F^{n-m} W_{n-1,n-1}\,,
  \end{align*}
  which implies that \(W_{1,-1}[\Delta_n-\Delta_{n-1}]W_{n,m}\) has non-zero
  contributions from \(W_{n-1,m-1}\).

  3. and 4. Since \(\mathcal{K}_{1,1}^+\) and \(\mathcal{X}_{1,1}^+\) are \(\mathcal{M}_{p_+,p_-}\)-modules their quotient 
  \(\mathcal{K}_{1,1}^+/\mathcal{X}_{1,1}^+\cong L(0)\) is an \(\mathcal{M}_{p_+,p_-}\)-module on which all fields 
  \(Y(A;z), A\in \mathcal{X}_{1,1}^+\) act
  trivially. Thus the sequence 
  \begin{align*}
    0\longrightarrow \mathcal{X}^+_{1,1}\longrightarrow \mathcal{K}^+_{1,1}\longrightarrow L(0)\longrightarrow 0\,,
  \end{align*}
  is an exact sequence of \(\mathcal{M}_{p_+,p_-}\)-modules.
\end{proof}

\begin{prop}\label{sec:Wzeromodeeigenvalues}\ 
  Let \(\omega_n(\beta)\) be the eigenvalues of the zero modes \(W_{n,0}[0]\) acting on the generating state \(|\beta\rangle\in F_\beta\)
  for \(\beta\in\mathbb{C}\) and \(n\geq 0\).
  \begin{enumerate}
  \item For \(n\geq 0\) the eigenvalue \(\omega_n(\beta)\) is a degree \(\Delta_n\) polynomial in \(\beta\)
    \begin{align*}
      \omega_n(\beta)=\langle\beta| W_{n,0}[0]|\beta\rangle
      =\operatorname{const}\cdot\prod_{i=1}^{(n+1)p_+-1}\prod_{j=1}^{(n+1)p_--1}
      (\beta-\beta_{i,j})\,.
    \end{align*}
  \item For \(n\geq0\) and \(h_\beta=\tfrac12\beta(\beta-\alpha_0)\) there exist polynomials \(g_n(h)\in\mathbb{C}[h]\) such that
    \begin{align*}
      g_n(h_\beta)=\left\{
        \begin{array}{cc}
          \omega_n(\beta)& n\operatorname{\ even}\\
          \omega_n(\beta)^2& n\operatorname{\ odd}
        \end{array}\right.\,.
    \end{align*}
    For \(n=0,1,2\) these polynomials are
    \begin{align*}
      g_0(h_\beta)=\omega_0(\beta)&=\operatorname{const}\cdot \prod_{[(i,j)]\in \mathcal{T}}(h_\beta-\Delta_{i,j})\,,\\
      g_1(h_\beta)=\omega_1(\beta)^2&=\operatorname{const}\cdot \prod_{[(i,j)]\in \mathcal{T}}(h_\beta-\Delta_{i,j})^4
      \prod_{i=1}^{p_+-1}\prod_{j=1}^{p_--1}(h_\beta-\Delta_{i,j;0}^+)^2\\
      &\ \prod_{i=1}^{p_+-1}(h_\beta-\Delta_{i,p_-;0}^+)^2
      \prod_{j=1}^{p_--1}(h_\beta-\Delta_{p_+,j;0}^+)^2\\
      &\ (h_\beta-\Delta_{p_+,p_-;0}^+)\,,\\
      g_2(h_\beta)=\omega_2(\beta)&=\operatorname{const}\cdot \prod_{[(i,j)]\in \mathcal{T}}(h_\beta-\Delta_{i,j})^3\\
      &\ \prod_{i=1}^{p_+-1}\prod_{j=1}^{p_--1}(h_\beta-\Delta_{i,j;0}^+)^2
      \prod_{i=1}^{p_+-1}\prod_{j=1}^{p_--1}(h_\beta-\Delta_{i,j;0}^-)\\
      &\ \prod_{i=1}^{p_+-1}(h_\beta-\Delta_{i,p_-;0}^+)^2
      \prod_{i=1}^{p_+-1}(h_\beta-\Delta_{i,p_-;0}^-)\\
      &\ \prod_{j=1}^{p_--1}(h_\beta-\Delta_{p_+,j;0}^+)^2
      \prod_{j=1}^{p_--1}(h_\beta-\Delta_{p_+,j;0}^-)\\
      &\ (h_\beta-\Delta_{p_+,p_-;0}^+)(h_\beta-\Delta_{p_+,p_-;0}^-)\,,
    \end{align*}
    where \(\mathcal{T}\) is the Kac table and \(\Delta_{r,s}\) and
    \(\Delta_{r,s;0}^\pm\) are the conformal weights of Definition \ref{sec:solitonweights}.
\end{enumerate}
\end{prop}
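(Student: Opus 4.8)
The plan is to evaluate $\omega_n(\beta)=\langle\beta|W_{n,0}[0]|\beta\rangle$ directly as a Selberg-type integral coming from the screening realisation $W_{n,0}=S_+^{[(n+1)p_+-1]}|\alpha_n^+\rangle$, read off the answer with the Jack-polynomial machinery of Section \ref{sec:deformationsec}, and then for the second part regroup the resulting linear factors in $\beta$ using the reflection $\beta\mapsto\alpha_0-\beta$, under which conformal weights are invariant. First I would observe that $W_{n,0}$ has Heisenberg weight $0$ and conformal weight $\Delta_n$, so $Y(W_{n,0};z)=\sum_mW_{n,0}[m]z^{-m-\Delta_n}$ is built from the Heisenberg current alone and hence acts on every Fock module $F_\beta$ preserving its $L_0$-grading; thus $W_{n,0}[0]$ acts on the one-dimensional space $F_\beta[h_\beta]$ by a scalar $\omega_n(\beta)$, and a weight count forces $\langle\beta|Y(W_{n,0};z)|\beta\rangle=\omega_n(\beta)z^{-\Delta_n}$. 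Writing this matrix element out through the iterated vertex operator construction gives, with $N=(n+1)p_+-1$ and an appropriate renormalised cycle,
\begin{align*}
  \omega_n(\beta)\,z^{-\Delta_n}=\int_{[\Gamma_{N}(\kappa_-)]}\langle\beta|S_+(x_1)\cdots S_+(x_{N})V_{\alpha_n^+}(z)|\beta\rangle\prod_{i=1}^{N}\d x_i\,,
\end{align*}
and evaluating the correlator with the operator product expansion for products of the fields $V_\bullet$, using $\alpha_+^2=2\kappa_+=2/\kappa_-$ and $\alpha_+\alpha_-=-2$ to absorb $\prod_{i<j}(x_j-x_i)^{\alpha_+^2}$ into $\Delta_{N}(x;\kappa_-)$, reduces the integrand to $\Delta_{N}(x;\kappa_-)\,(\prod_ix_i)^{c(\beta)}\prod_i(z-x_i)^{\alpha_+\alpha_n^+}$ with $c(\beta)$ affine in $\beta$ and $\alpha_+\alpha_n^+=-2N/\kappa_-$.

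I would then finish part 1 exactly as in the proof of Proposition \ref{sec:indecstructure_technical}: expand $\prod_i(1-x_i/z)^{\alpha_+\alpha_n^+}$ for large $z$ by the Cauchy identity for Jack polynomials (Proposition \ref{sec:polytofieldmap}(2)), whereupon the orthogonality and degree properties of the pairing $\langle\,\cdot\,\rangle_{\kappa_-}^{N}$ kill all but one term, which by the specialisation formula of Proposition \ref{sec:polytofieldmap}(4) together with the product formula for $b_\lambda(\kappa_-)$ equals $\epsilon_{X}\big(Q_{\lambda}(\,\cdot\,;\kappa_-)\big)$ with $X=-\kappa_-\alpha_+\beta$ and $\lambda$ the rectangular partition $\lambda^+_{n,0}=((n+1)p_--1)^{(n+1)p_+-1}$; reading off the arm- and leg-lengths of a box $s=(i,j)$ of this rectangle yields
\begin{align*}
  \omega_n(\beta)=\prod_{i=1}^{(n+1)p_+-1}\prod_{j=1}^{(n+1)p_--1}\frac{X+\kappa_-(j-1)-(i-1)}{\kappa_-\big((n+1)p_--j\big)+(n+1)p_+-1-i}\,.
\end{align*}
Using $\beta_{r,s}=\tfrac{1-r}{2}\alpha_++\tfrac{1-s}{2}\alpha_-$ one checks the numerator equals $-\kappa_-\alpha_+(\beta-\beta_{i,j})$ and the denominator equals $-\tfrac{2}{\alpha_+}\beta_{(n+1)p_+-i,\,1+j-(n+1)p_-}$, and since $\tfrac12\kappa_-\alpha_+^2=\kappa_+\kappa_-=1$ the quotient is precisely the claimed factor; moreover these denominators are nonzero, since $\beta_{(n+1)p_+-i,\,1+j-(n+1)p_-}=0$ would require a single integer $k$ with $kp_+=(n+1)p_+-i-1\ge0$ and simultaneously $kp_-=j-(n+1)p_-\le-1$.

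For the second part, the denominators just obtained are independent of $\beta$, so $\omega_n(\beta)=\text{const}\cdot\prod_{i,j}(\beta-\beta_{i,j})$ over the rectangle $1\le i\le(n+1)p_+-1$, $1\le j\le(n+1)p_--1$; the reflection $\beta\mapsto\alpha_0-\beta$ sends $\beta_{i,j}$ to $\alpha_0-\beta_{i,j}=\beta_{-i,-j}$, which inside the rectangle is represented by $\beta_{(n+1)p_+-i,(n+1)p_--j}$. Pairing each linear factor with that of its reflection and using $(\beta-\gamma)(\beta-(\alpha_0-\gamma))=2(h_\beta-h_\gamma)$ together with $h_{\beta_{r,s}}=h_{r,s}$ and $h_{r,s}=h_{r+p_+,s+p_-}=h_{-r,-s}$, I would rewrite $\omega_n(\beta)$, or $\omega_1(\beta)^2$ when $n=1$, as a product of factors $h_\beta-\Delta$ with $\Delta$ ranging over the weights of Definition \ref{sec:solitonweights}. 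For $n=0$ all factors pair off into the Kac weights $\Delta_{i,j}$, $[(i,j)]\in\mathcal T$, each once, giving $g_0$. For $n=1$ the reflection of the $(2p_+-1)\times(2p_--1)$ rectangle fixes only the box $(p_+,p_-)$, whose unpaired factor $\beta-\beta_{p_+,p_-}=\beta-\tfrac12\alpha_0$ is exactly why $\omega_1$ is anti-invariant under $\beta\mapsto\alpha_0-\beta$ and only $\omega_1(\beta)^2$ lies in $\mathbb C[h_\beta]$; squaring makes all multiplicities even apart from the lone factor $(\beta-\tfrac12\alpha_0)^2=2(h_\beta-\Delta^+_{p_+,p_-;0})$, giving $g_1$. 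For $n=2$ the reflection of the $(3p_+-1)\times(3p_--1)$ rectangle is fixed-point free, so $\omega_2$ itself lies in $\mathbb C[h_\beta]$, and the multiplicities of $\Delta_{r,s}$, $\Delta^+_{r,s;0}$, $\Delta^-_{r,s;0}$ come out as in $g_2$.

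The main obstacle is this last, purely combinatorial step: determining, box by box in the $(3p_+-1)\times(3p_--1)$ rectangle, which of the three families $\Delta_{r,s}$, $\Delta^\pm_{r,s;0}$ a Heisenberg weight reduces to under $\beta_{r,s}=\beta_{r+p_+,s+p_-}$, and then tallying the multiplicities correctly, with special attention to the boundary rows and columns $r=p_+$, $s=p_-$; the $n=1$ count is the same problem on a smaller grid and serves as a useful warm-up. Everything else in the argument is either the operator product expansion computation of the correlator or a direct application of the Jack-polynomial identities of Proposition \ref{sec:polytofieldmap}.
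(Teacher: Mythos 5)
Your proposal is correct and takes essentially the same route as the paper's own proof: the matrix element \(\langle\beta|W_{n,0}[0]|\beta\rangle\) is written as a Selberg-type integral of screening operators against \(V_{\alpha_n^+}(z)\), evaluated as the Jack-polynomial specialisation \(\epsilon_{\alpha_-\beta}(Q_{\lambda^+_{n,0}}(\,\cdot\,;\kappa_-))\) (your \(X=-\kappa_-\alpha_+\beta\) equals \(\alpha_-\beta\)), and the second part is obtained by pairing linear factors via \((\beta-\beta_{i,j})(\beta-\beta_{-i,-j})=2(h_\beta-h_{i,j})\). The extra details you supply -- the Cauchy-identity/orthogonality step, the identification of numerator and denominator with \(\beta-\beta_{i,j}\) and \(\beta_{(n+1)p_+-i,1+j-(n+1)p_-}\), and the fixed-box analysis explaining why only \(\omega_1^2\) is a polynomial in \(h_\beta\) -- are faithful elaborations of steps the paper leaves implicit.
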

\begin{proof}
  1. can be expressed in terms of Jack polynomials and evaluated accordingly.
  \begin{align*}
    &\int_{[\Gamma_{(n+1)p_+-1}^+]}\hspace{-2em} w^{\Delta_n}\langle\beta|\scr{+}(z_1+w)\cdots
    \scr{+}(z_{(n+1)p_+-1}+w)V_{\alpha_n^+}(w)|\beta\rangle \hspace{-0.5em}\prod_{i=1}^{(n+1)p_+-1}\hspace{-0.5em}\d z_i\\
    &=\Xi_{\alpha_-\beta}(Q_{\lambda_{n,0}^+}(x;\kappa_-))=\hspace{-0.5em}\prod_{i=1}^{(n+1)p_+-1}
    \prod_{j=1}^{(n+1)p_--1}\hspace{-0.5em}\frac{\beta-\beta_{i,j}}{\beta_{(n+1)p_+-i,1+j-(n+1)p_-}}\,,
  \end{align*}
  where \(\Xi\) is the map described in Proposition \ref{sec:polytofieldmap}.

  2. follows from 1. by using the identity
  \begin{align*}
    (\beta-\beta_{i,j})(\beta-\beta_{-i,-j})
      =\beta^2-\beta(\beta_{i,j}+\beta_{-i,-j})+\beta_{i,j}\beta_{-i,-j}=2(h_\beta-h_{i,j})\,.
  \end{align*}
\end{proof}

\subsection{The zero mode algebra and the 
  {\boldmath\(c_2\)\unboldmath}-cofiniteness condition}
\label{sec:zhualgsec}

In this section we will determine the structure of the zero mode algebra
\(A_0(\mathcal{M}_{p_+,p_-})\) and the Poisson algebra 
\(\mathfrak{p}(\mathcal{M}_{p_+,p_-})\), as well as prove that
\(\mathcal{M}_{p_+,p_-}\) satisfies Zhu's \(c_2\)-cofiniteness condition.
Let
\(A_0=A_0(\mathcal{M}_{p_+,p_-})\). We will first compute relations for the zero mode algebra
\(A_0\) and then show that they imply that the Poisson algebra is
finite dimensional and that therefore \(\mathcal{M}_{p_+,p_-}\) satisfies Zhu's \(c_2\)-cofiniteness condition.

\begin{prop}\label{sec:firstMppZhuprop}\ 
  \begin{enumerate}
\item The \(\mathcal{M}_{p_+,p_-}\) derivations \(E\) and \(F\) induce derivations on \(A_0\).
\item The \(\mathcal{M}_{p_+,p_-}\) anti-involution \(\sigma\) defines an anti-involution of \(A_0\) and
  \(A_0(\mathcal{M}_{p_+,p_-}^{(0)})\). On \(A_0\) it satisfies
  \begin{align*}
    \sigma([T])&=[T]&\sigma([W_{1,m}])&=-[W_{1,m}]
  \end{align*}
  for \(m=-1,0,1\) and on \(A_0(\mathcal{M}_{p_+,p_-}^{(0)})\)
  \begin{align*}
    \sigma([T])&=[T]&
    \sigma([W_{n,0}])&=(-1)^{n}[W_{n,0}]\,,
  \end{align*}
  for \(n\geq 0\).
\item We have the following surjection onto the zero mode algebra \(A_0\)
  from a subspace of \(\mathcal{K}_{1,1}^+\)
    \begin{align*}
      \mathbb{C}[T]\oplus\bigoplus_{m=-1}^{1}
      \mathbb{C}[T]\ast W_{1,m} \rightarrow A_0\,,
    \end{align*}
    where \(\mathbb{C}[T]\) denotes polynomials in \(T\) with multiplication
    \(\ast\).
  \item We have the following surjection onto the zero mode algebra \(A_0(\mathcal{M}_{p_+,p_-}^{(0)})\) from a
    subspace of \(K_{1,1;0}\)
    \begin{align*}
      \mathbb{C}[T]\oplus\bigoplus_{n\geq 1}
      \mathbb{C}[T]\ast W_{n,0} \rightarrow A_0(\mathcal{M}_{p_+,p_-}^{(0)})\,,
    \end{align*}
    where \(\mathbb{C}[T]\) denotes polynomials in \(T\) with multiplication
    \(\ast\).
  \end{enumerate}
\end{prop}

\begin{proof}
  1. and 2. follow because they hold in \(U(\mathcal{M}_{p_+,p_-})[0]\) and
  descend to \(A_0\) and \(A_0(\mathcal{M}_{p_+,p_-}^{(0)})\).

  We prove 3. by using Zhu's formulation of the zero mode algebra. It follows from 
  Lemma \ref{sec:indecstructure_technical} and Proposition \ref{sec:zhuproperties} that for \(n\geq 2\)
  \begin{align*}
    W_{n,n}=\operatorname{const}\cdot W_{1,1}[\Delta_{n-1}-\Delta_{n}]W_{n-1,n-1}\in O(\mathcal{M}_{p_+,p_-})\,.
  \end{align*}
  By applying \(F\) multiple times to \(W_{n,n}\) we see that
  \(W_{n,m}\in O(\mathcal{M}_{p_+,p_-})\) for \(n\geq2\) and \(-n\leq m\leq n\). From Proposition
  \ref{sec:zhuproperties} it also follows that the image of \(L_{-n}\) for
  \(n\geq 3\) satisfies
  \begin{align*}
    L_{-n}A\in \mathbb{C}[L_{-2},L_{-1},L_0]A \text{ mod } O(\mathcal{M}_{p_+,p_-})\,.
  \end{align*}
  Since \((L_0+L_{-1})A\in O(\mathcal{M}_{p_+,p_-})\) we therefore have the following surjection of
  vector spaces
  \begin{align*}
    \mathbb{C}[L_{-2}]|0\rangle\oplus \bigoplus_{m=-1}^1 \mathbb{C}[L_{-2}] W_{1,m}
    \longrightarrow A_0\,.
  \end{align*}
  Thus \(A_0\) is spanned by polynomials in \(T\) and polynomials in \(T\)
  times \(W_{1,m}\).

  4. follows from the same arguments used to prove 3. and
  the decomposition of \(K_{1,1;0}\) as a \(U(\mathcal{L})\)
  module. 
\end{proof}

\begin{thm}\label{sec:zhualgrelations}\ 
  \begin{enumerate}
  \item The zero mode algebra \(A_0\) is finite dimensional and is generated by \([T],[W_{1,m}],\ m=-1,0,1\).
  \item In the zero mode algebra \(A_0(\mathcal{M}_{p_+,p_-}^{(0)})\) the Virasoro element satisfies the polynomial relations
    \begin{align*}
      g_n([T])=\left\{
        \begin{array}{ll}
          [W_{n,0}]&n\ \operatorname{even}\\
          {[W_{n,0}]}^2&n\ \operatorname{odd}
        \end{array}\right.\,,
    \end{align*}
    where \(n\geq0\) and the the \(g_i\) are the polynomials of Proposition \ref{sec:Wzeromodeeigenvalues}.
  \item In the zero mode algebra \(A_0\) the Virasoro element satisfies the polynomials relations
    \begin{align*}
      [W_{0,0}]&=g_0([T])&[W_{1,0}]^2&=g_1([T])\\
      [W_{2,0}]&=g_2([T])=0\,,      
    \end{align*}
    where the \(g_i\) are the polynomials of Proposition \ref{sec:Wzeromodeeigenvalues}.
  \item The products of the generators \([W_{1,m}],\ m=-1,0,1\) are given by\\
    \begin{center}
      \begin{tabular}{c|c|c|c}
            \(\cdot\)&\([W_{1,-1}]\)&\([W_{1,0}]\)&\([W_{1,1}]\)\\\hline
            \([W_{1,-1}]\)&\(0\)&\(-f([T])[W_{1,-1}]\)&\(-g_1([T])-f([T])[W_{1,0}]\)\\\hline
            \([W_{1,0}]\)&\(f([T])[W_{1,-1}]\)&\(g_1([T])\)&\(-f([T])[W_{1,1}]\)\\\hline
            \([W_{1,1}]\)&\(-g_1([T])+f([T])[W_{1,0}]\)&\(f([T])[W_{1,1}]\)&\(0\)
          \end{tabular}
    \end{center}
    where \(f\) is a degree less than \(\Delta_1/2\) polynomial and \(g_1\) 
    is that of Proposition \ref{sec:Wzeromodeeigenvalues}.
  \item The commutators of the generators \([W_{1,m}],\ m=-1,0,1\) are given by
    \begin{align*}
      [[W_{1,0}],[W_{1,1}]]&=-2f([T])[W_{1,1}]\,,&[[W_{1,0}],[W_{1,-1}]]&=2f([T])[W_{1,-1}]\,,\\
      [[W_{1,1}],[W_{1,-1}]]&=2f([T])[W_{1,0}]\,,
    \end{align*}
\end{enumerate}
\end{thm}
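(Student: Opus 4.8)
I would start from Proposition~\ref{sec:firstMppZhuprop}, which already supplies a surjection $\mathbb{C}[T]\oplus\bigoplus_{m=-1}^{1}\mathbb{C}[T]\ast[W_{1,m}]\twoheadrightarrow A_0$ (so part~(1) is immediate), the commuting derivations $E,F,H$ of $A_0$ with $E[T]=F[T]=H[T]=0$, $E[W_{1,-1}]=[W_{1,0}]$, $E[W_{1,0}]=[W_{1,1}]$, $E[W_{1,1}]=0$ and the mirror relations for $F$, and the anti-involution $\sigma$ with $\sigma[T]=[T]$, $\sigma[W_{1,m}]=-[W_{1,m}]$. Consequently every relation below need only be proved for one representative, the others following by applying $E$, $F$ and $\sigma$. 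The plan is: (a) prove the polynomial relations of part~(2); (b) compute a single entry of the multiplication table by a screening-operator integral and propagate it by symmetry; (c) deduce $[W_{1,0}]^2=g_1([T])$ and the commutator table; (d) conclude finite dimensionality from $g_2([T])=0$ and the surjection above.

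For (a): $W_{0,0}$ is the vacuum Virasoro singular vector in $U(\mathcal{L})|0\rangle$, so $[W_{0,0}]$ lies in the subalgebra generated by $[T]$ and is a polynomial there of degree $\le\Delta_0/2=|\mathcal{T}|$; since $W_{0,0}$ is killed by the VOA surjection $\mathcal{M}_{p_+,p_-}\twoheadrightarrow\operatorname{MinVir}_{p_+,p_-}$ of Theorem~\ref{sec:indecstructure}, and $A_0(\operatorname{MinVir}_{p_+,p_-})=\mathbb{C}[x]/\bigl(\prod_{[(i,j)]\in\mathcal{T}}(x-\Delta_{i,j})\bigr)$, this polynomial is divisible by $\prod_{[(i,j)]\in\mathcal{T}}([T]-\Delta_{i,j})$; a degree count identifies it with $g_0([T])$ (the normalisation being fixed by Proposition~\ref{sec:Wzeromodeeigenvalues}). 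For $W_{2,0}$: it was shown in the proof of Proposition~\ref{sec:firstMppZhuprop} (via Proposition~\ref{sec:indecstructure_technical} and Proposition~\ref{sec:zhuproperties}) that $W_{2,0}\in O(\mathcal{M}_{p_+,p_-})$, so $[W_{2,0}]=0$; on the other hand $[W_{2,0}]$ is $\sigma$-even and of bounded conformal weight, hence a polynomial in $[T]$, and computing it by the screening-operator/Jack-polynomial method of Proposition~\ref{sec:Wzeromodeeigenvalues} (using $W_{2,0}\in F_0$) gives $g_2([T])$; hence $g_2([T])=0$ in $A_0$.

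For (b)--(c) I would use Zhu's product $[A]\cdot[B]=[A\ast B]$ with $A\ast B=\res_{z=0}Y(A;z)B\,(1+z)^{h_A}z^{-1}\,\d z$. The products $[W_{1,1}]^2=[W_{1,-1}]^2=0$ follow from the OPE $Y(W_{1,1};z)W_{1,1}$: it is $E$-annihilated (Corollary~\ref{sec:derivations}) and lies in the ideal $\mathcal{X}^+_{1,1}$, so by a conformal-weight count ($2\Delta_1<\Delta_2$) together with its $H$-weight it is supported only on parts of $\mathcal{X}^+_{1,1}$ whose class in $A_0$ vanishes. The single entry to compute from scratch is $[W_{1,-1}]\cdot[W_{1,1}]$: here $Y(W_{1,-1};z)W_{1,1}\in\mathcal{X}^+_{1,1}$ is confined by the derivations $E,F,H$ and the bound $2\Delta_1<\Delta_2$ to $U(\mathcal{L})W_{0,0}\oplus U(\mathcal{L})W_{1,0}$, so modulo $O(\mathcal{M}_{p_+,p_-})$ — using $W_{n,m}\in O$ for $n\ge2$ and Proposition~\ref{sec:zhuproperties}(5) — one has $[W_{1,-1}]\cdot[W_{1,1}]=p([T])+q([T])[W_{1,0}]$. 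Evaluating the defining residue on the divided-power screening realisation of $Y(W_{1,\pm1};z)$ and computing the resulting Selberg/Jack-polynomial specialisations $\epsilon_X(Q_\lambda(\,\cdot\,;\kappa_\pm))$ exactly as in the proof of Proposition~\ref{sec:indecstructure_technical} yields $p=-g_1$ and $q=-f$ with $\deg f<\Delta_1/2$. Applying $E$, $F$ and $\sigma$ then fills in all remaining products; pushing $F$ through $[W_{1,0}]\cdot[W_{1,1}]=-f([T])[W_{1,1}]$ and substituting the entries already found gives $[W_{1,0}]^2=g_1([T])$; subtracting transposed entries yields the commutator table. Finally, $g_2([T])=0$ together with the surjection of Proposition~\ref{sec:firstMppZhuprop} show that $A_0$ is spanned by $\{[T]^k,\,[T]^k[W_{1,m}]\mid 0\le k<\deg g_2\}$, hence $\dim A_0<\infty$.

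The genuine obstacle is step~(b): the residue computation of $W_{1,-1}\ast W_{1,1}$ (and likewise of $[W_{2,0}]$) modulo $O(\mathcal{M}_{p_+,p_-})$, where the \emph{full} polynomials $g_1$, $f$ (respectively $g_2$) must be extracted from the Selberg-type integrals and one must verify that no intermediate-weight Virasoro descendant of $W_{0,0}$ or $W_{1,0}$ has been overlooked. The symmetry propagation via $E,F,\sigma$, the vanishing $[W_{1,\pm1}]^2=0$, the minimal-model identification of $g_0$, and the passage to finite dimensionality are comparatively routine.
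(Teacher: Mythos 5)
Your overall strategy---establish a few base relations, propagate them with $E$, $F$, $\sigma$, and deduce finite dimensionality from $g_2([T])=0$ together with the spanning set of Proposition \ref{sec:firstMppZhuprop}---is the same as the paper's, and your treatment of parts (1), (2) and the finite-dimensionality is essentially identical (the paper gets $[W_{0,0}]=g_0([T])$ and $[W_{2,0}]=g_2([T])$ directly by matching eigenvalues on Fock highest-weight vectors via Proposition \ref{sec:Wzeromodeeigenvalues}, rather than through the minimal-model quotient, but both routes work). Where you diverge is in the choice of base case for the multiplication table, and here the paper's choice is strictly cheaper than yours. The paper never computes a fresh Selberg integral at this stage: it uses the structural containments $W_{1,0}\ast W_{1,0}\in U(\mathcal{L})|0\rangle\oplus U(\mathcal{L})W_{1,0}$ and $W_{1,0}\ast W_{1,-1}\in U(\mathcal{L})W_{1,-1}$ to write $[W_{1,0}]^2=g([T])+\tilde g([T])[W_{1,0}]$ and $[W_{1,0}]\cdot[W_{1,-1}]=f([T])[W_{1,-1}]$, kills $\tilde g$ by applying the anti-involution $\sigma$ (under which $[W_{1,0}]$ is odd), identifies $g=g_1$ from the already-computed eigenvalue polynomial $\omega_1(\beta)^2$ of Proposition \ref{sec:Wzeromodeeigenvalues}, and simply \emph{defines} $f$ to be the coefficient appearing in $[W_{1,0}]\cdot[W_{1,-1}]$; all remaining entries then follow using only $E$ and $\sigma$. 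So the computation you single out as the ``genuine obstacle''---extracting the full polynomials $g_1$ and $f$ from the residue of $W_{1,-1}\ast W_{1,1}$---is avoidable, and your claim to compute $q=-f$ from an integral is mildly circular unless you give an independent definition of $f$. There is also one step in your propagation that needs more care than you give it: recovering $[W_{1,0}]^2=g_1([T])$ by ``pushing $F$ through'' $[W_{1,0}]\cdot[W_{1,1}]=-f([T])[W_{1,1}]$ requires the two normalisation ratios $-b_{\lambda_{1,1}^+}/b_{\lambda_{1,0}^+}$ and $-b_{\lambda_{1,0}^+}/b_{\lambda_{1,-1}^+}$ in the action of $F$ on $W_{1,1}$ and $W_{1,0}$ to coincide; otherwise you pick up a spurious multiple of $g_1$ and a spurious $[W_{1,0}]$-term. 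This can be repaired (e.g.\ by first invoking the $\sigma$-parity argument to rule out a $[W_{1,0}]$-component of $[W_{1,0}]^2$, which is exactly the paper's trick), but as written it is an unverified coincidence of constants rather than a proof.
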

\begin{proof}
  We first prove the polynomial relations of 2. 
  Let \(k\geq 0\). Since
  \begin{align*}
    W_{2k+1,0}\ast W_{2k+1,0}\in U(\mathcal{L})|0\rangle\oplus\bigoplus_{n=1}^{m-1}U(\mathcal{L})W_{n,0}\,,
  \end{align*}
  where \(m\) is the smallest integer such that \(\Delta_m >2\Delta_{2k+1}\), it follows that in 
  \(A_0(\mathcal{M}_{p_+,p_-}^{(0)})\)
  \begin{align*}
    [W_{2k+1,0}]^2=f_0^{(2k+1)}([T])+\sum_{n=1}^{m-1} f_n^{(2k+1)}([T])[W_{n,0}]\,,
  \end{align*}
  for some polynomials \(f_n^{(2k+1)}\). Furthermore, since
  \begin{align*}
    W_{1,0}[\Delta_{2k+1}-\Delta_{2k+2}]W_{2k+1,0}=\operatorname{const}W_{2k+2,0} +
    \bigoplus_{n=0}^{2k+1}U(\mathcal{L})W_{n,0}
  \end{align*}
  lies in \(O(\mathcal{M}_{p_+,p_-}^{(0)})\), it follows that in 
  \(A_0(\mathcal{M}_{p_+,p_-}^{(0)})\)
  \begin{align*}
    [W_{2k+2,0}]=f_0^{(2k+2)}([T])+\sum_{n=1}^{2k+1} f_n^{(2k+2)}([T])[W_{n,0}]\,,
  \end{align*}
  for some polynomials \(f_n^{(2k+2)}\). Additionally, since
  \(\sigma([W_{2k+1,0}]^2)=[W_{2k+1,0}]^2\) and \(\sigma([W_{2k+2,0}])=[W_{2k+2,0}]\), the
  polynomials \(f_n^{2k+1},f_n^{2k+2}\) must vanish for \(n\) odd. Finally, it follows by induction in \(k\)
  that in \(A_0(\mathcal{M}_{p_+,p_-}^{(0)})\) the elements \([W_{2k+1,0}]^2\) and \([W_{2k+2,0}]\) for \(k\geq0\) are polynomials in \([T]\), since
  \(W_{0,0}\) is a Virasoro descendant of the vacuum. The formulae for these polynomials follow from
  Proposition \ref{sec:Wzeromodeeigenvalues}.

  Next we prove 3.
  The above calculations for \(A_0(\mathcal{M}_{p_+,p_-}^{(0)})\) imply that in \(A_0\),
  \([W_{0,0}]=g_0([T]),\ [W_{1,0}]^2=g_1([T]),\ [W_{2,0}]=g_2([T])\) and since \([W_{2,0}]\) vanishes in \(A_0\), 
  it follows that \(g_2([T])=0\).  
  The finite dimensionality of \(A_0\) then follows from Proposition
  \ref{sec:firstMppZhuprop} and \(g_2([T])=0\), thus proving 1.
 
  Finally we prove 4. and 5.
  The relation \([W_{1,\pm1}]^2=0\) follows from the fact that
  \(W_{1,\pm1}\ast W_{1,\pm1}=0\) in \(\mathcal{K}_{1,1}^+\).
  To show the relation for \([W_{1,0}]\cdot[W_{1,-1}]\) recall that
  \begin{align*}
    W_{1,0}\ast W_{1,-1}\in U(\mathcal{L})W_{1,-1} \,.
  \end{align*}
  This implies that in \(A_0\)
  \begin{align*}
    [W_{1,0}]\cdot[W_{1,-1}]=f([T]) [W_{1,-1}]\,.
  \end{align*}
  for some degree less than \(\Delta_1/2\) polynomial \(f\). We take this to
  be the polynomial \(f\) in the theorem.
  The remaining relations follow by the application of \(E\) and \(\sigma\)
  to the above relations.
\end{proof}

\begin{thm}\label{sec:poissonalgrelations}\ 
  \begin{enumerate}
  \item   The Poisson algebra \(\mathfrak{p}(\mathcal{M}_{p_+,p_-})\)
    is finite dimensional and \(\mathcal{M}_{p_+,p_-}\) therefore satisfies
    Zhu's \(c_2\)-cofiniteness condition.
  \item As a commutative algebra \(\mathfrak{p}(\mathcal{M}_{p_+,p_-})\) is
    generated by \([T]_{\mathfrak{p}}\) and \([W_{1,m}]_{\mathfrak{p}},\ m=-1,0,1\).
  \item The maps \(E\) and \(F\) act as derivations on \(\mathfrak{p}(\mathcal{M}_{p_+,p_-})\).
  \item 
    \begin{align*}
      [W_{1,\pm1}]^2_{\mathfrak{p}}&=0\\
      [W_{1,1}]_{\mathfrak{p}}\cdot[W_{1,-1}]_{\mathfrak{p}}&=-[W_{1,0}]^2_{\mathfrak{p}}\\
      [W_{1,0}]^2_{\mathfrak{p}}&=\operatorname{const}\cdot[T]_{\mathfrak{p}}^{\Delta_1}
    \end{align*}
  \end{enumerate}
\end{thm}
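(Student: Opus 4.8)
The plan is to prove parts 3, 2, 4, 1 in that order, leaning on the strong generation of \(\mathcal{M}_{p_+,p_-}\) by \(T,W_{1,1},W_{1,-1}\) (Theorem~\ref{sec:indecstructure}), the \(\ast\)-product identities and zero mode relations of Theorem~\ref{sec:zhualgrelations}, the derivation property of the Frobenius maps (Corollary~\ref{sec:derivations}), Proposition~\ref{sec:indecstructure_technical}, and the fact that \(\mathcal{M}_{p_+,p_-}/\mathcal{X}^+_{1,1}\cong\operatorname{MinVir}_{p_+,p_-}\) is \(c_2\)-cofinite. Part 3 is immediate: for \(X\in\{E,F,H\}\), Corollary~\ref{sec:derivations} gives \([X,A[n]]=(XA)[n]\), and \(X\) commutes with \(L_0\) so \(h_{XA}=h_A\); hence \(X\) sends \(A[-h_A-k]B\) (with \(k\ge1\)) to \((XA)[-h_{XA}-k]B+A[-h_A-k](XB)\in c_2(\mathcal{M}_{p_+,p_-})\), so \(X\) preserves \(c_2\) and descends to a derivation of \(\mathfrak{p}(\mathcal{M}_{p_+,p_-})\). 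For part 2, the state–field correspondence gives \(L_{-1}u=u[-h_u-1]|0\rangle\in c_2(\mathcal{M}_{p_+,p_-})\), and, as is standard, \(\mathfrak{p}(V)\) is generated as a commutative algebra by the images of any strong generating set; since \(\mathcal{M}_{p_+,p_-}\) is strongly generated by \(T,W_{1,1},W_{1,-1}\), the algebra \(\mathfrak{p}(\mathcal{M}_{p_+,p_-})\) is generated by \([T]_{\mathfrak{p}},[W_{1,1}]_{\mathfrak{p}},[W_{1,-1}]_{\mathfrak{p}}\), hence a fortiori by the four listed elements.

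For part 4 the key input is that the \(\ast\)-product identities \(W_{1,\pm1}\ast W_{1,\pm1}=0\), \(W_{1,0}\ast W_{1,-1}\in U(\mathcal{L})W_{1,-1}\) and \(W_{1,0}\ast W_{1,0}\in U(\mathcal{L})|0\rangle\oplus U(\mathcal{L})W_{1,0}\) hold inside \(\mathcal{K}^+_{1,1}\) (not merely in \(A_0\)), and are homogeneous; since all the \(W_{1,m}\) have conformal weight \(\Delta_1=(2p_+-1)(2p_--1)\), the weight \(2\Delta_1\) component of \(A\ast B\) is exactly the representative \(A[-\Delta_1]B\) of \([A]_{\mathfrak{p}}[B]_{\mathfrak{p}}\). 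Thus \([W_{1,\pm1}]^2_{\mathfrak{p}}=0\) at once. The second key fact is that \(\Delta_1\) is odd and that modulo \(c_2\) one has \(L_{-1}\equiv0\), \(L_{-n}\equiv0\) for \(n\ge3\), while \(L_{-2}\) raises weight by \(2\); hence every Virasoro descendant of odd level lies in \(c_2\). Consequently \(W_{1,0}[-\Delta_1]W_{1,-1}\), being a Virasoro descendant of \(W_{1,-1}\) at level \(\Delta_1\), lies in \(c_2\), so \([W_{1,0}]_{\mathfrak{p}}[W_{1,-1}]_{\mathfrak{p}}=0\); applying the derivation \(E\) and using \(EW_{1,-1}=W_{1,0}\), \(EW_{1,0}=W_{1,1}\) (Definition~\ref{sec:frobeniusaction}) then gives \([W_{1,1}]_{\mathfrak{p}}[W_{1,-1}]_{\mathfrak{p}}=-[W_{1,0}]^2_{\mathfrak{p}}\). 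Similarly, in \(W_{1,0}[-\Delta_1]W_{1,0}\) the \(U(\mathcal{L})W_{1,0}\)-summand sits at the odd level \(\Delta_1\) and drops, while the \(U(\mathcal{L})|0\rangle\)-summand sits at level \(2\Delta_1\) and reduces to a multiple of \(L_{-2}^{\Delta_1}|0\rangle=[T]^{\Delta_1}_{\mathfrak{p}}\); the multiple is nonzero, which is a routine free-field computation (it is the leading coefficient of \(g_1\) in Proposition~\ref{sec:Wzeromodeeigenvalues}). The remaining bracket and product tables follow by further use of \(E\) and the anti-involution \(\sigma\).

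For part 1, parts 2 and 4 together show that \(\mathfrak{p}(\mathcal{M}_{p_+,p_-})\) is spanned over \(\mathbb{C}[[T]_{\mathfrak{p}}]\) by \(1,[W_{1,-1}]_{\mathfrak{p}},[W_{1,0}]_{\mathfrak{p}},[W_{1,1}]_{\mathfrak{p}}\) — in particular every graded piece is at most \(3\)-dimensional — so finite dimensionality is equivalent to \([T]_{\mathfrak{p}}\) being nilpotent and annihilating a power of each \([W_{1,m}]_{\mathfrak{p}}\). Two reductions shrink the problem. First, by Proposition~\ref{sec:indecstructure_technical} with \(k=-1\), \(W_{n,n}\) is a nonzero multiple of \(W_{1,1}[\Delta_{n-1}-\Delta_n]W_{n-1,n-1}\); the estimate \(\Delta_n-\Delta_{n-1}=(2n+1)p_+p_--p_+-p_-\ge\Delta_1+1\) for \(n\ge2\) puts the mode index strictly below \(-h_{W_{1,1}}\), so \(W_{n,n}\in c_2\), and applying \(F\) gives \(W_{n,m}\in c_2\) for all \(n\ge2\); since \(c_2\) is stable under \(\mathcal{L}_{<0}\), the whole summand \(\bigoplus_{n\ge2}L(\Delta_n)\otimes V^+_{1,1;n}\) of \(\mathcal{X}^+_{1,1}\) lies in \(c_2\). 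Second, \(\mathcal{M}_{p_+,p_-}/\mathcal{X}^+_{1,1}\cong\operatorname{MinVir}_{p_+,p_-}\) is \(c_2\)-cofinite with \(\mathfrak{p}(\operatorname{MinVir}_{p_+,p_-})\cong\mathbb{C}[x]/(x^{(p_+-1)(p_--1)/2})\), so \(\mathfrak{p}(\mathcal{M}_{p_+,p_-})/\overline{\mathcal{X}^+_{1,1}}\) is finite dimensional. Hence \(\mathfrak{p}(\mathcal{M}_{p_+,p_-})\) is finite dimensional if and only if the image \(\overline{\mathcal{X}^+_{1,1}}\) is, and by the first reduction \(\overline{\mathcal{X}^+_{1,1}}\) is the image of just the \(n=0,1\) Virasoro summands \(L(\Delta_0)\oplus L(\Delta_1)^{\oplus3}\), a finitely generated \(\mathbb{C}[[T]_{\mathfrak{p}}]\)-module generated by \([W_{0,0}]_{\mathfrak{p}}\) and the \([W_{1,m}]_{\mathfrak{p}}\).

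The main obstacle is exactly this last step — showing that \([T]_{\mathfrak{p}}\) acts nilpotently on \(\overline{\mathcal{X}^+_{1,1}}\), i.e. producing \(N\) with \([T]^N_{\mathfrak{p}}=0\) and \([T]^N_{\mathfrak{p}}[W_{1,m}]_{\mathfrak{p}}=0\). The route I would take is to feed the zero mode relation \([W_{2,0}]=g_2([T])=0\) of Theorem~\ref{sec:zhualgrelations} through the canonical surjection \(\mathfrak{p}(\mathcal{M}_{p_+,p_-})\twoheadrightarrow\operatorname{Gr}(A_0)\) of Proposition~\ref{sec:zhutopoisson}: the relations of part 4 show \(\operatorname{Gr}(A_0)\) is likewise generated over \(\mathbb{C}[[T]]\) by four elements and is finite dimensional, so \([T]\) — being of positive degree — is nilpotent there; the kernel of the surjection then must be bounded using that, by the reductions above, it is concentrated inside \(\overline{\mathcal{X}^+_{1,1}}\) and is controlled by the ideal generated by \([W_{0,0}]_{\mathfrak{p}}\), which is proportional to a power of \([T]_{\mathfrak{p}}\) because \(W_{0,0}\) is a Virasoro descendant of the vacuum. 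Making this bookkeeping airtight — verifying the relevant free-field constants (the leading coefficients of \(g_1,g_2\)) are nonzero, and that the reduction of \(\overline{\mathcal{X}^+_{1,1}}\) really closes up after finitely many steps — is where essentially all the work lies; everything else is formal once Theorem~\ref{sec:zhualgrelations}, Proposition~\ref{sec:indecstructure_technical} and the \(\ast\)-product identities are available.
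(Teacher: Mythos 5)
Parts 2 and 3 of your argument are fine and match the paper (which simply cites Theorem~\ref{sec:indecstructure} and Corollary~\ref{sec:derivations}). Part 4 is also essentially correct, though you take a slightly different route: the paper obtains \([W_{1,1}]_{\mathfrak{p}}[W_{1,-1}]_{\mathfrak{p}}=-[W_{1,0}]_{\mathfrak{p}}^2\) by applying \(E^2\) to \([W_{1,-1}]_{\mathfrak{p}}^2=0\) and gets the third relation by pushing \([W_{1,0}]^2=g_1([T])\) into \(\operatorname{Gr}(A_0)\), whereas you use the (correct) observation that odd-level Virasoro descendants die in \(c_2\) since only \(L_{-2}\)-monomials survive, together with a single application of \(E\) and a direct computation of the weight-\(2\Delta_1\) component of \(W_{1,0}\ast W_{1,0}\). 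Your version is, if anything, more careful than the paper's about the direction of the surjection \(\mathfrak{p}\rightarrow\operatorname{Gr}(A_0)\).

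The genuine gap is in part 1. The crux of finite dimensionality is the nilpotency of \([T]_{\mathfrak{p}}\), and you never establish it: the route you sketch — feeding \(g_2([T])=0\) through the surjection \(\mathfrak{p}(\mathcal{M}_{p_+,p_-})\twoheadrightarrow\operatorname{Gr}(A_0)\) and then ``bounding the kernel'' — cannot work as stated, because that surjection goes the wrong way (finite dimensionality of \(\operatorname{Gr}(A_0)\), which is already known from Theorem~\ref{sec:zhualgrelations}, says nothing about its preimage), and you explicitly concede that ``essentially all the work'' of making the kernel estimate airtight remains. In fact no further work is needed: the relations you have already proved in part 4 close the argument in one line. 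Since \([W_{1,0}]_{\mathfrak{p}}^2=c\,[T]_{\mathfrak{p}}^{\Delta_1}\) with \(c\neq 0\) (the nonvanishing you assert via the leading coefficient of \(g_1\)) and \([W_{1,0}]_{\mathfrak{p}}^2=-[W_{1,1}]_{\mathfrak{p}}[W_{1,-1}]_{\mathfrak{p}}\), one gets
\begin{align*}
  [T]_{\mathfrak{p}}^{2\Delta_1}=c^{-2}\,[W_{1,1}]_{\mathfrak{p}}^2[W_{1,-1}]_{\mathfrak{p}}^2=0\,,
\end{align*}
so \([T]_{\mathfrak{p}}\) is nilpotent, \([W_{1,0}]_{\mathfrak{p}}^4=0\), and a commutative algebra finitely generated by nilpotent elements is finite dimensional — which is exactly the paper's (terse) conclusion ``finitely generated and all of its generators are nilpotent.'' Your detour through \(W_{n,m}\in c_2(\mathcal{M}_{p_+,p_-})\) for \(n\geq 2\) and the \(c_2\)-cofiniteness of \(\operatorname{MinVir}_{p_+,p_-}\) is correct as far as it goes but unnecessary, and it still leaves the nilpotency of \([T]_{\mathfrak{p}}\) unproved; note also that the nonvanishing of the constant \(c\) is not a cosmetic point but the load-bearing input for the whole of part 1.
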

\begin{proof}
  2. and 3. follow directly from Theorem \ref{sec:indecstructure}.
  
  The first relation of 4. follows from
  \begin{align*}
    W_{1,\pm1}[-\Delta_1]W_{1,\pm1}=0\,.
  \end{align*}
  The second relation follows by applying \(E^2\) to \([W_{1,-1}]_{\mathfrak{p}}^2=0\).
  The third relation is a consequence of Proposition \ref{sec:zhutopoisson}
  which implies that the relation \([W_{1,0}]^2=g_1([T])\) in \(A_0\) becomes
  \begin{align*}
    [W_{1,0}]^2=\operatorname{const}\cdot[T]^{\Delta_1}
  \end{align*}
  in \(\text{Gr}_{\Delta_1}(A_0)\) and therefore the same relation follows in \(\mathfrak{p}(\mathcal{M}_{p_+,p_-})\).

  The Poisson algebra
  \(\mathfrak{p}(\mathcal{M}_{p_+,p_-})\) is finite dimensional because
  it is finitely generated and all of its generators are nilpotent, thus
  proving 1.
\end{proof}

\subsection{Classification of simple {\boldmath \(\mathcal{M}_{p_+,p_-}\)\unboldmath}
  modules}
\label{sec:repthy}

In this section we classify all simple modules of both the zero mode algebra \(A_0\) and of the VOA \(\mathcal{M}_{p_+,p_-}\).
We will see that all simple \(\mathcal{M}_{p_+,p_-}\)-modules are either
isomorphic to
minimal model modules \(L_{[r,s]}\) or
submodules of the lattice modules \(V^\pm_{r,s}\).

\begin{prop}\label{sec:Mppsimplemodules}
  Let \(\mathcal{K}_{r,s}^\pm\) and \(\mathcal{X}_{r,s}^\pm\) be the Virasoro modules of
  Proposition \ref{sec:Mppirreds}, where \(1\leq r\leq p_+,\ 1\leq s\leq p_-\).
  \begin{enumerate}
  \item The \(\mathcal{K}_{r,s}^\pm\) and \(\mathcal{X}_{r,s}^\pm\) are
    \(\mathcal{M}_{p_+,p_-}\)-modules.
  \item The \(\mathcal{X}_{r,s}^\pm\) are simple \(\mathcal{M}_{p_+,p_-}\)-modules
    for \(1\leq r\leq p_+\), \(1\leq s\leq p_-\).
  \item For \(1\leq r< p_+,\ 1\leq s< p_-\) the modules \(\mathcal{K}_{r,s}^+\)
    satisfy the exact sequences
    \begin{align*}
      0\rightarrow \mathcal{X}_{r,s}^+\rightarrow \mathcal{K}_{r,s}^+\rightarrow
      L_{[r,s]}\rightarrow 0\,,
    \end{align*}
    where the arrows are \(\mathcal{M}_{p_+,p_-}\)-homomorphisms and
    \begin{align*}
      L_{[r,s]}=L_{[p_+-r,p_--s]}=\mathcal{K}_{r,s}^+/\mathcal{X}_{r,s}^+=L(\Delta_{r,s})
    \end{align*}
    are the simple modules of the minimal model VOA 
    \(\text{MinVir}_{c_{p_+,p_-}}\).
  \item For \(1\leq r\leq p_+,\ 1\leq s\leq p_-\) the spaces of least conformal weight
    \(\mathcal{K}_{r,s}^+[\Delta_{r,s}]\) and \(\mathcal{X}_{r,s}^+[\Delta_{r,s;0}^+]\) of 
    \(\mathcal{K}_{r,s}^+\) and \(\mathcal{X}_{r,s}^+\) are one dimensional and the zero modes 
    \(W_{1,m}[0], m=-1,0,1\) act trivially. The spaces of least conformal weight
    \(\mathcal{K}_{r,s}^-[\Delta_{r,s;0}^-]=\mathcal{X}_{r,s}^-[\Delta_{r,s;0}^-]\) of
    \(\mathcal{K}_{r,s}^-=\mathcal{X}_{r,s}^-\) are two dimensional and the zero modes 
    \(W_{1,m}[0], m=-1,0,1\) act non-trivially.
  \end{enumerate}
\end{prop}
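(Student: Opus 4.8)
The plan is to prove the four assertions in turn, using the Virasoro-module descriptions of Proposition \ref{sec:Mppirreds}, the Frobenius currents, and the zero mode relations of Theorem \ref{sec:zhualgrelations}. For assertion (1), each \(V^\pm_{r,s}\) is a simple \(\mathcal{V}_{p_+,p_-}\)-module, hence an \(\mathcal{M}_{p_+,p_-}\)-module by restriction along \(\mathcal{M}_{p_+,p_-}\hookrightarrow\mathcal{V}_{p_+,p_-}\); the subspaces \(\mathcal{K}^\pm_{r,s}\) and \(\mathcal{X}^\pm_{r,s}\) are cut out of \(V^\pm_{r,s}\) as intersections of kernels and images of the screening maps \(S^{[r']}_+,S^{[s']}_-\), so it suffices to check that these commute with the action of \(\mathcal{M}_{p_+,p_-}\). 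For \(S_\pm=S^{[1]}_\pm\) this is the defining property: \(S_\pm\) is the residue of a conformal weight one field, hence a derivation, and \([S_\pm,Y(A;z)]=Y(S_\pm A;z)=0\) for \(A\in\mathcal{M}_{p_+,p_-}=\ker S_+\cap\ker S_-\). The divided powers \(S^{[r']}_+,S^{[s']}_-\) are Virasoro intertwiners built from the undivided screenings and the same commutation holds — equivalently, the Frobenius maps \(E,F\) that climb the divided powers are \(\mathcal{M}_{p_+,p_-}\)-derivations commuting with every field \(Y(A;z)\) by Corollary \ref{sec:derivations}. Thus \(\mathcal{M}_{p_+,p_-}\) preserves every kernel and image in sight.

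Assertion (2) I expect to be the main obstacle, and I would prove it by adapting the argument of Theorem \ref{sec:indecstructure}. Let \(M\) be a nonzero \(\mathcal{M}_{p_+,p_-}\)-submodule of \(\mathcal{X}^\pm_{r,s}\). Since \(\mathcal{X}^\pm_{r,s}\) is a direct sum of finite-dimensional \(L_0\)-eigenspaces bounded below, \(M\) contains a vector of minimal conformal weight; it is killed by \(L_{\geq1}\), so by Proposition \ref{sec:Mppirreds}, which writes \(\mathcal{X}^\pm_{r,s}=\bigoplus_n L(\Delta^\pm_{r,s;n})\otimes V^\pm_{r,s;n}\) with \(\Delta^\pm_{r,s;n}\) strictly increasing in \(n\), this vector lies in a single soliton sector \(V^\pm_{r,s;n_0}\). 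The heart of the matter is the analogue for arbitrary \((r,s)\) of Proposition \ref{sec:indecstructure_technical}: the modes of the generating currents \(Y(W_{1,1};z),Y(W_{1,-1};z)\) — together with the Frobenius maps \(E,F\), which permute the soliton vectors inside a fixed sector — carry any nonzero soliton vector to soliton vectors of the neighbouring sectors \(V^\pm_{r,s;n_0\pm1}\) with nonzero coefficient, that coefficient being a Jack-polynomial evaluation \(\epsilon_X(Q_\lambda(\,\cdot\,;\kappa_\mp))\) of Proposition \ref{sec:polytofieldmap} whose nonvanishing one reads off from its explicit product form for the partitions \(\lambda^\pm_{n,m}\). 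Iterating reaches every soliton vector, and since each \(L(\Delta^\pm_{r,s;n})\) is Virasoro-generated by its soliton vectors we conclude \(M=\mathcal{X}^\pm_{r,s}\). Carrying out this connectivity computation uniformly in \((r,s)\), including the boundary cases \(r=p_+\) or \(s=p_-\), is the bulk of the work.

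For assertion (3), by (1) the inclusion \(\mathcal{X}^+_{r,s}\subseteq\mathcal{K}^+_{r,s}\) is one of \(\mathcal{M}_{p_+,p_-}\)-modules, so the quotient is an \(\mathcal{M}_{p_+,p_-}\)-module, and it is \(L(h_{r,s;0})\) as a Virasoro module by Proposition \ref{sec:Mppirreds}. To identify its \(\mathcal{M}_{p_+,p_-}\)-structure with the minimal-model module \(L_{[r,s]}\) I would show \(Y(A;z)=0\) on this quotient for all \(A\in\mathcal{X}^+_{1,1}\): the generator \(W_{0,0}\) of \(\mathcal{X}^+_{1,1}\) is the singular vector of the vacuum module at level \(\Delta_0\), hence a null field on every minimal-model Virasoro module, so \(Y(W_{0,0};z)=0\) on \(L(h_{r,s;0})\); by Theorem \ref{sec:indecstructure} every element of \(\mathcal{X}^+_{1,1}\) is reached from \(W_{0,0}\) by the modes of \(U(\mathcal{M}_{p_+,p_-})\), and this vanishing propagates through the state–field correspondence. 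Hence the action factors through \(\mathcal{M}_{p_+,p_-}/\mathcal{X}^+_{1,1}\cong\text{MinVir}_{c_{p_+,p_-}}\), identifying the quotient with the simple module \(L_{[r,s]}=L_{[p_+-r,p_--s]}\).

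For assertion (4), the dimensions are immediate from the soliton bases of Remark \ref{sec:solitonbasis} — the \(+\)-sectors have index range \(-n\leq m\leq n\), so \(\dim V^+_{r,s;0}=1\), while the \(-\)-sectors have \(-n\leq m\leq n+1\), so \(\dim V^-_{r,s;0}=2\); since \(L(\Delta^\pm_{r,s;0})\) has one-dimensional top and the \(\Delta^\pm_{r,s;n}\) increase, these sectors are exactly the lowest weight spaces of \(\mathcal{X}^+_{r,s}\) and \(\mathcal{X}^-_{r,s}=\mathcal{K}^-_{r,s}\). On the one-dimensional space \(\mathcal{X}^+_{r,s}[\Delta^+_{r,s;0}]\) the zero modes \(W_{1,m}[0]\) act by scalars satisfying the \(A_0\)-relations of Theorem \ref{sec:zhualgrelations}: \([W_{1,\pm1}]^2=0\) forces \(W_{1,\pm1}[0]=0\), and \([W_{1,0}]^2=g_1([T])\) with \(g_1(\Delta^+_{r,s;0})=0\) — the \(\Delta^+_{r,s;0}\) being among the roots of \(g_1\) by Proposition \ref{sec:Wzeromodeeigenvalues} — forces \(W_{1,0}[0]=0\). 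For the two-dimensional space \(\mathcal{X}^-_{r,s}[\Delta^-_{r,s;0}]\), by (2) it is a simple \(A_0\)-module (Proposition \ref{sec:zhuproperties}); if every \(W_{1,m}[0]\) acted as \(0\) it would be a simple module over the commutative quotient of \(A_0\) by the ideal generated by the \([W_{1,m}]\), hence one-dimensional — a contradiction. So the zero modes act non-trivially, which completes the proof.
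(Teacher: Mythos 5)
Your proposal is correct in substance and reaches all four assertions, but it departs from the paper's route in one place and is more detailed than the paper in others. For assertion (1) the paper does not argue via kernels of screenings at all: it obtains module-hood and simplicity simultaneously, by establishing (analogously to the proof of Theorem \ref{sec:indecstructure}) that the modes of \(Y(W_{1,\pm1};z)\) act transitively on the soliton vectors of \(\mathcal{K}_{r,s}^\pm\) and \(\mathcal{X}_{r,s}^\pm\); since \(T(z)\), \(Y(W_{1,1};z)\), \(Y(W_{1,-1};z)\) generate \(\mathcal{M}_{p_+,p_-}\), closure under these fields is all that is needed. Your alternative — restrict the \(\mathcal{V}_{p_+,p_-}\)-module structure of \(V^\pm_{r,s}\) and check that the defining kernels and images are preserved — is conceptually cleaner, but its weakest link is the claim that the divided powers \(S^{[r']}_+,S^{[s']}_-\) commute with \(Y(A;z)\) for all \(A\in\ker S_+\cap\ker S_-\). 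That is true, but it is a statement about integrals of products of screening currents over the homology cycles \([\Gamma_N(\kappa)]\), not a formal consequence of \(S_\pm A=0\); and your ``equivalently'' via Corollary \ref{sec:derivations} does not supply it, since that corollary gives \([X,Y(A;z)]=Y(XA;z)\) (a derivation, not vanishing) and the maps \(E,F\) are defined on the \(\mathcal{K}_{r,s}^\pm\) only after one already knows these are stable. If you keep this route you should prove the divided-power commutation directly (boundary terms vanishing on the cycle); otherwise fall back on the paper's generator argument, for which your treatment of assertion (2) — minimal-weight vector, location in a single soliton sector, transitivity with nonvanishing Jack-polynomial coefficients \(\epsilon_X(Q_\lambda)\) — coincides with the paper's and correctly identifies the \((r,s)\)-analogue of Proposition \ref{sec:indecstructure_technical} as the real work (which the paper also leaves as ``analogous''). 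Your arguments for (3) and (4) go beyond the paper's one-line proof: the paper simply notes the quotient is a simple Virasoro module and records the lowest-weight spaces \(\mathbb{C}|\beta_{r,s;0}\rangle\) and \(V_{r,s;0}^\pm\), deferring the zero-mode action to Proposition \ref{sec:A_0simples}; your scalar/nilpotency argument on the one-dimensional spaces and the ``simple module over a commutative quotient is one-dimensional'' argument on the two-dimensional ones are both sound and in fact supply details the paper omits.
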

\begin{proof}
  The fact that \(\mathcal{K}_{r,s}^\pm\) and \(\mathcal{X}_{r,s}^\pm\) are
  \(\mathcal{M}_{p_+,p_-}\)-modules follows analogously to the proof of 
  Theorem \ref{sec:indecstructure} by showing that \(Y(W_{1,\pm1};z)\) act transitively
  on the soliton vectors. This then also implies that the \(\mathcal{X}_{r,s}^\pm\) are
  simple.
    
  The \(L_{[r,s]}\) are \(\mathcal{M}_{p_+,p_-}\)-modules, because they are 
  quotients of \(\mathcal{M}_{p_+,p_-}\)-modules. They are simple,
  because they are already simple as Virasoro modules.

  The spaces of least conformal weight of \(\mathcal{K}_{r,s}^+\) are
  just \(\mathbb{C}|\beta_{r,s:0}\rangle\) while for \(\mathcal{X}_{r,s}^\pm\) 
  they are the soliton sectors
  \(V_{r,s:0}^\pm\). 
\end{proof}

For a simple \(\mathcal{M}_{p_+,p_-}\)-module \(M\) let \(\overline{M}\) be its
corresponding simple \(A_0\)-module.
\begin{prop}\label{sec:A_0simples}
  The simple modules of the zero mode algebra corresponding to the simple
  \(\mathcal{M}_{p_+,p_-}\)-modules of Proposition \ref{sec:Mppsimplemodules}
  have the following structure:
  \begin{enumerate}
  \item For \(1\leq r<p_+\), \(1\leq s<p_-\) 
    the simple \(A_0\)-modules \(\overline{L_{[r,s]}}=\overline{L_{[p_+-r,p_--s]}}\) are
    1-dimensional. The Virasoro \([T]\) element acts as \(\Delta_{r,s}\cdot\id\), while
    the \([W_{1,m}]\) act trivially.
  \item For \(1\leq r<p_+\), \(1\leq s<p_-\), the simple \(A_0\)-modules 
    \(\overline{\mathcal{X}_{r,s}^+}\) are
    1-dimensional. The \([W_{1,m}]\) act trivially, while the Virasoro element
    acts as
    \begin{align*}
      [T]=\Delta_{r,s;0}^+\cdot \id\,.
    \end{align*}
  \item For \(1\leq r<p_+\), \(1\leq s<p_-\), the simple \(A_0\)-modules 
    \(\overline{X^-_{r,s}}\) are
    2-dimensional. Let \(v_\pm\) be a basis such that
    \begin{align*}
      v_-=[W_{1,-1}] v_+\,,
    \end{align*}
    then
    \begin{align*}
      [T]v_\pm&=\Delta_{r,s;0}^- v_\pm\\
      [W_{1,0}]v_\pm&=\mp f([T]) v_{\pm}
    \end{align*}
    and \(\Delta_{r,s;0}^-\) is not a root of \(f\).
  \end{enumerate}
\end{prop}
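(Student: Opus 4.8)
The plan is to transport everything to the zero mode algebra $A_0=A_0(\mathcal{M}_{p_+,p_-})$. By Proposition \ref{sec:zhuproperties}(4) a simple $\mathcal{M}_{p_+,p_-}$-module $M$ corresponds to the simple $A_0$-module $\overline M$ realised on the lowest conformal weight space of $M$, with $[A]$ acting by $A[0]$; and since $[T]$ is central in $A_0$ (Proposition \ref{sec:zhuproperties}(2)) it acts on $\overline M$ as the scalar equal to the lowest conformal weight of $M$. So in each case the first step is to read that weight off from Proposition \ref{sec:Mppirreds}: it is $h_{r,s;0}=\Delta_{r,s}$ for $L_{[r,s]}$, it is $\Delta_{r,s;0}^+$ for $\mathcal{X}_{r,s}^+$, and it is $\Delta_{r,s;0}^-$ for $\mathcal{X}_{r,s}^-$.

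For the one-dimensional cases (1) and (2), Proposition \ref{sec:Mppsimplemodules}(4) already records that the lowest weight spaces of $L_{[r,s]}$ and $\mathcal{X}_{r,s}^+$ are one-dimensional and that the zero modes $W_{1,m}[0]$, $m=-1,0,1$, act trivially on them. (For $L_{[r,s]}$ this is also immediate from Theorem \ref{sec:indecstructure}: $W_{1,m}\in\mathcal{X}_{1,1}^+$ and $Y(A;z)=0$ on $\mathcal{K}_{1,1}^+/\mathcal{X}_{1,1}^+=L(0)$ for $A\in\mathcal{X}_{1,1}^+$.) Combined with the generation statement of Theorem \ref{sec:zhualgrelations}(1), this forces $\overline{L_{[r,s]}}$ and $\overline{\mathcal{X}_{r,s}^+}$ to be the one-dimensional $A_0$-modules on which $[W_{1,m}]=0$ and $[T]$ acts by the stated weight, which is precisely what (1) and (2) assert.

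The substantive case is (3). Here $V:=\overline{\mathcal{X}_{r,s}^-}$ is two-dimensional (Proposition \ref{sec:Mppsimplemodules}(4)) and simple (Proposition \ref{sec:Mppsimplemodules}(2)), and $[T]$ acts as the scalar $\Delta:=\Delta_{r,s;0}^-$, so $f([T])$ and $g_1([T])$ act as the scalars $\phi:=f(\Delta)$ and $\gamma:=g_1(\Delta)$. Writing $X_0=[W_{1,0}]$ and $X_\pm=[W_{1,\pm1}]$ for the induced operators on $V$ and substituting $[T]=\Delta$ into the product table and commutators of Theorem \ref{sec:zhualgrelations} yields, on $V$, the relations $X_0^2=\gamma$, $X_\pm^2=0$, $X_0X_\pm=-X_\pm X_0=\mp\phi X_\pm$, $[X_0,X_\pm]=\mp 2\phi X_\pm$ and $[X_+,X_-]=2\phi X_0$. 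The first thing to establish is $\phi\neq0$: if $\phi=0$ then all commutators among $X_0,X_+,X_-$ vanish and all their pairwise products vanish as well, so these three operators are commuting, square-zero (hence nilpotent) endomorphisms of $V$; a short case analysis, splitting into $\gamma\neq0$ (which forces $X_\pm=0$) and $\gamma=0$, then produces a nonzero vector in the common kernel $\ker X_0\cap\ker X_+\cap\ker X_-$, which together with the centrality of $[T]$ spans a one-dimensional $A_0$-submodule of $V$ — contradicting simplicity. Hence $\Delta_{r,s;0}^-$ is not a root of $f(h_\beta)$, as claimed.

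Given $\phi\neq0$, the elements $X_-$, $-X_+/(2\phi^2)$ and $X_0/\phi$ satisfy the defining relations of $\mathfrak{sl}_2$, and $V$ is simple over the subalgebra they generate, hence is the unique two-dimensional simple $\mathfrak{sl}_2$-module. Taking $v_+$ to be the $X_0$-eigenvector with eigenvalue $-\phi$ (the lowest weight vector) and setting $v_-:=X_-v_+$, the anticommutation relation gives $X_0v_-=\phi v_-$ and $v_-\neq0$, so $\{v_+,v_-\}$ is a basis of $V$ on which $[T]=\Delta$ and $[W_{1,0}]v_\pm=\mp f([T])v_\pm$ — exactly the structure asserted in (3); the relation $X_0^2=\gamma$ then moreover forces $g_1(\Delta_{r,s;0}^-)=f(\Delta_{r,s;0}^-)^2$, which is consistent with all the remaining relations. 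The step I expect to be the main obstacle is precisely ruling out $\phi=0$: this requires combining the structure relations of Theorem \ref{sec:zhualgrelations} carefully with the two-dimensionality and simplicity of $\mathcal{X}_{r,s}^-$ supplied by Proposition \ref{sec:Mppsimplemodules}; once $\phi\neq0$ is in hand everything else is formal.
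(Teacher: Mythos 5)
Your proof is correct, and for the substantive third part it takes a genuinely different route from the paper. The paper handles all three cases by leaning on explicit computations: for (1) and (2) it derives the vanishing of \([W_{1,0}]\) from the factorisation of the eigenvalue polynomial \(\omega_1(\beta)\) in Proposition \ref{sec:Wzeromodeeigenvalues} (which has roots at \(\Delta_{r,s}\) and \(\Delta^+_{r,s;0}\)) and then kills \([W_{1,\pm1}]\) by applying \(E\) and \(F\); for (3) it takes the non-trivial action of the zero modes on the two-dimensional soliton sector \(V^-_{r,s;0}\) as given by Proposition \ref{sec:Mppsimplemodules}(4), writes down the basis \(v_\pm\), reads the action of \([W_{1,0}]\) off the relations of Theorem \ref{sec:zhualgrelations}, and concludes \(f(\Delta^-_{r,s;0})\neq 0\) from that non-triviality. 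You instead deduce \(f(\Delta^-_{r,s;0})\neq 0\) purely from the abstract relations of \(A_0\) together with the two-dimensionality and simplicity of \(\overline{\mathcal{X}^-_{r,s}}\), and then reconstruct the module as the unique two-dimensional simple module of an \(\mathfrak{sl}_2\)-triple built from \(X_0,X_\pm\). This is more self-contained: the paper's claim that the zero modes act non-trivially on \(V^-_{r,s;0}\) ultimately rests on the screening-operator computations, whereas your argument needs only Theorem \ref{sec:zhualgrelations} plus simplicity, and it yields the consistency relation \(f(\Delta^-_{r,s;0})^2=g_1(\Delta^-_{r,s;0})\) as a by-product. One small wrinkle: in your case analysis for \(\phi=0\), the subcase \(\gamma\neq 0\) does not produce a vector in \(\ker X_0\cap\ker X_+\cap\ker X_-\) (there \(X_0^2=\gamma\) makes \(X_0\) invertible); the contradiction in that branch is rather that \(X_\pm=0\) forces \(X_+X_-=0\) while the product table gives \(X_+X_-=-\gamma\neq 0\) (or, equivalently, that an eigenvector of the then-diagonalisable \(X_0\) spans a one-dimensional submodule). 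This does not affect the conclusion.
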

\begin{proof}
  Recall the structure of the \(\mathcal{K}_{r,s}^\pm\) and \(\mathcal{X}_{r,s}^\pm\) as laid out
  in Proposition \ref{sec:Mppirreds}.
  \begin{enumerate}
  \item For \(1\leq r<p_+\), \(1\leq s<p_-\) 
    the factorisation of \(\omega_1(\beta)\) in 
    Proposition \ref{sec:Wzeromodeeigenvalues} contains a factor 
    \(h_\beta-\Delta_{r,s}\). Therefore the \(A_0\) generator \([W_{1,0}]\)
    acts trivially on \(\overline{L_{[r,s]}}\). By applying \(E\) and \(F\) one sees that 
    \([W_{1,\pm1}]\) must also act trivially. Thus \([T]\) is the only
    generator of \(A_0\) that acts non-trivially on \(\overline{L_{[r,s]}}\) and
    it acts by multiplying by the conformal weight \(\Delta_{r,s}\).
  \item The case for \(\overline{\mathcal{X}_{r,s}^+}\) follows in the same way as for 
    \(\overline{L_{[r,s]}}\).
  \item For \(1\leq r\leq p_+\), \(1\leq s\leq p_-\), the zero modes 
    \(W_{1,m}[0], m=-1,0,1\) act non-trivially on the two dimensional soliton sector
    \(V_{r,s;0}^-\subset \mathcal{X}_{r,s}^-\). Therefore there exits a basis \(v_\pm\) of 
    \(\overline{\mathcal{X}^-_{r,s}}\) such that
  \begin{align*}
    v_-&=[W_{1,-1}]v_+& v_+&=\operatorname{const}[W_{1,1}]v_-.
  \end{align*}
  It then follows from the relations of \(A_0\) and the conformal weight of 
  \(V_{r,s;0}^-\) that
  \begin{align*}
    [T]v_\pm&=\Delta_{r,s;0}^- v_\pm\\
    [W_{1,0}]v_\pm&=\mp f([T]) v_{\pm}\,.
  \end{align*}
  Since \([W_{1,0}]\) acts non-trivially  on \(\overline{X_{r,s}^-}\),
  the conformal weight \(\Delta_{r,s;0}^-\)
  cannot be a root of \(f(h_\beta)\).
  \end{enumerate}
\end{proof}

\begin{thm}\label{sec:classificationofmodules}
  The list of \(2p_+p_-+(p_+-1)(p_--1)/2\) simple 
  \(\mathcal{M}_{p_+,p_-}\)-modules and 
  simple \(A_0\)-modules of Propositions \ref{sec:Mppsimplemodules} and 
  \ref{sec:A_0simples}
  are a full classification of all simple \(\mathcal{M}_{p_+,p_-}\)- and 
  \(A_0\)-modules.
\end{thm}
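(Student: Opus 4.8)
The plan is to reduce the classification of simple $\mathcal{M}_{p_+,p_-}$-modules to that of simple $A_0$-modules via the correspondence of Proposition~\ref{sec:zhuproperties}, and then to read off the latter from the presentation of $A_0$ in Theorem~\ref{sec:zhualgrelations}. Since $A_0$ is finite dimensional over the algebraically closed field $\mathbb{C}$, every simple $A_0$-module $M$ is finite dimensional, and by Schur's Lemma the central element $[T]$ acts on $M$ as a scalar $h\in\mathbb{C}$. The relation $[W_{2,0}]=g_2([T])=0$ in $A_0$ forces $g_2(h)=0$, so $h$ lies in the finite set of roots of $g_2$, which by Proposition~\ref{sec:Wzeromodeeigenvalues} is exactly $\{\Delta_{r,s}\mid [(r,s)]\in\mathcal{T}\}\cup\{\Delta_{r,s;0}^+\}\cup\{\Delta_{r,s;0}^-\}$ with $1\leq r\leq p_+$, $1\leq s\leq p_-$. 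For each such $h$ I would pass to the finite dimensional algebra $A_0(h)=A_0/([T]-h)A_0$, whose simple modules are precisely the simple $A_0$-modules on which $[T]=h$. By Proposition~\ref{sec:firstMppZhuprop} and the multiplication table of Theorem~\ref{sec:zhualgrelations}, $A_0(h)$ is at most four dimensional, spanned by the images of $1,[W_{1,-1}],[W_{1,0}],[W_{1,1}]$, with structure constants obtained by substituting $[T]\mapsto h$ into the table and into $g_1,f$.

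Next I would split into two cases according to the value of $g_1(h)$. If $h=\Delta_{r,s}$ or $h=\Delta_{r,s;0}^+$, the factorisations of Proposition~\ref{sec:Wzeromodeeigenvalues} give $g_1(h)=0$. Using $[W_{1,0}]^2=g_1([T])=0$, $[W_{1,\pm1}]^2=0$, and multiplying the remaining relations on the left by $[W_{1,0}]$ (treating the sub-cases $f(h)=0$ and $f(h)\neq0$ separately), one checks that $A_0(h)$ is a local algebra whose unique simple module is one dimensional with all $[W_{1,m}]$ acting as zero; by Proposition~\ref{sec:A_0simples} this module is $\overline{L_{[r,s]}}$ in the first case and $\overline{\mathcal{X}_{r,s}^+}$ in the second. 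If instead $h=\Delta_{r,s;0}^-$, then $g_1(h)\neq0$, and Proposition~\ref{sec:Mppsimplemodules} (together with Proposition~\ref{sec:A_0simples}) provides a genuine two dimensional simple $A_0$-module $\overline{\mathcal{X}_{r,s}^-}$ at this weight on which $[W_{1,0}]$ acts with eigenvalues $\pm f(h)$; comparing with $[W_{1,0}]^2=g_1([T])$ forces $g_1(h)=f(h)^2\neq0$. Writing $c:=g_1(h)=f(h)^2$ and $e_\pm:=\tfrac12\bigl(1\pm[W_{1,0}]/f(h)\bigr)$, a short computation with the table shows that $e_+,e_-$ are orthogonal idempotents with $e_\pm A_0(h)e_\pm=\mathbb{C}e_\pm$, $e_\mp A_0(h)e_\pm=\mathbb{C}[W_{1,\pm1}]$ and $[W_{1,1}][W_{1,-1}]=-2c\,e_-\neq0$, so that $A_0(h)\cong M_2(\mathbb{C})$; its unique simple module is two dimensional and coincides with $\overline{\mathcal{X}_{r,s}^-}$.

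Finally I would assemble the pieces: every simple $A_0$-module occurs in the list of Propositions~\ref{sec:Mppsimplemodules} and~\ref{sec:A_0simples}, the modules there are pairwise non-isomorphic since they are distinguished by the scalar by which $[T]$ acts, and counting weights gives $(p_+-1)(p_--1)/2$ modules $L_{[r,s]}$, $p_+p_-$ modules $\mathcal{X}_{r,s}^+$ and $p_+p_-$ modules $\mathcal{X}_{r,s}^-$, a total of $2p_+p_-+(p_+-1)(p_--1)/2$; transporting through Proposition~\ref{sec:zhuproperties} gives the simple $\mathcal{M}_{p_+,p_-}$-modules. The main obstacle is making the case split watertight: one must know that $g_1$ vanishes at exactly the ``$+$-type'' roots $\Delta_{r,s},\Delta_{r,s;0}^+$ of $g_2$ and is nonzero at every ``$-$-type'' root $\Delta_{r,s;0}^-$ (equivalently, that all $(p_+-1)(p_--1)/2+2p_+p_-$ of these conformal weights are pairwise distinct), a combinatorial property of the Kac formula for coprime $p_+,p_-$ that is implicit in the Fock-module decompositions of Section~\ref{sec:Virrepthy}. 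Without this one cannot exclude spurious pairs of one dimensional $A_0(h)$-modules arising when $A_0(h)\cong\mathbb{C}\times\mathbb{C}$ because $g_1(h)\neq f(h)^2$, and the count would break down.
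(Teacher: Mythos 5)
Your proposal follows essentially the same route as the paper: since $[T]$ is central and $g_2([T])=0$ in $A_0$, the $[T]$-eigenvalue of any simple $A_0$-module must be a root of $g_2$, and these roots are exhausted by the conformal weights of the modules listed in Propositions \ref{sec:Mppsimplemodules} and \ref{sec:A_0simples}; the simple $\mathcal{M}_{p_+,p_-}$-modules are then recovered via the bijection with simple $A_0$-modules. Your additional analysis of the quotient algebras $A_0(h)$ at each root $h$ (and the case split on $g_1(h)$) is a correct and welcome elaboration of the step the paper compresses into the assertion that the list ``exhausts all these possibilities.''
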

\begin{proof}
  From Theorem \ref{sec:zhualgrelations} we know that \(g_2([T])=0\) in
  \(A_0\). Therefore the minimal polynomial of \([T]\) must divide \(g_2([T])\),
  that is the conformal weight of any simple \(A_0\)-module must be a root of
  \(g_2(h_\beta)\). The list of simple \(A_0\)-modules in Proposition
  \ref{sec:A_0simples} gives one example of a simple \(A_0\)-module for every
  root of \(g_2(h_\beta)\). What remains to be shown is that there exist no other inequivalent simple
  \(A_0\)-modules with the same conformal weights as those listed above.
  
  Consider a simple \(A_0\)-module \(\overline{M}\) with conformal weight \(\Delta_{r,s}\) or
  \(\Delta_{r,s;0}^+\). By the polynomial relations between \([T]\) and
  \([W_{1,0}]\) in Theorem \ref{sec:zhualgrelations}, the generator
  \([W_{1,0}]\) must act trivially on \(\overline{M}\) and hence \([W_{1,\pm1}]\) must
  also act trivially. Therefore the image of of \(A_0\) in \(\ehom{(\overline{M}})\) is a
  commutative algebra generated by \([T]\). Since finite dimensional simple modules
  of commutative algebras are 1 dimensional, it follows that \(\overline{M}\) is 1
  dimensional and therefore included in the list in Proposition
  \ref{sec:A_0simples}.

  Consider a simple \(A_0\)-module \(\overline{M}\) with conformal weight
  \(\Delta_{r,s;0}^-\). By the polynomial relations between \([T]\) and
  \([W_{1,0}]\) in Theorem \ref{sec:zhualgrelations}, the generator
  \([W_{1,0}]\) must act non-trivially on \(\overline{M}\) and hence also
  \([W_{1,\pm1}]\). By Proposition \ref{sec:A_0simples}, it also follows that
  \(f(\Delta_{r,s;0}^-)\neq0\). Thus the image of \(A_0\) in \(\ehom(\overline{M})\)
  contains the Lie algebra \(\mathfrak{sl}_2\) with generators
  \begin{align*}
    \mathbb{H}=\frac{1}{f(\Delta_{r,s;0}^-)}[W_{1,0}],\quad
    \mathbb{E}=\frac{1}{\sqrt{2}f(\Delta_{r,s;0}^-)}[W_{1,-1}],\quad
    \mathbb{F}=\frac{-1}{\sqrt{2}f(\Delta_{r,s;0}^-)}[W_{1,1}],
  \end{align*}
  and \(\overline{M}\) must therefore be a simple finite dimensional \(\mathfrak{sl}_2\)
  module. Since \(\mathbb{E}\) and \(\mathbb{F}\) are non-trivial but
  \(\mathbb{E}^2=0\) and \(\mathbb{F}^2=0\), it follows that \(\overline{M}\) must be the
  2 dimensional simple \(\mathfrak{sl}_2\) module and is therefore included in the list in Proposition
  \ref{sec:A_0simples}.
\end{proof}

\subsection{The Whittaker category {\boldmath \(\mathcal{M}_{p_+,p_-}\)\unboldmath}-Whitt-mod}
\label{sec:whittcat}

Let \(\mathcal{M}_{p_+,p_-}\)-mod be the category of left \(\mathcal{M}_{p_+,p_-}\)-modules. Then in the sense of abelian categories
any object \(M\) of \(\mathcal{M}_{p_+,p_-}\)-mod has a finite Jordan-H\"older composition series and the simple objects of 
\(\mathcal{M}_{p_+,p_-}\)-mod are given by
\begin{align*}
  \{L_{[r,s]}|1\leq r<p_+,1\leq s<p_-\}\cup\{\mathcal{X}_{r,s}^\pm|1\leq r\leq p_+,1\leq s\leq p_-\}\,.
\end{align*}
\begin{definition}
  We define the full subcategory \(\mathcal{M}_{p_+,p_-}\)-Whitt-mod of
  \(\mathcal{M}_{p_+,p_-}\)-mod as the category of all objects \(M\in
  \mathcal{M}_{p_+,p_-}\)-mod such that the composition series of \(M\)
  only contains simple objects in
  \begin{align*}
    \{\mathcal{X}_{r,s}^\pm|1\leq r\leq p_+,1\leq s\leq p_-\}\,.
  \end{align*}
  We call the category \(\mathcal{M}_{p_+,p_-}\)-Whitt-mod the \emph{Whittaker category of \(\mathcal{M}_{p_+,p_-}\)-mod} and
  the objects of \(\mathcal{M}_{p_+,p_-}\)-Whitt-mod \emph{Whittaker \(\mathcal{M}_{p_+,p_-}\)-modules}.
\end{definition}

The full subcategory \(\mathcal{M}_{p_+,p_-}\)-Whitt-mod has properties crucial to developing the conformal field theory
associated to \(\mathcal{M}_{p_+,p_-}\).

\section{Concluding remarks and further problems}
\label{sec:conclusion}

Since the VOA \(\mathcal{M}_{p_+,p_-}\) satisfies the \(c_2\)-cofiniteness condition, one can develop the
conformal field theory over general Riemann surfaces associated to \(\mathcal{M}_{p_+,p_-}\). A paper on this subject is being
prepared by the first author together with Y.~Hashimoto \cite{Hashimoto:20XX}.

We have some conjectures regarding the conformal field theory over general Riemann surfaces associated to \(\mathcal{M}_{p_+,p_-}\). By
considering the conformal field theory on the Riemann sphere associated to \(\mathcal{M}_{p_+,p_-}\), the fusion tensor product
\(\dot{\otimes}\) induces the structure of a braided monoidal category on \(\mathcal{M}_{p_+,p_-}\)-mod. However, as was noted in 
\cite{Gaberdiel:2009ug},
this fusion tensor product is not exact on \(\mathcal{M}_{p_+,p_-}\)-mod. We conjecture the following:
\begin{enumerate}
\item The full abelian subcategory
  \(\operatorname{Vir}_{\operatorname{min}}\subset
  \mathcal{M}_{p_+,p_-}\)-mod, generated by the simple modules
  \begin{align*}
    \{L_{[r,s]}=|1\leq r<p_+,1\leq s< p_-\}
  \end{align*}
  forms a tensor ideal in \(\mathcal{M}_{p_+,p_-}\)-mod .
  Thus \(\mathcal{M}_{p_+,p_-}\)-Whitt-mod is endowed
  with a quotient braided monoidal structure 
\((\mathcal{M}_{p_+,p_-}\)-Whitt-mod, \(\dot{\otimes})\).
\item The category \((\mathcal{M}_{p_+,p_-}\text{-Whitt-mod},\dot{\otimes})\) is rigid as a monoidal category.
\item In their seminal paper \cite{Feigin:2006xa} Feigin, Gainutdinov, Semikhatov and Tipunin defined the quantum group
  \(\mathfrak{g}_{p_+,p_-}\) -- a finite dimensional complex Hopf algebra. They determined all simple modules of 
  \(\mathfrak{g}_{p_+,p_-}\) and their projective covers. There are exactly \(2p_+p_-\) simple modules, which is also the number
  of simple objects in \(\mathcal{M}_{p_+,p_-}\)-Whitt-mod. The Hopf algebra \(\mathfrak{g}_{p_+,p_-}\) is not quasi-triangular,
  nevertheless, we expect the monoidal category \((\mathfrak{g}_{p_+,p_-}\text{-mod},\dot{\otimes})\) to be braided. Since
  \(\mathfrak{g}_{p_+,p_-}\) is a Hopf algebra, \((\mathfrak{g}_{p_+,p_-}\text{-mod},\dot{\otimes})\) is a rigid monoidal category.
  We conjecture that
  \begin{align*}
    (\mathcal{M}_{p_+,p_-}\text{-Whitt-mod},\dot{\otimes})\cong(\mathfrak{g}_{p_+,p_-}\text{-mod},\dot{\otimes})
  \end{align*}
  as braided monoidal categories.\footnote{The first author attributes this conjecture to exciting discussions with
  Semikhatov and Tipunin during his stay in Moscow in January 2013 and is convinced of its validity.}
\item In \cite{Feigin:2006xa} it was also shown that the centre \(Z(\mathfrak{g}_{p_+,p_-})\) of \(\mathfrak{g}_{p_+,p_-}\)
  is \(\tfrac12(3p_+-1)(3p_--1)\) dimensional and carries the structure of a \(\operatorname{SL}(2,\mathbb{Z})\)-module. The 
  space of vacuum amplitudes of \(\mathcal{M}_{p_+,p_-}\)-mod on the torus also carries an \(\operatorname{SL}(2,\mathbb{Z})\)
  action. Explaining the relation between these two \(\operatorname{SL}(2,\mathbb{Z})\) 
  modules will be an important future problem.
\item For any rank \(\ell\) simple Lie algebra \(\mathfrak{g}\) of ADE type, we fix coprime positive integers \(p_+,p_-\geq h=h^\vee\),
  where \(h\) is the Coxeter number of \(\mathfrak{g}\). By using free bosons and \(2\ell\) screening operators, one can define
  an extended \(W\) algebra of type \(\mathfrak{g}\), which we call \(\mathcal{M}_{p_+,p_-}(\mathfrak{g})\). When
  \(\mathfrak{g}=\mathfrak{sl}_2\), then \(\mathcal{M}_{p_+,p_-}(\mathfrak{sl}_2)\) is just \(\mathcal{M}_{p_+,p_-}\), the VOA
  considered in this paper.
  We conjecture that \(\mathcal{M}_{p_+,p_-}(\mathfrak{g})\) satisfies Zhu's \(c_2\)-cofiniteness condition and that one can
  construct Frobenius homomorphisms \(E_i,F_i,\ i=1,\dots,\ell\) subject to relations that
  are similar to those encountered in the case \(\mathfrak{g}=\mathfrak{sl}_2\).
\end{enumerate}


\providecommand{\href}[2]{#2}\begingroup\raggedright\endgroup

\end{document}